\DeclareMathOperator{\Inv}{Inv}
\DeclareMathOperator{\inv}{inv}
\DeclareMathOperator{\Id}{Id}
\DeclareMathOperator{\vol}{vol}
\DeclareMathOperator{\SL}{SL}
\DeclareMathOperator{\GL}{GL}
\DeclareMathOperator{\SO}{SO}
\DeclareMathOperator{\SU}{SU}
\DeclareMathOperator{\Sp}{Sp}
\DeclareMathOperator{\G}{G}
\DeclareMathOperator{\PSL}{PSL}
\DeclareMathOperator{\PGL}{PGL}
\DeclareMathOperator{\PSp}{PSp}
\DeclareMathOperator{\LieGL}{\mathfrak{gl}}
\DeclareMathOperator{\LieSL}{\mathfrak{sl}}
\DeclareMathOperator{\LieSO}{\mathfrak{so}}
\DeclareMathOperator{\T}{T}
\DeclareMathOperator{\diag}{diag}
\DeclareMathOperator{\ad}{ad}
\DeclareMathOperator{\Ad}{Ad}
\DeclareMathOperator{\inj}{inj}
\DeclareMathOperator{\Span}{span}
\DeclareMathOperator{\rank}{rank}
\DeclareMathOperator{\height}{ht}
\DeclareMathOperator{\minheight}{mht}
\DeclareMathOperator{\Gr}{Gr}
\DeclareMathOperator{\Projective}{\mathbb{P}}
\DeclareMathOperator{\Leb}{Leb}
\DeclareMathOperator{\Lie}{Lie}
\DeclareMathOperator{\nil}{nil}
\DeclareMathOperator{\Mat}{Mat}
\newcommand{\diff}{\,\mathrm{d}}
\newcommand{\Sob}{\mathcal{S}}
\newcommand{\N}{\mathbb{N}}
\newcommand{\Z}{\mathbb{Z}}
\newcommand{\Q}{\mathbb{Q}}
\newcommand{\R}{\mathbb{R}}
\newcommand{\C}{\mathbb{C}}
\newcommand{\LieG}{\mathfrak{g}}
\newcommand{\LieA}{\mathfrak{a}}
\newcommand{\LieU}{\mathfrak{u}}
\newcommand{\LieW}{\mathfrak{w}}
\newcommand{\LieK}{\mathfrak{k}}
\newcommand{\LieM}{\mathfrak{m}}
\newcommand{\LieMstar}{{\mathfrak{m}^{\star}}}
\newcommand{\LieMdiam}{{\mathfrak{m}^{\diamond}}}
\newcommand{\LieP}{\mathfrak{p}}
\newcommand{\LieS}{\mathfrak{s}}
\newcommand{\LieH}{\mathfrak{h}}
\newcommand{\Plucker}{{}^{\wp}}
\newcommand{\LimLie}{\mathfrak{l}}
\newcommand{\LimLiestar}{\mathfrak{l}^\star}
\newcommand{\Lstar}{L^\star}
\newcommand{\tripleirrep}{\mathcal{V}}
\newcommand{\rankG}{\mathsf{r}}
\newcommand{\rankGstar}{{\mathsf{r}^{\star}}}
\newcommand{\bigcdot}{\boldsymbol{\cdot}}
\newcounter{constC}
\newcounter{constLambda}
\renewcommand{\theconstLambda}{{\Lambda_{\arabic{constLambda}}}}
\newcommand{\constLambda}{\refstepcounter{constLambda}\theconstLambda}
\newcounter{constkappa}
\renewcommand{\theconstkappa}{{\kappa_{\arabic{constkappa}}}}
\newcommand{\constkappa}{\refstepcounter{constkappa}\theconstkappa}
\newcounter{constc}
\renewcommand{\theconstc}{{c_{\arabic{constc}}}}
\newcommand{\constc}{\refstepcounter{constc}\theconstc}
\newcounter{constE}
\newcounter{constD}
\newcounter{constlambda}
\newcounter{constnu}
\newcommand{\bbS}{\mathbb{S}}
\newcommand{\bftau}{\boldsymbol{\tau}}
\newcommand{\bfnu}{\boldsymbol{\nu}}
\newcommand{\bflambda}{\boldsymbol{\lambda}}
\newcommand{\bfxi}{\boldsymbol{\xi}}
\newcommand{\bfchi}{\boldsymbol{\chi}}
\newcommand{\bfG}{\mathbf{G}}
\newcommand{\bfP}{\mathbf{P}}
\newcommand{\bfN}{\mathbf{N}}
\newcommand{\bfH}{\mathbf{H}}
\newcommand{\calM}{\mathcal{M}}
\newcommand{\Mstar}{{\mathcal{M}^{\star}}}
\newcommand{\Ostar}{{\mathcal{O}^{\star}}}
\newcommand{\calJ}{\mathscr{J}}
\newcommand{\calJstar}{\mathscr{J}^{\star}}
\newcommand{\calJdiam}{\mathscr{J}^{\diamond}}
\newcommand{\Nil}{\mathcal{N}}
\newcommand{\regNil}{\mathcal{N}^{\mathrm{reg}}}
\newcommand{\epregNil}{\mathcal{N}^{\epsilon\textnormal{-}\mathrm{reg}}}
\newcommand{\regLieW}{\LieW^{\mathrm{reg}}}
\newcommand{\epregLieW}{\LieW^{\epsilon\textnormal{-}\mathrm{reg}}}
\newcommand{\genregLieW}[1]{\LieW^{{#1}\textnormal{-}\mathrm{reg}}}
\newcommand{\regW}{W^{\mathrm{reg}}}
\newcommand{\epregW}{W^{\epsilon\textnormal{-}\mathrm{reg}}}
\newcommand{\favh}{\mathsf{h}^{\natural}}
\newcommand{\favn}{\mathsf{n}^{\natural}}
\newcommand{\hatfavn}{\hat{\mathsf{n}}^{\natural}}
\newcommand{\checkfavn}{\check{\mathsf{n}}^{\natural}}
\newcommand{\altfavn}{\tilde{\mathsf{n}}^{\natural}}
\newcommand{\favLieG}{\LieG^{\natural}}
\newcommand{\favtripleirrep}{\tripleirrep^{\natural}}
\newcommand{\h}{\mathsf{h}}
\newcommand{\n}{\mathsf{n}}
\DeclareFontFamily{U}{mathb}{\hyphenchar\font45}
\DeclareFontShape{U}{mathb}{m}{n}{
<5> <6> <7> <8> <9> <10> gen * mathb
<10.95> mathb10 <12> <14.4> <17.28> <20.74> <24.88> mathb12
}{}
\DeclareSymbolFont{mathb}{U}{mathb}{m}{n}
\DeclareMathSymbol{\bigast}{2}{mathb}{"06}
\theoremstyle{plain}
\newtheorem{theorem}{Theorem}[section]
\newtheorem{proposition}[theorem]{Proposition}
\newtheorem{lemma}[theorem]{Lemma}
\newtheorem{corollary}[theorem]{Corollary}
\theoremstyle{definition}
\newtheorem{definition}[theorem]{Definition}
\newtheorem{example}[theorem]{Example}
\newtheorem{nonexample}[theorem]{Nonexample}
\theoremstyle{remark}
\newtheorem{remark}[theorem]{Remark}
\newtheorem{notation}[theorem]{Notation}
\Crefname{enumi}{Property}{Properties}
\Crefname{subsection}{Subsection}{Subsections}
\begin{document}
\newcommand{\starQCP}{\hyperref[def: star-QCP]{$\star$-QC} property\xspace}
\newcommand{\starCP}{\hyperref[def: star-QCP]{$\star$-C} property\xspace}


\title[Effective equidistribution of translates of tori]{Effective equidistribution of translates of tori in arithmetic homogeneous spaces and applications}

\author{Pratyush Sarkar}
\address{Department of Mathematics, UC San Diego, La Jolla, CA 92093, USA}
\email{psarkar@ucsd.edu}

\date{\today}

\begin{abstract}
Let $\Gamma < G$ be an arithmetic lattice in a noncompact connected semisimple real algebraic group.
For many such $G$ of rank at most $2$, in particular $G = \SL_3(\R)$, we prove effective equidistribution of large translates of tori in $G/\Gamma$. As an application, we obtain an asymptotic counting formula with a power saving error term for integral $3 \times 3$ matrices with a specified characteristic polynomial. These effectivize celebrated theorems of Eskin--Mozes--Shah.
\end{abstract}

\maketitle

\setcounter{tocdepth}{1}
\tableofcontents

\section{Introduction}
\label{sec:Introduction}
\subsection{Main results}
\label{subsec:MainResults}
We briefly introduce the setting of our counting theorem. Let $\Mat_{n \times n}(\R)$ be the vector space of $n \times n$ real matrices for $n \geq 2$ endowed with \emph{any} norm $\|\bigcdot\|$. Let $B_T \subset \Mat_{n \times n}(\R)$ be the corresponding open ball of radius $T > 0$ centered at $0 \in \Mat_{n \times n}(\R)$. Define the Zariski closed real subvariety
\begin{align*}
\mathscr{V}_{n, p} := \{L \in \Mat_{n \times n}(\R): \det(\lambda \mathrm{I} - L) = p(\lambda)\} \subset \Mat_{n \times n}(\R)
\end{align*}
where $p(\lambda) \in \Z[\lambda]$ is a monic polynomial
of degree $n$ which splits over $\R$
but is irreducible over $\Q$. Then, $G := \PGL_n(\R)$ acts transitively on $\mathscr{V}_{n, p}$ from the right via conjugations. The stabilizer $A_p < G$ of the companion matrix $v_p \in \mathscr{V}_{n, p}(\Z)$ of $p(\lambda)$ is the $\R$-points of an $\R$-split $\Q$-anisotropic $\Q$-torus of $\R$-rank $n - 1$, and hence $\mathscr{V}_{n, p} \cong A_p\backslash G$ as analytic $G$-spaces.
Denote by $\mu_{A_p\backslash G}$ the right $G$-invariant Borel probability measure on $A_p\backslash G$. For all $T > 0$, define
\begin{align*}
\mathcal{N}_{n, p}(T) &:= \#(\mathscr{V}_{n, p}(\Z) \cap B_T), \\
\mathcal{B}_T &:= \{A_pg \in A_p\backslash G: g^{-1}v_pg \in B_T\} \subset A_p\backslash G.
\end{align*}

One of the main theorems we prove in this paper is the following effective version of a counting theorem of Eskin--Mozes--Shah (see \cref{thm:EMS}) for $n = 3$ which gives a power saving error term. It is a special case of \cref{thm:GeneralCountingSL3} whose proof uses the conditional \cref{thm:GeneralCounting}.

\begin{theorem}
\label{thm:MainCounting}
There exists $c_p > 0$ (depending only on $\|\bigcdot\|$) and $\kappa > 0$ such that for all $T > 0$, we have
\begin{align*}
\mathcal{N}_{3, p}(T) &= \mu_{A_p\backslash G}(\mathcal{B}_T) + O_{p, \|\bigcdot\|}(T^{3 - \kappa}) \\
&= c_pT^3 + O_{p, \|\bigcdot\|}(T^{3 - \kappa}).
\end{align*}
\end{theorem}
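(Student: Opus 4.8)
The plan is to effectivize the Eskin--Mozes--Shah reduction of this counting problem to equidistribution of translated torus orbits, using the effective equidistribution theorem for torus translates of the present paper as the one dynamical input and upgrading the surrounding volume and regularity estimates to quantitative ones; this is the special case $\mathscr V = \mathscr V_{3,p}$, $B_T$ a norm ball, of the general counting theorem. Take $\Gamma = \PGL_3(\Z)$ (equivalently the image of $\GL_3(\Z)$). First I would invoke reduction theory --- concretely, the Latimer--MacDuffee bijection between $\GL_3(\Z)$-conjugacy classes in $\mathscr V_{3,p}(\Z)$ and ideal classes of the order $\Z[\lambda]/(p)$ in $K := \Q[\lambda]/(p)$ --- to write $\mathscr V_{3,p}(\Z) = \bigsqcup_{j=1}^m v_j\Gamma$ with $m < \infty$, where $A_j := \Stab_G(v_j)$ is a maximal $\R$-split $\Q$-anisotropic $\Q$-torus and $\Lambda_j := A_j \cap \Gamma$ is a cocompact lattice in $A_j$ (Dirichlet's unit theorem for $K$, which is totally real of degree $3$). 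The orbit map $\gamma \mapsto v_j \cdot \gamma := \gamma^{-1} v_j \gamma$ identifies $\Lambda_j\backslash\Gamma$ with $v_j\Gamma$, so $\mathcal N_{3,p}(T) = \sum_{j=1}^m \sum_{\gamma \in \Lambda_j\backslash\Gamma} \mathbf 1_{B_T}(v_j \cdot \gamma)$; the $m$ orbits are handled identically, so it suffices to treat one, say $j=1$, and recombine the resulting positive constants (regulators $\vol(\Lambda_j\backslash A_j)$, divided by $\vol(G/\Gamma)$) at the end. For that orbit I would fix a scale $\varepsilon = \varepsilon(T) \in (0,1)$, smooth $\mathbf 1_{B_T}$ at scale $\varepsilon T$ in $\Mat_3(\R)$ to get $\beta_T^{\pm} \in C_c^\infty(\Mat_3(\R))$ with $\beta_T^- \le \mathbf 1_{B_T} \le \beta_T^+$ and $\beta_T^+ - \beta_T^-$ supported in the annulus $B_{T(1+\varepsilon)} \setminus B_{T(1-\varepsilon)}$, restrict $\beta_T^{\pm}$ to $\mathscr V_{3,p} \cong A_1\backslash G$, and form the incomplete-Eisenstein-type transform $\Theta_f(g\Gamma) := \sum_{\gamma \in \Lambda_1\backslash\Gamma} f(v_1 \cdot (\gamma g^{-1}))$, a well-defined function on $G/\Gamma$ attached to the cocompact torus $A_1$, so that $\Theta_{\beta_T^-}(e\Gamma) \le \#(v_1\Gamma \cap B_T) \le \Theta_{\beta_T^+}(e\Gamma)$.

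The mean of $\Theta_f$ over $G/\Gamma$ is, by unfolding the sum over $\Lambda_1\backslash\Gamma$, equal to $\vol(\Lambda_1\backslash A_1) \int_{\mathscr V_{3,p}} f \, d\mu_{\mathscr V}$, which for $f = \mathbf 1_{B_T}$ is $\vol(\Lambda_1\backslash A_1)\,\mu_{\mathscr V}(\mathscr V_{3,p} \cap B_T)$; after recombining over $j$ this is exactly $\mu_{A\backslash G}(\mathcal B_T)$ in the normalization of the statement. That volume is elementary: conjugating so that $v_1 = \diag(\alpha_1,\alpha_2,\alpha_3)$ (the $\alpha_i$ the distinct real roots of $p$) with $A_1$ the diagonal torus and using Iwasawa coordinates $A_1\backslash G \cong N \times K$ ($N$ upper-triangular unipotent), one has $v_1 \cdot (nk) = k^{-1}(n^{-1}v_1 n)k$, and writing $n = I + X$ a one-line computation gives $n^{-1} v_1 n = \diag(\alpha_1,\alpha_2,\alpha_3) + u E_{12} + v E_{23} + w E_{13}$ where $(u,v,w) := ((\alpha_1-\alpha_2)X_{12},\,(\alpha_2-\alpha_3)X_{23},\,(\alpha_1-\alpha_3)X_{13} + (\alpha_3-\alpha_2)X_{12}X_{23})$, a change of variables with constant Jacobian $\prod_{i<j}(\alpha_i-\alpha_j)$. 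So $\mu_{\mathscr V}(\mathscr V_{3,p} \cap B_T)$ is a fixed constant times the Lebesgue volume of $\{(u,v,w) : \|\diag(\alpha) + uE_{12} + vE_{23} + wE_{13}\| < T\}$, a scaled convex body perturbed by a bounded shift; since any norm is symmetric, this volume is $c_p T^3 + O_{p, \|\bigcdot\|}(T)$ (exactly $\tfrac{4\pi}{3}(T^2 - \tr v_1^2)^{3/2}$ for the Hilbert--Schmidt norm), and likewise $\mu_{A\backslash G}(\mathcal B_{T(1+\varepsilon)} \setminus \mathcal B_{T(1-\varepsilon)}) \ll_{p, \|\bigcdot\|} \varepsilon T^3$.

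It then remains to bound $\Theta_{\beta_T^{\pm}}(e\Gamma) - \frac{1}{\vol(G/\Gamma)}\int_{G/\Gamma}\Theta_{\beta_T^{\pm}}\,dm$, and this is where the equidistribution theorem enters. I would decompose $\beta_T^{\pm}$ dyadically in the size of the conjugates $v_1 \cdot g$ in its support, truncate $\Theta_{\beta_T^{\pm}}$ in the cusp of $G/\Gamma$ at a height polynomial in $T$, and express the $\ell$-th piece's value at $e\Gamma$ ($0 \le \ell \le O(\log T)$) as a pairing of a bounded test function against a translate of the compact periodic torus orbit $Y_1 := A_1 e\Gamma \subset G/\Gamma$ by an element $g_\ell \in G$ with $\log\|g_\ell\| \asymp \ell$; applying the paper's effective equidistribution theorem for such translates replaces the pairing by its average with a relative saving $\ll_p \varepsilon^{-N} e^{-\eta\ell}$ for some $\eta > 0$ from the spectral gap. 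Since the $\ell$-th piece carries a main term of size $\asymp e^{3\ell}$ (the increment of the $T^3$-law above), summing over $\ell$ gives a total discrepancy $\ll_{p, \|\bigcdot\|}\varepsilon^{-N}T^{3-\eta}$ (taking $\eta < 3$, harmlessly), plus a cusp-truncation error $\ll_{p, \|\bigcdot\|}T^{3-\kappa_2}$ for the chosen height. Collecting the estimates,
\begin{align*}
\mathcal N_{3,p}(T) = \mu_{A\backslash G}(\mathcal B_T) + O_{p, \|\bigcdot\|}\bigl(\varepsilon T^3\bigr) + O_{p, \|\bigcdot\|}\bigl(\varepsilon^{-N}T^{3-\eta}\bigr) + O_{p, \|\bigcdot\|}\bigl(T^{3-\kappa_2}\bigr),
\end{align*}
and optimizing $\varepsilon = T^{-\eta/(N+1)}$ gives $\mathcal N_{3,p}(T) = \mu_{A\backslash G}(\mathcal B_T) + O_{p, \|\bigcdot\|}(T^{3-\kappa}) = c_p T^3 + O_{p, \|\bigcdot\|}(T^{3-\kappa})$ for some $\kappa > 0$ (governed by $\eta/(N+1)$, the volume and truncation errors having larger exponent).

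Granting the effective equidistribution theorem, the hard part will be uniformity. The region $\mathcal B_T$ is strongly anisotropic inside $A\backslash G$ (elongated along $N$), so the translates of $Y_1$ that carry the bulk of its mass run along directions that can approach the walls of the Weyl chamber, precisely where a torus translate fails to expand transversally and equidistribution degrades --- this is exactly why exponential mixing alone does not suffice and a dedicated effective equidistribution theorem for torus translates is required --- and handling these wall-neighbourhoods with only a power-saving loss is the crux. Simultaneously one must control the incomplete Eisenstein transform $\Theta_{\beta_T^{\pm}}$ in the cusp of $G/\Gamma$ (its lack of decay there forces the truncation step, which must then be reconciled with the main term) and organize the dyadic decomposition so that the accumulated error stays a genuine power of $T$ rather than merely $o(T^3)$. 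These points --- the near-wall analysis, the cusp, and the bookkeeping --- are where essentially all of the remaining work lies.
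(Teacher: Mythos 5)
Your proposal follows the same architecture as the paper's proof: both are effectivizations of the Duke--Rudnick--Sarnak/Eskin--McMullen/Eskin--Mozes--Shah unfolding argument, with \cref{thm:EquidistributionOfToriSL_3} (in the form of \cref{thm:EquidistributionOfStarPartialCentralizerOfToriW}) as the sole dynamical input. The differences are mostly presentational. You smooth the cutoff $\mathbf 1_{B_T}$ in $\Mat_3(\R)$ at scale $\varepsilon T$ and keep the basepoint sharp; the paper keeps the sharp cutoff $\chi_{\mathcal B_T}$ and instead smooths the basepoint with a bump $\phi_\delta$ at scale $\delta=T^{-\kappa_1}$, absorbing the boundary effect via the volume-ratio estimate \cref{pro: volume ratio estimate} (extracted from the EMS appendix) inside the sandwich of \cref{lem: approximation of orbit count}. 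Note that your step ``express the $\ell$-th piece's value at $e\Gamma$ as a pairing against a translate of the torus orbit'' silently requires exactly this basepoint smoothing: the value of $\Theta_f$ at a point is not a pairing against a torus translate, only $\langle\Theta_f,\phi\rangle$ unfolds to $\int\chi_{\mathcal B_T}(Ag)\int_{Ax_0}\phi(g^{-1}x)\,d\mu_{Ax_0}\,d\mu_{A\backslash G}$; so you end up needing both smoothings (which is harmless, since $\delta\ll\varepsilon$ combines them). Your explicit $n^{-1}v_1n$ computation is the $n=3$ instance of the paper's polynomial bijection $\Psi_k$ with unipotent Jacobian (\cref{lem: change of variables on LieW}), except that for a general norm the ball condition depends on the $K$-coordinate and one must integrate over $K$; also the boundary error there is $O(T^{\dim\LieW-1})=O(T^2)$, not $O(T)$. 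Your cusp truncation is unnecessary: the basepoint $x_0=\Gamma$ and the compact orbit $Ax_0$ are fixed, so the only height entering is the constant $\height(Ax_0)$ already built into \cref{thm:EquidistributionOfStarPartialCentralizerOfToriW}; quantitative non-divergence is used upstream, inside the proof of that theorem, not in the counting step.

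The genuine gap is the step you yourself flag as the crux and leave unexecuted: the degeneration of equidistribution near the walls, i.e.\ near the non-regular locus of $\LieW$. The paper's resolution (\cref{pro: weak-* convergence with polynomial rate}) is worth knowing because it is short once the equidistribution theorem is stated with an explicit polynomial dependence $\epsilon^{-\Lambda}$ on the regularity constant. Pass to spherical coordinates $\bfnu=t\n$ on $\LieW$ and excise from $\mathcal B_{k,T}^{\LieW}$ the $T^{1-\kappa}$-neighbourhoods of the finitely many proper linear subspaces $\ker(\pi_{\LieG_\alpha})$, $\alpha\in\Pi$, whose union is the non-regular locus; the excised set has Lebesgue measure $O(T^{\dim\LieW-\kappa})$ and is discarded trivially against $\|\phi\|_\infty$. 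On the complement every direction $\n$ is $\epsilon$-regular with $\epsilon(\n)\geq T^{-\kappa}$, so the error from \cref{thm:EquidistributionOfStarPartialCentralizerOfToriW} is $\ll\Sob(\phi)\,T^{\kappa\Lambda}t^{-\kappa'}$, and after also discarding the small radii $t\leq T^{1-\kappa''}$ (again a trivial volume bound) the radial integral of $T^{\kappa\Lambda}t^{\dim\LieW-1-\kappa'}$ is a power saving provided $\kappa$ is chosen small relative to $\kappa'/\Lambda$. So no delicate ``near-wall analysis'' is needed beyond a crude volume bound for a tube around a finite union of subspaces --- but this exchange of a small angular loss against the polynomial $\epsilon^{-\Lambda}$ penalty is precisely the step your proposal must supply to close, and without it the claimed relative saving $\varepsilon^{-N}e^{-\eta\ell}$ is not justified uniformly over the directions carrying the bulk of $\mathcal B_T$.
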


\begin{remark}
If the ring of integers of the cubic field $\Q(\alpha)$ is $\Z[\alpha]$ for any root $\alpha$ of $p(\lambda)$, and $\|\bigcdot\|$ is the Frobenius norm on $\Mat_{3 \times 3}(\R)$, then
\begin{align*}
c_p = 2^6 \pi \zeta(3)^{-1} \cdot \frac{h_{\Q(\alpha)}\mathrm{Reg}_{\Q(\alpha)}}{\sqrt{\Delta_{\Q(\alpha)}}}
\end{align*}
according to an explicit formula of Eskin--Mozes--Shah (see \cref{rem: explicit formula for c_p}).
\end{remark}

\begin{remark}
The analog of \cref{thm:MainCounting} for $n = 2$, i.e., an effective version of \cref{thm:EMS} for $n = 2$, also holds (see \cref{thm:GeneralCountingSL3} for $G = \PGL_2(\R)$). It is not the focus here in the introduction because the result is already known due to \cite{BO12} as $A_p\backslash G$ is affine symmetric in this case (and so the input of exponential mixing suffices; see the end of \cref{subsec:OutlineOfTheProofs}). However, the techniques developed in this paper give an independent proof.
\end{remark}

The proof of the above effective counting theorem uses \cref{thm:EquidistributionOfToriSL_3} below regarding effective equidistribution of translates of (certain compact orbits of) tori, which we state more generally. Let $G$ be a noncompact connected semisimple real algebraic group, $\Gamma < G$ be a lattice, and $X := G/\Gamma$. Let
\begin{align*}
K &< G, & A &< G, & W &< G,
\end{align*}
be a maximal compact subgroup, (the identity component of) the $\R$-points of a maximal $\R$-split $\R$-torus, and a maximal horospherical subgroup, respectively, such that $G = KAW$ is an Iwasawa decomposition. Let $M := Z_K(A)$ and note that $AM = Z_G(A)$. Denote by $\mu_X$ the left $G$-invariant Borel probability measure on $X$. For any periodic $A$-orbit $Ax_0$ for some $x_0 \in X$, denote by $\mu_{AM^\circ x_0}$ the left $AM^\circ$-invariant Borel probability measure on $AM^\circ x_0$. Let $\|\bigcdot\|$ be any norm on $\LieW := \Lie(W)$. Since our main application, \cref{thm:MainCounting}, concerns
\begin{align*}
G = \SL_3(\R) \cong \PSL_3(\R) \cong \PGL_3(\R)
\end{align*}
in which case we may assume $W$ is the subgroup of unipotent upper triangular matrices, we introduce the following terminology only for that case and refer to (the slightly different) \cref{def: epsilon regular} for the general case: an element $\exp(\mathsf{N}) \in W$ where
$\mathsf{N} =
\left(
\begin{smallmatrix}
0 & \mathsf{x} & \mathsf{z} \\
& 0 & \mathsf{y} \\
&  & 0
\end{smallmatrix}
\right)
\in \LieW
$
is called \emph{$\epsilon$-regular} for some \emph{regularity constant} $\epsilon > 0$ if
\begin{align*}
|\mathsf{x}|, |\mathsf{y}| > \epsilon\|\mathsf{N}\|.
\end{align*}
We write $\epregW \subset W$ for the subset of $\epsilon$-regular elements. We denote by $\Sob$ the $L^2$ Sobolev norm of some appropriate order $\ell \in \N$ depending only on $\dim(X) = \dim(G)$, and by $\height$ the height function on $X$ (see \cref{subsec: height and injectivity radius}).

\begin{theorem}
\label{thm:EquidistributionOfToriSL_3}
Let $G$ be locally isomorphic to one of the following: $\SO(n, 1)^\circ$ for $n \geq 2$, $\SL_2(\R) \times \SL_2(\R)$, $\SL_3(\R)$, $\SU(2, 1)$, $\Sp_4(\R)$. Let $\Gamma < G$ be an arithmetic lattice. Let $x_0 \in X$ such that $Ax_0$ is periodic. Let $g = k\exp(\mathsf{N})a \in K\epregW A$ for some $\epsilon > 0$ and $\|\mathsf{N}\| \gg_{X, \height(Ax_0)} \epsilon^{-\Lambda}$. Then, for all $\phi \in C_{\mathrm{c}}^\infty(X)$, we have
\begin{align*}
\left|\int_{AM^\circ x_0} \phi(gx) \diff\mu_{AM^\circ x_0}(x) - \int_X \phi \diff\mu_X\right| \leq \Sob(\phi)\epsilon^{-\Lambda}\|\mathsf{N}\|^{-\kappa}.
\end{align*}
Here, $\kappa > 0$ and $\Lambda > 0$ are constants depending only on $X$.
\end{theorem}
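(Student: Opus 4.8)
\emph{Reduction to the unipotent part.} The plan is first to strip $g=kwa$ down to $w$. Since $a\in A\subseteq AM^\circ$, left translation by $a$ preserves the orbit $AM^\circ x_0$ and its invariant measure, so the substitution $x\mapsto ax$ removes $a$. Writing $\phi_k:=\phi(k\,\bigcdot)$ we have $\Sob(\phi_k)\ll_X\Sob(\phi)$ uniformly for $k\in K$ (because $\mathrm{Ad}(K)$ is bounded and $\mu_X$ is $K$-invariant) and $\int_X\phi_k\diff\mu_X=\int_X\phi\diff\mu_X$, so $k$ is removed too. It therefore suffices to bound
\[
\Delta(\phi,w):=\left|\int_{AM^\circ x_0}\phi(wx)\diff\mu_{Ax_0}(x)-\int_X\phi\diff\mu_X\right|\ll\Sob(\phi)\,\epsilon^{-\Lambda}T^{-\kappa}
\]
for $w\in\epregW$ with $T=\|\log(w)\|$ large.

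\emph{Rescaling against a diagonal flow.} Let $a_t=\exp(tH_0)\subset A$ be the one-parameter subgroup for which $W$ is the expanding horospherical subgroup. With $\mathsf{n}=\log(w)\in\epregLieW$, the $\epsilon$-regularity forces the components of $\mathsf{n}$ along the simple root spaces to have size $\asymp\epsilon T$ while all components have size $\ll T$; hence one may factor $w=a_{t_w}w_0a_{-t_w}$ with $t_w=\log T-O(\log(1/\epsilon))$ (so $t_w\asymp\log T$ under the standing hypothesis $T\gg\epsilon^{-\Lambda}$) and $w_0$ of norm $\asymp1$, still $\epsilon'$-regular for some $\epsilon'\asymp\epsilon$. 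Conjugation alone is circular, so one first thickens: fixing $\delta=T^{-\theta}$ ($\theta\in(0,1)$ to be chosen) and using the local product structure of $G$ transverse to $AM^\circ$ (the $\bar W$- and $W$-directions), replace $\int_{AM^\circ x_0}\phi(wx)\diff\mu$ by $\langle w_*\rho,\phi\rangle$, where $\rho$ is an $L^1$-normalized bump on the $\delta$-neighbourhood of $AM^\circ x_0$ with $\Sob(\rho)\ll\delta^{-\ell'}$; the replacement costs $\ll\delta\,\Sob(\phi)\,\inj(Ax_0)^{-O(1)}\ll\delta\,\Sob(\phi)\,\height(Ax_0)^{O(1)}$ by the lower bound on the injectivity radius of a periodic $A$-orbit in terms of its height. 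Writing $w_*\rho$ via $w=a_{t_w}w_0a_{-t_w}$, the $a_{t_w}$-dilation contracts the $\bar W$-directions to width $\ll\delta$ and expands the $W$-directions by $\asymp T^{c}$, so $w_*\rho$ is, up to a set of small measure and the bounded perturbation $w_0$, carried by a long (of $W$-length $\asymp\delta T^{c}$) $\epsilon$-regular piece of a $W$-orbit attached to $AM^\circ x_0$; the role of $\epsilon$-regularity of $w$ is precisely to keep this spreading ``full'' and prevent it from collapsing into a proper subgroup.

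\emph{The equidistribution input and the group restriction.} One is thus reduced to an effective, quantitative version of the Eskin--Mozes--Shah dichotomy for this translated thickened orbit: either it is already $T^{-\kappa}$-equidistributed in $X$, or it concentrates near a periodic orbit of an intermediate reductive subgroup $A\subsetneq H\subsetneq G$ defined over $\Q$. The second alternative is excluded, or iterated away, using effective non-divergence together with effective equidistribution/mixing inputs for the relevant subgroups, and it is exactly here that the list of groups enters: for $G$ locally isomorphic to $\SO(n,1)^\circ$ or $\SL_2(\R)\times\SL_2(\R)$ the orbit $AM^\circ x_0$ is that of an affine symmetric (essentially rank-one) subgroup, and effective equidistribution of its translates follows from exponential mixing of the $A$-action alone; for $\SL_3(\R)$, $\SU(2,1)$, $\Sp_4(\R)$, where $A\backslash G$ is not affine symmetric, one instead needs the more delicate effective equidistribution of unipotent orbit pieces and quantitative non-divergence estimates, which are presently available only in these low-rank settings. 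This yields $\langle w_*\rho,\phi\rangle=\int\rho\cdot\int\phi+O\big(\Sob(\phi)\,\Sob(\rho)\,(\delta T^{c})^{-\kappa_2}\big)$.

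\emph{Conclusion and main obstacle.} Finally choose $\theta\in(0,1)$ so that both the thickening error $\delta\,\height(Ax_0)^{O(1)}$ and the product $\delta^{-\ell'}(\delta T^{c})^{-\kappa_2}$ of the Sobolev blow-up against the equidistribution rate are $\ll T^{-\kappa}$; this is possible once $\kappa_2$ exceeds $\ell'$ by a fixed margin and once $T\gg_{X,\height(Ax_0)}\epsilon^{-\Lambda}$, which also guarantees $t_w\asymp\log T$ and $\delta T^{c}\to\infty$. The constants $\kappa,\Lambda$ then depend only on $X$. The principal obstacle is this last input: obtaining an effective equidistribution rate $\kappa_2$ for the long regular $W$-orbit pieces that is strong enough to overcome the large Sobolev norm $\Sob(\rho)\approx\delta^{-\ell'}$ of the thickening, with only polynomial dependence on the regularity constant $\epsilon$ and on the height of the starting orbit — and it is precisely the availability of such a rate that confines the theorem to the five families of groups in the statement.
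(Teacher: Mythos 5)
There is a genuine gap at the heart of your argument: the claim that the (thickened) translate $w\cdot AM^\circ x_0$ is carried by a long piece of a \emph{full} $W$-orbit. This is true when $\height(\Phi)=1$ (e.g.\ $\SO(n,1)^\circ$, $\SL_2(\R)\times\SL_2(\R)$, where the centralizer of a regular nilpotent element is all of $\LieW$), but it fails exactly in the cases that make the theorem hard. For $G=\SL_3(\R)$ the orbit $AM^\circ x_0$ is $2$-dimensional while $\dim W=3$, so its translate cannot spread over a $3$-dimensional piece of a $W$-orbit; what actually happens is that $\Ad(w_{t\n})(\LieA\oplus\LieMstar)$ converges, at rate $t^{-1}$ with a polynomial loss in $\epsilon$, to a \emph{proper} subalgebra of $\LieW$, namely (for the groups in the statement) the nilradical of the centralizer of a regular nilpotent element $\n'$ determined by $\n=\log(w)/T$. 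Identifying this limit, proving it is a Lie algebra, and proving it is ``centralizing'' is the content of the paper's Sections 3--4 (the $\star$-QCP property, equivalent to $\height(\Phi)\le 3$), and none of it is present in your proposal; your assertion that $\epsilon$-regularity ``prevents collapsing into a proper subgroup'' is precisely backwards --- regularity guarantees collapse onto a specific proper subgroup, and the remaining work is effective equidistribution of growing balls in \emph{that} subgroup's orbits.

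Two further steps would fail as written. First, the thickening: any transversal to $AM^\circ$ necessarily contains $W$-directions expanded by $w$, so replacing the orbital integral by $\langle w_*\rho,\phi\rangle$ costs $\mathrm{Lip}(\phi)\,\|\Ad(w)\|_{\mathrm{op}}\,\delta\asymp T^{\height(\Phi)}\delta$, not $O(\delta)$; forcing this to be small makes $\Sob(\rho)\gg T^{\ell'\height(\Phi)}$, which no fixed mixing or equidistribution rate can absorb. (This is exactly why the wavefront/mixing argument of Eskin--McMullen and Benoist--Oh is confined to symmetric subgroups, whose transversals avoid expanding directions --- the paper makes this contrast explicitly.) Second, the ``effective quantitative EMS dichotomy for the translated thickened orbit'' that you invoke is essentially the statement being proven; the input actually available in the cited groups (Lindenstrauss--Mohammadi--Wang(--Yang)) concerns one-parameter unipotent arcs $\{a_tu_rx_0:r\in[0,1]\}$, and the passage from that to translates of tori --- via equidistribution of growing balls in the centralizer, the limiting-Lie-algebra analysis, and quantitative non-divergence of periodic $A$-orbits to control the height factor --- is the bulk of the paper and is missing here.
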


\begin{proof}
The theorem follows from \cref{thm: LMW and LMWY,thm: KM,pro:EShahImpliesCEShah,ex: starCP,nonex: starQCP,thm:EquidistributionOfStarPartialCentralizerOfTori starCP Case}.
\end{proof}

\begin{remark}
We mention here the works \cite{KK18,OS14} which explore different questions for $G$ locally isomorphic to $\SL_2(\R)$ but of similar flavor.
\end{remark}

In the above proof, \cref{thm: LMW and LMWY} is due to the landmark works of Lindenstrauss--Mohammadi--Wang \cite{LMW22} and later Lindenstrauss--Mohammadi--Wang--Yang \cite{LMWY25}. \Cref{thm:EquidistributionOfStarPartialCentralizerOfTori starCP Case}, and more generally, \cref{thm:EquidistributionOfStarPartialCentralizerOfTori}, are conditional versions of our theorem above for more general $G$; the former establishes the following \emph{passage} from one type of effective equidistribution to another:
\begin{align*}
\left[\substack{\text{effective equidistribution in $X$}\\ \text{of balls in regular centralizing unipotent orbits}\\ \text{under a regular one-parameter diagonal flow}}\right]
\; \leadsto \;
\left[\substack{\text{effective equidistribution in $X$}\\ \text{of translates of ($M^\circ$-orbits of) tori}}\right].
\end{align*}
We encourage the interested reader to see \cref{thm:EquidistributionOfStarPartialCentralizerOfTori starCP Case} whose hypotheses are fairly accessible.

An interesting property of $G$ as in \cref{thm:EquidistributionOfToriSL_3} is that the unipotent subgroup which appears in the above passage is the centralizer of \emph{some regular} unipotent element. In fact, for the above passage, a weaker property is necessary and sufficient but using a regular one-parameter unipotent subgroup for the input instead. We emphasize that, interestingly, this property \emph{does not hold for $G$ in full generality}---it holds if and only if the height of the root system of $G$ is at most $3$. Most $G$ of rank at most $2$ satisfy the height condition; however, there exists $G$ even of rank $2$, namely $G = \G_2(\R)$, which does not satisfy the height condition and hence also the property required for the above passage. On the other hand, there also exist many $G$ of arbitrarily large rank satisfying the height condition via taking products. For more details on the proof, see \cref{subsec:OutlineOfTheProofs}.

Nevertheless, using the techniques developed in this paper, an appropriate generalization of the above passage is expected to hold for $G$ in full generality, which necessarily incorporates an \emph{avoidance condition} for certain periodic orbits. However, the focus of this paper is on equidistribution as in \cref{thm:EquidistributionOfToriSL_3} \emph{for all} $A$-periodic $x_0 \in X$ much in the spirit of the action of $\langle AM^\circ, \exp(\R\mathsf{N})\rangle$ on $X$ being ``almost uniquely ergodic'' (i.e., with the exception of invariant cusp neighborhoods). Such a modified generalization of \cref{thm:EquidistributionOfStarPartialCentralizerOfTori} notwithstanding, an effective version of \cref{thm:EMS} is still expected to hold in full generality.

\subsection{Historical background}
\label{subsec:HistoricalBackground}
Let us recall the general counting problem which has been studied for decades \cite{Dav59,Bir62,Sch85,FMT89,DRS93,EM93,EMS96,BO12,GN12}, and in particular, some prior results which motivated our work. Let $\mathscr{V} \subset \R^n$ for some $n \in \N$ be a Zariski closed real subvariety defined over $\Q$ such that $\mathscr{V}(\Z) \subset \mathscr{V}$ is Zariski dense. Let $\|\bigcdot\|$ be \emph{any} norm on $\R^n$ and $B_T \subset \R^n$ be the corresponding open ball of radius $T > 0$ centered at $0 \in \R^n$. For all $T > 0$, define
\begin{align*}
\mathcal{N}(T) := \#(\mathscr{V}(\Z) \cap B_T).
\end{align*}
The counting problem of interest is the asymptotic behavior of $\mathcal{N}(T)$ as $T \to +\infty$.

As the problem in this vast generality is so far intractable, it is beneficial to restrict our attention to the case that $\mathscr{V}$ is \emph{homogeneous} of the following form. Let $\bfG$ be a connected semisimple linear algebraic group defined over $\Q$ and suppose that $\bfG(\R)$ is noncompact and acts transitively on $\mathscr{V}$ from the right via the inverse map and a left $\Q$-rational representation $\rho: \bfG(\R) \to \SL_n(\R)$. Let $G := \bfG(\R)^\circ < \bfG(\R)$ which is a subgroup of finite index. Let $\Gamma < \bfG(\Z) \cap G$ be a subgroup of finite index such that $\rho(\Gamma)\Z^n \subset \Z^n$. By fundamental theorems of Borel--Harish-Chandra \cite[Theorems 6.9, 3.8, 7.8, and 9.4]{BHC62} (cf. \cite{Ono57} for the last theorem for tori), we have the following facts. The set of integral points $\mathscr{V}(\Z)$ is a finite union of $\Gamma$-orbits. Therefore, the asymptotic behavior of $\mathcal{N}(T)$ is determined by a single $\Gamma$-orbit, say $\mathcal{O}_0 := \rho(\Gamma)v_0$ contained in $\mathscr{V}_0 := \rho(G)v_0$ for some $v_0 \in \mathscr{V}(\Z)$, and hence we redefine $\mathcal{N}(T)$ using $\mathcal{O}_0$ in place of $\mathscr{V}(\Z)$. Let $\bfH < \bfG$ be the stabilizer of $v_0$, and $H := \bfH(\R) \cap G$, and $\Gamma_H := \Gamma \cap H$. Then $\bfH$ is a reductive linear algebraic group defined over $\Q$, and $\mathscr{V} \cong \bfH(\R)\backslash \bfG(\R)$ as $\bfG(\R)$-varieties defined over $\Q$, and $\mathscr{V}_0 \cong H\backslash G$ as analytic $G$-spaces. It is well-known that $X := G/\Gamma$ admits a (unique) left $G$-invariant Borel probability measure $\mu_X$. We assume that the Zariski identity component $\bfH^\circ$ does not admit nontrivial $\Q$-characters so that $H/\Gamma_H$ admits a (unique) left $H$-invariant Borel probability measure. We may fix Haar measures $\mu_G$ on $G$ and $\mu_H$ on $H$ which are compatible with $\mu_X$ and $\mu_{H/\Gamma_H}$ respectively and then, since $H < G$ is unimodular, we may fix a (unique) right $G$-invariant Borel measure $\mu_{H\backslash G}$ such that $\diff\mu_G = \diff\mu_H \diff\mu_{H\backslash G}$. Define
\begin{align*}
\mathcal{B}_T := \{Hg \in H\backslash G: \rho(g^{-1})v_0 \in B_T\} \subset H\backslash G \qquad \text{for all $T > 0$}.
\end{align*}

The variety $\mathscr{V}$ and the analytic manifold $\mathscr{V}_0$ are called affine symmetric spaces if $\bfH(\R)$ is the set of fixed points of an involution on $\bfG(\R)$, i.e., a Lie group automorphism $\sigma: \bfG(\R) \to \bfG(\R)$ with $\sigma^2 = \Id_{\bfG(\R)}$.
The following is the classical counting theorem of Duke--Rudnick--Sarnak \cite[Theorem 1.2]{DRS93} (where it is effective for some cases) and Eskin--McMullen \cite[Theorem 1.4]{EM93}. It is completely effectivized in the work of Benoist--Oh \cite{BO12}.

\begin{theorem}
Suppose $\mathscr{V}$ is an affine symmetric space and $\Gamma < \bfG(\R)$ is an irreducible lattice. Then, we have
\begin{align*}
\mathcal{N}(T) \sim \mu_{H\backslash G}(\mathcal{B}_T) \qquad \text{as $T \to +\infty$}.
\end{align*}
\end{theorem}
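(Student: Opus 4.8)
The plan is to follow the ``mixing $\Rightarrow$ equidistribution $\Rightarrow$ counting'' scheme of Eskin--McMullen \cite{EM93} (an alternative being the spectral approach of Duke--Rudnick--Sarnak \cite{DRS93}: expand the counting function in the automorphic spectrum of $X$ and read off the main term from the constant function). First I would recast the count homogeneously. Via the $G$-equivariant identifications $\mathscr{V}_0 \cong H\backslash G$ and $\mathcal{O}_0 \cong \Gamma_H\backslash\Gamma$, the set $\mathcal{O}_0 \cap B_T$ corresponds to $\{\Gamma_H\gamma : H\gamma \in \mathcal{B}_T\}$, so that $\mathcal{N}(T) = \sum_{\gamma\in\Gamma_H\backslash\Gamma}\mathbf{1}_{\mathcal{B}_T}(H\gamma)$. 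Then $F_T(g) := \sum_{\gamma\in\Gamma_H\backslash\Gamma}\mathbf{1}_{\mathcal{B}_T}(H\gamma g)$ is left-$\Gamma$-invariant, so (realizing $X$ as $\Gamma\backslash G$) it defines a nonnegative $F_T \in L^1(X)$ with $\int_X F_T\,d\mu_X = \mu_{H\backslash G}(\mathcal{B}_T)$ and $F_T(\Gamma e) = \mathcal{N}(T)$; the whole problem is to read off $F_T$ at the single point $\Gamma e$.

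Next I would pair $F_T$ against a bump. Fix $\psi \in C_{\mathrm c}^\infty(X)$ with $\psi \geq 0$, $\int_X\psi\,d\mu_X = 1$, supported in a small neighbourhood of $\Gamma e$. Unfolding the sum over $\Gamma_H\backslash\Gamma$ and then disintegrating $\diff\mu_G = \diff\mu_H \diff\mu_{H\backslash G}$ along the compact orbit $\Gamma_H\backslash H$ (with the compatible normalizations fixed earlier) yields
\begin{align*}
\langle F_T, \psi\rangle = \int_{\mathcal{B}_T} P_g(\psi)\,\diff\mu_{H\backslash G}(Hg), \qquad P_g(\psi) := \int_{\Gamma_H\backslash H}\psi(\Gamma h g)\,\diff\mu_{\Gamma_H\backslash H}(h),
\end{align*}
where $P_g(\psi)$ depends only on $Hg$ and is the period of $\psi$ over the translate by $g$ (on the right) of the periodic $H$-orbit $\Gamma\backslash\Gamma H$.

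The heart of the matter, and the step I expect to be the main obstacle, is to prove that $P_g(\psi) \to \int_X\psi\,\diff\mu_X = 1$ as $Hg$ leaves every compact subset of $H\backslash G$. This is where the hypotheses enter. Irreducibility of $\Gamma$ (equivalently $\Q$-simplicity of $\bfG$) gives, by Moore ergodicity and Howe--Moore, mixing of the $G$-action on $L^2_0(X)$ for suitable sequences $g \to \infty$, and mixing directly handles a \emph{thickened} version of $P_g(\psi)$, in which the periodic $H$-orbit is fattened by a small bump in a slice transverse to $H$. To pass from the thickened period to the genuine one, one needs the restriction of $\psi$ to the sheared slice $(\Gamma\backslash\Gamma H)\,\omega g$ to stay uniformly close, over small $\omega$, to its restriction to $(\Gamma\backslash\Gamma H)\,g$ as $g \to \infty$; this uniform non-distortion is precisely the \emph{wavefront property} of the affine symmetric space $H\backslash G$, which follows from a Cartan-type decomposition $G = KAH$ attached to the defining involution. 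This wavefront lemma is the essential geometric input of \cite{EM93}; everything around it is comparatively soft.

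Granting the equidistribution of translated orbits, the conclusion follows in two routine moves. Since $\mu_{H\backslash G}(\mathcal{B}_T) \to \infty$ and $\mathcal{B}_T$ exhausts $H\backslash G$ while $0 \leq P_g(\psi) \leq \|\psi\|_\infty$, splitting $\mathcal{B}_T$ into a fixed compact piece (bounded contribution) and a tail (where $P_g(\psi) \approx 1$) gives $\langle F_T,\psi\rangle \sim \mu_{H\backslash G}(\mathcal{B}_T)$ as $T \to +\infty$. Finally, because $\psi$ is supported near $\Gamma e$, the wavefront property once more shows that small right translates of $\mathcal{B}_{T(1-\delta)}$ lie in $\mathcal{B}_T$, which lies in small right translates of $\mathcal{B}_{T(1+\delta)}$, so $\mathcal{N}(T(1-\delta)) \lesssim \langle F_T,\psi\rangle \lesssim \mathcal{N}(T(1+\delta))$. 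Combined with the well-roundedness $\mu_{H\backslash G}(\mathcal{B}_{T(1+\delta)})/\mu_{H\backslash G}(\mathcal{B}_T) \to 1$ as $\delta \to 0$ uniformly in $T$ — itself a consequence of the polar decomposition of $H\backslash G$ and the resulting regularity of $T \mapsto \mu_{H\backslash G}(\mathcal{B}_T)$ — a standard squeezing argument gives $\mathcal{N}(T) \sim \mu_{H\backslash G}(\mathcal{B}_T)$, as desired.
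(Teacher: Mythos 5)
Your outline is correct and is precisely the Eskin--McMullen argument that the paper cites for this theorem (the paper quotes the result and does not reprove it): unfold the counting function against a bump, establish equidistribution of translated periodic $H$-orbits via mixing together with the wavefront lemma, and conclude by a squeeze using well-roundedness of the norm balls --- the same scheme the paper later runs effectively for tori in its final section. One small quibble: the final sandwiching $\mathcal{B}_{T(1-\delta)}\cdot g\subset\mathcal{B}_T$ for $g$ near $e$ comes from continuity of the representation $\rho$ (a Lipschitz bound on $\|\rho(g)\|_{\mathrm{op}}$ near the identity, exactly as in the paper's operator-norm estimate preceding its approximation lemma for the orbit count), not from the wavefront property, which is only needed for the equidistribution step.
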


\begin{remark}
The irreducibility condition amounts to the condition that $\bfG$ is $\Q$-simple. Actually, a weaker form of irreducibility suffices (see \cite[p. 182]{EM93}).
\end{remark}

The counting theorem of Eskin--Mozes--Shah \cite[Theorems 1.16 and 1.3]{EMS96} (see also \cite{EMS97}) generalizes the above theorem for $\mathscr{V}$ homogeneous but not necessarily affine symmetric. Due to technicalities involving the so-called nonfocusing property, we avoid stating their theorem in full generality. Let us return to the setting from \cref{subsec:MainResults}, except that $p(\lambda)$ need not split over $\R$. Then, $\mathscr{V}_{n, p} \cong A_p\backslash G$ as analytic $G$-spaces where $A_p < G$ is the $\R$-points of a $\Q$-anisotropic $\Q$-torus of $\C$-rank $n - 1$.

\begin{theorem}
\label{thm:EMS}
There exists $c_p > 0$ (depending only on $\|\bigcdot\|$) such that
\begin{align*}
\mathcal{N}_{n, p}(T) \sim \mu_{A_p\backslash G}(\mathcal{B}_T) \sim c_pT^{\frac{n(n - 1)}{2}} \quad \text{as $T \to +\infty$}.
\end{align*}
\end{theorem}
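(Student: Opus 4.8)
The plan is to follow the strategy of Eskin--McMullen \cite{EM93} for affine symmetric spaces, extended by Eskin--Mozes--Shah \cite{EMS96} to the homogeneous case: transfer the lattice-point count to an equidistribution statement for translates of the periodic torus orbit in $X = G/\Gamma$, and pair it with a volume asymptotic for $\mathcal{B}_T$. By the finiteness theorems of Borel--Harish-Chandra, $\mathscr{V}_{n,p}(\Z)$ is a finite union of $\Gamma$-orbits (with $\Gamma = \PGL_n(\Z)$), so it suffices to analyze one orbit $\mathcal{O}_0 = v_0\cdot\Gamma$ and sum the finitely many contributions. Since the underlying $\Q$-torus $A$ is $\Q$-anisotropic, $\Gamma_A := \Gamma \cap A$ is cocompact in $A$, the orbit $Ax_0$ through $x_0 := e\Gamma$ is periodic with $A$-invariant probability measure $\mu_{Ax_0}$, and $\mathcal{O}_0$ is in bijection with $\Gamma_A\backslash\Gamma$ via $\Gamma_A\gamma \mapsto \gamma^{-1}v_0\gamma$. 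For $\phi \in C_{\mathrm{c}}(A\backslash G)$ form the automorphic function
\[
F_\phi(g\Gamma) := \sum_{\gamma \in \Gamma_A\backslash\Gamma} \phi(A\gamma g^{-1}), \qquad g\Gamma \in X,
\]
which is well defined --- the needed reindexing descends to the image of $\Gamma$ in $A\backslash G$, a discrete closed subset, so the defining sum is finite --- and satisfies $F_{\chi_{\mathcal{B}_T}}(e\Gamma) = \#(\mathcal{O}_0 \cap B_T)$. Smoothing $\chi_{\mathcal{B}_T}$ from above and below and the point evaluation at $x_0$ against an approximate identity $\psi_\delta \in C_{\mathrm{c}}^\infty(X)$ with $\int_X \psi_\delta \diff\mu_X = 1$, and unfolding the $\Gamma_A\backslash\Gamma$-sum, yields the Eskin--McMullen identity
\[
\int_X F_\phi\,\psi_\delta \diff\mu_X = \int_{A\backslash G} \phi(Ag)\left(\int_{Ax_0} \psi_\delta(g^{-1}x)\diff\mu_{Ax_0}(x)\right)\diff\mu_{A\backslash G}(Ag),
\]
with $\#(\mathcal{O}_0 \cap B_T)$ trapped between the values this produces for the upper and lower smoothings.

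The decisive input is that the inner integral above tends to $\int_X \psi_\delta \diff\mu_X = 1$ as $Ag$ leaves every compact subset of $A\backslash G$; equivalently, the translated probability measures $(g_i)_*\mu_{Ax_0}$ on $X$ converge weak-$*$ to $\mu_X$ whenever $Ag_i \to \infty$ in $A\backslash G$. This is where unipotent dynamics enters. By the Dani--Margulis nondivergence estimates the family $\{(g_i)_*\mu_{Ax_0}\}$ is tight (no escape of mass), so subsequential limits exist as probability measures; by Ratner's measure-classification theorem together with the Mozes--Shah theorem on limits of algebraic measures, such a limit is the homogeneous probability measure of a periodic orbit of some connected closed subgroup $L$ with $A \leq L \leq G$. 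One must then rule out every proper $L$, which is the substance of the \emph{nonfocusing} analysis of Eskin--Mozes--Shah: one examines, direction by direction in the Cartan decomposition of the $g_i$, whether the translates $g_i(Ax_0)$ can concentrate near an orbit of a proper intermediate subgroup, and shows that for a $\Q$-anisotropic maximal torus this cannot occur. I expect this nonfocusing/nondivergence analysis to be the main obstacle: for affine symmetric $\mathscr{V}$ (for instance the case $n = 2$) it can be replaced by a soft argument using mixing of the $G$-action on $X$, whereas for a genuinely non-symmetric homogeneous torus variety it requires the full strength of the linearization method.

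The remaining ingredient is the volume asymptotic $\mu_{A\backslash G}(\mathcal{B}_T) \sim c_p T^{n(n-1)/2}$. Writing a Cartan decomposition $g = k_1 a k_2$ with $a = \diag(e^{t_1}, \dots, e^{t_n})$ and $t_1 \geq \dots \geq t_n$, one checks that the conjugate $g^{-1}v_0g = k_2^{-1}a^{-1}(k_1^{-1}v_0k_1)ak_2$ of the regular semisimple companion matrix has norm comparable to $e^{t_1 - t_n}$, so the condition defining $\mathcal{B}_T$ becomes, up to a bounded additive perturbation, $t_1 - t_n < \log T$; integrating this region against the $G$-invariant density on $A\backslash G$ produces a regularly varying function of exponent equal to the number of positive roots of $G$, namely $\binom{n}{2} = n(n-1)/2$, with an explicit leading constant $c_p$ that in the favorable cases unwinds to the class-number--regulator expression recorded after \cref{thm:MainCounting}. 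Finally one verifies the well-roundedness and wavefront conditions of \cite{EM93} --- the $\epsilon$-neighborhood of $\partial\mathcal{B}_T$ has measure $O(\epsilon)\,\mu_{A\backslash G}(\mathcal{B}_T)$ uniformly in $T$ --- so that the weak-$*$ equidistribution above, applied on the bulk of $\mathcal{B}_T$ where $Ag$ is far out, upgrades to the genuine count, the bounded part of $\mathcal{B}_T$ contributing only lower-order terms. Assembling the three pieces and letting $\delta \to 0$ after $T \to \infty$ gives $\mathcal{N}_{n,p}(T) \sim \mu_{A\backslash G}(\mathcal{B}_T) \sim c_p T^{n(n-1)/2}$.
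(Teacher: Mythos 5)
The paper does not prove \cref{thm:EMS} at all: it is quoted from Eskin--Mozes--Shah \cite{EMS96} (their Theorems 1.16 and 1.3; see also \cite{EMS97}), and your proposal is essentially a faithful reconstruction of their argument --- Borel--Harish-Chandra finiteness, the Duke--Rudnick--Sarnak/Eskin--McMullen unfolding identity, equidistribution of translated torus measures via unipotent dynamics, and a volume asymptotic. The effective analogue that the paper does prove (\cref{thm:GeneralCounting} in \cref{sec:ProofOfTheCountingTheorem}) follows the same skeleton, with \cref{eqn: Inner Product Formula for Counting Function} playing the role of your unfolding identity and \cref{pro: volume ratio estimate} your well-roundedness, but it replaces the Ratner/Mozes--Shah input by the effective equidistribution theorem (\cref{thm:EquidistributionOfStarPartialCentralizerOfToriW}) and computes volumes in the coordinates $\mathcal{I}\colon W\times K\to A\backslash G$ rather than via a Cartan decomposition.

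Two points in your sketch deserve sharpening. First, a weak-$*$ limit of $(g_i)_*\mu_{Ax_0}$ is not a priori invariant under $A$ or under any unipotent one-parameter subgroup, so Ratner and Mozes--Shah do not apply off the shelf; the crux of \cite{EMS96} is that the polynomial behavior of $t\mapsto\Ad(g_i)\exp(t\bftau)$ for $\bftau\in\LieA$ forces any limit measure to be invariant under the limiting (nilpotent) directions of $\Ad(g_i)\LieA$ --- exactly the phenomenon quantified in \cref{sec:LimitingNilpotentLieAlgebras} --- and only after that does linearization rule out proper intermediate subgroups (nonfocusing). Second, your Cartan heuristic for the volume does not actually produce the stated exponent: for $n=3$, integrating the density $e^{2\rho(t)}$ over $\{t\in\LieA^+: t_1-t_3\le\log T\}$ gives $\asymp T^2\log T$, not $T^3$; the discrepancy arises because this computes a volume in $G$ rather than in the $n(n-1)$-dimensional quotient $A\backslash G$, and because $\|a^{-1}(k_1^{-1}v_0k_1)a\|\asymp e^{t_1-t_n}$ fails near the locus where the relevant matrix coefficient of $k_1^{-1}v_0k_1$ degenerates. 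The clean route, used in \cref{lem: change of variables on LieW}, is that $\bfnu\mapsto\Ad(w_{-\bfnu})v_0$ is a unit-Jacobian polynomial bijection of $\LieW$ onto the affine subspace $v_0+\LieW$, so each $W$-slice of $\mathcal{B}_T$ has volume $cT^{\dim\LieW}+O(T^{\dim\LieW-1})$ with $\dim\LieW=n(n-1)/2$, and integrating over $K$ gives the asymptotic.
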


\begin{remark}
\label{rem: explicit formula for c_p}
Specializing to the case that $p(\lambda)$ splits over $\R$ and for any root $\alpha$, the ring of integers of $\Q(\alpha)$ is $\Z[\alpha]$, and $\|\bigcdot\|$ is the Frobenius norm on $\Mat_{n \times n}(\R)$, Eskin--Mozes--Shah gave the explicit formula
\begin{align*}
c_p = \frac{2^{n - 1} h_{\Q(\alpha)}\mathrm{Reg}_{\Q(\alpha)}\beta_n}{\sqrt{\Delta_{\Q(\alpha)}} \cdot \prod_{k = 2}^n \pi^{-k/2}\Gamma(k/2)\zeta(k)},
\end{align*}
where $\Delta_{\Q(\alpha)}$, $h_{\Q(\alpha)}$, and $\mathrm{Reg}_{\Q(\alpha)}$ denote the discriminant, the class number, and the regulator of the number field $\Q(\alpha)$, respectively, and $\beta_n$ is the volume of the unit ball in $\Bigl(\frac{n(n - 1)}{2}\Bigr)$-dimensional Euclidean space. We refer to the work of Jeon--Lee \cite{JL24} for a generalization of the above formula.
\end{remark}

\subsection{\texorpdfstring{Outline of the proofs of \cref{thm:MainCounting,thm:EquidistributionOfToriSL_3}}{Outline of the proofs of Theorems~\ref{thm:MainCounting} and \ref{thm:EquidistributionOfToriSL_3}}}
\label{subsec:OutlineOfTheProofs}
Firstly, the passage from an equidistribution theorem as in \cref{thm:EquidistributionOfToriSL_3} to a counting theorem as in \cref{thm:MainCounting} in a \emph{qualitative} sense is well-understood and goes back to the techniques of \cite{DRS93,EM93,EMS96}. Following the same techniques in a \emph{quantitative} sense, we need to show
\begin{align*}
\int_{\mathcal{B}_T} \int_{Ax_0} \phi_\delta(g^{-1}x) \diff\mu_{Ax_0}(x) \diff\mu_{A\backslash G}(Ag) \to 1 \qquad \text{as $T \to +\infty$}
\end{align*}
with an appropriate error term, where $x_0 = \Gamma \in G/\Gamma$, and $\phi_\delta$ is a bump function on a $\delta$-ball centered at $x_0$ with $\int_X \phi_\delta \diff\mu_X = 1$, and $\mathcal{B}_T$ is the pullback of $B_T$ as introduced previously. To prove this, we not only use \cref{thm:EquidistributionOfToriSL_3}, but we also need to carefully deal with both of the following in an effective fashion:
\begin{itemize}
\item volume estimates for $\mathcal{B}_T$ which is not a Riemannian ball;
\item the Zariski closed subvariety of non-regular unipotent elements in $W$.
\end{itemize}
Recall for $\bfG = \SL_n$ for $n \geq 2$ that regular nilpotent/unipotent upper triangular matrices are simply those with nonzero entries along the diagonal immediately above the main diagonal. As we will see below, there are unavoidable issues with the non-regular elements.

The greater difficulty is to establish the equidistribution theorem. Accordingly, the bulk of the paper is devoted to developing ideas to investigate the validity of the equidistribution theorem for a general semisimple linear algebraic group $\bfG$ defined over $\Q$---the general conditional theorem we prove is \cref{thm:EquidistributionOfStarPartialCentralizerOfTori starCP Case}, and even more generally, \cref{thm:EquidistributionOfStarPartialCentralizerOfTori}. As mentioned previously, \cref{thm:EquidistributionOfStarPartialCentralizerOfTori starCP Case} establishes the following passage:
\begin{align}
\label{pass: CEShah to EquidistributionOfTori}
\left[\substack{\text{effective equidistribution in $X$}\\ \text{of balls in regular centralizing unipotent orbits}\\ \text{under a regular one-parameter diagonal flow}}\right]
\; \leadsto \;
\left[\substack{\text{effective equidistribution in $X$}\\ \text{of translates of ($M^\circ$-orbits of) tori}}\right].
\end{align}
It turns out that the above passage simply cannot hold for $\bfG$ in full generality (but see \cref{rem:EquidistributionOfStarPartialCentralizerOfToriAnyG} for special translates). As we will explain below, the criteria for $\bfG$ in \cref{thm:EquidistributionOfStarPartialCentralizerOfTori starCP Case} is that it must satisfy one of the following:
\begin{enumerate}
\item $\height(\Phi) \leq 2$;
\item $\height(\Phi) \leq 3$ and $\bfG$ is $\R$-quasi-split (recall, $\R$-split is stronger);
\end{enumerate}
where $\height(\Phi)$ denotes the height of the root system $\Phi$ of $\bfG$---the number of simple roots required (with multiplicity) to create the highest root. Of course to complete the proof, we require knowledge of the input in the passage in \labelcref{pass: CEShah to EquidistributionOfTori}. This is a natural input since it is known for $\bfG = \SO_{n, 1}$ for $n \geq 2$ by \cite{KM96} and it follows from recent theorems established in \cite{LMW22,LMWY25} for all the remaining $\bfG$ from \cref{thm:EquidistributionOfToriSL_3} by \cref{pro:EShahImpliesCEShah}. Moreover, these recent theorems are expected to hold in greater generality. Here, \cref{pro:EShahImpliesCEShah} establishes the following passage for $\bfG$ in full generality:
\begin{multline*}
\left[\substack{\text{effective equidistribution in $X$}\\ \text{of balls in regular one-parameter unipotent orbits}\\ \text{under a regular one-parameter diagonal flow}\\ \text{avoiding certain periodic orbits}}\right] \\
\; \leadsto \;
\left[\substack{\text{effective equidistribution in $X$}\\ \text{of balls in regular centralizing unipotent orbits}\\ \text{under a regular one-parameter diagonal flow}}\right].
\end{multline*}
Thus, we obtain the \emph{unconditional} \cref{thm:EquidistributionOfToriSL_3}. See the comparison with the affine symmetric setting at the end of this proof outline, in which case the input in the passage in \labelcref{pass: CEShah to EquidistributionOfTori} is instead exponential mixing in $X$.

Let us now describe the proof of \cref{thm:EquidistributionOfStarPartialCentralizerOfTori starCP Case}. For simplicity, suppose $\bfG$ is $\R$-split such as $\bfG = \SL_3$. Take a periodic $A$-orbit in $X$, say $Ax_0$. A translate of a small region of $Ax_0$, say about $x_0$ for simplicity, by a large unipotent element $w \in W$ can be understood by studying it at the Lie algebra level: writing $a_{\bftau} := \exp(\bftau) \in A$, we calculate that
\begin{align*}
wa_{\bftau}x_0 = wa_{\bftau}w^{-1} \cdot wx_0 = a_{\Ad(w)\bftau} \cdot wx_0.
\end{align*}
Write $\log(w) = T\n$ for $T := \|\log(w)\|$ and $\n \in \LieW$ with $\|\n\| = 1$. Let us introduce the unipotent flow $\{w_{t\n} := \exp(t\n)\}_{t \in \R} \subset W$. Then, we have $w = w_{T\n}$. We then expand
\begin{align*}
\Ad(w_{t\n})\bftau = \exp(\ad(t\n))\bftau = \sum_{k = 0}^{\height(\Phi)} \ad(\n)^k \bftau \cdot t^k \qquad \text{for all $t \in \R$}.
\end{align*}
Since we obtain a \emph{polynomial}, the limiting line is determined by the leading vector coefficient. In a similar vein, to study the limiting behavior of the whole Lie subalgebra $\LieA \subset \LieG$, we use the adjoint action on the exterior algebra of $\LieG$ whose pure wedges correspond to linear subspaces of $\LieG$. Again, an upshot of the polynomial nature of an analogous calculation to the above is that there always exists a limiting vector space in $\LieG$ with the same initial dimension, i.e., the $\R$-rank of $\bfG$. With further analysis, we prove that the limiting vector space is in fact a Lie algebra and hence call it a \emph{limiting Lie algebra}. We further prove that the limiting Lie algebra is an \emph{abelian nilpotent} Lie algebra if $w$ (or equivalently, $\n$) is regular. (If $\bfG$ is not $\R$-quasi-split, the limiting Lie algebra is nilpotent but not necessarily abelian.)

Heuristically, $\ad(\n)$ behaves like a ``raising operator'' and ``pushes'' the vectors in $\LieA$ ``higher'' or to be ``more nilpotent'' with each application (i.e., subsequent vectors are in a sum of root spaces with \emph{higher} roots). More concretely, we have the following calculations for $\bfG = \SL_3$. Let
$
\n
=
\left(
\begin{smallmatrix}
0 & \mathsf{x} & \mathsf{z} \\
& 0 & \mathsf{y} \\
&  & 0
\end{smallmatrix}
\right)
\in \LieW
$
with $\mathsf{x} \neq 0$ and $\mathsf{y} \neq 0$, and
$
\bftau
=
\left(
\begin{smallmatrix}
	\tau_1 & &  \\
	& \tau_2 &  \\
	&  & \tau_3
\end{smallmatrix}
\right)
\in \LieA
$.
Using $\Ad$, we directly calculate
\begin{align*}
w_{t\n}\bftau w_{-t\n} &=
\Bigl(
\begin{smallmatrix}
1 & t\mathsf{x} & t\mathsf{z} + t^2\mathsf{x}\mathsf{y}/2 \\
& 1          & t\mathsf{y} \\
&            & 1
\end{smallmatrix}
\Bigr)
\left(
\begin{smallmatrix}
\tau_1 &  &  \\
& \tau_2          &  \\
&            & \tau_3
\end{smallmatrix}
\right)
\Bigl(
\begin{smallmatrix}
1 & -t\mathsf{x} & -t\mathsf{z} + t^2\mathsf{x}\mathsf{y}/2 \\
& 1        & -t\mathsf{y} \\
&            & 1
\end{smallmatrix}
\Bigr) \\
&=
\Bigl(
\begin{smallmatrix}
\tau_2 &  &  \\
& \tau_1 - \tau_2 + \tau_3        &  \\
&            & \tau_2
\end{smallmatrix}
\Bigr)
+
\Bigl(
\begin{smallmatrix}
\alpha_1 & -t\alpha_1\mathsf{x} & -t(\alpha_1 + \alpha_2)\mathsf{z} + t^2(\alpha_ 1 - \alpha_2)\mathsf{x}\mathsf{y}/2 \\
& -\alpha_1 + \alpha_2        & -t\alpha_2\mathsf{y} \\
&            & -\alpha_2
\end{smallmatrix}
\Bigr)
\end{align*}
where $\alpha_1 = \tau_1 - \tau_2$ and $\alpha_2 = \tau_2 - \tau_3$. Then, the limiting line as $t \to +\infty$ for, say, $\alpha_1 = \alpha_2 = -1$ is
$
\Bigl[
\Bigl(
\begin{smallmatrix}
0 & \mathsf{x}  & 2\mathsf{z} \\
& 0 & \mathsf{y} \\
&            & 0
\end{smallmatrix}
\Bigr)
\Bigr]
$
and for, say,  $\alpha_1 = \alpha_2 + 2$ is
$
\Bigl[
\Bigl(
\begin{smallmatrix}
0 & 0  & \mathsf{x}\mathsf{y} \\
& 0 & 0 \\
&            & 0
\end{smallmatrix}
\Bigr)
\Bigr]
$.
Thus, we recognize that the limiting Lie algebra is the centralizer of $\n$. Alternatively using $\ad$, it turns out that the limiting Lie algebra is generated by the elements
\begin{align*}
\left[
\left(
\begin{smallmatrix}
0 & \mathsf{x} & \mathsf{z} \\
& 0          & \mathsf{y} \\
&            & 0
\end{smallmatrix}
\right)
,
\left(
\begin{smallmatrix}
-1 &  &  \\
& 0 &  \\
&   & 1
\end{smallmatrix}
\right)\right]
&=
\left(
\begin{smallmatrix}
0 & \mathsf{x} & 2\mathsf{z} \\
& 0          & \mathsf{y} \\
&            & 0
\end{smallmatrix}
\right),
&
\left[
\left(
\begin{smallmatrix}
0 & \mathsf{x} & \mathsf{z} \\
& 0          & \mathsf{y} \\
&            & 0
\end{smallmatrix}
\right),
\left[
\left(
\begin{smallmatrix}
0 & \mathsf{x} & \mathsf{z} \\
& 0          & \mathsf{y} \\
&            & 0
\end{smallmatrix}
\right)
,
\left(
\begin{smallmatrix}
-1 &  &  \\
& 2 &  \\
&   & -1
\end{smallmatrix}
\right)\right]\right]
&=
\left(
\begin{smallmatrix}
0 & 0 & 6\mathsf{x}\mathsf{y} \\
& 0          & 0 \\
&            & 0
\end{smallmatrix}
\right),
\end{align*}
which we again recognize as the centralizer of $\n$.

A desirable property that we seek, roughly speaking, is for the limiting Lie algebra to be regular, i.e., to contain a regular nilpotent element, say $\n' \in \LieW$. In this case, $\n'$ may coincide with $\n$, but \emph{typically does not}. Since the limiting Lie algebra is an \emph{abelian} nilpotent Lie algebra (as mentioned previously), so if it contains such an element $\n' \in \LieW$, then it must be the centralizer of $\n'$. (If $\bfG$ is not $\R$-quasi-split, the situation is more complicated and typically the limiting Lie algebra is not the centralizer of $\n'$.) We call the properties which we have alluded to the \emph{$\star$-centralizing property (\starCP)} and the \emph{$\star$-quasi-centralizing property (\starQCP)}, the latter being weaker (the ``$\star$-'' part of the terminology is made clear in the paper when dealing with general $\bfG$, not necessarily $\R$-split). Here, we use the terminologies loosely---they are defined precisely in \cref{def: star-QCP} using \cref{def:Centralizing,def:QuasiCentralizing}, respectively.

We discover that the \starQCP is not always satisfied---in fact in \cref{pro: height at most 3 implies star-QCP}, we prove that it holds if and only if $\height(\Phi) \leq 3$. Moreover, the criteria for $\bfG$ in the passage in \labelcref{pass: CEShah to EquidistributionOfTori} provided above suffices to obtain the \starCP. Heuristically, what is happening, say for $\bfG = \SL_n$ for $n \geq 5$, is the following. As $n$ increases, the subspace of nilpotent upper triangular matrices $\LieW$ becomes very large (its dimension grows quadratically in $n$)---in fact, so large that it admits several \emph{abelian} Lie subalgebras of the same dimension as the subspace of traceless diagonal matrices $\LieA$, i.e., $\rank_{\R}(\bfG) = n - 1$. It also becomes especially easy to find abelian Lie subalgebras which are stuck in the far upper right corner of the matrix entries and hence far from being regular. Now, recall the ``raising operator'' phenomenon described previously which ``pushes'' the nontrivial entries of the matrices to higher diagonals. It turns out that ``having more room'' in the far upper right corner as described above makes it easier to admit non-regular limiting Lie algebras; whence we obtain the restriction on $\height(\Phi)$.

Along the way to proving \cref{pro: height at most 3 implies star-QCP}, we also prove several relationships between the various properties/quantities mentioned above which may be of independent interest in Lie theory and may be useful in other contexts (see \cref{subsec:LimitingNilpotentLieAlgebras}). In particular, \cref{pro: LimLie always centralizing for favn} guarantees the $\star$-centralizing property for \emph{general} $\bfG$ and some but \emph{Lebesgue almost no} $\n \in \regLieW$; and \cref{pro: height greater than 3 implies star-QCP fails} gives the sufficient condition $\height(\Phi) > 3$ which guarantees the \emph{failure} of the $\star$-quasi-centralizing property for \emph{Lebesgue almost every} $\n \in \regLieW$.

We mention that even the above analysis of limiting Lie algebras that we have described in a \emph{qualitative} sense when $\bfG$ is $\R$-split, does not seem to have appeared in the literature. In our analysis, we do this and beyond to obtain the complete picture:
\begin{itemize}
\item we treat the case that $\bfG$ is $\R$-quasi-split but not $\R$-split, in which case $M = Z_K(A)$ is already nontrivial but a torus;
\item we treat the case that $\bfG$ is not $\R$-quasi-split, in which case $M = Z_K(A)$ is nontrivial and not a torus;
\item our analysis of limiting Lie algebras is \emph{quantitative}.
\end{itemize}

For the quantitative analysis of limiting Lie algebras, we introduce the notion of \emph{$\epsilon$-regular} nilpotent elements---roughly speaking, they form a cone in $\LieW$ which is of angle $\epsilon$ away from the union of $\rank_\R(\bfG)$ number of linear subspaces of $\LieW$ consisting of the non-regular nilpotent elements.
This is essential for effective results because the rate of convergence to the limiting Lie algebra is polynomial in $t$ but with a natural ``loss'' which is polynomial in $\epsilon$, and hence the same for the error term in the final equidistribution theorem in terms of $T$ and $\epsilon$. This is expected since for non-regular $\n$, it is even possible for $w_{t\n}AM^\circ x_0$ (depending on the $A$-periodic point $x_0$) to be stuck in a certain periodic orbit in $X$ for all $t \in \R$, and hence failing equidistribution, much less with a good error term.

The characterization of the limiting Lie algebra described above indicates that at an appropriate scale, the translate of a small region of $Ax_0$ by $w$ is approximately a large region of an orbit of the centralizer $Z_G(w_{\n'})$. A little more precisely, there is an ellipsoid of size at most $R/\epsilon T$ whose translate by $w = w_{T\n}$ is approximately an $R$-ball (measured in the Lie algebra) of a $Z_G(w_{\n'})$-orbit. Thus, we are reduced to proving effective equidistribution of growing balls in $Z_G(w_{\n'})$-orbits (or in some cases, even $\{w_{t\n'}\}_{t \in \R}$-orbits, roughly speaking). We prove this in \cref{thm:CEquidistributionOfGrowingBalls} (resp. \cref{thm:EquidistributionOfGrowingBalls}) using effective equidistribution of the $1$-ball in $Z_G(w_{\n'})$-orbits (resp. $\{w_{t\n'}\}_{t \in \R}$-orbits) under a fixed regular one-parameter diagonal flow, i.e., the input in the passage in \cref{pass: CEShah to EquidistributionOfTori}.

Note that in both of the effective equidistribution properties in the preceding paragraph, if $\Gamma < G$ contains unipotent elements, in which case $X$ has cusps, we need to include a natural ``loss'' which is polynomial in a certain height/injectivity radius depending on the basepoint. To control this factor when the basepoints are on tori, we also need a quantitative non-divergence result for tori.

As a final remark, we compare with the affine symmetric setting. In this case, symmetric subgroups $H < G$ are large (in fact maximal up to finite index if $G$ is simple) and they can ``see'' any maximal horospherical subgroup; more precisely, with respect to any one-parameter diagonal flow, we have the decomposition $G = HMA\widehat{W}$ where $\widehat{W}$ is the corresponding contracting maximal horospherical subgroup, which shows that $H$ has a transversal containing no expanding unipotent elements. It was shown in \cite{EM93} that this gives the wavefront property and hence the input of mixing of one-parameter diagonal flows suffices; this was effectivized in \cite{BO12}. That is, we have the following passage:
\begin{align*}
\left[\substack{\text{exponential mixing in $X$}\\ \text{of one-parameter diagonal flows}}\right]
\; \leadsto \;
\left[\substack{\text{effective equidistribution in $X$}\\ \text{of translates of periodic $H$-orbits}}\right].
\end{align*}
In fact, by \cite{KM96}, we have the following passage:
\begin{align*}
\left[\substack{\text{exponential mixing in $X$}\\ \text{of one-parameter diagonal flows}}\right]
\; \leadsto \;
\left[\substack{\text{effective equidistribution in $X$}\\ \text{of the $1$-ball in $W$-orbits}\\ \text{under regular one-parameter diagonal flows}}\right].
\end{align*}
We expect that one can then use the techniques in this paper to give another proof of effective equidistribution of translates of periodic $H$-orbits and the corresponding effective counting result; i.e., the following passage seems plausible:
\begin{align*}
\left[\substack{\text{effective equidistribution in $X$}\\ \text{of the $1$-ball in $W$-orbits}\\ \text{under regular one-parameter diagonal flows}}\right]
\; \leadsto \;
\left[\substack{\text{effective equidistribution in $X$}\\ \text{of translates of periodic $H$-orbits}}\right].
\end{align*}
Thereby, we would obtain a more unified approach to the general counting problem. From this perspective, the input in the passage in \labelcref{pass: CEShah to EquidistributionOfTori} is a direct replacement of exponential mixing in $X$/effective equidistribution in $X$ of the $1$-ball in $W$-orbits.

\subsection{Organization}
The paper itself is fairly linear. In \cref{sec:NotationAndPreliminaries}, we provide not only the standard preliminaries but also essential background on regular nilpotent elements. In \cref{sec:LieTheoreticEstimates}, we go further and derive useful nonstandard facts and estimates related to regular nilpotent elements which are used throughout the rest of the paper. In \cref{sec:LimitingNilpotentLieAlgebras}, we study limiting Lie algebras which, as explained in the proof outline above, is a key technique in this paper. \Cref{sec:EffectiveEquidistributionOfGrowingUnipotentBalls,sec:Non-divergenceForTranslatesOfPeriodic_A_Orbits} are independent. In \cref{sec:EffectiveEquidistributionOfGrowingUnipotentBalls}, we derive effective equidistribution in $X$ of growing balls in certain unipotent orbits. \Cref{sec:Non-divergenceForTranslatesOfPeriodic_A_Orbits} is only required if $\Gamma$ has unipotent elements so that $X$ is noncompact, in which case we derive a quantitative non-divergence result. In \cref{sec:EffectiveEquidistributionOfAMstar-Orbits}, we put all the tools together to prove the main theorem on effective equidistribution in $X$ of ($M^\circ$-orbits of) tori, giving \cref{thm:EquidistributionOfToriSL_3}.
Finally, in \cref{sec:ProofOfTheCountingTheorem}, we carefully prove the main application to effective orbit counting, giving \cref{thm:MainCounting}.

\subsection*{Acknowledgments}
I thank Amir Mohammadi for suggesting this project. I am also extremely grateful for many conversations we have had over the years regarding this project and other mathematics in general. Sarkar acknowledges support by an AMS-Simons Travel Grant.

\section{Notation and preliminaries}
\label{sec:NotationAndPreliminaries}
We fix some notation and cover necessary background for the rest of the paper. In particular, \cref{subsec:NilpotentElements} contains essential material on nilpotent elements.

\subsection{\texorpdfstring{Big $O$, $\Omega$, and Vinogradov notations}{Big O, Ω, and Vinogradov notations}}
For any functions $f: \R \to \R$ and $g: \R \to \R_{> 0}$ (or quantities where $f$ is implicitly a function of $g$), we write $f = O(g)$, $f \ll g$, or $g \gg f$ (resp. $f = \Omega(g)$) to mean that there exists an implicit constant $C > 0$ such that $|f| \leq Cg$ (resp. $|f| \geq Cg$). We also often use $O(g)$ and $\Omega(g)$ in an expression to stand for such types of quantities. If $f \ll g$ and $f \gg g$, then we write $f \asymp g$. We write $f \sim g$ as $x \to \pm\infty$ to mean $(f/g)(x) \to 1$ as $x \to \pm\infty$. For a normed vector space $(V, \|\cdot\|)$, we also use these symbols in the natural way for $V$-valued functions (or quantities). We put subscripts on $O$, $\Omega$, $\ll$, $\gg$, and $\asymp$ to indicate other quantities which the implicit constant may depend on. For simplicity, we only write the dependence on what we view as absolute quantities such as $\bfG$, $G$, $\Gamma$, and $X$ in theorems but prefer to omit writing them elsewhere.

\subsection{Algebraic/Lie groups and Lie algebras}
For brevity, we will call any linear algebraic group defined over a field $\mathbb F$ an $\mathbb F$-group. Let $\bfG$ be a connected semisimple $\Q$-group of $\R$-rank $\rankG \in \N$. Let $G := \bfG(\R)^\circ$ which is a noncompact connected semisimple real Lie group. Let
\begin{align*}
\Gamma &< \bfG(\Q) \cap G, & X &:= G/\Gamma,
\end{align*}
be an arithmetic lattice and the associated homogeneous space, respectively. We write Lie algebras associated to Lie groups by Fraktur letters, e.g., $\LieG$ is the Lie algebra of $G$. Let $B: \LieG \times \LieG \to \R$ be the Killing form. Let $\theta: \LieG \to \LieG$ be a Cartan involution. Then, $B_\theta(\bigcdot_1, \bigcdot_2) = -B(\bigcdot_1, \theta(\bigcdot_2))$ is positive definite. We also write this as an inner product $\langle \bigcdot, \bigcdot\rangle$ and its induced norm as $\|\bigcdot\|$ on $\LieG$. We use a superscript $\perp$ for the orthogonal complement in $\LieG$ with respect to $\langle \bigcdot, \bigcdot\rangle$. Also, we obtain the Cartan decomposition $\LieG = \LieK \oplus \LieP$ into the $+1$ and $-1$ eigenspaces respectively. Let $\LieA \subset \LieP$ be a maximal abelian Lie subalgebra and $\Phi \subset \LieA^*$ be the associated restricted root system. Choose a set of positive roots $\Phi^+ \subset \Phi$ with respect to some lexicographic order on $\LieA^*$ and let $\Pi \subset \Phi^+$ be the set of simple roots. Let $\LieA^+ \subset \LieA$ be the corresponding closed positive Weyl chamber. We have the restricted root space decomposition
\begin{align*}
\LieG = \LieA \oplus \LieM \oplus \LieW^+ \oplus \LieW^- = \LieG_0 \oplus \bigoplus_{\alpha \in \Phi} \LieG_\alpha
\end{align*}
where $\LieM = Z_{\LieK}(\LieA) \subset \LieK$, $\LieG_0 = \LieA \oplus \LieM$, and $\LieW^\pm = \bigoplus_{\alpha \in \Phi^+} \LieG_{\pm\alpha}$. Note that $Z_{\LieG}(\LieA) = \LieA \oplus \LieM$. Recall from \cite[Chapter VI, \S 4, Proposition 6.40]{Kna02} that the restricted root space decomposition is orthogonal with respect to $\langle \bigcdot, \bigcdot\rangle = B_\theta$.

Let $K < G$ be the maximal compact subgroup with Lie algebra $\LieK$. Also define the Lie subgroups
\begin{align*}
A &= \exp(\LieA) < G, & M &= Z_K(A) < K < G, & W^\pm &= \exp(\LieW^\pm) < G.
\end{align*}
Note that $A = \mathbf{A}(\R)^\circ$ for some maximal $\R$-split $\R$-torus $\mathbf{A} < \bfG$, and $\rankG = \dim(\LieA)$, and $M$ need not be connected. Recall that $\bfG$ is said to be $\R$-split if $\mathbf{A}$ is a maximal $\C$-split $\R$-torus, or equivalently, $\rank_\C(\bfG) = \rank_\R(\bfG) = \rankG$. Recall also that $\bfG$ is said to be $\R$-quasi-split if $Z_{\bfG}(\mathbf{A})$ is a maximal $\C$-split $\R$-torus, or equivalently, $\rank_\C(\bfG) = \dim(Z_{\LieG}(\LieA)) = \rankG + \dim(\LieM)$. We also denote $\LieW := \LieW^+$ and $W := W^+$ for simplicity. To emphasize the type of group element we have under exponentiation, we often write
\begin{align}
\label{eqn: exponential notation}
a_{\bftau} &:= \exp(\bftau), & w_{\bfnu} &:= \exp(\bfnu), & m_{\bfxi} &:= \exp(\bfxi), & b_{\bfchi} &:= \exp(\bfchi),
\end{align}
for all $\bftau \in \LieA$, $\bfnu \in \LieW$, $\bfxi \in \LieM$, and $\bfchi \in \LieA \oplus \LieM$, respectively.

\begin{notation}
We write nilpotent elements in the form $n = \sum_{\alpha \in \Phi^+} n_\alpha \in \LieW$ or variants thereof to mean that its restricted root space components are $n_\alpha \in \LieG_\alpha$ for all $\alpha \in \Phi^+$ without further specification.
\end{notation}

\subsection{Metrics and measures}
We equip $G$ with the left $K$-invariant and right $G$-invariant Riemannian metric induced by $\langle \bigcdot, \bigcdot\rangle = B_\theta$.
For any space $Y$ obtained from $G$ with an induced Riemannian metric, we use the following notation. Denote by $d_Y$ the metric on $Y$ and by $\mu_Y$ the measure on $Y$ both induced by the Riemannian metric on $Y$. We drop the subscript in the metric for $G$ and $X$. For convenience, we simultaneously normalize the metrics and measures such that $\mu_X$ is a probability measure. For periodic $A$-orbits $Ax_0 \subset X$ for some $x_0 \in X$, we additionally normalize $\mu_{Ax_0}$ and $\mu_{AM^\circ x_0}$ to probability measures. For any unipotent subgroup $U < G$, we recall from \cite[Chapter~1, \S 1.2, Theorem~1.2.10]{CG90} that the Haar measure on $U$ coincides with the pushforward of the Lebesgue measure on $\LieU$ under $\exp$, i.e.,
\begin{align*}
\mu_U = \exp_* \mu_\LieU.
\end{align*}
We also denote by $B_r^Y(y)$ the open ball centered at $y \in Y$ with radius $r > 0$. If $Y$ is a group and $y = e$, or if $Y$ is an inner product space and $y = 0$, then we omit writing the center $y$. If $Y$ is a Lie group, we also use the notation
\begin{align}
\label{eqn: exp of ball notation}
\mathsf{B}_r^Y := \exp(B_r^{\mathfrak{y}}) \qquad \text{for all $r > 0$}.
\end{align}
We denote by $\Sob$ the $L^2$ Sobolev norm of some appropriate order $\ell \in \N$ depending only on $\dim(X) = \dim(G)$.

\subsection{Height and injectivity radius}
\label{subsec: height and injectivity radius}
Since $\bfG$ is a $\Q$-group, $\LieG$ is endowed with a canonical $\Q$-structure. We can fix a compatible $\Z$-structure on $\LieG$ such that $\LieG(\Z) < \LieG$ is a lattice which is $\Gamma$-invariant under the adjoint action, i.e., $\Ad(\Gamma)\LieG(\Z) \subset \LieG(\Z)$, and closed under the Lie bracket, i.e., $[\LieG(\Z), \LieG(\Z)] \subset \LieG(\Z)$.

\begin{definition}[Height]
Let $V$ be an inner product space over $\R$ with a $\Z$-structure. For any $g \in \GL(V)$, the \emph{height} of the lattice $gV(\Z) < V$ is
\begin{align*}
\height(V(\Z)) := \sup_{v \in gV(\Z) \smallsetminus \{0\}} \|v\|^{-1}.
\end{align*}
For any point $x = g\Gamma \in X$, its \emph{height} is
\begin{align*}
\height(x) := \height(\Ad(g)\LieG(\Z)) = \sup_{v \in \Ad(g)\LieG(\Z) \smallsetminus \{0\}} \|v\|^{-1},
\end{align*}
forming the \emph{height function} $\height: X \to \R_{> 0}$.
\end{definition}

Using the height function, we define the compact subsets
\begin{align*}
X_\eta := \{x \in X: \height(x) \leq 1/\eta\} \subset X \qquad \text{for all $\eta > 0$}.
\end{align*}
We denote the injectivity radius at $x \in X$ by $\inj_X(x) > 0$ and recall that it gives the radius of the largest open ball centered at $0 \in \T_x(X)$ on which the Riemannian exponential map is injective. We also allow subsets $S \subset X$ for arguments: $\inj_X(S) := \inf_{x \in S}\inj_X(x)$ and $\height(S) = \sup_{x \in S} \height(x)$. In our setting, writing $x = g\Gamma \in X$, we can more explicitly write
\begin{align*}
\inj_X(x) = \frac{1}{2} \inf_{\gamma \in \Gamma \smallsetminus \{e\}} d(g, g\gamma).
\end{align*}
A useful fact is that there exist $\constkappa\label{kappa: injectivity radius and height 1}, \constkappa\label{kappa: injectivity radius and height 2} \in (0, 1)$ such that\footnote{To see this, it holds trivially on any fixed compact subset of $X$; and then it can be extended by going along geodesics into cusps which has roughly inverse effects on the two quantities.}
\begin{align*}
\height(x)^{-\ref{kappa: injectivity radius and height 1}} \ll_X \inj_X(x) \ll_X \height(x)^{-\ref{kappa: injectivity radius and height 2}}.
\end{align*}
Since we only ever need \emph{estimates} of heights and injectivity radii, we may assume by scaling them, that $\height \geq 1$ and $\inj_X \leq 1$ on $X$.

\subsection{Nilpotent elements and their centralizers}
\label{subsec:NilpotentElements}
We refer to the work of Steinberg \cite{Ste65} which also appears in \cite[Chapter III]{SS70} in more detail, and the work of Andre \cite{And75} for much of the background recounted here. The former works introduce the notion of regular elements for semisimple $\mathbb F$-groups for algebraically closed fields $\mathbb F$. The latter work generalizes it to the notion of $\mathbb F$-regular elements for semisimple $\mathbb F$-groups for fields $\mathbb F$ of characteristic $0$.

Recall that $\n \in \LieG$ is called nilpotent if $\ad(\n)^j = 0$ for some $j \in \mathbb N$. The subset of nilpotent elements $\Nil \subset \LieG$ forms an irreducible affine variety of dimension $2\dim(\LieW)$. It is also a closed cone and hence called the nilpotent cone.

Let $\n := \hat{\n} \in \Nil$. By the Jacobson--Morozov theorem, we can complete it to an $\LieSL_2(\R)$-triple $(\hat{\n}, \h, \check{\n})$ in $\LieG$ so that they satisfy the relations
\begin{align}\label{eq: sl2 triple}
[\h, \hat{\n}] &= \hat{\n}, & [\h, \check{\n}] &= -\check{\n}, & [\hat{\n}, \check{\n}] &= 2\h.
\end{align}
Moreover, the $\LieSL_2(\R)$-triple is unique up to the adjoint action of $\exp Z_\LieG(\n)$---in fact $\exp(\nil Z_\LieG(\n))$ since the adjoint action of $\exp(Z_\LieG(\h) \cap Z_\LieG(\n))$ fixes the $\LieSL_2(\R)$-triple. In particular, $\h$ is unique up to the translation action of the nilradical $\nil Z_\LieG(\n)$; and once $\h$ is also fixed, $\check{\n}$ is uniquely determined. Let $\LieSL_2(\n) \subset \LieG$ denote the corresponding Lie subalgebra generated by $(\hat{\n}, \h, \check{\n})$ which is isomorphic to $\LieSL_2(\R)$. Observing that $\LieG$ is then a Lie algebra representation of $\LieSL_2(\n)$ via the adjoint map, we obtain a decomposition of $\LieG$ into irreducible representations of $\LieSL_2(\n)$:
\begin{align*}
\LieG = \bigoplus_{j \in \calJ} \tripleirrep_j
\end{align*}
where $\calJ$ is some finite index set. Let $j_0 \in \calJ$ such that $\tripleirrep_{j_0} = \LieSL_2(\n)$. It follows immediately from finite-dimensional representation theory of $\LieSL_2(\R)$ that for all $j \in \calJ$, we can further decompose the corresponding irreducible representation into $1$-dimensional weight spaces for $\h$ indexed by their weights:
\begin{align*}
\tripleirrep_j = \bigoplus_{k \in \mathscr{K}_j} \tripleirrep_j(k)
\end{align*}
where $\mathscr{K}_j = \{-\varkappa_j, -(\varkappa_j - 1), \dotsc, \varkappa_j - 1, \varkappa_j\}$ for some $\varkappa_j \in \frac{1}{2}\Z_{\geq 0}$ and $\dim(\tripleirrep_j) = 2\varkappa_j + 1 \in \N$, so that
\begin{itemize}
\item $[\h, v] = kv$ for all $v \in \tripleirrep_j(k)$ and $k \in \frac{1}{2}\Z$;
\item $[\hat{\n}, \tripleirrep_j(k)] = \tripleirrep_j(k + 1)$ for all $k \in \frac{1}{2}\Z$;
\item $[\check{\n}, \tripleirrep_j(k)] = \tripleirrep_j(k - 1)$ for all $k \in \frac{1}{2}\Z$;
\end{itemize}
with the convention that $\tripleirrep_j(k)$ is trivial for all $k \in \frac{1}{2}\Z \smallsetminus \mathscr{K}_j$. It follows immediately from the above that the centralizer of $\n$ is the direct sum of the highest weight spaces:
\begin{align}
\label{eqn: centralizer of nilpotent}
Z_\LieG(\n) = \bigoplus_{j \in \calJ} \tripleirrep_j(\varkappa_j).
\end{align}
The above decompositions induce a grading:
\begin{align}
\label{eqn: grading by sl2 eigenvalues}
\LieG &= \bigoplus_{k \in \frac{1}{2}\Z} \LieG(k), & \LieG(k) &= \bigoplus_{j \in \calJ} \tripleirrep_j(k) \qquad \text{for all $k \in  \tfrac{1}{2}\Z$}.
\end{align}

We recall Andre's definition of $\mathbb F$-regularity, based on Steinberg's definition of regularity (which coincides with $\C$-regularity), for the special case of nilpotent elements and $\mathbb F = \R$ in the context of the Lie algebra $\LieG$.

\begin{definition}[$\R$-regular]
A nilpotent element $\n \in \Nil$ is \emph{$\R$-regular} if $\dim Z_\LieG(\n)$ is minimal among centralizers of elements in $\LieG$ that can be conjugated using the adjoint action of $G$ into $\LieA \oplus \LieW$; or more explicitly, if
\begin{align*}
\dim Z_\LieG(\n) = \dim Z_\LieG(\LieA) = \rankG + \dim(\LieM).
\end{align*}
A unipotent element $w \in G$ is \emph{$\R$-regular} if $\log(w)$ is $\R$-regular. A semisimple element in $\LieG$ or $G$ is said to be \emph{$\R$-regular} in a similar fashion.
\end{definition}

Since we will \emph{only} work with $\R$-regular elements, we will \emph{drop} the suffix ``$\R$-'' and speak only of \emph{regular} elements throughout the paper.

Note that if $\n \in \Nil$ is regular, then clearly $\#\calJ = \rankG + \dim(\LieM)$ above. Moreover, $\h$ is also regular and so $\dim\LieG(0) = \rankG + \dim(\LieM)$ which forces $\tripleirrep_j(0)$ to be nontrivial, $\dim(\tripleirrep_j)$ to be odd, and the corresponding weights, in particular $\varkappa_j$, to be integers for all $j \in \calJ$. \Cref{fig: SL2R-Triple Irreps for SO(5 2)} depicts a diagram which summarizes much of the above discussion. It is a useful visual aid for much of the Lie theoretic arguments throughout the paper.

When $\bfG$ is $\R$-split (e.g., $\bfG = \SL_n$), we can simply inherit Steinberg's definitions and results for when the (algebraically closed) field of definition is $\C$ because they agree with the above definitions and results due to $\dim(\LieM) = 0$---i.e., in the $\R$-split case, $\R$-regularity coincides with $\C$-regularity in $\LieG$. Also, $Z_\LieG(\n)$ is in fact abelian and $Z_\LieG(\h)$ is the Lie algebra of the $\R$-points of a maximal $\R$-split $\R$-torus in $\bfG$. This can be further generalized to the case that $\bfG$ is $\R$-quasi-split (see \cref{lem: limiting Lie algebra,pro: LimLie always centralizing for favn}).

\begin{figure}
\centering
\includegraphics{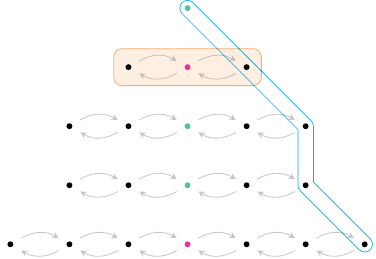}
\caption{In general, the weight space decomposition $\LieG = \bigoplus_{j \in \calJ} \bigoplus_{k = -\varkappa_j}^{\varkappa_j} \tripleirrep_j(k)$ can be represented by a diagram of the above fashion. As an example, the provided diagram is for $\LieG := \LieSO(5, 2)$ and a \emph{regular} nilpotent element $\n \in \LieG$. Each row represents an irreducible representation of $\LieSL_2(\n)$ in $\LieG$. In each row, each dot represents a weight space (which is $1$-dimensional) of $\LieSL_2(\n)$ in increasing order according to weights. Therefore, the center column represents $\LieG(0) = \LieG_0 = \LieA \oplus \LieM$ where the magenta dots form $\LieA$ and the light green dots form $\LieM$. The dots enclosed by the blue line are the highest weights and they form the centralizer $Z_\LieG(\n)$.}
\label{fig: SL2R-Triple Irreps for SO(5 2)}
\end{figure}

The subset of regular nilpotent elements $\regNil \subset \Nil$ is an open dense cone. Suppose $\n \in \regNil$. Applying an appropriate conjugation on $G$, we may write $\n = \sum_{\alpha \in \Phi^+} \n_\alpha \in \LieW$. An equivalent characterization of regularity of $\n$ is that
\begin{align*}
\n_\alpha \neq 0 \qquad \text{for all $\alpha \in \Pi$}.
\end{align*}
In this paper, we generalize the above and make the following quantitative definition.

\begin{definition}[$\epsilon$-regular]
\label{def: epsilon regular}
Let $\n \in \Nil$ be a nilpotent element. Applying an appropriate conjugation on $G$ (and changing the Riemannian metric accordingly), we may write $\n = \sum_{\alpha \in \Phi^+} \n_\alpha \in \LieW$. Then $\n$ is \emph{$\epsilon$-regular} for some \emph{regularity constant} $\epsilon > 0$ if it is nonzero and
\begin{align*}
\frac{\|\n_\alpha\|}{\|\n\|} \geq \frac{\epsilon}{\sqrt{\rankG}} \qquad \text{for all $\alpha \in \Pi$}.
\end{align*}
A unipotent element $w \in G$ is $\epsilon$-\emph{regular} if $\log(w)$ is $\epsilon$-regular.
\end{definition}

\begin{remark}
Clearly, any regular nilpotent element is $\epsilon$-regular for some $\epsilon > 0$. Also, the most optimal, i.e., maximum, regularity constant one can have is $1$.
\end{remark}

For any $\epsilon > 0$, we write $\epregNil \subset \regNil$ for the open cone of $\epsilon$-regular elements. Similarly, we also write
\begin{align*}
\regLieW &:= \LieW \cap \regNil, & \epregLieW &:= \LieW \cap \epregNil, \\
\regW &:= \exp\regLieW, & \epregW &:= \exp\epregLieW.
\end{align*}

\section{Lie theoretic estimates for regular nilpotent elements}
\label{sec:LieTheoreticEstimates}
In this section we cover a certain identity for regular nilpotent elements which is akin to conjugation to a Jordan normal form for matrices, and also derive a crucial estimate for the conjugating unipotent elements. Parts of the proof techniques are reminiscent of the proof of \cite[Proposition 10]{And75}. We also derive estimates for the adjoint action of the conjugating elements.

For the rest of the paper, we fix the element
\begin{align*}
\favh := \sum_{\alpha \in \Pi} \favh_\alpha \in \LieA
\end{align*}
where $\{\favh_\alpha\}_{\alpha \in \Pi}$ is the dual basis (and not the set of coroots) of $\Pi$. We also write
\begin{align*}
a_t := \exp(t\favh) \qquad \text{for all $t \in \R$}.
\end{align*}
Consider any regular nilpotent element of the form
\begin{align}
\label{eqn: favn form'}
\favn := \hatfavn := \sum_{\alpha \in \Pi} \favn_\alpha \in \regLieW, \qquad \text{$\favn_\alpha \in \LieG_\alpha$ with $\favn_\alpha \neq 0$ for all $\alpha \in \Pi$}. \tag{$\natural'$}
\end{align}
Then, $[\favh, \favn] = \favn$. These elements can be completed to a unique $\LieSL_2(\R)$-triple $(\hatfavn, \favh, \checkfavn)$ in $\LieG$. Specifically with respect to this $\LieSL_2(\R)$-triple, we have the objects and decompositions introduced in \cref{subsec:NilpotentElements}; to avoid confusion we indicate these with superscript $\natural$. In particular, $\LieSL_2(\favn) \subset \LieG$ is the unique Lie subalgebra generated by $(\hatfavn, \favh, \checkfavn)$. Furthermore, we may assume that $\LieG = \bigoplus_{j \in \calJ} \bigoplus_{k = -\varkappa_j}^{\varkappa_j} \favtripleirrep_j(k)$ is an \emph{orthogonal} decomposition with respect to $\langle \bigcdot, \bigcdot\rangle = B_\theta$ by successively choosing orthogonal $\theta$-stable irreducible representations of $\LieSL_2(\favn)$. Let us denote by $\height(\beta)$ the height of $\beta \in \Phi \cup \{0\}$, i.e., if $\beta = \sum_{\alpha \in \Pi} c_\alpha \alpha$, then $\height(\beta) = \sum_{\alpha \in \Pi} c_\alpha$; and also by $\height(\Phi)$ the height of the highest root in $\Phi$. Examining eigenvalues, we conclude that the grading from \cref{eqn: grading by sl2 eigenvalues} in the current setting is by the height, i.e.,
\begin{align}
\label{eqn: grading by height}
\favLieG(k) = \bigoplus_{\alpha \in \Phi \cup \{0\}, \height(\alpha) = k} \LieG_\alpha \qquad \text{for all $k \in \Z$}.
\end{align}
In particular, $\favLieG(0) = \LieA \oplus \LieM$ and $\bigoplus_{k \in \N} \favLieG(k) = \bigoplus_{\alpha \in \Phi^+} \LieG_\alpha = \LieW$.

We have the following surjectivity property as a straightforward corollary of the above discussion.

\begin{lemma}
\label{lem:adfavnSurjective}
Let $\favn \in \regLieW$ be of the form \labelcref{eqn: favn form'}. Then
\begin{align*}
\ad(\favn) \left(\bigoplus_{\alpha \in \Phi^+ \cup \{0\}, \height(\alpha) = k} \LieG_\alpha\right) = \bigoplus_{\alpha \in \Phi^+, \height(\alpha) = k + 1} \LieG_\alpha \qquad \text{for all $0 \leq k \leq \height(\Phi) - 1$}.
\end{align*}
\end{lemma}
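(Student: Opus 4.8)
The plan is to exploit the $\LieSL_2(\favn)$-representation structure that the excerpt has already set up. Recall from \labelcref{eqn: grading by height} that in the present setting the grading by $\LieSL_2(\favn)$-eigenvalues coincides with the grading by height of restricted roots: $\favLieG(k) = \bigoplus_{\height(\alpha) = k} \LieG_\alpha$. The left-hand side of the asserted identity is exactly $\ad(\favn)$ applied to the subspace of $\favLieG(k)$ spanned by the nonnegative-root spaces (i.e., $\favLieG(k)$ with the contribution from $\LieG_{-\alpha}$ for negative roots of height $k$ removed; of course for $k \geq 1$ there are no negative roots of height $k$, so for $k\geq 1$ this is just $\favLieG(k)$, and for $k=0$ it is $\LieA \oplus \LieM$, discarding the lowest-weight vectors sitting in no nonnegative-root space). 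First I would reduce to checking surjectivity onto $\bigoplus_{\height(\alpha)=k+1}\LieG_\alpha = \favLieG(k+1)$ and note that $\ad(\favn)$ obviously maps into this target by $[\favLieG(k'), \favLieG(k)] \subseteq \favLieG(k+k')$ applied with $k' = 1$ (since $\favn \in \favLieG(1)$).

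Next I would run the standard $\LieSL_2$ argument representation-by-representation. Decompose $\LieG = \bigoplus_{j \in \calJ} \favtripleirrep_j$ into irreducibles. Since $\favn$ is regular, all $\varkappa_j$ are integers, and each $\favtripleirrep_j$ has weights $-\varkappa_j, \dots, \varkappa_j$ with one-dimensional weight spaces. For a single irreducible $\LieSL_2(\R)$-representation, the raising operator $\ad(\hatfavn)\colon \favtripleirrep_j(k) \to \favtripleirrep_j(k+1)$ is an isomorphism for $-\varkappa_j \leq k \leq \varkappa_j - 1$ and zero for $k = \varkappa_j$; this is elementary finite-dimensional $\LieSL_2$ theory (it follows from $[\hatfavn,\checkfavn] = 2\favh$ together with irreducibility — the kernel of $\ad(\hatfavn)$ on an irreducible is exactly the highest weight line). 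Now fix $0 \leq k \leq \height(\Phi) - 1$. The target $\favLieG(k+1) = \bigoplus_j \favtripleirrep_j(k+1)$; a $j$ contributes a nonzero summand precisely when $k+1 \leq \varkappa_j$, i.e. $k \leq \varkappa_j - 1$, and for exactly those $j$ the map $\ad(\favn)\colon \favtripleirrep_j(k) \to \favtripleirrep_j(k+1)$ is an isomorphism. Here I am using that $\ad(\favn) = \ad(\hatfavn)$ acts on $\favtripleirrep_j(k)$ exactly as the abstract raising operator, which is part of the $\LieSL_2(\favn)$-module structure recalled in \cref{subsec:NilpotentElements}. Summing over $j$ gives surjectivity of $\ad(\favn)$ restricted to $\bigoplus_j \favtripleirrep_j(k)$ onto $\favLieG(k+1)$.

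Finally I would reconcile the domain: $\bigoplus_j \favtripleirrep_j(k) = \favLieG(k)$, and for $k \geq 1$ this equals $\bigoplus_{\height(\alpha)=k}\LieG_\alpha = \bigoplus_{\alpha \in \Phi^+,\,\height(\alpha)=k}\LieG_\alpha$ (all roots of positive height are positive), matching the stated domain. For $k = 0$, $\favLieG(0) = \LieA \oplus \LieM = \LieG_0$, which is the $\height = 0$ part indexed by $\Phi^+ \cup\{0\}$ with the $0$ term present, again matching the statement; and the $\LieSL_2$ argument still applies since on each $\favtripleirrep_j$ the weight-$0$ space raises isomorphically onto the weight-$1$ space whenever $\varkappa_j \geq 1$. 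So the claimed equality holds. I do not anticipate a genuine obstacle here — the only thing to be careful about is the bookkeeping in the $k=0$ case (making sure the $\LieM$-summands are handled) and the observation that weights are integers, which is exactly where regularity of $\favn$ is used; everything else is the textbook fact that $\ad(\hatfavn)$ is injective off the top weight space of each irreducible summand.
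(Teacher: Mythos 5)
Your proof is correct and takes essentially the same route as the paper: the paper's argument is the one-line computation $\ad(\favn)\favLieG(k) = \bigoplus_{j}\ad(\favn)\favtripleirrep_j(k) = \bigoplus_{j}\favtripleirrep_j(k+1) = \favLieG(k+1)$, using the grading by height from \cref{eqn: grading by height} and the raising-operator property of the $\LieSL_2(\favn)$-module structure, exactly as you do. Your additional bookkeeping for the $k=0$ case and the explicit identification of the kernel with the highest weight spaces is consistent with what the paper records separately in \cref{rem:SurjectivityOnSmallerDomain}.
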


\begin{proof}
Let $\favn \in \regLieW$ be of the form \labelcref{eqn: favn form'}. The lemma follows from
\begin{align*}
\ad(\favn) \favLieG(k) = \bigoplus_{j \in \calJ} \ad(\favn) \favtripleirrep_j(k) = \bigoplus_{j \in \calJ} \favtripleirrep_j(k + 1) = \favLieG(k + 1)
\end{align*}
for all $0 \leq k \leq \height(\Phi) - 1$.
\end{proof}

\begin{remark}
\label{rem:SurjectivityOnSmallerDomain}
In fact, the proof shows that we can discard the kernel which is exactly $\ker\bigl(\ad(\favn)|_{\favLieG(k)}\bigr) = \bigoplus_{j \in \calJ: \varkappa_j = k} \favtripleirrep_j(\varkappa_j)$ for all $0 \leq k \leq \height(\Phi) - 1$.
\end{remark}

Henceforth, we will consider the more restricted class of regular nilpotent elements of the form
\begin{align}
\label{eqn: favn form}
\favn := \hatfavn := \sum_{\alpha \in \Pi} \favn_\alpha \in \regLieW, \qquad \text{$\favn_\alpha \in \LieG_\alpha$ with $\|\favn_\alpha\| = \frac{1}{\sqrt{\rankG}}$ for all $\alpha \in \Pi$}. \tag{$\natural$}
\end{align}
Indeed, $\|\favn\| = 1$,  $\favn$ is $1$-regular, and $[\favh, \favn] = \favn$. We call the corresponding unique $\LieSL_2(\R)$-triple $(\hatfavn, \favh, \checkfavn)$ a \emph{natural} or \emph{standard} $\LieSL_2(\R)$-triple.

\begin{remark}
We only impose $\|\favn\| = 1$ for convenience. One could instead use the symmetrical condition that $\|\favn_\alpha\|$ are identical for all $\alpha \in \Pi$ and $\|\hatfavn\| = \|\checkfavn\|$. For example, for $\bfG = \SL_3$, we have the \emph{natural} $\LieSL_2(\R)$-triple
$
\left(
\left(
\begin{smallmatrix}
0 & 1 & 0 \\
& 0 & 1 \\
&   & 0
\end{smallmatrix}
\right),
\left(
\begin{smallmatrix}
1 &   &  \\
& 0 &  \\
&   & -1
\end{smallmatrix}
\right),
\left(
\begin{smallmatrix}
0 &   &  \\
1 & 0 &  \\
0 & 1 & 0
\end{smallmatrix}
\right)
\right)
$.
\end{remark}

The following lemma can be thought of as providing a kind of Jordan normal form for regular nilpotent elements in semisimple Lie algebras. We also provide the exact formula for the conjugating semisimple element and an estimate for the conjugating nilpotent element.

\begin{lemma}
\label{lem:LieAlgebraJordanNormalForm}
Let $\n = \sum_{\alpha \in \Phi^+} \n_\alpha \in \epregLieW$ for some $\epsilon > 0$. Then, there exist a unique $\favn \in \genregLieW{1}$ of the form \labelcref{eqn: favn form}, $\sigma := \sum_{\alpha \in \Pi} \log(\sqrt{\rankG}\|\n_\alpha\|) \favh_\alpha \in \LieA$, and $\omega \in \LieW$ such that
\begin{align*}
\n = \Ad(\exp \sigma)\Ad(\exp \omega)\favn.
\end{align*}
Moreover, there exists a canonical choice for $\omega$, and if $\|\n\| \geq 1$, then it satisfies $\|\omega\| \ll_\LieG \epsilon^{-2(\height(\Phi) - 1)}$.
\end{lemma}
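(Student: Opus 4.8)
The plan is to construct $\favn$, $\sigma$, and $\omega$ explicitly by a graded ``sweeping'' argument, clearing out the root-space components of $\n$ in order of increasing height. First I would dispose of $\sigma$: set $r_\alpha := \sqrt{\rankG}\|\n_\alpha\|$ for $\alpha \in \Pi$, which are all positive and $\asymp_\epsilon 1$ (bounded above by $\sqrt{\rankG}\|\n\|$ and below by $\epsilon\|\n\|$) by $\epsilon$-regularity, and let $\sigma := \sum_{\alpha \in \Pi}\log(r_\alpha)\favh_\alpha$. Since $\langle\alpha,\favh_\beta\rangle = \delta_{\alpha\beta}$ for simple roots, $\Ad(\exp\sigma)$ acts on $\LieG_\alpha$ by $r_\alpha$ for each $\alpha \in \Pi$, so $\Ad(\exp(-\sigma))\n$ has simple-root components $\tilde{\n}_\alpha$ with $\|\tilde{\n}_\alpha\| = 1/\sqrt{\rankG}$, exactly the normalization \labelcref{eqn: favn form}. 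Thus it suffices to prove the lemma with $\sigma = 0$, i.e. to find $\favn$ of the form \labelcref{eqn: favn form} (with prescribed simple-root components, namely those of $\Ad(\exp(-\sigma))\n$) and $\omega \in \LieW$ with $\n' := \Ad(\exp(-\sigma))\n = \Ad(\exp\omega)\favn$.

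Next I would build $\omega$ and $\favn$ together by induction on height. Decompose $\omega = \sum_{k\ge 1}\omega^{(k)}$ with $\omega^{(k)} \in \favLieG(k)$; the component $\omega^{(1)}$ will be forced to vanish since we are not moving the simple-root data, so effectively $\omega \in \bigoplus_{k\ge 2}\favLieG(k)$. Write $\Ad(\exp\omega)\favn = \sum_{m\ge 1}(\ad\omega)^m\favn/m!$ and examine this grading-component by grading-component. The height-$1$ component is just $\favn$ itself (since each $\ad\omega$ raises height by at least $1$ and $\favn$ has height $1$), which matches the simple-root components of $\n'$ by construction. The height-$(k+1)$ component of $\Ad(\exp\omega)\favn$ equals $\ad(\omega^{(k)})\favn$ plus a term $P_k$ depending only on $\omega^{(2)},\dots,\omega^{(k-1)}$ and on $\favn$ (coming from lower-order brackets and higher powers), because any bracket involving $\omega^{(j)}$ with $j\ge k$ other than the single $\ad(\omega^{(k)})\favn$ lands in height $\ge k+2$. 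We want this to equal the height-$(k+1)$ component $\n'_{k+1}$ of $\n'$, i.e. $\ad(\favn)\omega^{(k)} = -(\ad(\omega^{(k)})\favn) = -(\n'_{k+1} - P_k)$, wait---more precisely $\ad(\omega^{(k)})\favn = \n'_{k+1} - P_k$. By \cref{lem:adfavnSurjective} and \cref{rem:SurjectivityOnSmallerDomain}, $\ad(\favn)\colon \favLieG(k) \to \favLieG(k+1)$ is surjective with kernel $\bigoplus_{j:\varkappa_j=k}\favtripleirrep_j(\varkappa_j)$, so $\ad(-\favn) = -\ad(\favn)$ is too; hence we may solve for $\omega^{(k)}$, and we fix the \emph{canonical} choice by taking $\omega^{(k)}$ to be the unique preimage orthogonal to $\ker(\ad(\favn)|_{\favLieG(k)})$ (equivalently, using the partial inverse furnished by the $\LieSL_2(\favn)$-representation structure). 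This determines $\omega$ canonically; iterating from $k=2$ up to $k = \height(\Phi)-1$ terminates since $\favLieG(k) = 0$ for $k > \height(\Phi)$.

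For the norm bound, I would track the recursion quantitatively. All structure constants of $\LieG$, the operator norm of $\ad(\favn)$, and the norm of the partial inverse of $\ad(\favn)$ on each graded piece are bounded in terms of $\LieG$ alone (here crucially $\favn$ is $1$-regular with $\|\favn\| = 1$, so these are uniform). Assuming $\|\n\| \ge 1$ we have $\|\n'\| \ll_\LieG \epsilon^{-C}$ for an absolute $C$ (from the bound on $\|\Ad(\exp(-\sigma))\|$ via $\|\sigma\| \ll \log(\epsilon^{-1})$, but one should simply note $\|\n'_{k+1}\| \le \|\n'\|$ and absorb the $\sigma$-distortion); the polynomial $P_k$ is a fixed polynomial in $\omega^{(2)},\dots,\omega^{(k-1)}$ with $\LieG$-dependent coefficients, so $\|\omega^{(k)}\| \ll_\LieG \|\n'_{k+1}\| + (\text{polynomial in } \|\omega^{(2)}\|,\dots,\|\omega^{(k-1)}\|)$. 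Unwinding this recursion from $k=2$ to $k = \height(\Phi)-1$, each step can multiply the exponent, giving a bound of the shape $\|\omega\| \ll_\LieG (\epsilon^{-C})^{2^{?}}$ at first sight; the point of the claimed exponent $2(\height(\Phi)-1)$ is that one gets $\|\omega^{(k)}\| \ll_\LieG \epsilon^{-2(k-1)}$ by a cleaner induction, using that the only genuinely nonlinear contributions to $P_k$ at stage $k$ are products $\omega^{(a)}\omega^{(b)}$ (and higher) with $a+b \le k$, $a,b\ge 2$, so the worst exponent at stage $k$ is $\max(2, 2(a-1)+2(b-1)) \le 2(k-1)$ (using $a+b\le k$ and $a,b\ge2$). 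Carefully checking this combinatorial bookkeeping---that the exponent grows additively rather than multiplicatively---is the main obstacle, and it is exactly the kind of routine-but-delicate estimate where one wants to argue by strong induction on $k$ with hypothesis $\|\omega^{(j)}\| \ll_\LieG \epsilon^{-2(j-1)}$ for all $j < k$, and verify it survives one more bracket. Summing over $k \le \height(\Phi)-1$ then yields $\|\omega\| \ll_\LieG \epsilon^{-2(\height(\Phi)-1)}$.
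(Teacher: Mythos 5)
Your strategy is the same as the paper's: peel off $\sigma$ first using the dual basis $\{\favh_\alpha\}_{\alpha\in\Pi}$, then solve the conjugation equation height by height using the surjectivity of $\ad(\favn)\colon\favLieG(k)\to\favLieG(k+1)$ from \cref{lem:adfavnSurjective}, with the canonical choice of $\omega$ dictated by orthogonality to the kernel, and finally a strong induction showing that the exponents of $\epsilon^{-1}$ accumulate additively rather than multiplicatively. (You solve $\n'=\Ad(\exp\omega)\favn$ where the paper solves $\Ad(\exp\omega')\tilde{\n}=\favn$; these differ only by a sign on $\omega$, and your direction has the mild advantage that all source terms are brackets against $\favn$ alone, which has norm $1$.)

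There is, however, one concrete misstep: the claim that ``the component $\omega^{(1)}$ will be forced to vanish since we are not moving the simple-root data.'' This is false, and the construction as you describe it fails at height $2$. Since $\ad(\omega^{(1)})$ raises height by at least one, $\omega^{(1)}$ never affects the height-$1$ data in the first place, so nothing forces it to vanish; on the contrary, the height-$2$ component of $\Ad(\exp\omega)\favn$ is exactly $\ad(\omega^{(1)})\favn$ (every other term lands in height $\geq 3$), so taking $\omega^{(1)}=0$ produces height-$2$ component $0$, which cannot match the generally nonzero height-$2$ component of $\n'=\Ad(\exp(-\sigma))\n$. The recursion must start at $k=1$, with $\omega^{(1)}$ chosen in the orthogonal complement of $\ker\bigl(\ad(\favn)|_{\favLieG(1)}\bigr)$; this is consistent with \cref{rem:LieAlgebraJordanNormalForm}, which records that the canonical $\omega$ lies in $\bigoplus_{j}\bigoplus_{k=1}^{\varkappa_j-1}\favtripleirrep_j(k)$ and hence does have height-$1$ components whenever some $\varkappa_j\geq 2$. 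Correcting this shifts your inductive bound from $\|\omega^{(k)}\|\ll\epsilon^{-2(k-1)}$ to $\|\omega^{(k)}\|\ll\epsilon^{-2k}$: the target satisfies $\|\n'_{k+1}\|\ll\epsilon^{-(k+1)}\leq\epsilon^{-2k}$, and the nonlinear terms $\ad(\omega^{(j_1)})\cdots\ad(\omega^{(j_m)})\favn$ with $j_1+\dotsb+j_m=k$, $m\geq 2$, contribute $\ll\epsilon^{-2(j_1+\dotsb+j_m)}=\epsilon^{-2k}$ by the induction hypothesis. Summing over $1\leq k\leq\height(\Phi)-1$ still gives $\|\omega\|\ll_\LieG\epsilon^{-2(\height(\Phi)-1)}$, so with this repair your argument is essentially the paper's proof.
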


\begin{remark}
\label{rem:LieAlgebraJordanNormalForm}
The proof of the estimate for $\|\omega\|$ also gives the canonical choice for $\omega$ for any $\n \in \regLieW$. In fact, using \cref{rem:SurjectivityOnSmallerDomain}, we have $\omega \in \bigoplus_{j \in \calJ} \bigoplus_{k = 1}^{\varkappa_j - 1} \favtripleirrep_j(k) = \LieW \cap Z_\LieG(\favn)^\perp$ (and hence, is not regular). Also, using \cref{eqn: tilde_n_alpha bound}, one can derive a more involved estimate for $\|\omega\|$ in terms of $\|\n\|$ and $\epsilon$ for any $\n \in \epregLieW$ but we do not need it for our purposes.
\end{remark}

\begin{proof}
Let $\n$ and the desired $\favn$, $\sigma$, and $\omega$ be as in the lemma.

We first outline a straightforward proof without estimating $\|\omega\|$ for $\bfG = \SL_n$ which is $\R$-split. In this case, we may use the standard representation and assume that $\LieW$ consists of strictly upper triangular matrices. Then, $\favn$ is simply the Jordan normal form of $\n$. Since the diagonal above the main diagonal for $\n$ consists of nonzero entries by regularity, it is easy to check that the conjugating matrix must be upper triangular which can then be written as a product of a diagonal matrix $\exp(\sigma)$ and a unipotent upper triangular matrix $\exp(\omega)$. The calculation easily gives the exact form of $\sigma$ as in the lemma.

We now give a detailed proof in the general case. It suffices to construct $\omega \in \LieW$ and $\sigma \in \LieA$ (at the expense of minus signs) such that $\Ad(\exp \omega)\Ad(\exp \sigma)\n = \favn$. To this end, we first take
\begin{align*}
\sigma &:= -\sum_{\alpha \in \Pi} \log(\sqrt{\rankG}\|\n_\alpha\|) \favh_\alpha \in \LieA, & \tilde{\n} &= \sum_{\alpha \in \Phi^+} \tilde{\n}_\alpha := \Ad(\exp \sigma)\n \in \regLieW.
\end{align*}
Using $\Ad \circ \exp = \exp \circ \ad$, we see that $\tilde{\n}_\alpha = \Ad(\exp \sigma)\n_\alpha = e^{\alpha(\sigma)} \n_\alpha$ for all $\alpha \in \Phi^+$. In particular, $\tilde{\n}_\alpha = (\sqrt{\rankG}\|\n_\alpha\|)^{-1} \n_\alpha$ and hence $\|\tilde{\n}_\alpha\| = \frac{1}{\sqrt{\rankG}}$ for all $\alpha \in \Pi$. Similarly, by $\epsilon$-regularity of $\n$, we get the bound
\begin{align}
\label{eqn: tilde_n_alpha bound}
\|\tilde{\n}_\alpha\| \leq (\epsilon\|\n\|)^{-\height(\alpha)}\|\n_\alpha\| \qquad \text{for all $\alpha \in \Phi^+$}.
\end{align}

Now we take $\favn := \sum_{\alpha \in \Pi} \tilde{\n}_\alpha \in \genregLieW{1}$ of the form \labelcref{eqn: favn form}. Using $\Ad \circ \exp = \exp \circ \ad$, we consider the following equation in the variable $\omega \in \LieW$:\begin{align}
\label{eqn: 1st equation for n}
&\Ad(\exp \omega)\tilde{\n} = \favn \\
\label{eqn: 2nd equation for n}
\implies{}&\tilde{\n} + \ad(\omega)\tilde{\n} + \frac{\ad(\omega)^2}{2!}\tilde{\n} + \dotsb + \frac{\ad(\omega)^{\height(\Phi) - 1}}{(\height(\Phi) - 1)!} \tilde{\n} = \sum_{\alpha \in \Pi} \tilde{\n}_\alpha.
\end{align}
We then solve \cref{eqn: 2nd equation for n} by an inductive procedure.

In the $1$\textsuperscript{st} step, we choose $\omega_1 \in \bigoplus_{\alpha \in \Phi^+, \height(\alpha) = 1} \LieG_\alpha = \bigoplus_{\alpha \in \Pi} \LieG_\alpha$ such that in the left hand side of \cref{eqn: 2nd equation for n}, the components in the restricted root spaces of height $2$ vanishes. This is possible since:
\begin{itemize}
\item the components in the restricted root spaces of height $2$ are determined only by $\tilde{\n} + \ad(\omega_1)\sum_{\alpha \in \Pi} \tilde{\n}_\alpha$ and does not change the components in the restricted root spaces of height $1$;
\item $\tilde{\n}$ is regular;
\item $\ad\bigl(\sum_{\alpha \in \Pi} \tilde{\n}_\alpha\bigr) \bigl(\bigoplus_{\alpha \in \Phi^+, \height(\alpha) = 1} \LieG_\alpha\bigr) = \bigoplus_{\alpha \in \Phi^+, \height(\alpha) = 2} \LieG_\alpha$ by \cref{lem:adfavnSurjective}.
\end{itemize}

Now, having done the first $p \in \N$ steps and chosen $\omega_j \in \bigoplus_{\alpha \in \Phi^+, \height(\alpha) = j} \LieG_\alpha$ for all $1 \leq j \leq p$, in the $(p + 1)$\textsuperscript{th} step, we can choose $\omega_{p + 1} \in \bigoplus_{\alpha \in \Phi^+, \height(\alpha) = p + 1} \LieG_\alpha$ such that in the left hand side of \cref{eqn: 2nd equation for n}, the components in the restricted root spaces of heights $2, 3, \dotsc, p + 2$ vanishes. Again, this is possible since:
\begin{itemize}
\item the components in the restricted root spaces of height $p + 2$ are determined only by $\tilde{\n} + \ad(\omega_{p + 1})\sum_{\alpha \in \Pi} \tilde{\n}_\alpha + \ad\bigl(\sum_{j = 1}^p \omega_j\bigr)\tilde{\n} + \dotsb + \frac{\ad(\sum_{j = 1}^p \omega_j)^{p + 1}}{(p + 1)!}\tilde{\n}$ and does not change the components in the restricted root spaces of heights $1, 2, 3, \dotsc, p + 1$ from the previous steps;
\item $\tilde{\n}$ is regular;
\item $\ad\bigl(\sum_{\alpha \in \Pi} \tilde{\n}_\alpha\bigr) \bigl(\bigoplus_{\alpha \in \Phi^+, \height(\alpha) = p + 1} \LieG_\alpha\bigr) = \bigoplus_{\alpha \in \Phi^+, \height(\alpha) = p + 2} \LieG_\alpha$ by \cref{lem:adfavnSurjective}.
\end{itemize}
This process terminates at the $(\height(\Phi) - 1)$\textsuperscript{th} step and produces the desired element $\omega = \sum_{j = 1}^{\height(\Phi) - 1} \omega_j \in \LieW$ such that \cref{eqn: 1st equation for n} is satisfied.

We now argue that for a suitable choice of $\{\omega_j\}_{j = 1}^{\height(\Phi) - 1}$ in the inductive procedure above, the desired estimate for $\|\omega\|$ holds when $\|\n\| \geq 1$. The implicit constants below will depend only on $\LieG$. Clearly, we can ensure that $\omega_j$ is orthogonal to the kernel of $\ad\bigl(\sum_{\alpha \in \Pi} \tilde{\n}_\alpha\bigr)\bigr|_{\bigoplus_{\alpha \in \Phi^+, \height(\alpha) = j} \LieG_\alpha}$ for all $1 \leq j \leq \height(\Phi) - 1$. Denote
\begin{align*}
P_{\tilde{\n}, j} := \ker\left(\left.\ad\left(\sum_{\alpha \in \Pi} \tilde{\n}_\alpha\right)\right|_{\bigoplus_{\alpha \in \Phi^+, \height(\alpha) = j} \LieG_\alpha}\right)^\perp \qquad \text{for all $1 \leq j \leq \height(\Phi) - 1$}.
\end{align*}
Simply by continuity and compactness, we have the lower bound
\begin{align}
\label{eqn: ad favn lower bound}
\inf_{\substack{\altfavn \in \genregLieW{1}\\ \text{of the form } \labelcref{eqn: favn form}}} \min_{1 \leq j \leq \height(\Phi) - 1} \inf_{v \in P_{\altfavn, j}, \|v\| = 1} \|\ad(\altfavn)v\| \gg 1.
\end{align}
In the $1$\textsuperscript{st} step, we must have
\begin{align*}
\left\|\ad(\omega_1)\sum_{\alpha \in \Pi} \tilde{\n}_\alpha\right\| = \Biggl\|\sum_{\alpha \in \Phi^+, \height(\alpha) = 2} \tilde{\n}_\alpha\Biggr\| \ll \epsilon^{-2}
\end{align*}
where we used \cref{eqn: tilde_n_alpha bound} and $\|\n\| \geq 1$ for the inequality. Putting \cref{eqn: ad favn lower bound} together with the above inequality gives $\|\omega_1\| \ll \epsilon^{-2}$. Now, suppose we have done the first $p \in \N$ steps and have $\|\omega_j\| \ll \epsilon^{-2j}$ for all $1 \leq j \leq p$. In the $(p + 1)$\textsuperscript{th} step, we must have
\begin{align*}
\left\|\ad(\omega_{p + 1})\sum_{\alpha \in \Pi} \tilde{\n}_\alpha\right\| = \Biggl\|\sum_{k = 0}^{p + 1} \Omega_k\sum_{\alpha \in \Phi^+, \height(\alpha) = p + 2 - k} \tilde{\n}_\alpha\Biggr\| =: \left\|\sum_{k = 0}^{p + 1} v_k \right\|
\end{align*}
where $\Omega_0 = \Id_{\LieG}$ and $\Omega_k \in \mathfrak{gl}(\LieG)$ are sums of at most $\height(\Phi)^{\height(\Phi)}$ number of terms which are of the form $\frac{1}{N!}\ad(\omega_{j_1})\ad(\omega_{j_2}) \dotsb \ad(\omega_{j_N})$ for some $\{j_l\}_{l = 1}^N \subset \{1, 2, \dotsc, p\}$ and $N \in \N$ such that $\sum_{l = 1}^N j_l = k$, for all $1 \leq k \leq p + 1$. Using this characterization and the fact that the operator norm on $\ad(\LieW) \subset \mathfrak{gl}(\LieG)$ is equivalent to the norm on $\LieW$, we get $\|\Omega_k\|_{\mathrm{op}} \ll \epsilon^{-2k}$ for all $0 \leq k \leq p + 1$. Thus, using \cref{eqn: tilde_n_alpha bound} and $\|\n\| \geq 1$ once more, we get $\|v_k\| \ll \epsilon^{-2k} \cdot \epsilon^{-(p + 2 - k)} = \epsilon^{-(p + 2 + k)}$ for all $0 \leq k \leq p$. Using the stronger property that $\|\tilde{\n}_\alpha\| = 1$ for all $\alpha \in \Pi$ gives $\|v_{p + 1}\| \ll \epsilon^{-2(p + 1)}$. Hence, \cref{eqn: ad favn lower bound}, the above formula, and the triangle inequality gives $\|\omega_{p + 1}\| \ll \epsilon^{-2(p + 1)}$. Finally, $\|\omega\| \leq \sum_{j = 1}^{\height(\Phi) - 1} \|\omega_j\| \ll \epsilon^{-2(\height(\Phi) - 1)}$, concluding the proof.
\end{proof}

For all $\n \in \regLieW$, we will always assume that the $\LieSL_2(\R)$-triple $(\hat{\n}, \h, \check{\n})$ and its associated decompositions $\LieG = \bigoplus_{j \in \calJ} \tripleirrep_j = \bigoplus_{j \in \calJ} \bigoplus_{k = -\varkappa_j}^{\varkappa_j} \tripleirrep_j(k)$ are obtained from the ones corresponding to the $\LieSL_2(\R)$-triple $(\hatfavn, \favh, \checkfavn)$ under the action of $\Ad(\exp \sigma)\Ad(\exp \omega)$ where the unique elements $\favn \in \genregLieW{1}$ of the form \labelcref{eqn: favn form} and $\sigma \in \LieA$ and $\omega \in \LieW$ are provided by \cref{lem:LieAlgebraJordanNormalForm}.

\begin{remark}
\label{rem:CorrespondingSL2TripleProperties}
Consequently, we also have $\bigoplus_{k \in \N} \LieG(k) = \LieW$.
\end{remark}

We will often work with an exterior power $\bigwedge^r \LieG \subset \bigwedge \LieG$ for some $0 \leq r \leq \dim(\LieG)$. Recall that for any basis $\{v_j\}_{j = 1}^{\dim(\LieG)} \subset \LieG$, we have the induced basis
\begin{align*}
\{v_{j_1} \wedge \dotsb \wedge v_{j_r}\}_{1 \leq j_1 < \dotsb < j_r \leq \dim(\LieG)} \subset \bigwedge^r \LieG.
\end{align*}
We endow $\bigwedge^r \LieG$ with the standard inner product and norm via the determinant, i.e., for all $y_1 \wedge \dotsb \wedge y_r \in \bigwedge^r \LieG$ and $z_1 \wedge \dotsb \wedge z_r \in \bigwedge^r \LieG$, their inner product is defined by
\begin{align*}
\langle y_1 \wedge \dotsb \wedge y_r, z_1 \wedge \dotsb \wedge z_r \rangle := \det(\langle y_j, z_k \rangle)_{1 \leq j \leq r,\ 1 \leq k \leq r},
\end{align*}
with the natural convention for $r = 0$, and extended linearly using an induced basis of the aforementioned form. One readily checks that this is well-defined. Note that if the chosen basis on $\LieG$ is orthonormal, then so is the induced basis on $\bigwedge^r \LieG$. For any $g \in G$, we abuse notation and write $\Ad(g)$ for the induced linear map on $\bigwedge^r \LieG$.

Recall that a linear automorphism maps the closed unit ball centered at the origin to a closed ellipsoid centered at the origin and its major and minor axes can be measured using the operator norm. In light of this, the following operator norm bounds will be useful.

\begin{lemma}
\label{lem:sigma_omega_OperatorNorms}
Let $\n \in \epregLieW$ for some $\epsilon > 0$ with $\|\n\| \geq 1$, and $\sigma \in \LieA$ and $\omega \in \LieW$ be the unique elements provided by \cref{lem:LieAlgebraJordanNormalForm}. Let $0 \leq r \leq \dim(\LieG)$. Then, we have
\begin{align*}
\bigl\|\Ad(\exp \sigma)|_{\bigwedge^r \LieG}\bigr\|_{\mathrm{op}} &\ll (\sqrt{\rankG}\|\n\|)^{r\height(\Phi)}, \\
\Bigl\|\Ad(\exp \sigma)^{-1}\bigr|_{\bigwedge^r \LieG}\Bigr\|_{\mathrm{op}} &\ll \epsilon^{-r\height(\Phi)} \|\n\|^{r\height(\Phi)},  \\
\bigl\|\Ad(\exp \omega)|_{\bigwedge^r \LieG}\bigr\|_{\mathrm{op}} &\ll_\LieG \epsilon^{-4r\height(\Phi)(\height(\Phi) - 1)}, \\
\Bigl\|\Ad(\exp \omega)^{-1}\bigr|_{\bigwedge^r \LieG}\Bigr\|_{\mathrm{op}} &\ll_\LieG \epsilon^{-4r\height(\Phi)(\height(\Phi) - 1)}.
\end{align*}
\end{lemma}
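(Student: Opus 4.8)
The plan is to estimate the four operator norms directly from the explicit descriptions of $\sigma$ and $\omega$ provided by \cref{lem:LieAlgebraJordanNormalForm}, exploiting the fact that $\Ad(\exp\sigma)$ acts diagonally on the restricted root space decomposition while $\Ad(\exp\omega)$ is unipotent with a controlled exponent. First I would handle $\Ad(\exp\sigma)$. Since $\sigma = \sum_{\alpha\in\Pi}\log(\sqrt{\rankG}\|\n_\alpha\|)\favh_\alpha \in \LieA$ and $\{\favh_\alpha\}_{\alpha\in\Pi}$ is the dual basis of $\Pi$, for each root $\beta = \sum_{\alpha\in\Pi}c_\alpha\alpha$ we have $\beta(\sigma) = \sum_{\alpha\in\Pi}c_\alpha\log(\sqrt{\rankG}\|\n_\alpha\|)$, so $\Ad(\exp\sigma)$ scales $\LieG_\beta$ by $\prod_{\alpha\in\Pi}(\sqrt{\rankG}\|\n_\alpha\|)^{c_\alpha}$ and acts trivially on $\LieA\oplus\LieM$. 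Using $\epsilon$-regularity, $\frac{\epsilon}{\sqrt{\rankG}}\|\n\|\le\|\n_\alpha\|\le\|\n\|$ for all $\alpha\in\Pi$, hence each scaling factor lies between $(\epsilon\|\n\|/\sqrt{\rankG}\cdot\sqrt{\rankG})^{\pm\height(\Phi)}$-type bounds; more precisely, since $\|\n\|\ge 1$, each eigenvalue of $\Ad(\exp\sigma)$ on $\LieG$ is at most $(\sqrt{\rankG}\|\n\|)^{\height(\Phi)}$ and at least $(\epsilon\|\n\|)^{\height(\Phi)}/(\text{const})$, with the reciprocal bounds for $\Ad(\exp\sigma)^{-1}$. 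Passing to $\bigwedge^r\LieG$ multiplies exponents by at most $r$ (the induced basis vectors are wedges of at most $r$ root vectors, and the eigenvalue of $\Ad(\exp\sigma)$ on such a wedge is the product of the $r$ individual eigenvalues), giving the first two displayed bounds.

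Next I would handle $\Ad(\exp\omega)$. By \cref{lem:LieAlgebraJordanNormalForm} (and \cref{rem:LieAlgebraJordanNormalForm}), $\omega\in\LieW$ with $\|\omega\|\ll_\LieG \epsilon^{-2(\height(\Phi)-1)}$. Since $\omega$ is nilpotent, $\Ad(\exp\omega) = \exp(\ad(\omega)) = \sum_{k=0}^{\height(\Phi)}\frac{1}{k!}\ad(\omega)^k$ is a polynomial of bounded degree in $\ad(\omega)$, and the operator norm of $\ad(\omega)$ on $\LieG$ is $\ll\|\omega\|$ by equivalence of norms. Therefore $\|\Ad(\exp\omega)|_\LieG\|_{\mathrm{op}}\ll_\LieG \max(1,\|\omega\|^{\height(\Phi)})\ll_\LieG \epsilon^{-2\height(\Phi)(\height(\Phi)-1)}$; the same bound holds for $\Ad(\exp\omega)^{-1} = \Ad(\exp(-\omega))$ since $\|-\omega\| = \|\omega\|$. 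On $\bigwedge^r\LieG$, the induced map $\Ad(\exp\omega)$ has operator norm at most $\|\Ad(\exp\omega)|_\LieG\|_{\mathrm{op}}^r$ (a standard fact: the $r$-th exterior power of a linear map has operator norm equal to the product of the top $r$ singular values, which is at most the $r$-th power of the largest), yielding $\ll_\LieG \epsilon^{-2r\height(\Phi)(\height(\Phi)-1)}$, which is absorbed into the stated $\epsilon^{-4r\height(\Phi)(\height(\Phi)-1)}$ with room to spare. The same argument applies verbatim to $\Ad(\exp\omega)^{-1}$.

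The main obstacle, such as it is, is bookkeeping rather than conceptual: one must be careful that the constant $\sqrt{\rankG}$ is tracked correctly in the first bound (where I want no hidden $\LieG$-dependence, since the factor $(\sqrt{\rankG}\|\n\|)^{r\height(\Phi)}$ already carries the $\rankG$-dependence explicitly) versus the $\omega$-bounds (where $\ll_\LieG$ is permitted). Concretely, for the $\sigma$-estimates I would avoid invoking norm-equivalence constants by computing eigenvalues exactly on the orthogonal restricted root space decomposition — since that decomposition is orthonormal after choosing orthonormal bases in each $\LieG_\alpha$, the operator norm of a diagonal map is literally the maximum modulus of its eigenvalues, with no implied constant beyond what appears in the bounds $|\beta(\sigma)|\le\height(\Phi)\max_\alpha|\log(\sqrt{\rankG}\|\n_\alpha\|)|$. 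The only genuine input beyond linear algebra is the estimate $\|\omega\|\ll_\LieG\epsilon^{-2(\height(\Phi)-1)}$ from \cref{lem:LieAlgebraJordanNormalForm}, which is exactly why that lemma was proved with the explicit $\omega$-bound; everything else is elementary manipulation of operator norms under $\Ad$ and exterior powers.
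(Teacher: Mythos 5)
Your proposal is correct in substance and follows essentially the same route as the paper: diagonal eigenvalue bounds for $\Ad(\exp \sigma)$ on the restricted root space decomposition via $\epsilon$-regularity, the truncated exponential series of $\ad(\omega)$ combined with $\|\omega\| \ll_\LieG \epsilon^{-2(\height(\Phi) - 1)}$ from \cref{lem:LieAlgebraJordanNormalForm}, and a passage to $\bigwedge^r \LieG$ (the paper reduces to $r = 1$ via the induced orthonormal basis, the determinant formula, and Cauchy--Schwarz; your eigenvalue-product and singular-value arguments are an equivalent and arguably cleaner packaging of the same step).

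One quantitative correction: on all of $\LieG$ the operator $\ad(\omega)$ is nilpotent of degree $2\height(\Phi) + 1$, not $\height(\Phi) + 1$, since each application raises height by at least one and $\LieG$ spans heights from $-\height(\Phi)$ to $\height(\Phi)$. With the correct truncation $\sum_{k = 0}^{2\height(\Phi)}$ your bound for $\bigl\|\Ad(\exp \omega)|_{\LieG}\bigr\|_{\mathrm{op}}$ becomes $\ll_\LieG \epsilon^{-4\height(\Phi)(\height(\Phi) - 1)}$, so after taking $r$-th exterior powers you land exactly on the stated exponent rather than beating it ``with room to spare''; this is precisely why \cref{rem:Optimal_OperatorNorms} records that the factor $4$ improves to $2$ only when $\LieG$ is replaced by $\LieA \oplus \LieW$. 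A smaller caveat, shared with the paper's own write-up: your claim that every eigenvalue of $\Ad(\exp \sigma)$ on $\LieG$ is at most $(\sqrt{\rankG}\|\n\|)^{\height(\Phi)}$ is justified only on root spaces of nonnegative height; on $\LieG_{-\alpha}$ with $\alpha \in \Phi^+$ the available upper bound is $(\epsilon\|\n\|)^{-\height(\alpha)}$, which is what the second displayed estimate controls, and in the applications the first estimate is effectively used on $\LieA \oplus \LieW$.
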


\begin{remark}
\label{rem:Optimal_OperatorNorms}
For the last two operator norms, if $\LieG$ is replaced with $\LieA \oplus \LieW$, then the factor $4$ in the exponent can be replaced with $2$.
\end{remark}

\begin{proof}
Let $\n$, $\sigma$, $\omega$, and $r$ be as in the lemma. Using in succession, an induced orthonormal basis on $\bigwedge^r \LieG$ of the aforementioned form, the definition of the inner product on $\bigwedge^r \LieG$ via the determinant, the triangle inequality, and the Cauchy--Schwarz inequality, one verifies that it suffices to the prove the lemma for $\bigwedge^r \LieG = \LieG$ corresponding to $r = 1$ .

Using $\sigma = \sum_{\alpha \in \Pi} \log(\sqrt{\rankG}\|\n_\alpha\|) \favh_\alpha$ from \cref{lem:LieAlgebraJordanNormalForm}, the identity $\Ad \circ \exp = \exp \circ \ad$, and the $\epsilon$-regularity of $\n$, for all $\alpha \in \Phi \cup \{0\}$ and $v \in \LieG_\alpha$ with $\|v\| = 1$, we have
\begin{align}
\nonumber
\|\Ad(\exp \sigma) v\| &= \bigl\|e^{\alpha(\sigma)}v\bigr\| \leq (\sqrt{\rankG}\|\n\|)^{\height(\alpha)}, \\
\label{eqn: Ad exp - sigma operator norm}
\|\Ad(\exp(-\sigma)) v\| &= \bigl\|e^{\alpha(-\sigma)}v\bigr\| \leq (\epsilon\|\n\|)^{-\height(\alpha)}.
\end{align}
The first two operator norm bounds follow.

Again using standard Lie theoretic identities and the estimate $\|\ad(\omega)\|_{\mathrm{op}} \ll \|\omega\| \ll \epsilon^{-2(\height(\Phi) - 1)}$ from \cref{lem:LieAlgebraJordanNormalForm}, for all $v \in \LieG$ with $\|v\| = 1$, we calculate that
\begin{align*}
\|\Ad(\exp \omega)v\| &\leq \sum_{k = 0}^{2\height(\Phi)} \frac{1}{k!} \|\ad(\omega)^k v\| \ll e\epsilon^{2\height(\Phi)(-2(\height(\Phi) - 1))}.
\end{align*}
The third operator norm bound follows. For the last operator norm bound, one can repeat the same calculation with $-\omega$ in place of $\omega$.
\end{proof}

\section{Limiting nilpotent Lie algebras for regular nilpotent elements}
\label{sec:LimitingNilpotentLieAlgebras}
In this section, we will study the limiting behavior of certain one-parameter families of Lie algebras associated to regular nilpotent elements $\n \in \regLieW$.

\subsection{Limiting vector spaces}
First we introduce Grassmannians and cover a general linear algebra lemma.

Let $V$ be a finite-dimensional inner product space over $\R$. Let $\Gr_r(V)$ denote the Grassmannian of $r$-dimensional linear subspaces of $V$ for any $0 \leq r \leq \dim(V)$. We will often use the Pl\"{u}cker embedding
\begin{align*}
\Plucker[\bigcdot ]: \Gr_r(V) &\hookrightarrow \Projective\Bigl(\bigwedge^r V\Bigr) \\
\Span_{\R}(\{v_1, \dotsc, v_r\}) &\mapsto [v_1 \wedge \dotsb \wedge v_r].
\end{align*}
There are many standard metrics on $\Gr_r(V)$ which are all equally good. For the sake of concreteness, it will be convenient to take the metric $d$ on $\Gr_r(V)$ induced by the Fubini--Study metric $d$ on $\Projective\bigl(\bigwedge^r V\bigr)$ defined by
\begin{align*}
d([x], [y]) = \min\left\{\left\|\frac{x}{\|x\|} - \frac{y}{\|y\|}\right\|, \left\|\frac{x}{\|x\|} + \frac{y}{\|y\|}\right\|\right\} \qquad \text{for all $[x], [y] \in \Projective\Bigl(\bigwedge^r V\Bigr)$},
\end{align*}
where $\bigwedge^r V$ is endowed with the standard inner product and norm induced by the one on $V$ via the determinant.

\begin{definition}[Polynomial curve]
We say that a continuous map $\varphi: \R \to \Gr_r(V)$ (resp. $\varphi: \mathbb P^1 \to \Gr_r(V)$) is a \emph{polynomial curve} (resp. \emph{closed polynomial curve}) if it is induced by a polynomial map $\tilde{\varphi}: \R \to \bigwedge^r V$.
\end{definition}

\begin{remark}
The image of $\tilde{\varphi}$ in the above definition necessarily consists of pure wedges.
\end{remark}

\begin{lemma}
\label{lem:ExtendingPolynomialMaps}
Let $V$ be a finite-dimensional inner product space over $\R$ and $\varphi: \R \to \Gr_r(V)$ be a polynomial curve for some $0 \leq r \leq \dim(V)$. Then, it extends to a closed polynomial curve $\varphi: \mathbb P^1 \to \Gr_r(V)$ and it satisfies
\begin{align*}
d\bigl(\varphi(t), \varphi(\infty)\bigr) \ll_\varphi |t|^{-1} \qquad \text{for all $|t| > 0$}.
\end{align*}
Here, writing $\Plucker\varphi(t) = [x_nt^n + x_{n - 1}t^{n - 1} + \dotsb + x_0]$ for all $t \in \R$, for some $n \in \Z_{\geq 0}$ and $\{x_j\}_{j = 0}^n \subset \bigwedge^r V$ with $\|x_n\| = 1$, we can take the implicit constant to be
\begin{align*}
C_\varphi := 4\max\{1, n\|x_{n - 1}\|, n\|x_{n - 2}\|, \dotsc, n\|x_0\|\}.
\end{align*}
\end{lemma}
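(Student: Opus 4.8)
The plan is to work entirely in the Plücker picture, where $\varphi$ is given by a polynomial $\tilde\varphi(t) = x_n t^n + x_{n-1}t^{n-1} + \dots + x_0$ with values in $\bigwedge^r V$ (necessarily pure wedges, though we won't need that for the estimate), normalized so $\|x_n\| = 1$. The natural candidate for $\varphi(\infty)$ is the line $[x_n]$; one must first check this is well-defined, i.e. that $x_n \neq 0$ (true by the normalization $\|x_n\|=1$) and that $x_n$ is a pure wedge so that $[x_n]$ actually lies in the image of the Plücker embedding — this follows because $\Gr_r(V)$ is closed (compact) in $\Projective(\bigwedge^r V)$ and $[x_n] = \lim_{t\to\infty}[\tilde\varphi(t)/t^n]$ is a limit of points of $\Gr_r(V)$. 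So first I would define $\varphi(\infty) := [x_n]$ and note this extends $\varphi$ to a closed polynomial curve $\mathbb P^1 \to \Gr_r(V)$ (reparametrize near $\infty$ by $s = 1/t$, multiply through by $s^n$).

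Next I would estimate the Fubini–Study distance $d([\tilde\varphi(t)], [x_n])$. Using the explicit formula for $d$, it suffices to bound $\bigl\| \tfrac{\tilde\varphi(t)}{\|\tilde\varphi(t)\|} \pm \tfrac{x_n}{\|x_n\|}\bigr\|$; picking the sign appropriately and using $\|x_n\|=1$, the standard inequality is $d([u],[v]) \leq 2\,\|u/\|u\| - v/\|v\|\,\| \leq 2\,\dfrac{\|\,u - \|u\|\,v\,\|\cdot\|v\|^{-1}\cdot\text{(stuff)}}{\cdots}$ — more cleanly, I would use the elementary bound that for unit $v$ and nonzero $u$, $\min_{\pm}\|u/\|u\| \pm v\| \le 2\|u - \lambda v\|/\|u\|$ for the best scalar $\lambda$, hence it is enough to compare $\tilde\varphi(t)$ with $t^n x_n$. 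Write $\tilde\varphi(t) - t^n x_n = \sum_{j=0}^{n-1} x_j t^j$, so $\|\tilde\varphi(t) - t^n x_n\| \le \sum_{j=0}^{n-1}\|x_j\|\,|t|^j \le n\,\max_j\|x_j\|\cdot|t|^{n-1}$ for $|t|\ge 1$. Dividing by $\|t^n x_n\| = |t|^n$ gives a bound of order $(n\max_j\|x_j\|)\,|t|^{-1}$. Then I would control the denominator from below: for $|t|$ large, $\|\tilde\varphi(t)\| \ge |t|^n - n\max_j\|x_j\|\,|t|^{n-1} \ge \tfrac12 |t|^n$ once $|t| \ge 2n\max_j\|x_j\|$, which is absorbed into the constant $C_\varphi = 4\max\{1, n\|x_{n-1}\|,\dots,n\|x_0\|\}$. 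Chasing the constants through the triangle-inequality comparison (unit-normalize $\tilde\varphi(t)$, compare to unit-normalized $t^n x_n = x_n$) yields $d(\varphi(t),\varphi(\infty)) \le C_\varphi |t|^{-1}$, and for $0 < |t| \le$ (the threshold) the bound $d \le 2 \le C_\varphi|t|^{-1}$ holds trivially since $d$ is bounded by $2$ and $C_\varphi \ge 4$, $|t| \le 1$ there — though one should double-check the threshold case where $1 \le |t| \le 2n\max\|x_j\|$ separately, again using $d \le 2$ and that $C_\varphi|t|^{-1} \ge 4 \cdot (2n\max\|x_j\|)^{-1} \cdot$... actually here I would just note $d \le 2$ always and $C_\varphi/|t| \ge C_\varphi/(2\max\{1,n\max\|x_j\|\}) \ge 2$, closing the gap.

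The only genuinely delicate point is the two-step comparison: passing from "$\tilde\varphi(t)$ is close to $t^n x_n$ as vectors" to "$[\tilde\varphi(t)]$ is close to $[x_n]$ in Fubini–Study distance" requires the lemma that $d([u],[v]) \le \tfrac{2}{\|u\|}\|u - \|u\| v/\|v\|\,\|$-type estimate, and getting the constant to come out as exactly $4$ rather than some larger absolute constant takes a little care. I expect this bookkeeping — not any conceptual difficulty — to be the main obstacle; the extension to $\mathbb P^1$ and the existence of $\varphi(\infty)$ in $\Gr_r(V)$ are routine once compactness of the Grassmannian in projective space is invoked. Everything else is elementary polynomial estimation.
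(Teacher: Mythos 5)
Your proposal is correct and follows essentially the same route as the paper: identify $\varphi(\infty)=[x_n]$ (lying in $\Plucker\Gr_r(V)$ by compactness), normalize $\tilde\varphi(t)$ against its leading term $t^nx_n$ to get an $O(|t|^{-1})$ bound on the Fubini--Study distance for $|t|\geq C_\varphi/2$, and dispose of small $|t|$ via the trivial diameter bound on $\Gr_r(V)$ together with $C_\varphi\geq 4$. The only cosmetic difference is that the metric as defined already gives $d([u],[v])\leq\bigl\|u/\|u\|-v/\|v\|\bigr\|$ without an extra factor of $2$, which makes your constant-chasing slightly easier than you anticipate.
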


\begin{proof}
Let $\varphi$ be as in the lemma. We assume $n \in \N$ since the lemma is trivial for $n = 0$. By definition, we can write
\begin{align*}
\Plucker\varphi(t) = [x_nt^n + x_{n - 1}t^{n - 1} + \dotsb + x_0] = [x_n + x_{n - 1}t^{-1} + \dotsb + x_0t^{-n}]
\end{align*}
for all $t \in \R$, for some $\{x_j\}_{j = 0}^n \subset \bigwedge^r V$ with $\|x_n\| = 1$. Taking limits $t \to \pm\infty$, it is clear that $\Plucker\varphi(\infty) := \lim_{t \to \infty} \Plucker\varphi(t) = [x_n] \in \Plucker\Gr_r(V)$ exists and hence $\varphi$ extends to $\varphi: \mathbb P^1 \to \Gr_r(V)$. For the distance estimate, we have
\begin{align*}
d\bigl(\varphi(t), \varphi(\infty)\bigr) &\leq \left\|x_n - \frac{x_nt^n + x_{n - 1}t^{n - 1} + \dotsb + x_0}{\|x_nt^n + x_{n - 1}t^{n - 1} + \dotsb + x_0\|}\right\| \\
&= \left\|x_n - \frac{x_n + O_\varphi(|t|^{-1})}{1 + O_\varphi(|t|^{-1})}\right\| \\
&\leq \frac{2O_\varphi(|t|^{-1})}{1 + O_\varphi(|t|^{-1})} \\
&\leq 4O_\varphi(|t|^{-1})
\end{align*}
for all $|t| \geq \frac{C_\varphi}{2}$, where the implicit constants are all $\frac{C_\varphi}{4}$, and $C_\varphi$ is defined as in the lemma. The same bound holds trivially for $0 < |t| \leq \frac{C_\varphi}{2}$ by compactness of $\Gr_r(V)$ and the explicit choice of the metric $d$ and the constant $C_\varphi$.
\end{proof}

\subsection{Limiting nilpotent Lie algebras}
\label{subsec:LimitingNilpotentLieAlgebras}
We first introduce some notation.
Define, respectively, the Lie subalgebra and the linear subspace
\begin{align*}
\LieMdiam &:= Z_\LieM(\favn) = \LieM \cap Z_\LieG(\favn) \subset \LieM, & \LieMstar &:= \LieM \cap (\LieMdiam)^\perp \subset \LieM.
\end{align*}
In general, $\LieMstar$ is not necessarily a Lie algebra. Define $\calJdiam := \bigl\{j \in \calJ: \dim\bigl(\favtripleirrep_j\bigr) = 1\bigr\}$ and $\calJstar := \calJ \smallsetminus \calJdiam$ so that $\calJ = \calJdiam \sqcup \calJstar$. Note that $\LieMdiam = \bigoplus_{j \in \calJdiam} \favtripleirrep_j$. Thus, recalling facts from \cref{subsec:NilpotentElements,sec:LieTheoreticEstimates}, we deduce that
\begin{align}
\label{eqn:LieA+LieMstarDecomposition}
\LieA \oplus \LieMstar = \bigoplus_{j \in \calJstar} \favtripleirrep_j(0).
\end{align}
We also define $\rankGstar := \dim(\LieA \oplus \LieMstar) = \rankG + \dim(\LieMstar)$ and note that $\#\calJstar = \rankGstar$. It will be convenient to fix any $\bftau_j \in \favtripleirrep_j(0)$ with $\|\bftau_j\| = 1$ for all $j \in \calJ$ (the weight spaces are $1$-dimensional). We put an order on $\calJ$ so that we can conveniently write wedge products; although it is not important since the sign will be immaterial.

Let $\n \in \LieW$ with $\|\n\| = 1$. Then, $\{w_{t\n}\}_{t \in \R}$ is a one-parameter unipotent subgroup determined by $\n$ (recall the notation from \cref{eqn: exponential notation}). Recall that for any $g \in G$, we abuse notation and write $\Ad(g)$ for the induced linear map on $\bigwedge \LieG$ as well. Define the curves $\LimLie_\n: \R \to \Gr_\rankG(\LieG)$ and $\LimLiestar_\n: \R \to \Gr_\rankGstar(\LieG)$by
\begin{align*}
\LimLie_\n(t) &= \Ad(w_{t\n}) (\LieA \oplus \LieM), & \LimLiestar_\n(t) &= \Ad(w_{t\n}) (\LieA \oplus \LieMstar)\qquad \text{for all $t \in \R$}
\end{align*}
induced by the flow $\{\Ad(w_{t\n})\}_{t \in \R}$. Using the Pl\"ucker embedding and the vectors $\bigwedge_{j \in \calJ} \bftau_j \in \bigwedge^{\rankG} (\LieA \oplus \LieM)$ and $\bigwedge_{j \in \calJstar} \bftau_j \in \bigwedge^{\rankGstar} (\LieA \oplus \LieMstar)$
(the vector spaces are $1$-dimensional), we have
\begin{align}
\label{eqn: Plucker embedding for unipotent translate curve}
\Plucker\LimLie_\n(t) &= \Bigl[\bigwedge_{j \in \calJ} \Ad(w_{t\n})\bftau_j\Bigr], & \Plucker\LimLiestar_\n(t) &= \Bigl[\bigwedge_{j \in \calJstar} \Ad(w_{t\n})\bftau_j\Bigr] \qquad \text{for all $t \in \R$}.
\end{align}
Recalling that $\exp|_{\LieW}$ is a polynomial map since $\LieW$ is a nilpotent Lie algebra, and $\Ad(w_{t\n}) = \exp(\ad(t\n))$, we conclude that $\LimLie_\n$ and $\LimLiestar_\n$ are polynomial curves. The one-parameter family $\{\LimLie_\n(t)\}_{t \in \R}$ consists of Lie algebras isomorphic to $\LieA \oplus \LieM$. When $\bfG$ is $\R$-quasi-split, it consists entirely of abelian Lie algebras since $\LieM$ is abelian.

\begin{lemma}
\label{lem: limiting Lie algebra}
Let $\n \in \LieW$ with $\|\n\| = 1$. The polynomial curves $\LimLie_\n$ and $\LimLiestar_\n$ extend to closed polynomial curves $\LimLie_\n: \mathbb P^1 \to \Gr_\rankG(\LieG)$ and $\LimLiestar_\n: \mathbb P^1 \to \Gr_\rankGstar(\LieG)$ and they satisfy
\begin{align*}
d\bigl(\LimLie_\n(t), \LimLie_\n(\infty)\bigr) &\ll_\n |t|^{-1}, & d\bigl(\LimLiestar_\n(t), \LimLiestar_\n(\infty)\bigr) &\ll_\n |t|^{-1} \qquad \text{for all $|t| > 0$}.
\end{align*}
Here, the implicit constants are $C_{\LimLie_\n}$  and $C_{\LimLiestar_\n}$ as defined in \cref{lem:ExtendingPolynomialMaps}. Moreover, $\LimLie_\n(\infty)$ is a Lie algebra, and if $\bfG$ is $\R$-quasi-split, then $\LimLie_\n(\infty)$ is an abelian Lie algebra.
\end{lemma}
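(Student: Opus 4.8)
The plan is to apply \cref{lem:ExtendingPolynomialMaps} to the two polynomial curves $\LimLie_\n$ and $\LimLiestar_\n$ to obtain the extension to $\mathbb P^1$ and the quantitative distance estimates; the implicit constants $C_{\LimLie_\n}$, $C_{\LimLiestar_\n}$ are exactly the ones named there, applied to the polynomial representatives of \cref{eqn: Plucker embedding for unipotent translate curve}. This handles the first two assertions essentially for free, since we have already observed that both curves are polynomial (because $\exp|_\LieW$ is polynomial and $\Ad(w_{t\n}) = \exp(\ad(t\n))$ acts polynomially on $\bigwedge \LieG$). The only remaining content is the identification of $\LimLie_\n(\infty)$ as a Lie algebra, and its abelianness in the $\R$-quasi-split case.

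For the Lie algebra claim, the key point is that ``being a Lie subalgebra'' is a Zariski-closed condition on $\Gr_\rankG(\LieG)$: the subset of $V \in \Gr_\rankG(\LieG)$ with $[V, V] \subseteq V$ is cut out by polynomial equations (in Plücker coordinates, the vanishing of $v \wedge [x, y]$ for $x, y$ ranging over a spanning family of $V$, which is a regular function of the point of the Grassmannian). For every finite $t \in \R$ we have $\LimLie_\n(t) = \Ad(w_{t\n})(\LieA \oplus \LieM)$, which is a Lie subalgebra since $\LieA \oplus \LieM = Z_\LieG(\LieA)$ is one and $\Ad(w_{t\n})$ is a Lie algebra automorphism. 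Thus the polynomial curve $t \mapsto \LimLie_\n(t)$ takes values in this closed subvariety for all $t \in \R$, hence so does its limit $\LimLie_\n(\infty)$; concretely, one writes the defining equations evaluated along $\Plucker\LimLie_\n(t) = [x_n t^n + \dots + x_0]$, notes they vanish identically in $t$, and takes the $t \to \infty$ limit (equivalently, evaluates the leading-coefficient relation), concluding $[\LimLie_\n(\infty), \LimLie_\n(\infty)] \subseteq \LimLie_\n(\infty)$. A cleaner way to phrase the same argument: pick a continuous family of bases $\{e_i(t)\}$ of $\LimLie_\n(t)$ extending continuously to $t = \infty$ (possible after rescaling the Plücker representative, as in the proof of \cref{lem:ExtendingPolynomialMaps}); the structure constants $c_{ij}^k(t)$ defined by $[e_i(t), e_j(t)] = \sum_k c_{ij}^k(t) e_k(t)$ exist for all finite $t$, are rational in $t$ with no pole at $\infty$, and their existence persists in the limit, witnessing that $[e_i(\infty), e_j(\infty)] \in \LimLie_\n(\infty)$. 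The same argument applied to the relation $[e_i(t), e_j(t)] = 0$ — which holds for all finite $t$ when $\bfG$ is $\R$-quasi-split, since then $\LieM$ is abelian and $\LieA \oplus \LieM$ is an abelian subalgebra, preserved as such by the automorphism $\Ad(w_{t\n})$ — shows $[\LimLie_\n(\infty), \LimLie_\n(\infty)] = 0$, giving the final abelianness claim.

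The step I expect to require the most care is making the ``limit of Lie algebras is a Lie algebra'' argument rigorous without hand-waving: one must commit to either the Zariski-closedness formulation (and quote or verify that the subalgebra locus is closed in the Grassmannian) or the explicit structure-constants-along-a-continuous-basis formulation, and in the latter case ensure the chosen basis $e_i(t)$ genuinely extends continuously through $t = \infty$ — this is precisely what the normalization in \cref{lem:ExtendingPolynomialMaps} provides, since near $\infty$ one uses the representative $x_n + x_{n-1}t^{-1} + \dots$ whose leading wedge $x_n$ is a pure wedge decomposing as $e_1(\infty) \wedge \dots \wedge e_\rankG(\infty)$. Everything else (the extension to $\mathbb P^1$, the distance bounds, the isomorphism type $\LimLie_\n(t) \cong \LieA \oplus \LieM$ for finite $t$) is immediate from the cited lemma and from $\Ad(w_{t\n})$ being an automorphism.
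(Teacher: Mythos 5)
Your proposal is correct and follows essentially the same route as the paper: invoke \cref{lem:ExtendingPolynomialMaps} for the extension and the quantitative bounds, then deduce the (abelian, in the $\R$-quasi-split case) Lie algebra property of $\LimLie_\n(\infty)$ from the closedness of the (abelian) Lie subalgebra condition on the Grassmannian, using that $\LimLie_\n(t) = \Ad(w_{t\n})(\LieA \oplus \LieM)$ is a Lie algebra (abelian when $\LieM$ is) for every finite $t$. The paper states the closedness argument in one line; your two rigorous formulations of it are both valid elaborations of the same idea.
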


\begin{proof}
Let $\n \in \LieW$ with $\|\n\| = 1$. We simply invoke \cref{lem:ExtendingPolynomialMaps} for the polynomial curves $\LimLie_\n$ and $\LimLiestar_\n$.
Due to the observation preceding the lemma, we deduce that $\LimLie_\n(\infty)$ is indeed a Lie algebra by closedness of the Lie subalgebra condition. Similarly, if $\bfG$ is $\R$-quasi-split, we deduce that $\LimLie_\n(\infty)$ is an abelian Lie algebra by closedness of the abelian Lie subalgebra condition.
\end{proof}

\begin{remark}
For all $\n \in \LieW$ with $\|\n\| = 1$, since $\LimLiestar_\n(t) \subset \LimLie_\n(t)$ for all $t \in \R$, we also have $\LimLiestar_\n(\infty) \subset \LimLie_\n(\infty)$ but not necessarily as a Lie subalgebra. If $\bfG$ is $\R$-split, then $\LimLiestar_\n(\infty) = \LimLie_\n(\infty)$ since $\LieM$ is trivial.
\end{remark}

\begin{definition}[($\star$-)limiting Lie algebra]
For all $\n \in \LieW$ with $\|\n\| = 1$, we continue using the notation $\LimLie_\n(\infty)$ and $\LimLiestar_\n(\infty)$ for the Lie algebra and linear subspace provided by \cref{lem: limiting Lie algebra} and call them the \emph{limiting Lie algebra} and the \emph{$\star$-limiting vector space} (or \emph{$\star$-limiting Lie algebra} if it is a Lie algebra) corresponding to $\n$.
\end{definition}

We now study the limiting Lie algebras corresponding to \emph{regular} nilpotent elements $\n \in \regLieW$.

\begin{lemma}
\label{lem:LimitingLieAlgebraNilpotent}
Let $\n \in \regLieW$ with $\|\n\| = 1$. Then, $\LimLie_\n(\infty) \subset \LieMdiam \oplus \LieW$ and $\LimLiestar_\n(\infty) \subset \LieW$.
\end{lemma}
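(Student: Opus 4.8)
The plan is to exploit the fact, established earlier in the section, that $\LimLie_\n(\infty)$ and $\LimLiestar_\n(\infty)$ are obtained as leading‑coefficient limits of the polynomial curves $t \mapsto \bigwedge_{j \in \calJ} \Ad(w_{t\n})\bftau_j$ and $t \mapsto \bigwedge_{j \in \calJstar} \Ad(w_{t\n})\bftau_j$, and to identify that leading coefficient explicitly using the height grading. Since $\n \in \regLieW$, after conjugating we are in the situation of \cref{lem:LieAlgebraJordanNormalForm}: there is a standard triple $(\hat\n,\h,\check\n)$ with $[\h,\n]=\n$, and $\n$ raises height by exactly $1$ on $\LieW$ via $\ad(\n)$. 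The key observation is that for each $j \in \calJ$, writing $\bftau_j \in \favtripleirrep_j(0)$ (the weight‑$0$ space for $\h$, which lies in $\LieA \oplus \LieM = \favLieG(0)$), the expansion
\begin{align*}
\Ad(w_{t\n})\bftau_j = \sum_{k=0}^{\varkappa_j} \frac{t^k}{k!}\,\ad(\n)^k\bftau_j
\end{align*}
has top term $\frac{t^{\varkappa_j}}{\varkappa_j!}\,\ad(\n)^{\varkappa_j}\bftau_j \in \tripleirrep_j(\varkappa_j) \subset \LieW$ whenever $\varkappa_j \geq 1$, because $\ad(\n)$ carries $\tripleirrep_j(k)$ to $\tripleirrep_j(k+1)$ and the highest weight $\varkappa_j$ of a \emph{regular} element is a positive integer for every $j \in \calJstar$ (this was noted after the definition of $\R$-regular). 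For $j \in \calJdiam$ we have $\varkappa_j = 0$, so $\Ad(w_{t\n})\bftau_j = \bftau_j \in \LieMdiam$ is constant in $t$.

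First I would carry out this expansion and extract the leading vector coefficient of the Plücker curve $\Plucker\LimLie_\n(t)$. By multilinearity, the leading term of $\bigwedge_{j\in\calJ}\Ad(w_{t\n})\bftau_j$ is (up to a nonzero scalar and sign) $\bigwedge_{j\in\calJdiam}\bftau_j \wedge \bigwedge_{j\in\calJstar}\ad(\n)^{\varkappa_j}\bftau_j$; the first wedge block spans $\LieMdiam$ and each factor in the second block lies in $\LieW$. Hence $\LimLie_\n(\infty)$, which is the linear span of this pure wedge, is contained in $\LieMdiam \oplus \LieW$. Similarly, the leading term of $\bigwedge_{j\in\calJstar}\Ad(w_{t\n})\bftau_j$ is $\bigwedge_{j\in\calJstar}\ad(\n)^{\varkappa_j}\bftau_j$, all of whose factors lie in $\LieW$, giving $\LimLiestar_\n(\infty) \subset \LieW$.

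The one point that needs care — and which I expect to be the main (mild) obstacle — is verifying that the claimed leading term is genuinely nonzero, i.e. that $\ad(\n)^{\varkappa_j}\bftau_j \neq 0$ for $j \in \calJstar$ and that the wedge of these over $j \in \calJstar$ (together with $\{\bftau_j\}_{j\in\calJdiam}$) does not vanish; otherwise the ``leading coefficient'' would come from a lower power of $t$ and could in principle spill into $\favLieG(0)$. For a single $j$, nonvanishing of $\ad(\n)^{\varkappa_j}\bftau_j$ is exactly the $\LieSL_2$ representation theory recalled in \cref{subsec:NilpotentElements}: $\bftau_j$ generates $\tripleirrep_j$ as a lowest‑plus‑raising module, and $\ad(\hat\n)^{\varkappa_j}$ maps the weight‑$0$ line isomorphically onto the weight‑$\varkappa_j$ line (the highest weight space). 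For the wedge: the vectors $\ad(\n)^{\varkappa_j}\bftau_j$ for $j \in \calJstar$ together with $\bftau_j$ for $j \in \calJdiam$ lie in distinct summands $\tripleirrep_j(\varkappa_j)$ of the decomposition $Z_\LieG(\n) = \bigoplus_{j\in\calJ}\tripleirrep_j(\varkappa_j)$ from \cref{eqn: centralizer of nilpotent}, hence are linearly independent, so their wedge is nonzero. This also confirms that $\deg \Plucker\LimLie_\n = \sum_{j\in\calJstar}\varkappa_j$ and pins down $\LimLie_\n(\infty) = Z_\LieG(\n)$ on the nose, which is consistent with — though strictly stronger than — the stated containment; I would phrase the write‑up to prove the containments as stated, remarking on the exact identification if convenient. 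Finally, since $\LimLiestar_\n(t) \subset \LimLie_\n(t)$ for all $t$ and the same holds in the limit, the two containments are compatible, completing the argument.
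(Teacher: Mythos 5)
There is a genuine gap, and it sits exactly at the point you flagged as ``the one point that needs care.'' Your argument rests on the claims that $\Ad(w_{t\n})\bftau_j = \sum_{k=0}^{\varkappa_j}\frac{t^k}{k!}\ad(\n)^k\bftau_j$ and that $\ad(\n)^k\bftau_j \in \tripleirrep_j(k)$. Both are false for a general $\n\in\regLieW$: the vectors $\bftau_j$ were fixed in $\favtripleirrep_j(0)$, the weight spaces of the \emph{standard} triple $(\hatfavn,\favh,\checkfavn)$, whereas the decomposition $\LieG=\bigoplus_j\bigoplus_k\tripleirrep_j(k)$ in which $\ad(\n)$ acts as a pure raising operator is the one obtained by conjugating with $\Ad(\exp\sigma)\Ad(\exp\omega)$ from \cref{lem:LieAlgebraJordanNormalForm}. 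Since $\bftau_j\notin\tripleirrep_j(0)$ in general, the correct statement (the paper's \cref{eqn:ad_IterateCalculation}) is only $\ad(\n)^k\bftau_j\in\tripleirrep_j(k)\oplus\bigoplus_{l>k}\LieG(l)$, and the expansion runs to $k=\height(\Phi)$, not $k=\varkappa_j$. Consequently the leading coefficient of the Pl\"ucker curve need \emph{not} be $\bigwedge_{j\in\calJdiam}\bftau_j\wedge\bigwedge_{j\in\calJstar}\ad(\n)^{\varkappa_j}\bftau_j$: summands of degree strictly larger than $\sum_{j\in\calJstar}\varkappa_j$ can occur and dominate. Your concluding ``stronger'' assertion that $\LimLie_\n(\infty)=Z_\LieG(\n)$ is in fact refuted by the paper itself: Case~4 of the proof of \cref{pro: height at most 3 implies star-QCP} exhibits, whenever $\height(\Phi)>3$ (e.g.\ for $\G_2$), a regular $\n$ for which $\bigwedge_{j\in\calJstar}\ad(\n)^{\varkappa_j}\bftau_j$ does not contribute to the leading term at all; whether it does is precisely the quasi-centralizing property, which is the subject of the rest of the section and cannot be assumed here.

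The containments themselves can still be reached along your general lines, but the missing idea is how to rule out wedge factors with exponent $k=0$ (for $j\in\calJstar$) in the leading term \emph{without} identifying that term. The paper does this by contradiction: if a summand contributing to the leading term had a factor $\bftau_j$ (exponent $k=0$) for the indices $j\in\calJstar_0$, then replacing each such factor by $\ad(\n)\bftau_j$ yields, after a linear-independence check using the weight-space components supplied by \cref{eqn:ad_IterateCalculation}, a \emph{nonzero} summand of strictly higher degree, contradicting maximality of the leading degree. Once every surviving factor with $j\in\calJstar$ has $k\geq 1$, it lies in $\bigoplus_{l\geq1}\LieG(l)=\LieW$, while the factors with $j\in\calJdiam$ lie in $\LieMdiam\oplus\LieW$, which gives both containments. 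Your write-up would need to replace the ``terminating expansion plus raising operator'' step with an argument of this kind.
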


\begin{proof}
Let $\n \in \regLieW$ with $\|\n\| = 1$. Let us first prepare by deriving some formulas. Let $t \in \R$. We recall \cref{eqn: Plucker embedding for unipotent translate curve} for $\Plucker\LimLie_\n(t)$ and $\Plucker\LimLiestar_\n(t)$ and calculate the corresponding wedge products.
For the former, we have
\begin{align}
\label{eqn:PolynomialCurveFor-n}
\begin{aligned}
\bigwedge_{j \in \calJ} \Ad(w_{t\n})\bftau_j &= \bigwedge_{j \in \calJ} \exp(\ad(t\n))\bftau_j = \bigwedge_{j \in \calJ} \sum_{k = 0}^{\height(\Phi)} \frac{\ad(\n)^k}{k!}\bftau_j \cdot t^k \\
&= \sum_{k = 0}^{\height(\Phi)^{\rankG}} \left(\sum_{\substack{\{l_j\}_{j \in \calJ} \subset \{0, 1, \dotsc, \height(\Phi)\}\\ \sum_{j \in \calJ} l_j = k}} \frac{1}{\prod_{j \in \calJ} l_j!} \bigwedge_{j \in \calJ} \ad(\n)^{l_j}\bftau_j\right) t^k.
\end{aligned}
\end{align}
An analogous $\star$-version of \cref{eqn:PolynomialCurveFor-n} holds. The limiting Lie algebra $\LimLie_\n(\infty)$ and the $\star$-limiting vector space $\LimLiestar_\n(\infty)$ are then determined by the leading terms of the sum in the respective versions of \cref{eqn:PolynomialCurveFor-n} (see the proof of \cref{lem:ExtendingPolynomialMaps}). Note that the lower order terms are required to estimate the constants $C_{\LimLie_\n}$ and $C_{\LimLiestar_\n}$. To investigate the terms in \cref{eqn:PolynomialCurveFor-n} we calculate the iterates $\ad(\n)^k\bftau_j$ for all $j \in \calJ$ and $0 \leq k \leq \height(\Phi)$. By \cref{lem:LieAlgebraJordanNormalForm}, we have $\n = \Ad(\exp \sigma)\Ad(\exp \omega)\favn$ for unique elements $\favn \in \genregLieW{1}$ of the form \labelcref{eqn: favn form} and $\sigma \in \LieA$ and $\omega \in \LieW$. Let $j \in \calJ$ and $0 \leq k \leq \height(\Phi)$. Using standard Lie theoretic identities and recalling basic properties from \cref{subsec:NilpotentElements,sec:LieTheoreticEstimates}, we have
\begin{align}
\label{eqn:ad_IterateCalculation}
\begin{aligned}
\ad(\n)^k\bftau_j &= \ad(\Ad(\exp \sigma)\Ad(\exp \omega)\favn)^k\bftau_j \\
&= \Ad(\exp \sigma)\Ad(\exp \omega) \ad(\favn)^k \Ad(\exp \omega)^{-1}\Ad(\exp \sigma)^{-1} \bftau_j \\
&\in \Ad(\exp \sigma)\Ad(\exp \omega) \left(\ad(\favn)^k\bftau_j + \bigoplus_{l > k} \favLieG(l)\right) \\
&\subset \tripleirrep_j(k) \oplus \bigoplus_{l > k} \LieG(l).
\end{aligned}
\end{align}

We now begin the proof in earnest. In \cref{eqn:PolynomialCurveFor-n}, for all $j \in \calJdiam$, any wedge factor $\ad(\n)^k\bftau_j$ for some $k \in \Z_{\geq 0}$ is contained in $\LieMdiam \oplus \LieW$ using \cref{eqn:ad_IterateCalculation}. We claim that in \cref{eqn:PolynomialCurveFor-n}, for all $j \in \calJstar$, any wedge factor $\ad(\n)^k \bftau_j$ which appears in a nonzero summand contributing to the leading term must have exponent $k > 0$. Using \cref{eqn:ad_IterateCalculation} with the claim, we conclude that all such wedge factors are contained in $\bigoplus_{k \in \N} \LieG(k) = \LieW$ (see \cref{rem:CorrespondingSL2TripleProperties}). Thus, $\LimLie_\n(\infty) \subset \LieMdiam \oplus \LieW$. We now prove the claim. For the sake of contradiction, suppose the claim is false. Then in \cref{eqn:PolynomialCurveFor-n}, a nonzero pure wedge coefficient of a summand contributing to the leading term (up to permutation of the wedge factors) is of the form
\begin{align*}
\bigwedge_{j \in \calJstar_0} \bftau_j \wedge \bigwedge_{j \in \calJ \smallsetminus \calJstar_0} \ad(\n)^{k_j} \bftau_j \neq 0
\end{align*}
for some $\{k_j\}_{j \in \calJ \smallsetminus \calJstar_0} \subset \N$ where $\calJstar_0 \subset \calJstar$ is the set of indices for which the exponent on $\ad(\n)$ vanishes. But then it is clear by the definitions of $\calJdiam$ and $\calJstar$, and using \cref{eqn:ad_IterateCalculation} to examine the weight space components, that $\{\ad(\n) \bftau_j\}_{j \in \calJstar_0} \cup \{\ad(\n)^{k_j} \bftau_j\}_{j \in \calJ \smallsetminus \calJstar_0}$ is a linearly independent set of vectors and hence
\begin{align*}
\bigwedge_{j \in \calJstar_0} \ad(\n) \bftau_j \wedge \bigwedge_{j \in \calJ \smallsetminus \calJstar_0} \ad(\n)^{k_j} \bftau_j \neq 0.
\end{align*}
This contradicts the fact that the original summand contributed to the leading term. The same claim above holds for the $\star$-version of \cref{eqn:PolynomialCurveFor-n} with the same proof as above with instances of $\calJ$ replaced with $\calJstar$. Thus $\LimLiestar_\n(\infty) \subset \LieW$.
\end{proof}

We introduce some definitions needed to further study the limiting Lie algebras via the $\star$-limiting vector spaces. Let $\n \in \regLieW$. We will often use the nilradical of $Z_\LieG(\n)$ which we denote by
\begin{align}
\label{eqn:NilpotentCentralizer}
\LieU_\n := \nil Z_\LieG(\n) = \bigoplus_{j \in \calJstar} \tripleirrep_j(\varkappa_j)
\end{align}
where we deduce the equality using $Z_\LieG(\favn) = \bigoplus_{j \in \calJ}  \favtripleirrep_j(\varkappa_j)$, \cref{lem:LieAlgebraJordanNormalForm}, and standard Lie algebra identities, where $\favn \in \genregLieW{1}$ of the form \labelcref{eqn: favn form} is provided by the lemma. When $\bfG$ is $\R$-split, $\LieU_\n = Z_\LieG(\n)$ and it is abelian. Due to \cref{eqn:NilpotentCentralizer}, we may define the projection map
\begin{align}
\label{eqn: projection map to LieU_n}
\pi_{\LieU_\n}: \LieG \to \LieU_\n
\end{align}
with respect to the weight space decomposition $\LieG = \bigoplus_{j \in \calJ} \bigoplus_{k = -\varkappa_j}^{\varkappa_j} \tripleirrep_j(k)$.

\begin{definition}[Centralizing]
\label{def:Centralizing}
Let $\n \in \regLieW$ with $\|\n\| = 1$. We say that $\LimLiestar_\n(\infty)$ is \emph{centralizing} if there exists $\n' := \n + z \in \regLieW$ with $z \in \bigoplus_{k > 1} \LieG(k)$ such that $\LimLiestar_\n(\infty) = \LieU_{\n'}$.
\end{definition}

\begin{remark}
\label{rem: centralizing implies star limiting Lie algebra}
By definition, if $\LimLiestar_\n(\infty)$ is centralizing for some $\n \in \regLieW$ with $\|\n\| = 1$, then it is a $\star$-limiting \emph{Lie algebra}.
\end{remark}

\begin{definition}[Quasi-centralizing]
\label{def:QuasiCentralizing}
Let $\n \in \regLieW$ with $\|\n\| = 1$. We say that $\LimLiestar_\n(\infty)$ is \emph{quasi-centralizing} if $\pi_{\LieU_\n}|_{\LimLiestar_\n(\infty)}$ is a linear isomorphism.
\end{definition}

\begin{definition}[$\epsilon^{-*}$-centralizing]
Let $\n \in \epregLieW$ for some $\epsilon > 0$ with $\|\n\| = 1$. We say that $\LimLiestar_\n(\infty)$ is \emph{$\epsilon^{-*}$-centralizing}, or more precisely \emph{$\epsilon^{-\Lambda}$-centralizing}, if we have the following:
\begin{itemize}
\item $\pi_{\LieU_\n}|_{\LimLiestar_\n(\infty)}$ is a linear isomorphism;
\item there exists $\Lambda > 0$ such that for all $v \in \LimLiestar_\n(\infty)$, we have
\begin{align*}
\|v - \pi_{\LieU_\n}(v)\| \ll_\LieG \epsilon^{-\Lambda}\|\pi_{\LieU_\n}(v)\|.
\end{align*}
\end{itemize}
\end{definition}

We record a simple observation relating the first two notions from above.

\begin{lemma}
\label{lem: quasi-split and quasi-centralizing implies centralizing}
Let $\n \in \regLieW$ with $\|\n\| = 1$. The following holds.
\begin{enumerate}
\item If $\LimLiestar_\n(\infty)$ is centralizing, then it is quasi-centralizing.
\item Suppose $\bfG$ is $\R$-quasi-split. If $\LimLiestar_\n(\infty)$ is quasi-centralizing, then it is centralizing. Indeed, $\LimLie_\n(\infty) = Z_\LieG(\n')$ and $\LimLiestar_\n(\infty) = \LieU_{\n'}$.
\end{enumerate}
\end{lemma}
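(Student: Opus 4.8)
The plan is to handle the two implications separately, in both cases working with the degree grading $\LieG=\bigoplus_{k}\LieG(k)$ attached to $\n$ (obtained from the one for $\favn$ via the conjugation of \cref{lem:LieAlgebraJordanNormalForm}) and with the identity $\bigoplus_{k\geq 1}\LieG(k)=\LieW$ from \cref{rem:CorrespondingSL2TripleProperties}. I will use throughout that $\ad(\n)\colon\LieG(k)\to\LieG(k+1)$ is surjective for every $k\geq 1$, which follows by transporting \cref{lem:adfavnSurjective} through $\Ad(\exp\sigma)\Ad(\exp\omega)$.

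For (1), assume $\LimLiestar_\n(\infty)=\LieU_{\n'}$ with $\n'=\n+z$ and $z\in\bigoplus_{k>1}\LieG(k)$. First I would solve $\Ad(\exp v)\n=\n'$ for some $v\in\bigoplus_{k\geq 1}\LieG(k)=\LieW$ by an inductive procedure on the degree, just as in the proof of \cref{lem:LieAlgebraJordanNormalForm}: matching the degree-$(k+1)$ components determines the degree-$k$ component of $v$ via the surjectivity of $\ad(\n)$ recalled above. Then $\LieU_{\n'}=\Ad(\exp v)\LieU_\n$. Because $\ad(v)$ strictly raises the degree, $\Ad(\exp v)$ preserves the filtration $\bigoplus_{l\geq m}\LieG(l)$ and induces the identity on the associated graded, so for a basis $\{w_j\}_{j\in\calJstar}$ of $\LieU_\n$ with $w_j\in\tripleirrep_j(\varkappa_j)\subset\LieG(\varkappa_j)$ one has $\Ad(\exp v)w_j\in w_j+\bigoplus_{l>\varkappa_j}\LieG(l)$, hence $\pi_{\LieU_\n}(\Ad(\exp v)w_j)=w_j+\sum_{j'\colon\varkappa_{j'}>\varkappa_j}c_{jj'}w_{j'}$. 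Ordering $\calJstar$ by the value of $\varkappa_j$, this is a unitriangular change of basis, so $\pi_{\LieU_\n}|_{\LimLiestar_\n(\infty)}$ is an isomorphism, i.e.\ $\LimLiestar_\n(\infty)$ is quasi-centralizing.

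For (2), assume $\bfG$ is $\R$-quasi-split and $\LimLiestar_\n(\infty)$ is quasi-centralizing. By \cref{lem: limiting Lie algebra} $\LimLie_\n(\infty)$ is abelian, and $\LimLiestar_\n(\infty)\subseteq\LimLie_\n(\infty)\cap\LieW$ by \cref{lem: limiting Lie algebra,lem:LimitingLieAlgebraNilpotent}; so $\LimLiestar_\n(\infty)$ is an abelian Lie subalgebra of $\LieG$ of dimension $\rankGstar$ consisting of ad-nilpotent elements. The key step, discussed below, is to exhibit a \emph{regular} nilpotent $\n'\in\LimLiestar_\n(\infty)$ whose degree-one component equals $\n$ (equivalently $\n'-\n\in\bigoplus_{k>1}\LieG(k)$); granting this, the remainder is formal. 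Abelianness of $\LimLiestar_\n(\infty)$ and $\LimLiestar_\n(\infty)\subseteq\LieW$ give $\LimLiestar_\n(\infty)\subseteq Z_\LieG(\n')\cap\LieW$. In the $\R$-quasi-split case $Z_\LieG(\n')$ is abelian, decomposing as $\LieU_{\n'}=\nil Z_\LieG(\n')$ plus a complementary abelian summand on which $\ad$ acts semisimply, whose only ad-nilpotent element is $0$ (as $\LieG$ is semisimple); hence $Z_\LieG(\n')\cap\LieW=\LieU_{\n'}$, of dimension $\rankGstar$ (this dimension, $\#\{j\colon\varkappa_j\geq 1\}=\dim\LieG(1)=\sum_{\alpha\in\Pi}\mult(\alpha)$, is the same for every regular nilpotent). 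Comparing dimensions yields $\LimLiestar_\n(\infty)=\LieU_{\n'}$, so $\LimLiestar_\n(\infty)$ is centralizing. Finally $\n'\in\LimLie_\n(\infty)$ and abelianness of $\LimLie_\n(\infty)$ force $\LimLie_\n(\infty)\subseteq Z_\LieG(\n')$, and both are abelian of dimension $\#\calJ$, so $\LimLie_\n(\infty)=Z_\LieG(\n')$.

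I expect the construction of this regular $\n'$ to be the main obstacle, and it is where both hypotheses are genuinely used. I would extract it from the description of $\LimLiestar_\n(\infty)$ as the span of the leading coefficients of the polynomial curve $t\mapsto\bigwedge_{j\in\calJstar}\Ad(w_{t\n})\bftau_j$ in the proof of \cref{lem:LimitingLieAlgebraNilpotent}, combined with \cref{eqn:ad_IterateCalculation}. For the index $j_0$ with $\tripleirrep_{j_0}=\LieSL_2(\favn)$ one has, by \cref{eqn:ad_IterateCalculation}, $\ad(\n)\bftau_{j_0}=c\,\n+(\text{terms of degree}>1)$ with $c\neq 0$; if this vector occurs among the leading coefficients then $\n\in\pi_{\LieG(1)}(\LimLiestar_\n(\infty))$. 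Since $\LimLiestar_\n(\infty)$ and $\LieG(1)$ both have dimension $\rankGstar$, one then shows $\pi_{\LieG(1)}|_{\LimLiestar_\n(\infty)}$ is injective; the quasi-centralizing hypothesis is precisely what prevents the degenerate behaviour in which the leading exponents $l_j$ drop below $\varkappa_j$ and $\n$ fails to appear, while $\R$-quasi-splitness is what makes $\LimLiestar_\n(\infty)$ the abelian subalgebra used above. The unique preimage $\n'$ of $\n$ under this injective map then has the same height-one restricted-root component as $\n$, so all its simple-root components are nonzero and $\n'$ is regular.
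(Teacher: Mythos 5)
Part~(1) of your proposal is correct and is essentially the paper's argument. The paper writes $\LimLiestar_\n(\infty) = \LieU_{\n'} = \Ad(g_{\n'}g_\n^{-1})\LieU_\n$ with $g_{\n'}g_\n^{-1} \in AW$ coming from \cref{lem:LieAlgebraJordanNormalForm}, and concludes by noting that $\pi_{\LieU_\n} \circ \Ad(g_{\n'}g_\n^{-1})|_{\LieU_\n}$ has trivial kernel; your construction of the conjugator $\exp(v)$ with $v \in \LieW$ degree by degree (using the transported surjectivity of \cref{lem:adfavnSurjective}) and the explicit unitriangularity of $\pi_{\LieU_\n}|_{\LimLiestar_\n(\infty)}$ in a basis ordered by $\varkappa_j$ is the same mechanism, spelled out in more detail.

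Part~(2) has a genuine gap at exactly the step you flag as the main obstacle, and the route you propose to close it fails. You want a regular $\n' \in \LimLiestar_\n(\infty)$ with $\pi_{\LieG(1)}(\n') = \n$, and you propose to obtain it from injectivity of $\pi_{\LieG(1)}|_{\LimLiestar_\n(\infty)}$ via the dimension count $\dim \LimLiestar_\n(\infty) = \dim \LieG(1) = \rankGstar$. That injectivity is false in general: already for $\LieG = \LieSL_3(\R)$ (where $\height(\Phi) = 2$, $\calJstar = \{j_0, j_1\}$ with $\varkappa_{j_1} = 2$) the leading coefficient is the single pure wedge $\ad(\n)\bftau_{j_0} \wedge \ad(\n)^2\bftau_{j_1}$, and $\ad(\n)^2\bftau_{j_1}$ is a nonzero element of $\tripleirrep_{j_1}(2) \subset \LieG(2)$ lying in $\LimLiestar_\n(\infty) \cap \ker\pi_{\LieG(1)}$. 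What you actually need is only that $\n$ lies in the \emph{image} of $\pi_{\LieG(1)}|_{\LimLiestar_\n(\infty)}$ --- any preimage is then of the form $\n + z$ with $z \in \bigoplus_{k > 1}\LieG(k)$ (since $\LimLiestar_\n(\infty) \subset \LieW$), hence has the same height-one component as $\n$ and is automatically regular --- but your argument for this is conditional (``if this vector occurs among the leading coefficients''), and since $\LimLiestar_\n(\infty)$ is spanned by the columns of the \emph{total} leading coefficient, which in general is a sum of several pure wedges, the appearance of $\ad(\n)\bftau_{j_0}$ as a factor of one contributing summand does not by itself place any vector with $\LieG(1)$-component $\n$ inside $\LimLiestar_\n(\infty)$. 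The paper instead takes $\n'$ to be the preimage of $\n$ under $\pi_{\LieU_\n}|_{\LimLiestar_\n(\infty)}$, whose existence is immediate from the quasi-centralizing hypothesis. The remainder of your part~(2) --- abelianness of $\LimLiestar_\n(\infty)$ in the quasi-split case, the identity $Z_\LieG(\n') \cap \LieW = \LieU_{\n'}$ via the Jordan decomposition, and the dimension counts giving $\LimLiestar_\n(\infty) = \LieU_{\n'}$ and $\LimLie_\n(\infty) = Z_\LieG(\n')$ --- agrees with the paper and is fine once a valid $\n'$ is in hand.
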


\begin{proof}
Let $\n$ be as in the lemma. To prove property~(1), suppose $\LimLiestar_\n(\infty)$ is centralizing. Using definitions and \cref{lem:LieAlgebraJordanNormalForm}, we have both
\begin{align*}
\LieU_\n &= \Ad(g_\n)\LieU_{\favn}, & \LimLiestar_\n(\infty) &= \LieU_{\n'} = \Ad(g_{\n'})\LieU_{\favn},
\end{align*}
for some $\n' \in \regLieW$, $\favn \in \genregLieW{1}$ of the form \labelcref{eqn: favn form}, and $g_\n, g_{\n'} \in AW < G$. Therefore, $\LimLiestar_\n(\infty) = \Ad(g_{\n'} g_\n^{-1})\LieU_\n$ where $g_{\n'} g_\n^{-1} \in AW$. It follows that $\pi_{\LieU_\n}|_{\LimLiestar_\n(\infty)}$ must be a linear isomorphism because $\pi_{\LieU_\n}|_{\LimLiestar_\n(\infty)} \circ \Ad(g_{\n'} g_\n^{-1})|_{\LieU_\n}$ has trivial kernel.

To prove property~(2), suppose $\bfG$ is $\R$-quasi-split and $\LimLiestar_\n(\infty)$ is quasi-centralizing. Then $\LimLiestar_\n(\infty)$ must be abelian by \cref{lem: limiting Lie algebra}. Taking $\n' \in \regLieW$ such that $\pi_{\LieU_\n}(\n') = \n$, and then using definitions and comparing dimensions, we get $\LimLie_\n(\infty) = Z_\LieG(\n')$. Then by \cref{lem:LimitingLieAlgebraNilpotent}, we get $\LimLiestar_\n(\infty) = \LieU_{\n'}$.
\end{proof}

\begin{remark}
\label{rem:CentralizingDefinition n'}
As a consequence of \cref{lem: quasi-split and quasi-centralizing implies centralizing}, in \cref{def:Centralizing}, we may in fact take $\n' \in \regLieW$ such that $\pi_{\LieU_\n}(\n') = \n$.
\end{remark}

We give an elementary but useful reformulation of the quasi-centralizing property in the lemma below. Let $\n \in \regLieW$, and $\sigma \in \LieA$ and $\omega \in \LieW$ be the unique elements provided by \cref{lem:LieAlgebraJordanNormalForm}. Define a new inner product via the pushforward
\begin{align*}
\langle \bigcdot, \bigcdot \rangle_\n :=  [\Ad(\exp \sigma)\Ad(\exp \omega)]_* \langle \bigcdot, \bigcdot \rangle
\end{align*}
and denote by $\|\bigcdot\|_\n$ and $\|\bigcdot\|_{\n, \mathrm{op}}$ its induced norm and operator norm, respectively. Note that the weight space decomposition $\LieG = \bigoplus_{j \in \calJ} \bigoplus_{k = -\varkappa_j}^{\varkappa_j} \tripleirrep_j(k)$ is orthogonal with respect to $\langle \bigcdot, \bigcdot \rangle_\n$. As before, it induces an inner product and norm on $\bigwedge^\rankGstar \LieG$ via the determinant. When we are only concerned with checking orthogonality conditions, we abuse notation and allow \emph{linear subspaces} for the arguments of $\langle \bigcdot, \bigcdot \rangle_\n$.

\begin{lemma}
\label{lem: quasi-centralizing reformulation}
Let $\n \in \regLieW$ with $\|\n\| = 1$. Then, $\LimLiestar_\n(\infty)$ is quasi-centralizing if and only if $\bigl\langle \bigwedge^\rankGstar \LimLiestar_\n(\infty), \bigwedge^\rankGstar \LieU_\n \bigr\rangle_\n \neq 0$.
\end{lemma}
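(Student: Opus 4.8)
The plan is to reduce the statement to a general fact about orthogonal projections and Plücker coordinates: if $P\colon V\to V$ is the orthogonal projection onto an $r$-dimensional subspace $U$ with respect to an inner product on $V$, and $W\subset V$ is any $r$-dimensional subspace, then $P|_W\colon W\to U$ is a linear isomorphism if and only if $\bigl\langle\bigwedge^r W,\bigwedge^r U\bigr\rangle\neq 0$. Before invoking this I would record two preliminary observations. First, $\pi_{\LieU_\n}$ is precisely the orthogonal projection onto $\LieU_\n$ with respect to $\langle\bigcdot,\bigcdot\rangle_\n$: indeed, the weight space decomposition $\LieG=\bigoplus_{j\in\calJ}\bigoplus_{k}\tripleirrep_j(k)$ is orthogonal for $\langle\bigcdot,\bigcdot\rangle_\n$, the map $\pi_{\LieU_\n}$ was defined as the projection along this decomposition, and by \cref{eqn:NilpotentCentralizer} the target $\LieU_\n=\bigoplus_{j\in\calJstar}\tripleirrep_j(\varkappa_j)$ is a sub-sum of weight spaces. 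Second, $\LimLiestar_\n(\infty)$ and $\LieU_\n$ are both $\rankGstar$-dimensional subspaces of $\LieG$: the former because $\LimLiestar_\n(\infty)\in\Gr_{\rankGstar}(\LieG)$ by \cref{lem: limiting Lie algebra} (and it actually lies in $\LieW$ by \cref{lem:LimitingLieAlgebraNilpotent}), the latter because $\#\calJstar=\rankGstar$ and each $\tripleirrep_j(\varkappa_j)$ is one-dimensional. With these in hand, the lemma is the above fact applied to $V=\LieG$, $U=\LieU_\n$, $W=\LimLiestar_\n(\infty)$, $r=\rankGstar$, and the inner product $\langle\bigcdot,\bigcdot\rangle_\n$.

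To prove the linear-algebra fact itself, I would fix an orthonormal basis $\{u_1,\dotsc,u_r\}$ of $U$ and an arbitrary basis $\{v_1,\dotsc,v_r\}$ of $W$. By the definition of the induced inner product on $\bigwedge^r V$ via the determinant,
\[
\bigl\langle v_1\wedge\dotsb\wedge v_r,\ u_1\wedge\dotsb\wedge u_r\bigr\rangle=\det\bigl(\langle v_i,u_j\rangle\bigr)_{1\leq i,j\leq r}.
\]
Since $v_i-Pv_i\perp U$ and $u_j\in U$, one has $\langle v_i,u_j\rangle=\langle Pv_i,u_j\rangle$, so the matrix on the right is exactly the matrix of $P|_W$ relative to the basis $\{v_i\}$ of $W$ and the orthonormal basis $\{u_j\}$ of $U$; its determinant is nonzero precisely when $P|_W$ is invertible. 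As $\bigwedge^r W$ and $\bigwedge^r U$ are the lines spanned by $v_1\wedge\dotsb\wedge v_r$ and $u_1\wedge\dotsb\wedge u_r$ respectively, and scaling is immaterial to the non-vanishing of the pairing, this yields the claimed equivalence, and hence the lemma.

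There is essentially no serious obstacle here — the argument is a short computation in linear algebra. The one point genuinely requiring care, and the reason the modified inner product $\langle\bigcdot,\bigcdot\rangle_\n$ rather than the original $\langle\bigcdot,\bigcdot\rangle$ appears in the statement, is the identification of $\pi_{\LieU_\n}$ with an \emph{orthogonal} projection; this is exactly what the pushforward inner product was introduced for, and the $\langle\bigcdot,\bigcdot\rangle_\n$-orthogonality of the weight space decomposition is the crucial input that makes it work.
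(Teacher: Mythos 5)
Your proof is correct and follows essentially the same route as the paper: the paper's (much terser) argument likewise reduces the statement to the non-vanishing of the Gram determinant $\det(\langle v_i,u_j\rangle_\n)$ computed via the definition of the induced inner product on $\bigwedge^{\rankGstar}\LieG$, using that the weight space decomposition is $\langle\bigcdot,\bigcdot\rangle_\n$-orthogonal so that $\pi_{\LieU_\n}$ is the orthogonal projection. You have simply spelled out the multilinear-algebra details that the paper leaves implicit.
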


\begin{proof}
Let $\n$ be as in the lemma. By definition, $\LimLiestar_\n(\infty)$ is quasi-centralizing if and only if $\ker(\pi_{\LieU_\n}|_{\LimLiestar_\n(\infty)})$ is trivial. Using the definition of the inner product $\langle \bigcdot, \bigcdot \rangle_\n$ on $\bigwedge^\rankGstar \LieG$ via the determinant, and multilinearity, we deduce that $\ker(\pi_{\LieU_\n}|_{\LimLiestar_\n(\infty)})$ is trivial if and only if $\bigl\langle \bigwedge^\rankGstar \LimLiestar_\n(\infty), \bigwedge^\rankGstar \LieU_\n \bigr\rangle_\n \neq 0$.
\end{proof}

\begin{lemma}
\label{lem: quasi-centralizing estimates}
Let $\n \in \epregLieW$ for some $\epsilon > 0$ with $\|\n\| = 1$. If $\LimLiestar_\n(\infty)$ is quasi-centralizing, then we have the following:
\begin{enumerate}
\item $\LimLiestar_\n(\infty)$ is $\epsilon^{-6\rankGstar\height(\Phi)^2}$-centralizing;
\item $C_{\LimLiestar_\n} \ll_\LieG \epsilon^{-4\rankGstar\height(\Phi)^2}$ where $C_{\LimLiestar_\n}$ is as defined in \cref{lem:ExtendingPolynomialMaps};
\item $\epsilon^{6\rankGstar\height(\Phi)^2} \ll_\LieG \|\pi_{\LieU_\n}|_{\LimLiestar_\n(\infty)}\|_{\mathrm{op}} \leq 1$.
\end{enumerate}
\end{lemma}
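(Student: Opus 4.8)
The plan is to work directly with the polynomial curve $\LimLiestar_\n$ and pin down its leading coefficient quantitatively. First I would fix notation: write $\n = \Ad(\exp\sigma)\Ad(\exp\omega)\favn$ as in \cref{lem:LieAlgebraJordanNormalForm}, put $g_\n := \exp(\sigma)\exp(\omega)$, and recall that $\LieU_\n = \Ad(g_\n)\LieU_{\favn}$, that $\langle\bigcdot,\bigcdot\rangle_\n = (\Ad(g_\n))_*\langle\bigcdot,\bigcdot\rangle$ (so $\|\bigcdot\|_\n = \|\Ad(g_\n)^{-1}(\bigcdot)\|$), and that the weight space decomposition $\LieG = \bigoplus_{j\in\calJ}\bigoplus_k\tripleirrep_j(k)$ is $\langle\bigcdot,\bigcdot\rangle_\n$-orthogonal, so that $\pi_{\LieU_\n}$ is the $\langle\bigcdot,\bigcdot\rangle_\n$-orthogonal projection onto $\LieU_\n$. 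By \cref{eqn: Plucker embedding for unipotent translate curve} and the $\star$-version of the expansion \cref{eqn:PolynomialCurveFor-n}, $\Plucker\LimLiestar_\n(t) = [\tilde\varphi(t)]$ with $\tilde\varphi(t) = \bigwedge_{j\in\calJstar}\Ad(w_{t\n})\bftau_j = \sum_i x_i t^i$, where each $x_i$ is a fixed universal combination of the pure wedges $\bigwedge_{j\in\calJstar}\ad(\n)^{l_j}\bftau_j$ over compositions $\sum_j l_j = i$. The key step is to identify the leading term: using the weight-support estimate $\ad(\n)^{l_j}\bftau_j\in\tripleirrep_j(l_j)\oplus\bigoplus_{l>l_j}\LieG(l)$ from \cref{eqn:ad_IterateCalculation} (together with the fact recorded there that the $\tripleirrep_j(l_j)$-component equals $\Ad(g_\n)\ad(\favn)^{l_j}\bftau_j$) and a determinant computation in $\bigwedge^\rankGstar\LieG$, one checks that the $\bigwedge^\rankGstar\LieU_\n$-component (with respect to $\langle\bigcdot,\bigcdot\rangle_\n$) of such a monomial vanishes whenever $\sum_j l_j > \bar k := \sum_{j\in\calJstar}\varkappa_j$, and that among the monomials with $\sum_j l_j = \bar k$ the only one with nonzero $\bigwedge^\rankGstar\LieU_\n$-component is $\bigwedge_{j\in\calJstar}\ad(\n)^{\varkappa_j}\bftau_j$, whose $\bigwedge^\rankGstar\LieU_\n$-component equals $\Ad(g_\n)\bigl(\bigwedge_{j\in\calJstar}\ad(\favn)^{\varkappa_j}\bftau_j\bigr)$. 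Combined with the reformulation \cref{lem: quasi-centralizing reformulation} of quasi-centralizing — which forces the $\bigwedge^\rankGstar\LieU_\n$-component of the leading coefficient of $\tilde\varphi$ to be nonzero — this shows $\deg\tilde\varphi = \bar k$ and that the leading coefficient $x_{\bar k}$ has $\bigwedge^\rankGstar\LieU_\n$-component $\tfrac1{\prod_j\varkappa_j!}\Ad(g_\n)\bigl(\bigwedge_{j\in\calJstar}\ad(\favn)^{\varkappa_j}\bftau_j\bigr)$, of $\|\bigcdot\|_\n$-norm $\tfrac1{\prod_j\varkappa_j!}\prod_{j\in\calJstar}\|\ad(\favn)^{\varkappa_j}\bftau_j\| \gg_\LieG 1$, the lower bound coming from compactness of the family of $\favn$ of the form \cref{eqn: favn form} (and of the auxiliary choices of $\LieSL_2(\favn)$-decomposition).

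From here the remaining estimates are operator-norm bookkeeping, which I expect to be routine. Since $\|\ad(\n)\|_{\mathrm{op}}\ll_\LieG\|\n\| = 1$ and each $l_j\le\height(\Phi)$, every iterate satisfies $\|\ad(\n)^{l_j}\bftau_j\|\ll_\LieG 1$, hence $\|x_i\|\ll_\LieG 1$ for all $i$ (there are $O_\LieG(1)$ terms). Every pure wedge occurring in $x_{\bar k}$ has all exponents $\ge 1$ (this is exactly the claim established in the proof of \cref{lem:LimitingLieAlgebraNilpotent}), so $x_{\bar k}\in\bigwedge^\rankGstar\LieW$; combining the lower bound above with $\|x_{\bar k}\|_\n\le\|\Ad(g_\n)^{-1}|_{\bigwedge^\rankGstar\LieW}\|_{\mathrm{op}}\|x_{\bar k}\|$ and the $\LieW$-versions of the operator-norm estimates of \cref{lem:sigma_omega_OperatorNorms} (using $\|\n\| = 1$ and \cref{eqn: Ad exp - sigma operator norm}) together with \cref{rem:Optimal_OperatorNorms}, which give $\|\Ad(g_\n)^{-1}|_{\bigwedge^\rankGstar\LieW}\|_{\mathrm{op}}\ll_\LieG\epsilon^{-\rankGstar\height(\Phi)(2\height(\Phi)-1)}\le\epsilon^{-2\rankGstar\height(\Phi)^2}$, I obtain $\|x_{\bar k}\|\gg_\LieG\epsilon^{2\rankGstar\height(\Phi)^2}$. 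Renormalizing so that the leading coefficient of the defining polynomial has unit norm turns $\|x_i\|\ll_\LieG 1$ into $\|x_i\|\ll_\LieG\epsilon^{-2\rankGstar\height(\Phi)^2}$; since $\bar k\ll_\LieG 1$, the formula for $C_{\LimLiestar_\n}$ in \cref{lem:ExtendingPolynomialMaps} then gives $C_{\LimLiestar_\n}\ll_\LieG\epsilon^{-2\rankGstar\height(\Phi)^2}\ll_\LieG\epsilon^{-4\rankGstar\height(\Phi)^2}$, which is~(2). (Throughout one may assume $\epsilon\le 1$, since $\epsilon$-regularity is monotone in $\epsilon$.)

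For~(1), I would write $\Plucker\LimLiestar_\n(\infty) = [x_{\bar k}] = [y + z]$ with $y$ the $\bigwedge^\rankGstar\LieU_\n$-component of $x_{\bar k}$ and $z\perp_\n\bigwedge^\rankGstar\LieU_\n$, so that the largest principal angle between $\LimLiestar_\n(\infty)$ and $\LieU_\n$ (with respect to $\langle\bigcdot,\bigcdot\rangle_\n$) has tangent at most $\|z\|_\n/\|y\|_\n\le\|x_{\bar k}\|_\n/\|y\|_\n\ll_\LieG\epsilon^{-2\rankGstar\height(\Phi)^2}$ by the bounds above; this gives $\|v - \pi_{\LieU_\n}(v)\|_\n\ll_\LieG\epsilon^{-2\rankGstar\height(\Phi)^2}\|\pi_{\LieU_\n}(v)\|_\n$ for all $v\in\LimLiestar_\n(\infty)$. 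Since $\LimLiestar_\n(\infty)\subset\LieW$ by \cref{lem:LimitingLieAlgebraNilpotent} and $\LieU_\n\subset\LieW$, transferring this to $\|\bigcdot\|$ via the $\LieW$-operator norms of $\Ad(g_\n)$ and $\Ad(g_\n)^{-1}$ (from \cref{lem:sigma_omega_OperatorNorms} and \cref{rem:Optimal_OperatorNorms}, with $\|\n\| = 1$) costs an extra factor $\epsilon^{-2\height(\Phi)(\height(\Phi)-1)-\height(\Phi)(2\height(\Phi)-1)}$; and since $(2\rankGstar+4)\height(\Phi)^2\le 6\rankGstar\height(\Phi)^2$ for $\rankGstar\ge 1$, this yields $\|v - \pi_{\LieU_\n}(v)\|\ll_\LieG\epsilon^{-6\rankGstar\height(\Phi)^2}\|\pi_{\LieU_\n}(v)\|$, i.e.\ $\LimLiestar_\n(\infty)$ is $\epsilon^{-6\rankGstar\height(\Phi)^2}$-centralizing. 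Finally for~(3) (operator norm with respect to $\|\bigcdot\|_\n$, as is natural here), $\langle\bigcdot,\bigcdot\rangle_\n$-orthogonality of the weight decomposition gives $\|v\|_\n^2 = \|\pi_{\LieU_\n}(v)\|_\n^2 + \|v - \pi_{\LieU_\n}(v)\|_\n^2\ge\|\pi_{\LieU_\n}(v)\|_\n^2$, so $\|\pi_{\LieU_\n}|_{\LimLiestar_\n(\infty)}\|_{\n,\mathrm{op}}\le 1$; and taking $v\in\LimLiestar_\n(\infty)$ with $\pi_{\LieU_\n}(v) = u$ and $\|u\|_\n = 1$, the $\|\bigcdot\|_\n$-form of the inequality just established gives $\|v\|_\n\ll_\LieG\epsilon^{-2\rankGstar\height(\Phi)^2}$, whence $\|\pi_{\LieU_\n}|_{\LimLiestar_\n(\infty)}\|_{\n,\mathrm{op}}\ge\|u\|_\n/\|v\|_\n\gg_\LieG\epsilon^{2\rankGstar\height(\Phi)^2}\ge\epsilon^{6\rankGstar\height(\Phi)^2}$.

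The hard part is the single step in the first paragraph: establishing, under the quasi-centralizing hypothesis, that $\tilde\varphi$ has degree exactly $\bar k$ and that the $\bigwedge^\rankGstar\LieU_\n$-component of its leading coefficient has $\langle\bigcdot,\bigcdot\rangle_\n$-norm bounded below by a constant depending only on $\LieG$. This needs careful tracking of weight supports through \cref{eqn:ad_IterateCalculation}, a determinant computation in the exterior power, and a compactness argument over the family \cref{eqn: favn form}; everything else is a mechanical combination of \cref{lem:sigma_omega_OperatorNorms} with elementary facts about principal angles between subspaces.
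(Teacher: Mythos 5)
Your proposal is correct and follows essentially the same route as the paper: expand $\Plucker\LimLiestar_\n(t)$ via the $\star$-version of \cref{eqn:PolynomialCurveFor-n}, use the weight-support information from \cref{eqn:ad_IterateCalculation} together with \cref{lem: quasi-centralizing reformulation} to show that the unique monomial with nonzero $\bigwedge^{\rankGstar}\LieU_\n$-component is $\bigwedge_{j}\ad(\n)^{\varkappa_j}\bftau_j$ in degree $\sum_j\varkappa_j$, and then convert between $\|\bigcdot\|$ and $\|\bigcdot\|_\n$ with \cref{lem:sigma_omega_OperatorNorms} and the principal-angle formalism (your direct bound $\|x_i\|\ll_\LieG 1$ from $\|\ad(\n)\|_{\mathrm{op}}\ll 1$ is in fact cleaner than the paper's detour through $\Ad(g_\n)$). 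The one loose end is property~(3): you prove the lower bound only for $\|\pi_{\LieU_\n}|_{\LimLiestar_\n(\infty)}\|_{\n,\mathrm{op}}$, whereas the statement (and the paper) uses the unweighted operator norm; the conversion is another application of \cref{lem:sigma_omega_OperatorNorms} costing a factor $\epsilon^{-O(\height(\Phi)^2)}$, which still fits inside the exponent $6\rankGstar\height(\Phi)^2$ since $\rankGstar\geq 1$, so this is a routine patch rather than a gap.
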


\begin{proof}
We will often use \cref{eqn:PolynomialCurveFor-n,eqn:ad_IterateCalculation}. Let $\n$ be as in the lemma. Suppose $\LimLiestar_\n(\infty)$ is quasi-centralizing. We first prove two claims.

\medskip
\noindent
\textit{Claim~1. In the $\star$-version of \cref{eqn:PolynomialCurveFor-n}, there exists at least one summand contributing to the leading term whose pure wedge coefficient is such that for all $j \in \calJstar$, at least one of its wedge factors must have a nonzero component in $\tripleirrep_j(\varkappa_j)$ with respect to the weight space decomposition $\LieG = \bigoplus_{j \in \calJ} \bigoplus_{k = -\varkappa_j}^{\varkappa_j} \tripleirrep_j(k)$.}

\medskip
\noindent
\textit{Proof of Claim~1.}
For the sake of contradiction, suppose Claim~1 is false. Then, in the $\star$-version of \cref{eqn:PolynomialCurveFor-n}, all the summands contributing to the leading term have a pure wedge coefficient such that for some $j \in \calJstar$, all its wedge factors have vanishing component in $\tripleirrep_j(\varkappa_j)$. Consequently, the pure wedge coefficients of all the summands contributing to the leading term, and hence also of the total leading term, must have a vanishing inner product with $\bigwedge_{j \in \calJstar} \tripleirrep_j(\varkappa_j) = \bigwedge^\rankGstar \LieU_\n$ with respect to $\langle \bigcdot, \bigcdot\rangle_\n$. This contradicts the quasi-centralizing property by \cref{lem: quasi-centralizing reformulation}.

\medskip
\noindent
\textit{Claim~2. A summand of the form in Claim~1 is unique and its pure wedge coefficient is $\bigwedge_{j \in \calJstar} \ad(\n)^{\varkappa_j}\bftau_j$.}

\medskip
\noindent
\textit{Proof of Claim~2.}
Using \cref{eqn:ad_IterateCalculation}, we observe that for all $j \in \calJstar$, the element $\ad(\n)^{\varkappa_j}\bftau_j$ has a nonzero component in $\tripleirrep_j(\varkappa_j)$. Similarly, we also observe that for all \emph{distinct} $j, j' \in \calJstar$, if $\ad(\n)^k\bftau_{j'}$ has a nonzero component in $\tripleirrep_j(\varkappa_j)$, then it must have exponent $k < \varkappa_j$. Therefore, a summand (contributing to the leading term) of the form in Claim~1 must have the pure wedge coefficient $\bigwedge_{j \in \calJstar} \ad(\n)^{\varkappa_j}\bftau_j$, and hence is unique, and as a simple consequence, the leading term must be of degree $\sum_{j \in \calJstar} \varkappa_j = \dim(\LieW)$.

\medskip
We now prove property~(2) of the lemma. We need some estimates. Let $j \in \calJstar$ and $0 \leq k \leq \varkappa_j$. Recall from \cref{eqn:ad_IterateCalculation} (keeping the same notation) that
\begin{align*}
\ad(\n)^k\bftau_j &= \Ad(\exp \sigma)\Ad(\exp \omega)\bigl(\ad(\favn)^k\bftau_j + z_{j, k}\bigr) \\
&= \Ad(\exp \sigma)\bigl(\ad(\favn)^k\bftau_j + z_{j, k}'\bigr)
\end{align*}
for some $z_{j, k}, z_{j, k}' \in \bigoplus_{l > k} \favLieG(l)$. Clearly,
\begin{align*}
\bigl\|\ad(\favn)^k\bftau_j\bigr\| &\asymp \|\bftau_j\| = 1 \qquad \text{if $k \leq \varkappa_j$}, \\
\ad(\favn)^k\bftau_j &= 0 \qquad \text{if $k > \varkappa_j$}.
\end{align*}
Using standard Lie theoretic identities and the estimate from \cref{lem:LieAlgebraJordanNormalForm}, we also have
\begin{align*}
\|z_{j, k}\|, \bigl\|z_{j, k}'\bigr\| \ll \epsilon^{-2(\height(\Phi) - k)(\height(\Phi) - 1)}.
\end{align*}
Let us now derive estimates for pure wedge coefficients in the $\star$-version of \cref{eqn:PolynomialCurveFor-n}. Since the weight space decomposition $\LieG = \bigoplus_{j \in \calJ} \bigoplus_{k = -\varkappa_j}^{\varkappa_j} \favtripleirrep_j(k)$ is orthogonal, for all $0 \leq k_j \leq \varkappa_j$ for all $j \in \calJstar$, we have
\begin{align}
\label{eqn: a priori wedge estimate}
\Bigl\|\bigwedge_{j \in \calJstar} \bigl(\ad(\favn)^{k_j}\bftau_j + z_{j, k_j}\bigr)\Bigr\| &\asymp 1 + O\bigl(\epsilon^{-2\rankGstar\height(\Phi)(\height(\Phi) - 1)}\bigr).
\end{align}
Similarly, using \cref{lem:LieAlgebraJordanNormalForm,lem:sigma_omega_OperatorNorms}, for all $0 \leq k_j \leq \height(\Phi)$ for all $j \in \calJstar$, we have
\begin{align}
\label{eqn: wedge estimate}
\Bigl\|\bigwedge_{j \in \calJstar} \ad(\n)^{k_j}\bftau_j\Bigr\| \ll \epsilon^{-2\rankGstar\height(\Phi)(\height(\Phi) - 1)}.
\end{align}
Denote by $\Sigma$ the pure wedge coefficient of the total leading term in the $\star$-version of \cref{eqn:PolynomialCurveFor-n}. Denote $\Upsilon := \bigwedge_{j \in \calJstar} \ad(\n)^{\varkappa_j}\bftau_j$ for the pure wedge coefficient of the unique summand of the form in Claim~1. Using \cref{lem:sigma_omega_OperatorNorms,rem:Optimal_OperatorNorms}, we get
\begin{align*}
\|\Sigma\|_\n = \|[\Ad(\exp \sigma)\Ad(\exp \omega)]^{-1}\Sigma\| \ll \epsilon^{-2\rankGstar\height(\Phi)(\height(\Phi) - 1) - \rankGstar\height(\Phi)} \|\Sigma\|.
\end{align*}
Observe that due to Claims~1 and 2, the pure wedge coefficient $\Upsilon$ is orthogonal to all other pure wedge coefficients contributing to $\Sigma$ with respect to $\langle \bigcdot, \bigcdot \rangle_\n$. Using this observation, the definition of $\|\bigcdot\|_\n$, and \cref{eqn: a priori wedge estimate}, we also get
\begin{align*}
\|\Sigma\|_\n &\geq \|\Upsilon\|_\n = \Bigl\|\bigwedge_{j \in \calJstar} [\Ad(\exp \sigma)\Ad(\exp \omega)]^{-1} \ad(\n)^{\varkappa_j}\bftau_j\Bigr\| \\
&\gg 1 + O\bigl(\epsilon^{-2\rankGstar\height(\Phi)(\height(\Phi) - 1)}\bigr).
\end{align*}
Combining the two inequalities gives
\begin{align}
\label{eqn: Sigma wedge estimate}
\|\Sigma\| \gg \epsilon^{2\rankGstar\height(\Phi)(\height(\Phi) - 1) + \rankGstar\height(\Phi)}.
\end{align}
Finally, we use the estimates from \cref{eqn: wedge estimate,eqn: Sigma wedge estimate} and the definition of $C_{\LimLiestar_\n}$ from \cref{lem:ExtendingPolynomialMaps} to calculate
\begin{align*}
C_{\LimLiestar_\n} &\ll \frac{4\dim(\LieW) \epsilon^{-2\rankGstar\height(\Phi)(\height(\Phi) - 1)}}{\epsilon^{2\rankGstar\height(\Phi)(\height(\Phi) - 1) + \rankGstar\height(\Phi)}} \ll \epsilon^{-4\rankGstar\height(\Phi)^2}.
\end{align*}

We now prove property~(3) of the lemma.
Using $\|\Sigma\|_\n \ll \epsilon^{-2\rankGstar\height(\Phi)(\height(\Phi) - 1)}$ and $\|\Upsilon\|_\n \gg 1$, we calculate that
\begin{align*}
\bigl\langle \|\Sigma\|_\n^{-1}\Sigma, \|\Upsilon\|_\n^{-1}\Upsilon\bigr\rangle_\n &= \frac{\langle \Upsilon, \Upsilon \rangle_\n}{\|\Sigma\|_\n \cdot \|\Upsilon\|_\n} = \frac{\|\Upsilon\|_\n}{\|\Sigma\|_\n} \gg \epsilon^{2\rankGstar\height(\Phi)(\height(\Phi) - 1)}.
\end{align*}
Now, we use the fact that the above inner product coincides with $\prod_{j \in \calJstar} \cos(\theta_j)$ where $\{\theta_j\}_{j \in \calJstar}$ are the principal angles with respect to $\langle \bigcdot, \bigcdot\rangle_\n$ between the linear subspaces which the pure wedges $\Sigma$ and $\Upsilon$ represent under the Pl\"ucker embedding, namely, $\LimLiestar_\n(\infty)$ and $\LieU_\n$, respectively. These quantities simply come from the singular value decomposition of the matrix associated to the orthogonal projection map onto either linear subspace. In fact, with respect to $\langle \bigcdot, \bigcdot\rangle_\n$, since $\pi_{\LieU_\n}$ is an orthogonal projection map and $\pi_{\LieU_\n}|_{\LimLiestar_\n(\infty)}$ is a linear isomorphism by hypothesis, there exist orthonormal bases $\alpha \subset \LimLiestar_\n(\infty)$ and $\beta \subset \LieU_\n$ such that $[\pi_{\LieU_\n}|_{\LimLiestar_\n(\infty)}]_\alpha^\beta = \diag((\cos(\theta_j))_{j \in \calJstar})$. Consequently
\begin{align}
\|\pi_{\LieU_\n}|_{\LimLiestar_\n(\infty)}\|_{\n, \mathrm{op}} \gg \epsilon^{2\rankGstar\height(\Phi)(\height(\Phi) - 1)}.
\end{align}
Invoking \cref{lem:sigma_omega_OperatorNorms} to convert $\|\bigcdot\|_{\n, \mathrm{op}}$ to $\|\bigcdot\|_{\mathrm{op}}$ gives the desired lower bound. The upper bound is trivial.

We now prove property~(1) of the lemma. For all $j \in \calJstar$, combining the bound from the preceding paragraph with the trivial bound $|\cos| \leq 1$ immediately gives $\cos(\theta_j) \gg \epsilon^{2\rankGstar\height(\Phi)(\height(\Phi) - 1)}$ which can be converted to $\tan(\theta_j) \ll \epsilon^{-2\rankGstar\height(\Phi)(\height(\Phi) - 1)}$. Hence
\begin{align*}
\|v - \pi_{\LieU_\n}(v)\|_\n \ll \epsilon^{-2\rankGstar\height(\Phi)(\height(\Phi) - 1)}\|\pi_{\LieU_\n}(v)\|_\n \qquad \text{for all $v \in \LimLiestar_\n(\infty)$}.
\end{align*}
Again, invoking \cref{lem:sigma_omega_OperatorNorms} gives the desired upper bound.
\end{proof}

\begin{remark}
\label{rem: regularity constant of n'}
Let $\n \in \epregLieW$ for some $\epsilon > 0$ with $\|\n\| = 1$ and suppose $\LimLiestar_\n(\infty)$ is quasi-centralizing. Let $\n' \in \LimLiestar_\n(\infty) \cap \regLieW$ such that $\pi_{\LieU_\n}(\n') \in \R\n$. As a consequence of \cref{lem: quasi-centralizing estimates}, we have $\n' \in \genregLieW{\Omega_\LieG(\epsilon^{\Lambda'})}$ for $\Lambda' = 6\rankGstar\height(\Phi)^2 + 1$.
\end{remark}

\begin{definition}[$\star$-(Q)CP]
\label{def: star-QCP}
We say that $\bfG$ has the \emph{$\star$-(quasi-)centralizing property ($\star$-(Q)CP)} if $\LimLiestar_\n(\infty)$ is (quasi-)centralizing for all $\n \in \regLieW$ with $\|\n\| = 1$.
\end{definition}

We have the following corollary of \cref{lem: quasi-split and quasi-centralizing implies centralizing}.

\begin{corollary}
\label{cor: quasi-split and starQCP implies starCP}
If $\bfG$ is $\R$-quasi-split and has the \starQCP, then it has the \starCP.
\end{corollary}

\Cref{pro: height at most 3 implies star-QCP} completely characterizes the \starQCP and gives criteria for the \starCP. Before that, however, we have \cref{pro: LimLie always centralizing for favn} regarding the $\star$-limiting Lie algebra being centralizing for \emph{general} $\bfG$ and some but \emph{Lebesgue almost no} $\n \in \regLieW$ with $\|\n\| = 1$, whose proof is easier; though we will not use it (but see \cref{rem:EquidistributionOfStarPartialCentralizerOfToriAnyG}). In contrast, we have \cref{pro: height greater than 3 implies star-QCP fails} which gives the sufficient condition $\height(\Phi) > 3$ for the \emph{failure} of the $\star$-limiting Lie algebra being quasi-centralizing for \emph{Lebesgue almost every} $\n \in \regLieW$ with $\|\n\| = 1$; indeed, it implies the much weaker statement regarding the failure of the \starQCP in \cref{pro: height at most 3 implies star-QCP}.

\begin{proposition}
\label{pro: LimLie always centralizing for favn}
Let $\favn \in \genregLieW{1}$ be of the form \labelcref{eqn: favn form}. Then, $\LimLiestar_{\favn}(\infty)$ is centralizing. Indeed, $\LimLie_{\favn}(\infty) = Z_\LieG(\favn)$ and $\LimLiestar_{\favn}(\infty) = \LieU_{\favn}$.
\end{proposition}

\begin{proof}
Let $\favn \in \genregLieW{1}$ be of the form \labelcref{eqn: favn form}. We easily calculate that
\begin{align*}
\ad(\favn)^k \bftau_j
\begin{cases}
\in \favtripleirrep_j(k) \smallsetminus \{0\}, & 0 \leq k \leq \varkappa_j \\
\in \favtripleirrep_j(k) = \{0\}, & k > \varkappa_j
\end{cases}
\qquad \text{for all $j \in \calJ$}.
\end{align*}
We deduce that in \cref{eqn:PolynomialCurveFor-n}, the term with pure wedge coefficient $\bigwedge_{j \in \calJ} \ad(\favn)^{\varkappa_j}\bftau_j$ contributes to the total leading term. Now, using similar arguments as in the proofs of Claims~1 and 2 in the proof of \cref{lem: quasi-centralizing estimates}, the proposition follows.
\end{proof}

\begin{proposition}
\label{pro: height greater than 3 implies star-QCP fails}
If $\height(\Phi) > 3$, then there exists a codimension $1$ submanifold $\mathcal{Z} \subset \{\n' \in \regLieW: \|\n'\| = 1\}$ such that $\LimLiestar_\n(\infty)$ is not quasi-centralizing for all $\n \in \{\n' \in \regLieW: \|\n'\| = 1\} \smallsetminus \mathcal{Z}$.
\end{proposition}

\begin{proof}
Suppose $\height(\Phi) > 3$. Let $j_\dagger \in \calJstar$ such that $\varkappa_{j_\dagger} > 3$. Let $\n \in \regLieW$ with $\|\n\| = 1$. By \cref{lem:LieAlgebraJordanNormalForm}, we have $\n = \Ad(\exp \sigma)\Ad(\exp \omega)\favn$ for unique elements $\favn \in \genregLieW{1}$ of the form \labelcref{eqn: favn form} and $\sigma \in \LieA$ and $\omega \in \LieW$.
We look at components with respect to the weight space decomposition $\LieG = \bigoplus_{j \in \calJ} \bigoplus_{k = -\varkappa_j}^{\varkappa_j} \favtripleirrep_j(k)$.
We may assume that $\omega$ has a nonzero component in $\favtripleirrep_{j_\dagger}(1)$, say $\omega_{j_\dagger} \in \favtripleirrep_{j_\dagger}(1) \smallsetminus \{0\}$. This is because such a property fails only on the codimension $1$ submanifold
\begin{align*}
\mathcal{Z} = \{\n' \in \regLieW: \|\n'\| = 1, (\omega_{\n'})_{j_\dagger} = 0\} \subset \{\n' \in \regLieW: \|\n'\| = 1\}
\end{align*}
where $(\omega_{\n'})_{j_\dagger}$ denotes the component in $\favtripleirrep_{j_\dagger}(1)$ of the unique element $\omega_{\n'} \in \LieW$ corresponding to $\n' \in \regLieW$ provided by \cref{lem:LieAlgebraJordanNormalForm}. Indeed, $\mathcal{Z}$ is a submanifold since we can check from the proof of \cref{lem:LieAlgebraJordanNormalForm} that the map $\regLieW \to \favtripleirrep_{j_\dagger}(1)$ given by $\n' \mapsto (\omega_{\n'})_{j_\dagger}$ is smooth.

In the rest of the proof, we will examine the leading term in the $\star$-version of \cref{eqn:PolynomialCurveFor-n}. Recall from the proof of Claim~2 in the proof of \cref{lem: quasi-centralizing estimates} that using \cref{eqn:ad_IterateCalculation}, we have $\ad(\n)^{\varkappa_j}\bftau_j \in \tripleirrep_j(\varkappa_j) \smallsetminus \{0\}$ for all $j \in \calJstar$. Similarly, for all $2 \leq k \leq \height(\Phi)$, we denote $L := \Ad(\exp \sigma)\Ad(\exp \omega)$ for brevity and calculate that
\begin{align*}
\ad(\n)^k\favh &= \ad(\Ad(\exp \sigma)\Ad(\exp \omega)\favn)^k\favh \\
&= L \ad(\favn)^k \Ad(\exp \omega)^{-1}\Ad(\exp \sigma)^{-1}\favh \\
&\in L \ad(\favn)^k \left[\favh + [\favh, \omega] + \bigoplus_{l > 1} \favLieG(l)\right] \\
&\subset L \ad(\favn)^k \left[\favh + \omega_{j_\dagger} + \bigoplus_{j \in \calJstar \smallsetminus \{j_\dagger\}: \varkappa_j \geq 1} \favtripleirrep_j(1) \oplus \bigoplus_{l > 1} \favLieG(l)\right] \\
&= L \left[\ad(\favn)^k\omega_{j_\dagger} + \bigoplus_{j \in \calJstar \smallsetminus \{j_\dagger\}: \varkappa_j \geq k + 1} \favtripleirrep_j(k + 1) \oplus \bigoplus_{l > k + 1} \favLieG(l)\right] \\
&= L\ad(\favn)^k\omega_{j_\dagger} + \bigoplus_{j \in \calJstar \smallsetminus \{j_\dagger\}: \varkappa_j \geq k + 1} \tripleirrep_j(k + 1) \oplus \bigoplus_{l > k + 1} \LieG(l).
\end{align*}
Note that $L\ad(\favn)^k\omega_{j_\dagger} = \Ad(\exp \sigma)\Ad(\exp \omega)\ad(\favn)^k\omega_{j_\dagger} \in \tripleirrep_{j_\dagger}(k + 1) \smallsetminus \{0\}$.
Recalling $\bftau_{j_0} \in \R\favh \smallsetminus \{0\}$ and the hypothesis $\varkappa_{j_\dagger} - 2 > 1$, we deduce that
\begin{align*}
\ad(\n)^{\varkappa_{j_\dagger} - 2}\bftau_{j_0} \in \tripleirrep_{j_\dagger}(\varkappa_{j_\dagger} - 1) \smallsetminus \{0\}.
\end{align*}
Hence,
\begin{align*}
	\bigl\{\ad(\n)^{\varkappa_{j_\dagger} - 2}\bftau_{j_0}\bigr\} \cup \{\ad(\n)^{\varkappa_j}\bftau_j\}_{j \in \calJstar \smallsetminus \{j_0\}}
\end{align*}
is clearly a linearly independent set of vectors. On the one hand, the desired nonzero pure wedge $\bigwedge_{j \in \calJstar} \ad(\n)^{\varkappa_j}\bftau_j$ must be a coefficient of a summand of degree $\sum_{j \in \calJstar} \varkappa_j = \dim(\LieW)$. On the other hand, the nonzero pure wedge
\begin{align*}
	\ad(\n)^{\varkappa_{j_\dagger} - 2}\bftau_{j_0} \wedge \bigwedge_{j \in \calJstar \smallsetminus \{j_0\}} \ad(\n)^{\varkappa_j}\bftau_j
\end{align*}
must be a coefficient of a summand of degree
\begin{align*}
	(\varkappa_{j_\dagger} - 2) + \sum_{j \in \calJstar \smallsetminus \{j_0\}} \varkappa_j > \sum_{j \in \calJstar} \varkappa_j
\end{align*}
where we have used the hypothesis $\varkappa_{j_\dagger} - 2 > 1 = \varkappa_{j_0}$. Therefore, $\bigwedge_{j \in \calJstar} \ad(\n)^{\varkappa_j}\bftau_j$ cannot contribute to the leading term. Hence, recalling Claims~1 and 2 in the proof of \cref{lem: quasi-centralizing estimates}, we conclude that $\LimLiestar_\n(\infty)$ is not quasi-centralizing.
\end{proof}

\begin{proposition}
\label{pro: height at most 3 implies star-QCP}
The $\R$-group $\bfG$ has the \starQCP if and only if $\height(\Phi) \leq 3$. Moreover, it has the \starCP if one of the following holds:
\begin{enumerate}
\item $\height(\Phi) \leq 2$;
\item $\height(\Phi) \leq 3$ and $\bfG$ is $\R$-quasi-split.
\end{enumerate}
\end{proposition}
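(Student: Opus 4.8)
The plan is to reduce (quasi-)centralizing to a statement about the degree of one explicit vector-valued polynomial. Fix $\n\in\regLieW$ with $\|\n\|=1$, and let $\sigma\in\LieA$, $\omega\in\LieW$, $\favn\in\genregLieW{1}$ of the form \labelcref{eqn: favn form} be as in \cref{lem:LieAlgebraJordanNormalForm}; write $g:=\exp(\sigma)\exp(\omega)$, $v_j:=\Ad(g)^{-1}\bftau_j=\bftau_j+w_j$ with $w_j\in\LieW$, and $c_{j,k}:=\tfrac1{k!}\ad(\favn)^kv_j=\tfrac1{k!}\Ad(g)^{-1}\ad(\n)^k\bftau_j$, so that by \cref{eqn:ad_IterateCalculation} one has $c_{j,k}\in\bigoplus_{l\ge k}\favLieG(l)$ with $\favLieG(k)$-component exactly $\tfrac1{k!}\ad(\favn)^k\bftau_j\in\favtripleirrep_j(k)$. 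Then $\Plucker\LimLiestar_\n(t)=[\Ad(g)Q(t)]$, where $Q(t):=\bigwedge_{j\in\calJstar}\sum_kc_{j,k}t^k$, and $\LimLiestar_\n(\infty)$ corresponds to the leading coefficient $L$ of $Q$ (proof of \cref{lem:ExtendingPolynomialMaps}). Set $D:=\dim\LieW=\sum_{j\in\calJstar}\varkappa_j$. Pulling \cref{lem: quasi-centralizing reformulation} back by $\Ad(g)$, $\LimLiestar_\n(\infty)$ is quasi-centralizing iff $\langle L,\bigwedge_{j\in\calJstar}\favtripleirrep_j(\varkappa_j)\rangle\ne0$, where $\bigwedge_{j\in\calJstar}\favtripleirrep_j(\varkappa_j)=\bigwedge^{\rankGstar}\LieU_\favn$ by \cref{eqn:NilpotentCentralizer}. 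The first key point — purely combinatorial — is the weight bound: choosing $\mathbf e_{j'}$ spanning $\favtripleirrep_{j'}(\varkappa_{j'})$, a term $\det(\langle c_{j,k_j},\mathbf e_{j'}\rangle)_{j,j'}$ is nonzero only if $\sum_jk_j\le D$, with equality forcing $k_j=\varkappa_j$ for all $j$ and value $\prod_j\langle c_{j,\varkappa_j},\mathbf e_j\rangle\ne0$ (the matrix is block lower-triangular once indices are ordered by $\varkappa_j$, since $\langle c_{j,k_j},\mathbf e_{j'}\rangle\ne0$ needs $j=j',k_j=\varkappa_j$ or $k_j\le\varkappa_{j'}-1$). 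Hence $\deg Q\ge D$ always, and $\LimLiestar_\n(\infty)$ is quasi-centralizing iff $\deg Q=D$, i.e. iff $\bigwedge_{j\in\calJstar}c_{j,k_j}=0$ for every tuple with $\sum_jk_j>D$.

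For $\height(\Phi)\le3\Rightarrow\starQCP$, I would show $\bigwedge_{j\in\calJstar}c_{j,k_j}=0$ whenever $\sum_jk_j>D$. If $k_j>\varkappa_j$ then $\ad(\favn)^{k_j}\bftau_j=0$, so $c_{j,k_j}\ne0$ only via $\ad(\favn)^{k_j}w_j^{(m)}(j')\in\favtripleirrep_{j'}(m+k_j)$ with $m\ge1$, $m+k_j\le\varkappa_{j'}\le3$; this forces $\varkappa_j=1$, $k_j=2$, $\varkappa_{j'}=3$, and $c_{j,2}\in Z:=\bigoplus_{j':\varkappa_{j'}=3}\favtripleirrep_{j'}(3)$. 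One checks that $\{j: c_{j,k_j}\in Z\}=\{j:k_j>\varkappa_j\}\sqcup\{j:\varkappa_j=3,\,k_j=3\}$, so linear independence gives $|\{j:k_j>\varkappa_j\}|\le\dim Z-|\{j:\varkappa_j=3,k_j=3\}|=|\{j:\varkappa_j=3,k_j\le2\}|$. Since $\sum_{k_j>\varkappa_j}(k_j-\varkappa_j)=|\{j:k_j>\varkappa_j\}|$ and $\sum_{k_j<\varkappa_j}(\varkappa_j-k_j)\ge|\{j:\varkappa_j=3,k_j\le2\}|$, this yields $\sum_jk_j\le D$, a contradiction. Thus $\deg Q=D$ for all $\n$, i.e. $\bfG$ has $\starQCP$.

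For the converse I would construct, when $\height(\Phi)\ge4$, a regular $\n$ with $\deg Q>D$, hence non-quasi-centralizing. Let $j^\ast\in\calJstar$ be the $\LieSL_2(\favn)$-subrepresentation containing the highest root vector, so $\varkappa_{j^\ast}=\height(\Phi)\ge4$, and $j_1$ the one with $\favtripleirrep_{j_1}=\LieSL_2(\favn)$, so $\varkappa_{j_1}=1$ and $\bftau_{j_1}\propto\favh$. Pick a unit $\omega_0\in\favtripleirrep_{j^\ast}(1)\subset\LieW\cap Z_\LieG(\favn)^\perp$ and set $\n:=\Ad(\exp(\epsilon\omega_0))\favn\in\regLieW$ for small $\epsilon\ne0$ (rescaling to unit norm afterwards changes neither $\LimLiestar_\n(\infty)$ nor $\LieU_\n$); its canonical data is $\sigma=0$, $\omega=\epsilon\omega_0$. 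Since $[\omega_0,[\omega_0,\favh]]=[\omega_0,-\omega_0]=0$, one gets $v_{j_1}=(\favh+\epsilon\omega_0)/\|\favh\|$ exactly, so $c_{j_1,k}=\tfrac{\epsilon}{k!\|\favh\|}\ad(\favn)^k\omega_0\in\favtripleirrep_{j^\ast}(k+1)$ for $k\ge2$, nonzero up to $k=\varkappa_{j^\ast}-1$. Now the tuple $k_{j_1}=k_{j^\ast}=\varkappa_{j^\ast}-1$, $k_j=\varkappa_j$ otherwise, has degree $D+\varkappa_{j^\ast}-3>D$, and the lowest-weight components of its factors lie in the pairwise distinct lines $\favtripleirrep_{j^\ast}(\varkappa_{j^\ast})$, $\favtripleirrep_{j^\ast}(\varkappa_{j^\ast}-1)$, $\favtripleirrep_j(\varkappa_j)$ ($j\ne j_1,j^\ast$), so its wedge is nonzero. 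To upgrade this to $\deg Q>D$ I would work to first order in $\epsilon$ and extract the component of total weight $D+\varkappa_{j^\ast}-2$ (the minimal total weight among all degree-$(D+\varkappa_{j^\ast}-3)$ tuples, by the weight bound on the $c_{j,k}$ and $\ell w(c_{j_1,k})=k$ for $k\le1$, $=k+1$ for $k\ge2$): only the above tuple and the tuple $k_{j_1}=\varkappa_{j^\ast}-2$, $k_j=\varkappa_j$ otherwise, contribute, and they add to a nonzero multiple of $\ad(\favn)v\wedge v\wedge R$ precisely because $\tfrac1{((\varkappa_{j^\ast}-1)!)^2}\ne\tfrac1{(\varkappa_{j^\ast}-2)!\,\varkappa_{j^\ast}!}$. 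Hence $\deg Q\ge D+\varkappa_{j^\ast}-3>D$, so $\bfG$ lacks $\starQCP$. \emph{The hard part will be this last non-cancellation bookkeeping} — checking that no further tuples (in particular ones where some $c_{j,k}$, $j\ne j_1$, escapes through $w_j$) reach that weight; this is handled by the same weight/degree counting, together with the fact that an "extra" unit of degree at any slot $j\ne j_1$ produces a repeated weight line with the $j^\ast$-slot and kills the wedge.

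Finally, the "Moreover" part. If $\height(\Phi)\le2$ then $c_{j,k}=0$ for all $k>\varkappa_j$ (a $w_j$-contribution has weight $m+k\ge3$), so the only degree-$D$ tuple is the balanced one and $L=\bigwedge_{j\in\calJstar}c_{j,\varkappa_j}$; moreover each $c_{j,\varkappa_j}\in\LieU_\favn$ — for $\varkappa_j=2=\height(\Phi)$ since $c_{j,2}=\tfrac12\ad(\favn)^2\bftau_j\in\favtripleirrep_j(2)$, and for $\varkappa_j=1$ since $\ad(\favn)\bftau_j\in\favtripleirrep_j(1)$ is a highest weight space and $\ad(\favn)w_j\in\favLieG(2)=\bigoplus_{\varkappa_{j'}=2}\favtripleirrep_{j'}(2)\subset\LieU_\favn$ — so $[L]=\Plucker\LieU_\favn$, i.e. $\LimLiestar_\n(\infty)=\Ad(g)\LieU_\favn=\LieU_\n$, which is centralizing (take $\n'=\n$); hence $\bfG$ has $\starCP$. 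If $\height(\Phi)\le3$ and $\bfG$ is $\R$-quasi-split, then $\bfG$ has $\starQCP$ by the equivalence above, and by \cref{lem: quasi-split and quasi-centralizing implies centralizing}(2) every quasi-centralizing $\LimLiestar_\n(\infty)$ is centralizing, so $\bfG$ has $\starCP$.
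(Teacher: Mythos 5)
Your proposal follows the same route as the paper: pull everything back by the conjugator from \cref{lem:LieAlgebraJordanNormalForm}, analyze the leading coefficient of the wedge polynomial through the weight grading $\favLieG(k)$, kill over-degree tuples by a dimension count in $\favLieG(3)$ when $\height(\Phi)\leq 3$, and for $\height(\Phi)\geq 4$ twist $\favn$ by $\exp$ of an element of $\favtripleirrep_{j^\ast}(1)$ for a long chain $j^\ast$. Your determinant reformulation (quasi-centralizing iff $\deg Q=D$, via the block-triangularity of $(\langle c_{j,k_j},\mathbf e_{j'}\rangle)$) is a clean repackaging of the paper's Claims~1 and 2 from \cref{lem: quasi-centralizing estimates} and of the two ``criteria'' used in Cases~1--3; your counting $\sum_{k_j>\varkappa_j}(k_j-\varkappa_j)\leq|\{\varkappa_j=3,\,k_j\leq 2\}|$ is exactly the paper's dimension count in Case~3. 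The ``Moreover'' part is handled identically (for $\height(\Phi)\leq 2$ every $c_{j,\varkappa_j}$ lands in $\LieU_{\favn}$, so $\LimLiestar_\n(\infty)=\LieU_\n$; for the quasi-split case invoke \cref{lem: quasi-split and quasi-centralizing implies centralizing}).

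The one substantive point is the step you flag yourself: upgrading the single nonzero degree-$(D+\varkappa_{j^\ast}-3)$ summand to $\deg Q>D$. You are right that this needs an argument — a nonzero summand does not preclude cancellation in the total coefficient at that degree (the paper's Case~4 is terse on exactly this point) — and your identification of the two competing tuples together with the non-cancellation $\tfrac{1}{((\varkappa_{j^\ast}-1)!)^2}\neq\tfrac{1}{(\varkappa_{j^\ast}-2)!\,\varkappa_{j^\ast}!}$ is the correct crux. However, your stated mechanism for eliminating the remaining tuples (``a repeated weight line with the $j^\ast$-slot kills the wedge'') is not quite right as phrased: when a slot $j_2\neq j_1$ overshoots through $w_{j_2}$, its contribution lives in $\favLieG(k_{j_2}+1)=\bigoplus_{\varkappa_{j'}\geq k_{j_2}+1}\favtripleirrep_{j'}(k_{j_2}+1)$, which may be spread over several maximal-length chains, so the wedge need not vanish for the reason you give. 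The clean fix is to pair the degree-$(D+\varkappa_{j^\ast}-3)$ coefficient with the single covector $\mathbf e'\wedge\bigwedge_{j\neq j_1}\mathbf e_j$, $\mathbf e'\in\favtripleirrep_{j^\ast}(\varkappa_{j^\ast}-1)$: since $v_{j_1}=(\favh+\epsilon\omega_0)/\|\favh\|$ exactly, the $j_1$-row of the resulting determinant vanishes unless $k_{j_1}+1\in\{\varkappa_{j^\ast}-1,\varkappa_{j^\ast}\}$, and the usual weight-deficit count then forces the assignment to be one of your two tuples. With that substitution the argument closes, and the rest of the proposal is correct.
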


\begin{proof}
Let $\n \in \regLieW$ with $\|\n\| = 1$. We will use two criteria.

First, recall the criteria from the proof of \cref{lem:LimitingLieAlgebraNilpotent} that in the $\star$-version of \cref{eqn:PolynomialCurveFor-n}, for all $j \in \calJstar$, any wedge factor $\ad(\n)^k \bftau_j$ which appears in a nonzero summand contributing to the leading term must have exponent $k > 0$.

Another criteria proved in a similar fashion is that in the $\star$-version of \cref{eqn:PolynomialCurveFor-n}, for all $j \in \calJstar$ with $\varkappa_j > 1$, any wedge factor $\ad(\n)^k \bftau_j$ which appears in a nonzero summand contributing to the leading term must have exponent $k > 1$.

Now, we proceed with the rest of the proof by examining the leading term in the $\star$-version of \cref{eqn:PolynomialCurveFor-n} case by case.

\medskip
\noindent
\textit{Case~1: $\height(\Phi) = 1$.}
Invoking the first criteria above, we conclude that there is a unique summand producing the leading term whose pure wedge coefficient is $\bigwedge_{j \in \calJstar} \ad(\n)^{\varkappa_j}\bftau_j$. Therefore, $\LimLiestar_\n(\infty)$ is centralizing.

\medskip
\noindent
\textit{Case~2: $\height(\Phi) = 2$.}
By \cref{eqn:ad_IterateCalculation}, for all $j \in \calJstar$ with $\varkappa_j = 1$, we have $\ad(\n)^2 \bftau_j = 0$ and hence by the first criteria above, any corresponding wedge factor which appears in a nonzero summand contributing to the leading term must be $\ad(\n) \bftau_j$. Similarly, for all $j \in \calJstar$ with $\varkappa_j = 2$, by the second criteria above, any corresponding wedge factor which appears in a nonzero summand contributing to the leading term must be $\ad(\n)^2 \bftau_j$. We then finish the proof exactly as in Case~1.

\medskip
\noindent
\textit{Case~3: $\height(\Phi) = 3$.}
By \cref{eqn:ad_IterateCalculation}, for all $j \in \calJstar$ with $\varkappa_j = 2$, we have $\ad(\n)^3 \bftau_j = 0$ and hence by the second criteria above, any corresponding wedge factor which appears in a nonzero summand contributing to the leading term must be $\ad(\n)^2 \bftau_j$. Similarly, for all $j \in \calJstar$ with $\varkappa_j = 1$ (resp. $\varkappa_j = 3$), by the first (resp. second) criteria above, any corresponding wedge factor which appears in a nonzero summand contributing to the leading term must be $\ad(\n) \bftau_j$ or $\ad(\n)^2 \bftau_j \in \LieG(3)$ (resp. $\ad(\n)^2 \bftau_j$ or $\ad(\n)^3 \bftau_j \in \LieG(3)$). Now, for any nonzero summand contributing to the leading term, if the number of $j \in \calJstar$ with $\varkappa_j = 3$ and corresponding wedge factor $\ad(\n)^2 \bftau_j$ is $k \in \Z_{\geq 0}$, then simply by a dimension count in $\LieG(3)$, the number of $j \in \calJstar$ with $\varkappa_j = 1$ and corresponding wedge factor $\ad(\n)^2 \bftau_j$ is \emph{at most} $k$. In any case, the leading term must be of degree at most $\sum_{j \in \calJstar} \varkappa_j = \dim(\LieW)$. Observe that the unique summand whose pure wedge coefficient is $\bigwedge_{j \in \calJstar} \ad(\n)^{\varkappa_j}\bftau_j$ is of degree $\sum_{j \in \calJstar} \varkappa_j = \dim(\LieW)$. Moreover, by a similar argument as in the proof of Claim~1 in \cref{lem: quasi-centralizing estimates}, that pure wedge coefficient is orthogonal to all other pure wedge coefficients of summands of the same degree with respect to $\langle \bigcdot, \bigcdot \rangle_\n$. We conclude that the aforementioned unique summand contributes to the leading term, the leading term must be of degree $\sum_{j \in \calJstar} \varkappa_j = \dim(\LieW)$, and $\LimLiestar_\n(\infty)$ is quasi-centralizing.

\medskip
\noindent
\textit{Case~4: $\height(\Phi) > 3$.}
In this case, \cref{pro: height greater than 3 implies star-QCP fails} says that $\LimLiestar_\n(\infty)$ is not quasi-centralizing for Lebesgue almost every $\n \in \regLieW$ with $\|\n\| = 1$, a fortiori, for some $\n \in \regLieW$ with $\|\n\| = 1$. We conclude that the \starQCP does not hold.

\medskip
This completes the proof in light of property~(2) of \cref{lem: quasi-split and quasi-centralizing implies centralizing}.
\end{proof}

In light of \cref{pro: height at most 3 implies star-QCP}, we state the classification of semisimple real algebraic groups $G = \bfG(\R)^\circ$ with $\height(\Phi) \leq 3$ and $\height(\Phi) \leq 2$ as follows (see \cite[Reference Chapter, \S 2, Table 9]{OV90}; see \cite[Chapter 1]{Rin16} for Hasse diagrams). Recall that Lie type $BC_n$ for $n \geq 1$ indicates a root system $\Phi$ which is a union of that of Lie types $B_n$ and $C_n$ (see the definition in \cite[Chapter 4, \S 2.7, Theorem 14]{OV90} and \cite{Tim03}).

\begin{proposition}
\label{pro: Lie types of small height}
The following hold.
\begin{enumerate}
\item We have $\height(\Phi) \leq 3$ if and only if $G = \bfG(\R)^\circ$ is of Lie type $A_1$, $A_2$, $A_3$, $B_2 \cong C_2$, or $BC_1$, or their products.
\item We have $\height(\Phi) \leq 2$ if and only if $G = \bfG(\R)^\circ$ is of Lie type $A_1$, $A_2$, or $BC_1$, or their products.
\end{enumerate}
\end{proposition}

We immediately obtain numerous (non)examples of $\R$-groups with the \starQCP/\starCP which we record below.

\begin{example}
\label{ex: starQCP}
The following $\R$-groups satisfy $\height(\Phi) \leq 3$ and hence they have the \starQCP by \cref{pro: height at most 3 implies star-QCP}: $\SL_n$ for $n \in \{2, 3, 4\}$, $\Sp_4$, $\SO_{n, 1}$, $\SO_{n, 2}$, $\SU_{n, 1}$ all for $n \geq 2$, $\SU_{2, 2}$,  and their products.
\end{example}

\begin{example}
\label{ex: starCP}
The following $\R$-groups satisfy one of the following: $\height(\Phi) \leq 2$; $\height(\Phi) \leq 3$ and $\bfG$ is $\R$-quasi-split;  and hence they have the \starCP by \cref{pro: height at most 3 implies star-QCP}: $\SL_n$ for $n \in \{2, 3, 4\}$, $\Sp_4$, $\SO_{n, 1}$ for $n \geq 2$, $\SO_{n, 2}$ for $n \in \{2, 3, 4\}$, $\SU_{2, 1}$, and their products.
\end{example}

\begin{remark}
Recall that $\SO_{1, 1}$ is abelian, and $\SU_{1, 1}$ is locally isomorphic to $\SO_{2, 1}$.
\end{remark}

\begin{nonexample}
\label{nonex: starQCP}
The following $\R$-groups satisfy $\height(\Phi) \geq 4$ and hence they do not have the \starQCP: $\G_2$, $\SL_n$ for $n \geq 5$, $\Sp_n$ for $n \geq 6$, $\SO_{p, q}$ for $p \geq q \geq 3$, $\SU_{p, q}$ for $p > q = 2$ and $p \geq q \geq 3$, and their products.
\end{nonexample}

\begin{remark}
Above, we even have a nonexample of an $\R$-group with the \starQCP with $\R$-rank $2$: the $\R$-split $\R$-group $\G_2$. For $\G_2$, in the notation of Case~4 of the proof of \cref{pro: height at most 3 implies star-QCP}, we have $\calJstar = \calJ = \{j_0, j_\dagger\}$ and $\varkappa_{j_\dagger} = 5$ by counting dimensions:
\begin{align*}
\dim\bigl(\favtripleirrep_{j_\dagger}\bigr) = \dim(\LieG) - \dim\bigl(\favtripleirrep_{j_0}\bigr) = 14 - 3 = 11.
\end{align*}
\end{remark}

\begin{lemma}
\label{lem: starQCP implies Lie algebra}
If $\bfG$ has the \starQCP, then $\LimLiestar_\n(\infty)$ is a Lie algebra for all $\n \in \regLieW$ with $\|\n\| = 1$.
\end{lemma}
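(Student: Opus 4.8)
The plan is to show that, under \starQCP, the identity $\LimLiestar_\n(\infty) = \LimLie_\n(\infty) \cap \LieW$ holds for every $\n \in \regLieW$ with $\|\n\| = 1$. This is enough: $\LimLie_\n(\infty)$ is a Lie algebra by \cref{lem: limiting Lie algebra} and $\LieW$ is a Lie algebra, so given $v, w \in \LimLiestar_\n(\infty)$ one has $[v, w] \in \LimLie_\n(\infty)$ (since $v, w$ lie in the Lie algebra $\LimLie_\n(\infty)$, using $\LimLiestar_\n(\infty) \subseteq \LimLie_\n(\infty)$ from the remark after \cref{lem: limiting Lie algebra}) and $[v, w] \in [\LieW, \LieW] \subseteq \LieW$ (since $v, w \in \LieW$ by \cref{lem:LimitingLieAlgebraNilpotent}), hence $[v, w] \in \LimLie_\n(\infty) \cap \LieW = \LimLiestar_\n(\infty)$. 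One inclusion $\LimLiestar_\n(\infty) \subseteq \LimLie_\n(\infty) \cap \LieW$ already holds by that same remark together with \cref{lem:LimitingLieAlgebraNilpotent}. For the reverse, recall from \cref{lem:LimitingLieAlgebraNilpotent} that $\LimLie_\n(\infty) \subseteq \LieMdiam \oplus \LieW$; since $\dim \LimLie_\n(\infty) = \dim(\LieA \oplus \LieM) = \#\calJ = \#\calJdiam + \#\calJstar = \dim \LieMdiam + \dim \LimLiestar_\n(\infty)$, it is enough to show that the projection $\LimLie_\n(\infty) \to \LieMdiam$ along $\LieW$ is onto, as that forces $\dim(\LimLie_\n(\infty) \cap \LieW) = \#\calJstar = \dim \LimLiestar_\n(\infty)$ and hence equality.

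To prove this surjectivity, write $\LieG = \LieMdiam \oplus Q$ with $Q := \LieMdiam^\perp \supseteq \LieW$; then the surjectivity is equivalent to the Pl\"ucker vector of $\LimLie_\n(\infty)$ having a nonzero component in $\bigwedge^{\#\calJdiam} \LieMdiam \wedge \bigwedge^{\#\calJstar} Q$. By \cref{lem: limiting Lie algebra,lem:ExtendingPolynomialMaps} and \cref{eqn: Plucker embedding for unipotent translate curve}, this Pl\"ucker vector is the leading coefficient in $t$ of $\bigwedge_{j \in \calJ} \Ad(w_{t\n}) \bftau_j$. I would then use \cref{eqn:ad_IterateCalculation} together with the orthogonality (with respect to $\langle \bigcdot, \bigcdot \rangle = B_\theta$) of the restricted root space decomposition and of the $\favtripleirrep_j$'s to observe that for $j \in \calJdiam$ only the $k = 0$ summand of $\Ad(w_{t\n}) \bftau_j = \sum_k \tfrac{t^k}{k!} \ad(\n)^k \bftau_j$ has a nonzero $\LieMdiam$-component (namely $\bftau_j$), while for $j \in \calJstar$ every $\ad(\n)^k \bftau_j$ lies in $Q$; consequently the $\bigwedge^{\#\calJdiam} \LieMdiam \wedge \bigwedge^{\#\calJstar} Q$-component of $\bigwedge_{j \in \calJ} \Ad(w_{t\n}) \bftau_j$ equals $\bigwedge_{j \in \calJdiam} \bftau_j \wedge \bigwedge_{j \in \calJstar} \Ad(w_{t\n}) \bftau_j$.

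By \starQCP, the proof of \cref{pro: height at most 3 implies star-QCP} (equivalently, $\height(\Phi) \leq 3$) shows that $\bigwedge_{j \in \calJstar} \Ad(w_{t\n}) \bftau_j$ has $t$-degree $\sum_{j \in \calJstar} \varkappa_j = \dim(\LieW)$ with leading coefficient representing $\LimLiestar_\n(\infty)$; wedging with the constant nonzero vector $\bigwedge_{j \in \calJdiam} \bftau_j$ (nonzero because $\LieMdiam \subseteq \LieG_0$ and $\LimLiestar_\n(\infty) \subseteq \LieW$ meet trivially) keeps the degree at $\dim(\LieW)$. The remaining and, I expect, hardest point is to show that the full wedge $\bigwedge_{j \in \calJ} \Ad(w_{t\n}) \bftau_j$ also has $t$-degree $\dim(\LieW)$, i.e.\ that no nonzero term $\bigwedge_{j \in \calJ} \ad(\n)^{k_j} \bftau_j$ with $\sum_j k_j > \dim(\LieW)$ occurs; granting this, the leading coefficient of the full wedge inherits the nonzero component $\bigwedge_{j \in \calJdiam} \bftau_j \wedge \bigl(\text{leading coeff of } \bigwedge_{j \in \calJstar} \Ad(w_{t\n}) \bftau_j\bigr)$, completing the argument. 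To establish this degree bound one uses, for $j \in \calJdiam$, that $\ad(\n)^k \bftau_j \in \LieG(2) \oplus \LieG(3)$ for $k \geq 1$ and vanishes for $k \geq 3$ (as $\height(\Phi) \leq 3$), and then runs the same kind of linear-independence and dimension-counting argument as in the proof of \cref{pro: height at most 3 implies star-QCP} — now carried out over all of $\calJ$ at once, with a dimension count in $\LieG(2) \oplus \LieG(3)$ ruling out that a factor $\ad(\n)^k \bftau_j$ with $j \in \calJdiam$ and $k \geq 1$ can appear in a summand of maximal $t$-degree while keeping all $\#\calJ$ wedge factors linearly independent.
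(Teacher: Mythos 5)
Your overall strategy — reduce the lemma to the identity $\LimLiestar_\n(\infty) = \LimLie_\n(\infty) \cap \LieW$ and then quote that $\LimLie_\n(\infty)$ is a Lie algebra (\cref{lem: limiting Lie algebra}) and $\LimLiestar_\n(\infty) \subset \LieW$ (\cref{lem:LimitingLieAlgebraNilpotent}) — is genuinely different from the paper's proof, and the first two steps are sound: the reduction to surjectivity of the projection $\LimLie_\n(\infty) \to \LieMdiam$ is correct, and so is the computation that the $\bigl(\bigwedge^{\#\calJdiam}\LieMdiam\bigr) \wedge \bigl(\bigwedge^{\#\calJstar} Q\bigr)$-component of the full wedge equals $\bigwedge_{j \in \calJdiam}\bftau_j \wedge \bigwedge_{j \in \calJstar}\Ad(w_{t\n})\bftau_j$. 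However, the step you yourself flag as the hardest — that the full wedge has $t$-degree exactly $\dim(\LieW)$ — is a genuine gap, and the dimension count you propose does not close it when $\height(\Phi) = 3$ and $\calJdiam \neq \varnothing$. Take $\LieG = \LieSO(5,2)$ (type $B_2$, so $\star$-QCP holds): here $\#\calJdiam = 1$, $\#\calJstar_1 = 1$, $\#\calJstar_2 = 2$, $\#\calJstar_3 = 1$, $\dim\LieG(2)\oplus\LieG(3) = 4$ and $\dim\LieG(3) = 1$. The summand with exponents $k_j = \varkappa_j$ for $j \in \calJstar$ and $k_{j_\diamond} = 1$ for the index $j_\diamond \in \calJdiam$ has degree $\dim(\LieW) + 1 = 9$; it places exactly four factors in $\LieG(2)\oplus\LieG(3)$ and exactly one in $\LieG(3)$, so no dimension count excludes it. Whether it vanishes reduces to whether $\ad(\n)\bftau_{j_\diamond}$ has zero component in $\tripleirrep_{j_3}(2)$, and there is no a priori reason for this: $\bftau_{j_\diamond}$ centralizes $\favn$ but generically \emph{not} $\n = \Ad(\exp\sigma)\Ad(\exp\omega)\favn$, since $[\favn,[\omega,\bftau_{j_\diamond}]] \neq 0$ in general. (Your count does work for $\height(\Phi) \leq 2$, but there the lemma is already immediate from \starCP.) If such a higher-degree summand survives (and the degree-$9$ terms do not all cancel, a cancellation your argument also does not address), then the projection to $\LieMdiam$ is \emph{not} onto, $\dim(\LimLie_\n(\infty)\cap\LieW) > \#\calJstar$, and your identity — hence your whole route — fails, even though the lemma itself remains true.

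The paper avoids this issue entirely by never comparing the two limits: it extracts from Case~3 of the proof of \cref{pro: height at most 3 implies star-QCP} an explicit spanning set $\{u_j + v_j\}_{j \in \calJstar_1} \cup \{u_j\}_{j \in \calJstar_2 \cup \calJstar_3}$ with $u_j \in \tripleirrep_j(\varkappa_j)$ and $v_j \in \LieG(2)$, and verifies closure under the bracket directly using $[\LieG(3), \LieW] = 0$, $[\LieG(2), \LieG(1)\oplus\LieG(2)] \subset \LieG(3) \subset \LimLiestar_\n(\infty)$, and $[u_j, u_{j'}] \in Z_\LieG(\n) \cap \LieG(2)$. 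If you want to salvage your approach, you would either have to prove the degree claim for the full wedge (which requires controlling the $\calJdiam$-factors $\ad(\n)^k\bftau_{j_\diamond}$, $k \geq 1$, by finer structure than dimension counts) or abandon the comparison with $\LimLie_\n(\infty)$ and argue on $\LimLiestar_\n(\infty)$ alone, which is essentially what the paper does.
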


\begin{proof}
Suppose $\bfG$ has the \starQCP and let $\n$ be as in the corollary. By \cref{pro: height at most 3 implies star-QCP} we may assume $\height(\Phi) = 3$ because otherwise it is trivial by the definition of $\LimLiestar_\n(\infty)$ being centralizing (see \cref{rem: centralizing implies star limiting Lie algebra}). Denote $\calJstar_k := \{j \in \calJstar: \varkappa_j = k\}$ for all $k \in \{1, 2, 3\}$. Now, from Case~3 of the proof of \cref{pro: height at most 3 implies star-QCP} itself, we find that in the $\star$-version of \cref{eqn:PolynomialCurveFor-n}, a nonzero pure wedge coefficient of any summand contributing to the leading term (up to permutation of the wedge factors) is of the form
\begin{align*}
\bigwedge_{j \in \widetilde{\calJstar_1}} \ad(\n)^2\bftau_j \wedge \bigwedge_{j \in \widetilde{\calJstar_3}} \ad(\n)^2\bftau_j \wedge \bigwedge_{j \in \calJstar \smallsetminus (\widetilde{\calJstar_1} \sqcup \widetilde{\calJstar_3})} \ad(\n)^{\varkappa_j}\bftau_j
\end{align*}
where $\widetilde{\calJstar_1} \subset \calJstar_1$ and $\widetilde{\calJstar_3} \subset \calJstar_3$ are any two subsets with $\#\widetilde{\calJstar_1} = \#\widetilde{\calJstar_3}$. We use \cref{eqn:ad_IterateCalculation} several times in the rest of the proof. Observe that for any such nonzero pure wedge, since $\#\widetilde{\calJstar_1} = \#\widetilde{\calJstar_3}$, we have
\begin{align*}
\Span_{\R}\Bigl(\{\ad(\n)^2\bftau_j\}_{j \in \widetilde{\calJstar_1}} \cup \{\ad(\n)^3\bftau_j\}_{j \in \calJstar_3 \smallsetminus \widetilde{\calJstar_3}}\Bigr) = \bigoplus_{j \in \calJstar_3} \tripleirrep_j(3) = \LieG(3).
\end{align*}
Moreover, we have the corresponding wedge factors
\begin{align*}
\ad(\n)\bftau_j &\in \tripleirrep_j(1) \oplus \LieG(2) \oplus \LieG(3) \qquad \text{for all $j \in \calJstar_1 \smallsetminus \widetilde{\calJstar_1}$}, \\
\ad(\n)^2\bftau_j &\in \tripleirrep_j(2) \oplus \LieG(3) \qquad \text{for all $j \in \calJstar_2$}, \\
\ad(\n)^2\bftau_j &\in \tripleirrep_j(2) \oplus \LieG(3) \qquad \text{for all $j \in \widetilde{\calJstar_3}$},
\end{align*}
with nonzero components in the corresponding first direct summand. Therefore, the total pure wedge coefficient of the leading term, which is a linear combination of pure wedges of the above form, is a nonzero pure wedge in the set
\begin{align*}
\bigwedge_{j \in \calJstar_1} \bigl(\tripleirrep_j(1) \oplus \LieG(2)\bigr) \wedge \bigwedge_{j \in \calJstar_2} \tripleirrep_j(2) \wedge \bigwedge_{j \in \calJstar_3} \tripleirrep_j(3)
\end{align*}
using properties of the wedge product. Therefore
\begin{align*}
\LimLiestar_\n(\infty) = \Span_{\R}\bigl(\{u_j + v_j\}_{j \in \calJstar_1} \cup \{u_j\}_{j \in \calJstar_2 \cup \calJstar_3}\bigr),
\end{align*}
where $u_j \in \tripleirrep_j(\varkappa_j)$ for all $j \in \calJstar$ and $v_j \in \LieG(2)$ for all $j \in \calJstar_1$. Let us verify that it is a Lie algebra. Firstly, we have
\begin{align*}
[\LieG(3), \LieG(1) \oplus \LieG(2) \oplus \LieG(3)] &= \{0\} \subset \LimLiestar_\n(\infty), \\
[\LieG(2), \LieG(1) \oplus \LieG(2)] &\subset \LieG(3) = \bigoplus_{j \in \calJstar_3} \tripleirrep_j(3) \subset \LimLiestar_\n(\infty).
\end{align*}
Thus, it suffices to check the Lie bracket $[u_j + v_j, u_{j'} + v_{j'}]$ for all $j, j' \in \calJstar_1$. Let $j, j' \in \calJstar_1$. We calculate
\begin{align*}
[u_j + v_j, u_{j'} + v_{j'}] = [u_j, u_{j'}] + [u_j, v_{j'}] + [v_j, u_{j'}] + [v_j, v_{j'}].
\end{align*}
Now, $[v_j, v_{j'}] = 0$ and $[u_j, v_{j'}], [v_j, u_{j'}] \in \LieG(3) = \bigoplus_{j \in \calJstar_3} \tripleirrep_j(3)$. Finally, $u_j, u_{j'} \in \bigoplus_{j \in \calJstar_1} \tripleirrep_j(1) \subset Z_\LieG(\n)$, and so $[u_j, u_{j'}] \in Z_\LieG(\n) \cap \LieG(2) = \bigoplus_{j \in \calJstar_2} \tripleirrep_j(2)$. Therefore, $[u_j + v_j, u_{j'} + v_{j'}] \in \LimLiestar_\n(\infty)$, which completes the proof.
\end{proof}

\section{Effective equidistribution of growing balls}
\label{sec:EffectiveEquidistributionOfGrowingUnipotentBalls}
In this section, we first introduce two important hypotheses and discuss their validity. We also prove that the first implies the second. The main objective is to establish \cref{thm:CEquidistributionOfGrowingBalls,thm:EquidistributionOfGrowingBalls} regarding the passage from the hypotheses to the effective equidistribution in $X$ of growing balls in certain unipotent orbits.

\subsection{Effective equidistribution hypotheses and their validity in some instances}
We introduce two hypotheses, the first stronger than the second (but not trivially). We call the first one \emph{Hypothesis Effective Shah Equidistribution}, or \emph{Hypothesis E-Shah} for short. It is similar to the hypothesis introduced in \cite{LS24}. The one we state here is more special in the sense that we only consider a certain class of subgroups of $G$ isomorphic to $\SL_2(\R)$ but more general in the sense that we do not require them to be maximal. Consequently, there is a more general avoidance condition for certain periodic orbits. We call the second one \emph{Hypothesis Centralizer Effective Shah Equidistribution}, or \emph{Hypothesis CE-Shah} for short. Throughout this section, we use the particular class of natural $\LieSL_2(\R)$-triples $(\hatfavn, \favh, \checkfavn)$ in $\LieG$ with $\favh$ fixed previously and $\favn := \hatfavn \in \genregLieW{1}$ of the form \labelcref{eqn: favn form} (see \cref{sec:LieTheoreticEstimates}). We denote by $\SL_2(\favn) < G$ the unique Lie subgroup isomorphic to $\SL_2(\R)$ corresponding to the Lie subalgebra $\LieSL_2(\favn) \subset \LieG$ generated by $(\hatfavn, \favh, \checkfavn)$.

\newtheoremstyle{named}{}{}{\itshape}{}{\bfseries}{.}{ }{#1 \thmnote{#3}}
\theoremstyle{named}
\newtheorem{namedhypothesis}{Hypothesis}
\newcommand{\EShah}{Hypothesis~\nameref{hyp:EShah}\xspace}
\newcommand{\CEShah}{Hypothesis~\nameref{hyp:CEShah}\xspace}

\begin{namedhypothesis}[E-Shah]
\label{hyp:EShah}
Let $\favn \in \genregLieW{1}$ be of the form \labelcref{eqn: favn form}. Denote $u_{\bigcdot} := \exp(\bigcdot \favn)$. Then, for all $x_0 \in X$, $R \gg_X 1$, and $t \geq \ref{Lambda:EShah}\log(R)$, one of the following holds.
\begin{enumerate}
\item For all $\phi \in C_{\mathrm{c}}^\infty(X)$, we have
\begin{align*}
\left|\int_0^1 \phi(a_tu_rx_0) \diff r - \int_{X} \phi \diff\mu_{X} \right| \leq \Sob(\phi) \height(x_0)^{\ref{Lambda:EShah}} R^{-\ref{kappa:EShah}},
\end{align*}
\item There exist a maximal intermediate closed subgroup $\SL_2(\favn) \leq H < G$ and $x \in X$ with
\begin{align*}
d(x_0, x) \leq R^{\ref{Lambda:EShah}}t^{\ref{Lambda:EShah}} e^{-t}
\end{align*}
such that $Hx$ is periodic with $\vol(Hx) \leq R$.
\end{enumerate}
Here, $\constkappa\label{kappa:EShah} > 0$ and $\constLambda\label{Lambda:EShah} > 0$ are constants depending only on $X$.
\end{namedhypothesis}

\begin{remark}
In practice, for Case~(2) above, the optimal exponential factor is expected to be $e^{-\underline{\varkappa}t}$ where $\underline{\varkappa} = \min_{\calJstar \smallsetminus \{j_0\}} \varkappa_j$. This is the case for \cref{thm: LMW and LMWY}.
\end{remark}

The motivation for introducing the above hypothesis is that it is expected to hold in general and indeed, some instances have recently been proven by Lindenstrauss--Mohammadi--Wang \cite[Theorem 1.1]{LMW22} and Lindenstrauss--Mohammadi--Wang--Yang \cite[Theorem 1.1]{LMWY25} which we quote below.

\begin{theorem}
\label{thm: LMW and LMWY}
Let $G$ be locally isomorphic to one of the following: $\SL_2(\C)$, $\SL_2(\R) \times \SL_2(\R)$, $\SL_3(\R)$, $\SU(2, 1)$, $\Sp_4(\R)$, $\G_2(\R)$. Then, \EShah holds.
\end{theorem}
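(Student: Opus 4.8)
The content of this theorem is contained in the landmark works \cite{LMW22,LMWY25}; the task is to check that their main equidistribution theorems specialize to the precise dichotomy asserted in \EShah. The plan is as follows. First I would fix, for each $G$ in the list, the subgroup $\SL_2(\favn) < G$ generated by a natural $\LieSL_2(\R)$-triple $(\hatfavn,\favh,\checkfavn)$ with $\favn \in \genregLieW{1}$ of the form \labelcref{eqn: favn form}, and record that within $\SL_2(\favn)$ the one-parameter subgroup $u_r = \exp(r\favn)$ is the expanding horospherical subgroup of the diagonal flow $a_t = \exp(t\favh)$: since $[\favh,\favn]=\favn$ we have $a_tu_ra_{-t}=u_{e^tr}$, so the average in \EShah satisfies $\int_0^1 \phi(a_tu_rx_0)\,\diff r = e^{-t}\int_0^{e^t}\phi(u_sa_tx_0)\,\diff s$; that is, it is precisely the equidistribution of a piece of length $e^t$ of a one-parameter unipotent orbit, which is the object studied in \cite{LMW22,LMWY25}. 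For $G$ locally isomorphic to $\SL_3(\R)$, $\Sp_4(\R)$, $\G_2(\R)$, or $\SU(2,1)$ the subalgebra $\LieSL_2(\favn)$ is a principal $\LieSL_2$; for $G$ locally isomorphic to $\SL_2(\C)$ it is a copy of $\LieSL_2(\R)$ inside $\LieSL_2(\C)$, and for $G$ locally isomorphic to $\SL_2(\R)\times\SL_2(\R)$ it is the diagonal $\LieSL_2(\R)$. In all six cases $\SL_2(\favn) < G$ is a maximal connected Lie subgroup, so the hypotheses of \cite{LMW22,LMWY25} apply.

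Next, I would invoke \cite[Theorem~1.1]{LMW22} for $G$ locally isomorphic to $\SL_2(\C)$ or $\SL_2(\R)\times\SL_2(\R)$, and \cite[Theorem~1.1]{LMWY25} for the remaining four groups. Each of these produces, for such an orbit piece, a dichotomy of exactly the shape of \EShah: either effective equidistribution with a polynomial-in-$R$ rate and a polynomial loss in $\height(x_0)$ (equivalently, in the inverse injectivity radius at $x_0$), valid once $t$ exceeds a fixed multiple of $\log R$; or a point $x$ exponentially close to $x_0$ lying on a periodic orbit $Hx$ of a proper intermediate subgroup $\SL_2(\favn)\leq H<G$ with $\vol(Hx)$ bounded polynomially in $R$. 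Passing from their formulations to the one stated here is routine bookkeeping: (i) absorb the linear reparametrization of $t$ (and of $r$) relating their normalization of the $\SL_2$-diagonal to $a_t=\exp(t\favh)$ into the constants $\ref{kappa:EShah}$ and $\ref{Lambda:EShah}$; (ii) pass from a ball-averaged or pushforward formulation to the interval average $\int_0^1\phi(a_tu_rx_0)\,\diff r$, which is immediate since the orbit segment is one-dimensional and $\mu_U=\exp_*\mu_{\LieU}$; (iii) observe that the Sobolev order used in \cite{LMW22,LMWY25} depends only on $\dim(X)=\dim(G)$, so it may be taken equal to the $\ell$ fixed for $X$; and (iv) replace $H$ by a maximal intermediate closed subgroup containing it, which is harmless since a periodic $H$-orbit is contained in a periodic orbit of no larger volume of any closed overgroup, after which a volume bound of the form $\ll R^{O(1)}$ is turned into a bound by $R$ by enlarging $\ref{Lambda:EShah}$.

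The one place requiring genuine care, rather than a renaming of constants, is the matching of the algebraic obstruction alternative. In \cite{LMW22,LMWY25} the obstruction is phrased in terms of nearby periodic orbits of specific intermediate subgroups, and for each $G$ on the list one must verify that these can be organized into exactly: a maximal intermediate closed subgroup $H$ with $\SL_2(\favn)\leq H<G$, a nearby point on a periodic $H$-orbit, and a polynomial volume bound. For the split rank-two cases and $\SU(2,1)$ this is clean, because the intermediate subgroups between a principal $\LieSL_2$ and $\LieG$ are highly restricted; the cases $\SL_2(\C)$ and $\SL_2(\R)\times\SL_2(\R)$ should be treated separately, since there the relevant statement is close to classical effective equidistribution of horospheres and geodesic planes, and the intermediate subgroups (tori, the $\SL_2$ itself, or $\SL_2(\R)\times\{e\}$ inside the product) must be read off directly from the structure of $G$. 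Once this matching is carried out, the threshold $t\geq\ref{Lambda:EShah}\log R$, the exponential closeness $R^{\ref{Lambda:EShah}}t^{\ref{Lambda:EShah}}e^{-t}$, and the $\height(x_0)^{\ref{Lambda:EShah}}$ loss all appear verbatim (up to renaming) in the cited theorems, so no additional estimates are needed.
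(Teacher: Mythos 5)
Your proposal takes exactly the route the paper does: Theorem~\ref{thm: LMW and LMWY} is obtained there by direct quotation of \cite[Theorem 1.1]{LMW22} (for $\SL_2(\C)$ and $\SL_2(\R)\times\SL_2(\R)$) and \cite[Theorem 1.1]{LMWY25} (for the remaining four groups), with no further argument given, so your matching of the reparametrization $\int_0^1\phi(a_tu_rx_0)\,\diff r = e^{-t}\int_0^{e^t}\phi(u_sa_tx_0)\,\diff s$, the Sobolev norms, and the obstruction alternative is precisely the translation being implicitly invoked. The one caveat is that your justification in step (iv) --- that a periodic $H$-orbit lies inside a periodic orbit of no larger volume of any closed overgroup --- is false in general, but it is moot here since, as you yourself establish, $\SL_2(\favn)$ is already a maximal connected subgroup of $G$ in all six cases, so the obstruction subgroup requires no enlarging.
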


For any $\favn \in \genregLieW{1}$ of the form \labelcref{eqn: favn form}, recall $\LieU_{\favn} = \nil Z_\LieG(\favn)$ from \cref{eqn:NilpotentCentralizer}. We will often denote $\LieU := \LieU_{\favn}$ and $U := \exp(\LieU)$. Also recall $\mathsf{B}_r^U = \exp\bigl(B_r^{\LieU}\bigr)$ for all $r > 0$ from \cref{eqn: exp of ball notation}. The following is the second hypothesis.

\begin{namedhypothesis}[CE-Shah]
\label{hyp:CEShah}
Let $\favn \in \genregLieW{1}$ be of the form \labelcref{eqn: favn form}. Denote $\LieU := \LieU_{\favn}$ and $U := \exp(\LieU)$. Then, for all $x_0 \in X$, $t \gg_X 1$, and $\phi \in C_{\mathrm{c}}^\infty(X)$, we have
\begin{align*}
\left|\frac{1}{\mu_U\bigl(\mathsf{B}_1^U\bigr)}\int_{\mathsf{B}_1^U} \phi(a_tux_0) \diff\mu_U(u) - \int_{X} \phi \diff\mu_{X} \right| \leq \Sob(\phi) \height(x_0)^{\ref{Lambda:CEShah}} e^{-\ref{kappa:CEShah}t}.
\end{align*}
Here, $\constkappa\label{kappa:CEShah} > 0$ and $\constLambda\label{Lambda:CEShah} > 0$ are constants depending only on $X$.
\end{namedhypothesis}

The following theorem is well-known from effective equidistribution of balls in maximal horospherical orbits under a fixed regular one-parameter diagonal flow in the work of Kleinbock--Margulis \cite[Proposition 2.4.8]{KM96} (see the work of Edwards \cite{Edw21} for a precise error term). Note that for $G = \SO(n, 1)^\circ$ for $n \geq 2$, we have $\LieU_{\favn} = \bigoplus_{j \in \calJstar} \favtripleirrep_j(1) = \LieW$ for all $\n \in \regLieW$ with $\|\n\| = 1$ because $\height(\Phi) = 1$ and so $\varkappa_j = 1$ for all $j \in \calJstar$.

\begin{theorem}
\label{thm: KM}
Let $G$ be locally isomorphic to $\SO(n, 1)^\circ$ for $n \geq 2$. Then, \CEShah holds.
\end{theorem}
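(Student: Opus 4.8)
The plan is to deduce \CEShah from effective (exponential) mixing of the one-parameter flow $\{a_t\}_{t \in \R}$ on $X$ via the thickening argument of Kleinbock--Margulis (\cite[Proposition~2.4.8]{KM96}, see also \cite{Edw21}); this is exactly the conversion ``exponential mixing $\Rightarrow$ effective equidistribution of the $1$-ball in $W$-orbits'' mentioned in the introduction. First I would record the simplification special to $G$ locally isomorphic to $\SO(n, 1)^\circ$: the restricted root system is of type $A_1$, so $\height(\Phi) = 1$, and hence, as noted just before the statement, any $\favn \in \genregLieW{1}$ of the form \labelcref{eqn: favn form} has associated data with $\varkappa_j \in \{0, 1\}$ for all $j$, so that $\LieU := \LieU_{\favn} = \bigoplus_{j \in \calJstar} \favtripleirrep_j(1) = \LieW$ and $U = W$ is the \emph{full} maximal horospherical subgroup, with $\Ad(a_t)|_{\LieW} = e^t \Id_{\LieW}$. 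Thus $\{a_t\}$ is a regular diagonal flow expanding $W$ at uniform rate $e^t$, fixing $AM$, and contracting $W^-$ at rate $e^{-t}$, and \CEShah is precisely the assertion that the $a_t$-pushforward of normalized Lebesgue measure on $\mathsf{B}_1^W x_0$ effectively equidistributes to $\mu_X$ with a polynomial loss in $\height(x_0)$.

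The single external ingredient is effective mixing: there exist $\kappa_0 > 0$ and a Sobolev order $\ell \in \N$, depending only on $X$, such that $\bigl|\int_X \phi_1(a_t x)\phi_2(x) \diff\mu_X - \int \phi_1 \diff\mu_X \int \phi_2 \diff\mu_X\bigr| \leq \Sob(\phi_1)\Sob(\phi_2) e^{-\kappa_0 t}$ for all $\phi_1, \phi_2 \in C_{\mathrm{c}}^\infty(X)$ and $t \geq 0$. For $G$ locally isomorphic to $\SO(n, 1)^\circ$ and $\Gamma < G$ arithmetic this is classical: it follows from a spectral gap for $\Gamma$ acting on $L^2_0(X)$ --- automatic for a fixed lattice, and uniform over congruence lattices --- together with the representation-theoretic decay of matrix coefficients of $\SO(n, 1)^\circ$. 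Since \CEShah permits the constants to depend on $X$, this suffices.

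With these in hand I would carry out the thickening. Fix $x_0 = g_0\Gamma \in X$. By right-invariance of the metric, $d(x_0, wx_0) = d(e, w) \ll 1$ for all $w \in \mathsf{B}_1^W$, so the orbit segment $\mathsf{B}_1^W x_0$ lies within bounded distance of $x_0$; hence $\height(\mathsf{B}_1^W x_0) \ll \height(x_0)^{O(1)}$ and $\eta := \inj_X(\mathsf{B}_1^W x_0) \gg \height(x_0)^{-\kappa_1}$ for an absolute $\kappa_1 > 0$. For a parameter $\delta \in (0, c\eta)$ I would form $E_\delta := \{a_s\, m\, w^-\, w\, x_0 : |s| < \delta,\ m \in \mathsf{B}_\delta^M,\ w^- \in \mathsf{B}_\delta^{W^-},\ w \in \mathsf{B}_1^W\}$, which injects into $X$ by the choice of $\delta$, together with a smooth approximation $\Psi_\delta \in C_{\mathrm{c}}^\infty(X)$ to $\mu_X(E_\delta)^{-1}\mathbf{1}_{E_\delta}$ with $\Sob(\Psi_\delta) \ll \delta^{-O(1)}$. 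Applying effective mixing to $\phi$ and $\Psi_\delta$ gives $\int_X \phi(a_t y)\Psi_\delta(y)\diff\mu_X(y) = \int_X \phi \diff\mu_X + O(\Sob(\phi)\delta^{-O(1)}e^{-\kappa_0 t})$; on the other hand, flowing $E_\delta$ by $a_t$ contracts the $MW^-$-directions and merely translates the $s$-interval, so a Lipschitz estimate together with a standard computation of the local product Jacobian identifies the left-hand side with $\mu_W(\mathsf{B}_1^W)^{-1}\int_{\mathsf{B}_1^W}\phi(a_t w x_0)\diff w + O(\delta\Sob(\phi))$. Combining yields
\[
\left|\frac{1}{\mu_W(\mathsf{B}_1^W)}\int_{\mathsf{B}_1^W} \phi(a_t w x_0)\diff w - \int_X \phi \diff\mu_X\right| \ll \Sob(\phi)\bigl(\delta + \delta^{-O(1)}e^{-\kappa_0 t}\bigr),
\]
and optimizing $\delta$ subject to $\delta \ll \height(x_0)^{-\kappa_1}$ --- taking $\delta \asymp e^{-\kappa_0 t / C}$ when $t$ is large relative to $\log\height(x_0)$, and $\delta \asymp \height(x_0)^{-\kappa_1}$ otherwise, and using $\height \geq 1$ together with $t \gg_X 1$ --- produces the bound $\Sob(\phi)\height(x_0)^{\Lambda}e^{-\kappa t}$ with $\kappa, \Lambda$ depending only on $X$.

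The main difficulty here is not the mixing input, which is genuinely classical for rank-one arithmetic $X$, but the bookkeeping around the injectivity radius: one must thicken only in the $AMW^-$-complement of $W$, treat the $A$-direction as the flow direction itself, and --- crucially --- control $\inj_X$ along the \emph{entire} segment $\mathsf{B}_1^W x_0$ rather than at $x_0$ alone (handled by the bounded-displacement observation above), and then propagate the resulting cap $\delta \ll \height(x_0)^{-\kappa_1}$ through the optimization so that the final error genuinely takes the shape $\Sob(\phi)\height(x_0)^{\Lambda}e^{-\kappa t}$ rather than $\Sob(\phi)e^{-\kappa t}$ times an uncontrolled injectivity loss.
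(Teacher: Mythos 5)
Your proposal is correct and follows essentially the same route as the paper: the paper's proof consists of the observation that $\height(\Phi) = 1$ forces $\LieU_{\favn} = \LieW$, so that \CEShah reduces to effective equidistribution of the unit ball in maximal horospherical orbits under the regular diagonal flow, which it then obtains by citing Kleinbock--Margulis \cite[Proposition~2.4.8]{KM96} and Edwards \cite{Edw21} --- precisely the exponential-mixing-plus-thickening argument you spell out. You have simply unfolded the content of the cited references rather than invoking them.
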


In the next subsection we will prove that indeed \EShah implies \CEShah.

\subsection{A priori lemmas and proof that \EShah implies \CEShah}
We first cover some lemmas which will be useful for the rest of this section.

Let us introduce some terminologies and conventions. By a \emph{reductive (resp. semisimple) real algebraic group} $H$, we shall mean any intermediate Lie group $\bfH(\R)^\circ \leq H \leq \bfH(\R)$ where $\bfH$ is a reductive (resp. semisimple) $\R$-group. It will be useful to recall that $[\bfH(\R): \bfH(\R)^\circ] < +\infty$ for any $\R$-group $\bfH$. Also, a real algebraic group is semisimple if and only if its Lie algebra is semisimple. The Lie algebra of a reductive real algebraic group is reductive; however, the converse is false (e.g., $\bfG_{\mathrm{a}}(\R)$).

\begin{definition}[Regular]
We say that a semisimple Lie subalgebra $\LieS \subset \LieG$ is \emph{regular} if it contains a regular (semisimple or nilpotent) element in $\LieG$.
\end{definition}

\begin{remark}
A regular semisimple Lie subalgebra $\LieS \subset \LieG$ necessarily contains both regular semisimple and regular nilpotent elements in $\LieG$.
\end{remark}

As usual, $\LieS \subset \LieG$ (resp. $\LieH \subset \LieG$) will always denote the Lie subalgebra corresponding to a Lie subgroup $S \subset G$ (resp. $H \subset G$). Conversely, for the rest of this section, $S$ will always denote the \emph{unique connected} Lie subgroup corresponding to $\LieS \subset \LieG$.

The following lemma is of a similar flavor to \cite[Lemma A.4]{EMV09}. The proof we give is along slightly different lines.

\begin{lemma}
\label{lem: intermediate subalgebra is reductive}
Let $\LieS \subset \LieG$ be a regular semisimple Lie subalgebra. Then, any intermediate Lie subalgebra $\LieS \subset \LieH \subset \LieG$ is that of an intermediate connected reductive real algebraic subgroup $S \leq H \leq G$.
\end{lemma}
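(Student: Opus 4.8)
The plan is to show that $\LieH$ is reductive as an abstract Lie algebra; granting this, $[\LieH, \LieH]$ is semisimple and hence, by Chevalley's theorem that semisimple Lie subalgebras are algebraic, it is the Lie algebra of a connected semisimple $\R$-algebraic subgroup, while $\mathfrak{z}(\LieH)$ is an abelian subalgebra of $\LieG$-semisimple elements centralizing it, so the two together generate a connected reductive real algebraic subgroup $H$ with $S \leq H \leq G$ (the compact central torus being handled by standard facts, using that $\mathfrak{z}(\LieH)$ sits inside $\LieMdiam$; I return to this below). To set up, first note that since $\LieS$ is regular and semisimple it contains a regular nilpotent element $\hatfavn$ of $\LieG$ together with an $\LieSL_2(\R)$-triple $(\hatfavn, h, \checkfavn) \subset \LieS$ whose semisimple member $h$ is regular semisimple in $\LieG$; all such $h$ are $G$-conjugate to $\favh$, so after applying the corresponding inner automorphism of $\LieG$ (which preserves the conclusion) I may assume $h = \favh$ and $\LieSL_2(\favn) \subseteq \LieS \subseteq \LieH$ with $\favn$ of the form \labelcref{eqn: favn form'}. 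Then $\ad(\favh)$ preserves $\LieH$, so by the height grading \cref{eqn: grading by height} we get $\LieH = \bigoplus_k \LieH_k$ with $\LieH_k := \LieH \cap \favLieG(k)$; in particular $\LieH^{\pm} := \bigoplus_{\pm k > 0} \LieH_k$ are nilpotent subalgebras of $\LieG$, hence exponentiate to closed unipotent subgroups, and $\LieH_0 \subseteq \LieA \oplus \LieM$.

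The core of the argument is to prove $\rad(\LieH) = \mathfrak{z}(\LieH)$. Both $\nil(\LieH)$ and $\rad(\LieH)$ are characteristic in $\LieH$, hence stable under the derivations $\ad_{\LieG}(\LieSL_2(\favn))|_{\LieH}$ of $\LieH$, so they are $\LieSL_2(\favn)$-submodules of $\LieG$ and, since $\favh \in \LieSL_2(\favn)$, graded. The key elementary fact, used repeatedly, is that an element which is semisimple in $\LieG$ and lies in a nilpotent ideal of $\LieH$ must be central in $\LieH$: its $\ad_{\LieH}$ is semisimple (a restriction of a semisimple operator) and nilpotent, hence zero. If some irreducible $\LieSL_2(\favn)$-summand of $\nil(\LieH)$ were nontrivial, its weight-$0$ space would be nonzero, would lie in $\favLieG(0) = \LieA \oplus \LieM$, hence would consist of semisimple elements of $\LieG$ lying in $\nil(\LieH)$, hence would be central in $\LieH$, contradicting that $\favn \in \LieH$ acts nontrivially on it. So $\LieSL_2(\favn)$ acts trivially on $\nil(\LieH)$, i.e. $\nil(\LieH) \subseteq Z_{\LieG}(\LieSL_2(\favn)) = \LieMdiam$, which consists of semisimple elements; another application of the key fact gives $\nil(\LieH) \subseteq \mathfrak{z}(\LieH)$, so $\nil(\LieH) = \mathfrak{z}(\LieH)$. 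Now $[\LieH, \rad(\LieH)] \subseteq \nil(\LieH) = \mathfrak{z}(\LieH) \subseteq \favLieG(0)$, which, comparing $\favh$-weights, forces $\rad(\LieH) \subseteq \favLieG(0)$, and then $[\rad(\LieH), \favn] \subseteq \favLieG(1) \cap \mathfrak{z}(\LieH) = \{0\}$ and likewise for $\checkfavn$, so $\rad(\LieH) \subseteq Z_{\LieG}(\LieSL_2(\favn)) = \LieMdiam$; a solvable subalgebra of the compact-type $\LieMdiam$ is abelian, and for $v \in \rad(\LieH)$ one checks $[v, \LieH^{\pm}] \subseteq \LieH^{\pm} \cap \mathfrak{z}(\LieH) = \{0\}$ and that $\ad_{\LieH_0}(v)$ maps $\LieH_0$ into $\mathfrak{z}(\LieH)$ and annihilates $\mathfrak{z}(\LieH)$, hence is both nilpotent and semisimple, hence zero; so $v \in \mathfrak{z}(\LieH)$ and $\LieH$ is reductive.

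I expect the reductivity step to be the main obstacle: making rigorous the dichotomy between $\ad_{\LieG}$-semisimple and $\ad_{\LieH}$-nilpotent elements, pinning both $\nil(\LieH)$ and $\rad(\LieH)$ inside $\LieMdiam = Z_{\LieG}(\LieSL_2(\favn))$ via the $\LieSL_2(\favn)$-module structure and the height grading, and the bookkeeping across the graded pieces $\LieH_k$. Once reductivity is in hand the rest is routine: $[\LieH, \LieH]$ is semisimple, hence algebraic, and $\mathfrak{z}(\LieH) \subseteq \LieMdiam = \Lie(Z_M(\favn))$ with $Z_M(\favn) = M \cap Z_G(\favn)$ a compact real algebraic group that centralizes $[\LieH, \LieH]$, so $[\LieH, \LieH]$ together with $\mathfrak{z}(\LieH)$ (enlarged to its algebraic hull if necessary, which changes neither $[\LieH, \LieH]$, $\LieH^{\pm}$, nor the inclusions) is the Lie algebra of a connected reductive real algebraic subgroup $H$, visibly containing $\exp(\LieH^{\pm})$ and $S$ and contained in $G$.
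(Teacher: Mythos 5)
Your core argument --- that $\LieH$ is reductive as a Lie algebra --- is correct but takes a genuinely different route from the paper. The paper never touches $\rad(\LieH)$: it passes to the normalizer $\bfN = N_\bfG(\LieH)$, invokes \cite[Lemmas A.2 and A.3]{EMV09} to say that a nontrivial $R_{\mathrm{u}}(\bfN)$ would force $\bfN$ into a proper parabolic $\bfP$, and then uses the Langlands-type decomposition of $\Lie(\bfP)$ together with the fact that $\hatfavn$ (resp.\ $\checkfavn$) has nonzero components in every simple positive (resp.\ negative) root space to show that $\hatfavn$ or $\checkfavn$ escapes $\Lie(\bfP)$, contradicting $\LieS \subset \Lie(\bfP)$. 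Your proof works entirely inside $\LieH$: the dichotomy ``$\ad_{\LieG}$-semisimple element of a nilpotent ideal of $\LieH$ must be central,'' combined with the $\LieSL_2(\favn)$-module structure of the characteristic ideals and the height grading, pins $\nil(\LieH)$ and then $\rad(\LieH)$ inside $\LieMdiam$ and finally inside $\mathfrak{z}(\LieH)$. I checked the steps (the weight-$0$ spaces of nontrivial summands landing in $\LieA \oplus \LieM$, the weight comparison forcing $\rad(\LieH) \subset \favLieG(0)$, and the semisimple-plus-nilpotent argument on $\LieH_0$); they hold. What your approach buys is a self-contained, elementary argument with no appeal to parabolic subgroups or to the quoted lemmas of Einsiedler--Margulis--Venkatesh; what the paper's buys is that it directly controls the \emph{normalizer}, which is an algebraic group by construction.

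The genuine gap is your last step. Enlarging $\mathfrak{z}(\LieH)$ to its algebraic hull \emph{does} change the Lie algebra of the resulting group: the reductive algebraic group you build has Lie algebra $[\LieH,\LieH] \oplus \mathfrak{t}$ with $\mathfrak{t} \supseteq \mathfrak{z}(\LieH)$ possibly strict, so it need not have Lie algebra $\LieH$, which is what the lemma asserts. And this cannot be repaired in general: for $\LieG = \LieSO(6,1)$ one has $\LieMdiam \cong \LieSO(4)$, and taking $\mathfrak{z}(\LieH)$ to be an irrational line in a $2$-torus of $\LieMdiam$ and $\LieH = \LieSL_2(\favn) \oplus \mathfrak{z}(\LieH)$ gives an intermediate subalgebra whose connected subgroup is not closed, so no reductive real algebraic subgroup has Lie algebra exactly $\LieH$. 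To be fair, the paper's own proof passes over the same point (reductivity of $N_\bfG(\LieH)$ gives that $\LieH$ is a reductive ideal of an algebraic Lie algebra, not that $H$ itself is closed), and in the paper's applications the issue is harmless because either $Z_\LieG(\LieH) \subset \LieH$ is assumed --- which makes $\mathfrak{z}(\LieH) = Z_\LieG(\LieH)$ algebraic --- or $H$ is taken closed from the outset. Under either such hypothesis your argument closes cleanly; without one, neither your proof nor the paper's delivers the algebraicity claim as literally stated.
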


\begin{proof}
Let $\LieS \subset \LieH \subset \LieG$ be as in the lemma and $S \leq H \leq G$ the corresponding connected Lie groups. Without loss of generality, we may assume that $\LieS$ is isomorphic to $\LieSL_2(\R)$ and generated by a \emph{regular} $\LieSL_2(\R)$-triple in $\LieG$. In fact, applying a conjugation on $G$, we may assume that the generating regular $\LieSL_2(\R)$-triple is a natural one, $(\hatfavn, \favh, \checkfavn)$ (see \cref{subsec:NilpotentElements,sec:LieTheoreticEstimates}). We wish to prove that $H$ is a reductive real algebraic group. To this end, it suffices to prove that the normalizer $\bfN := N_\bfG(\LieH) \leq \bfG$, which is an $\R$-subgroup, is reductive, i.e., $R_{\mathrm{u}}(\bfN)$ is trivial.

Suppose, to the contrary, that $R_{\mathrm{u}}(\bfN)$ is nontrivial. By \cite[Lemmas A.2 and A.3]{EMV09}, we conclude that $\bfN \leq \bfP$ where $\bfP < \bfG$ is a parabolic $\R$-subgroup. Let $\mathfrak{P} \subset \LieG$ denote the Lie subalgebra corresponding to $\bfP$ so that $\mathfrak{P} = N_\LieG(\nil(\mathfrak{P}))$.
Recall that $\exp(\favh)$ is contained in the $\R$-points of a \emph{unique} maximal $\R$-split $\R$-torus which coincides with $A$. Thus, for some choice of positive roots $\widetilde{\Phi^+} \subset \Phi$ (possibly distinct from $\Phi^+$) and some proper subset of the corresponding set of simple roots $\Theta \subset \widetilde{\Pi} \subset \widetilde{\Phi^+}$, we have the decomposition
\begin{align*}
\mathfrak{P} = \LieA \oplus \LieM \oplus \bigoplus_{\alpha \in \widetilde{\Phi^+} \cup \langle\Theta\rangle} \LieG_\alpha
\end{align*}
where $\langle\Theta\rangle$ denotes the root subsystem generated by $\Theta$. Note that this corresponds to the Langlands decomposition for the parabolic subgroup associated to $\mathfrak{P}$. Now, recall that $\hatfavn = \sum_{\alpha \in \Pi} \hatfavn_\alpha \in \LieW^+$ and $\checkfavn = \sum_{\alpha \in \Pi} \checkfavn_{-\alpha} \in \LieW^-$ with $\hatfavn_\alpha \neq 0$ and $\checkfavn_{-\alpha} \neq 0$ for all $\alpha \in \Pi$. We write $\Pi^* := \{-\alpha: \alpha \in \Pi\}$. Since $\Theta \subset \widetilde{\Pi}$ is \emph{proper}, we have $\Pi \not\subset \langle\Theta\rangle$ and $\Pi^* \not\subset \langle\Theta\rangle$. Furthermore, either $\Pi \smallsetminus \langle\Theta\rangle \not\subset \widetilde{\Phi^+}$ or $\Pi^* \smallsetminus \langle\Theta\rangle \not\subset \widetilde{\Phi^+}$. We conclude that either $\Pi \not\subset \widetilde{\Phi^+} \cup \langle\Theta\rangle$ or $\Pi^* \not\subset \widetilde{\Phi^+} \cup \langle\Theta\rangle$. But using this with the above characterization of $\mathfrak{P}$, $\hatfavn$, and $\checkfavn$, either $\hatfavn \notin \mathfrak{P}$ or $\checkfavn \notin \mathfrak{P}$. In any case, this contradicts $\LieS \subset \LieH \subset \mathfrak{P}$.
\end{proof}

\begin{lemma}
\label{lem: finite Lie subalgebras betweeen semisimple and LieG}
Let $\LieS \subset \LieG$ be a regular semisimple Lie subalgebra. Then, there exist at most finitely many (depending only on $\dim(\LieG)$) intermediate Lie subalgebras $\LieS \subset \LieH \subset \LieG$ such that $Z_\LieG(\LieH) \subset \LieH$.
\end{lemma}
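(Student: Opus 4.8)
The plan is to peel off the centre of $\LieH$ using the previous lemma and then count the semisimple part and the central part separately. By \cref{lem: intermediate subalgebra is reductive}, every intermediate $\LieS \subseteq \LieH \subseteq \LieG$ is the Lie algebra of a connected reductive real algebraic subgroup $S \leq H \leq G$, so we may write $\LieH = \mathfrak{z}(\LieH) \oplus \LieH'$ with $\LieH' := [\LieH,\LieH]$ semisimple and $\mathfrak{z}(\LieH)$ the (toral) Lie algebra of the central torus of $H$. Since $\LieS$ is semisimple, the composite $\LieS \hookrightarrow \LieH \to \mathfrak{z}(\LieH)$ vanishes, so $\LieS \subseteq \LieH'$. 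Because $\LieS$ is regular it contains a regular nilpotent of $\LieG$; conjugating by a suitable element of $\exp(\nil Z_\LieG(\favn))$ and rescaling --- operations that only translate the whole configuration and hence do not affect a bound that is uniform in $\LieS$ --- we may assume $\LieSL_2(\favn) \subseteq \LieS \subseteq \LieH'$ for a natural $\LieSL_2(\R)$-triple $(\hatfavn, \favh, \checkfavn)$ with $\favn$ of the form \labelcref{eqn: favn form}.

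Next I would exploit the hypothesis $Z_\LieG(\LieH) \subseteq \LieH$. Intersecting with $\LieH$ gives $Z_\LieG(\LieH) = Z_\LieG(\LieH) \cap \LieH = \mathfrak{z}(\LieH)$. On the other hand $Z_\LieG(\LieH) = Z_\LieG(\LieH') \cap Z_\LieG(\mathfrak{z}(\LieH)) = Z_\LieC(\mathfrak{z}(\LieH))$ where $\LieC := Z_\LieG(\LieH')$, and since $\LieH'$ is semisimple, $\LieC \cap \LieH' = Z(\LieH') = 0$; combined, these force $Z_\LieC(\mathfrak{z}(\LieH)) \subseteq \mathfrak{z}(\LieH)$, i.e. $\mathfrak{z}(\LieH)$ is a self-centralising toral subalgebra of the reductive algebra $\LieC$, which itself is determined by $\LieH'$. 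Furthermore $\favn \in \LieH'$ gives $\LieC \subseteq Z_\LieG(\favn)$, and the description of $Z_\LieG(\favn)$ recalled in \cref{subsec:NilpotentElements,sec:LieTheoreticEstimates} shows that its reductive part sits inside the compact subalgebra $Z_\LieG(\LieSL_2(\favn)) = \bigoplus_{j \in \calJdiam} \favtripleirrep_j \subseteq \LieM$. Hence, once $\LieH'$ is fixed, there are only finitely many possibilities for $\mathfrak{z}(\LieH)$, bounded in terms of $\dim(\LieG)$ --- and in the cases of interest $\LieC$ is abelian, so $\mathfrak{z}(\LieH) = \LieC$ and $\LieH = \LieH'$.

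It then remains to bound the number of semisimple subalgebras $\LieH'$ with $\LieSL_2(\favn) \subseteq \LieH' \subseteq \LieG$. For this I would start from the classical fact that $\LieG$ has only finitely many $G$-conjugacy classes of semisimple subalgebras, with the count bounded in terms of $\dim(\LieG)$, and then show that within each class only $O_{\dim(\LieG)}(1)$ members can contain our fixed $\LieSL_2(\favn)$: if $\LieH_1', \LieH_2'$ lie in one class and both contain $\LieSL_2(\favn)$, then $\LieH_2' = \Ad(g)\LieH_1'$, and after absorbing a conjugation carrying the copy of $\LieSL_2(\favn)$ inside $\LieH_1'$ to a fixed one (finitely many choices), $g \in N_G(\LieSL_2(\favn))$; regularity of $\LieSL_2(\favn)$ makes $N_G(\LieSL_2(\favn))$ small --- up to finite index it is $\LieSL_2(\favn)$ together with the compact factor $\exp(Z_\LieG(\LieSL_2(\favn)))$ --- and one checks this produces only boundedly many distinct subalgebras. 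The structural input that rigidifies $\LieH'$ is precisely that it contains the regular nilpotent $\favn$ of $\LieG$: this forces $\LieH'$ to be $\ad(\favh)$-graded and $Z_\LieG(\LieH')$ to lie inside the small algebra $Z_\LieG(\favn)$. Combining with the previous paragraph, the number of admissible $\LieH$ is at most (number of $\LieH'$) times (number of $\mathfrak{z}(\LieH)$ per $\LieH'$), a bound depending only on $\dim(\LieG)$.

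The main obstacle is exactly this last step: promoting finiteness of \emph{conjugacy classes} of semisimple subalgebras to an honest finiteness of subalgebras containing the \emph{fixed} $\LieS$. Regularity of $\LieS$ is indispensable here --- for $\LieS = 0$ the statement is plainly false --- so the argument must genuinely use that $\LieH'$ contains a regular nilpotent of $\LieG$ to pin it down, whereas the reductive decomposition and the centraliser bookkeeping of the first two steps are comparatively routine.
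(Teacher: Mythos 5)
Your reduction to $\LieSL_2(\favn) \subset \LieH$, the decomposition $\LieH = \mathfrak{z}(\LieH)\oplus[\LieH,\LieH]$ via \cref{lem: intermediate subalgebra is reductive}, and the identity $Z_{\mathfrak{c}}(\mathfrak{z}(\LieH)) = \mathfrak{z}(\LieH)$ with $\mathfrak{c} := Z_\LieG([\LieH,\LieH])$ are correct, and your overall strategy (finitely many conjugacy classes of reductive subalgebras, Richardson rigidity, then a normalizer analysis) is the same one the paper runs by citing \cite[Lemmas A.1 and A.5]{EMV09}. But the two finiteness claims that actually carry the proof are asserted rather than proved, and both fail precisely when $Z_\LieG(\LieSL_2(\favn)) = \LieMdiam$ is positive-dimensional. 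First, a nonabelian reductive $\mathfrak{c}$ has a \emph{continuum} of self-centralizing toral subalgebras (its Cartan subalgebras), so ``finitely many $\mathfrak{z}(\LieH)$ per $\LieH'$'' is false; your hedge ``in the cases of interest $\mathfrak{c}$ is abelian'' is unproved and false in general, e.g.\ $\LieMdiam \cong \LieSO(n-2)$ for $\LieG = \LieSO(n,1)$, $n \ge 5$. Second, after reducing to $g \in N_G(\LieSL_2(\favn))$, the factor $Z_G(\LieSL_2(\favn))$ need not normalize $\LieH'$, and its orbit on the conjugates of $\LieH'$ containing $\LieSL_2(\favn)$ can again be a continuum; ``one checks this produces only boundedly many distinct subalgebras'' is exactly the point at issue and is not checked. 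Note also that you never use the hypothesis $Z_\LieG(\LieH)\subset\LieH$ for the semisimple part, whereas the paper's proof invokes it precisely at that final step, together with \cite[Theorem~1]{Pog98}, to obtain $[N_G(\LieH):H]<\infty$.

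The gap is not cosmetic. For $\LieG = \LieSO(5,1)$ and $\LieS = \LieSL_2(\favn)$ regular, the standard representation $\R^{5,1}$ decomposes under $\LieSL_2(\favn)$ as one $3$-dimensional irreducible plus a $3$-dimensional trivial spacelike summand $T$; for each line $\R v \subset T$ the stabilizer $\LieH_v \cong \LieSO(4,1)$ contains $\LieSL_2(\favn)$, satisfies $Z_\LieG(\LieH_v) = 0 \subset \LieH_v$, and distinct lines give distinct $\LieH_v$. So there is a continuum of admissible intermediate subalgebras, and no argument along your lines (nor, as far as I can see, the citation chain in the paper's own proof) can close the statement in the generality in which it is phrased. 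The lemma is salvageable when $Z_\LieG(\LieSL_2(\favn))$ is trivial, which does hold for every group to which the paper actually applies it ($\SL_2(\C)$, $\SL_2(\R)\times\SL_2(\R)$, $\SL_3(\R)$, $\SU(2,1)$, $\Sp_4(\R)$, where $\LieMdiam = 0$): there your second step becomes correct because $N_G(\LieSL_2(\favn))$ is $\SL_2(\favn)$ up to finite index and visibly preserves $\LieH'$, and your first step is vacuous. As written, however, the proposal does not supply the finiteness it claims, and the missing piece is exactly the one you flag as ``the main obstacle.''
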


\begin{proof}
Let $\LieS \subset \LieH \subset \LieG$ be as in the lemma. By \cref{lem: intermediate subalgebra is reductive}, the corresponding intermediate connected Lie subgroup $S \leq H \leq G$ is a reductive real algebraic subgroup. First observe that \cite[Lemma A.1]{EMV09} holds for Lie algebras of reductive $\R$-groups since so does its proof verbatim; in particular \cite[Theorem 7.1]{Ric67} is stated for pairs of reductive $\mathbb F$-groups for algebraically closed fields $\mathbb F$ of characteristic $0$. Then, the proof of \cite[Lemma A.5]{EMV09} also holds verbatim save the last sentence which argues $[N_G(\LieH): H] < \infty$. Finally, loc. cit. also holds in our case using \cite[Theorem~1]{Pog98}, which states $[N_G(H): H \cdot Z_G(H)] < \infty$, and the hypothesis $Z_\LieG(\LieH) \subset \LieH$.
\end{proof}

We introduce and discuss some properties for \cref{lem: reductive orbit counting estimate}.

\begin{definition}[Epimorphic]
A Lie subgroup $H \leq G$ is said to be \emph{epimorphic} in $G$ if for any finite-dimensional representation of $G$, any $H$-fixed vector is also $G$-fixed.
\end{definition}

Let $H \leq G$ be a reductive real algebraic subgroup such that $[H^\circ, H^\circ]$ (or equivalently, $[\LieH, \LieH]$) is nontrivial. Then, $[H^\circ, H^\circ] < H$ is a connected semisimple real algebraic subgroup. Only for the following observation and lemma, let $\{a_t\}_{t \in \R} < [H^\circ, H^\circ]$ be a one-parameter subgroup of semisimple elements and $U < [H^\circ, H^\circ]$ be a unipotent subgroup normalized by $\{a_t\}_{t \in \R}$. We observe that all the proofs in the work of Sanchez--Seong \cite{SS24} go through so long as the following properties hold:
\begin{itemize}
\item $Z_\LieG([\LieH, \LieH]) \subset \LieH$---this ensures that after decomposing $\LieG = \LieH \oplus \LieH^\dagger$ into a direct sum of representations of $[\LieH, \LieH]$, the complement $\LieH^\dagger$ decomposes further into irreducible representations of $[\LieH, \LieH]$ which are all \emph{nontrivial}, i.e., $\LieH^\dagger$ has no $[H^\circ, H^\circ]$-fixed vectors;
\item $\langle \{a_t\}_{t \in \R}, U\rangle < [H^\circ, H^\circ]$ is epimorphic in $[H^\circ, H^\circ]$---this ensures that for any finite-dimensional representation of $[H^\circ, H^\circ]$, any vector not fixed by $[H^\circ, H^\circ]$ is also not fixed by $\langle \{a_t\}_{t \in \R}, U\rangle$.
\end{itemize}

Since we are only interested in the generalization of \cite[Theorem 5]{SS24}, the following lemma due to Shah \cite[Lemma 5.2]{Sha96} takes care of the second property above.

\begin{lemma}
Let $H$ be a connected semisimple real algebraic group. The subgroup $\langle \{a_t\}_{t \in \R}, U\rangle < H$ is epimorphic in $H$ if and only if $\{a_t\}_{t \in \R}$ has nontrivial components in all simple factors of $H$.
\end{lemma}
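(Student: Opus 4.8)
The statement is Shah's epimorphicity criterion \cite[Lemma 5.2]{Sha96}, so the proof is essentially a matter of recalling the structure theory and invoking existing machinery, but let me lay out how I would organize it. Write $H = H_1 \cdots H_r$ as an almost direct product of its simple factors, and let $\{a_t\}_{t \in \R}$ have component $\{a_t^{(i)}\}_{t \in \R}$ in $H_i$. The one direction is immediate: if $\{a_t\}_{t \in \R}$ has trivial component in some $H_{i_0}$, then since $U$ is unipotent and normalized by $\{a_t\}_{t \in \R}$, the group $\langle \{a_t\}_{t \in \R}, U\rangle$ is contained in a proper (in fact, amenable-by-construction, but all we need is proper) subgroup that misses $H_{i_0}$ — more precisely, the Zariski closure of $\langle \{a_t\}_{t \in \R}, U\rangle$ projects trivially or into a proper subgroup of $H_{i_0}$, so it is not epimorphic, since the standard representation of $H_{i_0}$ (or any nontrivial irreducible one) pulled back to $H$ exhibits an $\langle \{a_t\}_{t \in \R}, U\rangle$-fixed vector that is not $H$-fixed. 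Actually the cleanest formulation: a subgroup of $H$ that projects to a proper subgroup of some simple factor cannot be epimorphic, because epimorphicity passes to quotients and no proper subgroup of a simple group is epimorphic (a proper algebraic subgroup of a simple group has a nonconstant function on the flag-type variety it fixes).

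For the converse — the substantive direction — suppose $\{a_t\}_{t \in \R}$ has nontrivial component in every simple factor $H_i$. I would first reduce to the Zariski closure: let $L$ be the Zariski closure of $\langle \{a_t\}_{t \in \R}, U\rangle$ in $H$; epimorphicity of $\langle \{a_t\}_{t \in \R}, U\rangle$ in $H$ is equivalent to epimorphicity of $L$ in $H$ (fixed vectors for a group and its Zariski closure coincide in any rational representation). Then $L$ is generated by a one-parameter diagonalizable subgroup and a unipotent subgroup it normalizes, so $L = D \ltimes R_{\mathrm u}(L)$ with $D$ a torus containing $\{a_t\}_{t \in \R}$ up to isogeny. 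The key point is a criterion of Bien–Borel \cite{BB92} (or the treatment in Grosshans, or directly Shah's argument): a subgroup $L$ is epimorphic in $H$ iff it is not contained in any proper \emph{observable} subgroup, equivalently iff it is not contained in the stabilizer of any line in any rational representation, equivalently — when $L$ contains a maximal unipotent-normalizing diagonal piece — iff $L$ surjects onto each simple factor under the projections. One reduces to checking each simple factor $H_i$ separately: the image $L_i$ of $L$ in $H_i$ contains the nontrivial one-parameter diagonal subgroup $\{a_t^{(i)}\}$ together with — this is where I need to be careful — enough unipotent elements. The cleanest route is Shah's: $\{a_t^{(i)}\}$ being nontrivial means it is a nontrivial cocharacter of $H_i$, and its centralizer-and-contracted-unipotent-subgroup data shows $\langle \{a_t^{(i)}\}, U_i\rangle$, where $U_i$ is the projection of $U$, is \emph{not} contained in any proper parabolic of $H_i$ (a proper parabolic either fails to contain $\{a_t^{(i)}\}$ in its Levi or fails to contain the expanding/contracting unipotent directions that $U$ must supply relative to $a_t$), hence by the Bien–Borel parabolic criterion it is epimorphic in the simple group $H_i$. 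Doing this for every $i$ and using that epimorphicity in each simple factor of an almost-direct product implies epimorphicity in the product (this last implication is where one uses that the factors are simple and $L$ meets each nontrivially — it is \cite[Corollary]{BB92} type statement) gives the conclusion.

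The main obstacle — and the step I would want to cite rather than reprove — is the Bien–Borel characterization of epimorphic subgroups via non-containment in proper observable/parabolic subgroups, together with the precise statement that $\langle \{a_t\}_{t\in\R}, U\rangle$ with $\{a_t\}$ a nontrivial cocharacter of a \emph{simple} group is automatically epimorphic there. In the setting of this paper, $U$ is a specific unipotent subgroup normalized by the regular diagonal flow $\{a_t\}_{t \in \R}$ (coming from $\favh$), and $\{a_t\}$ is regular, so in each simple factor the pair $(\{a_t^{(i)}\},U_i)$ is even better behaved — the one-parameter subgroup is regular, so its contracting horospherical subgroup is a maximal unipotent, and epimorphicity is classical. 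So in practice I would state the lemma, reduce to simple $H$ by projecting, observe that for simple $H$ a nontrivial cocharacter together with any nontrivial unipotent subgroup it normalizes generates an epimorphic subgroup by \cite{BB92,Sha96}, and note the converse as above; the entire argument is essentially a pointer to \cite[Lemma 5.2]{Sha96}, which is exactly what the excerpt already does by attributing it to Shah.
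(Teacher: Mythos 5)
The paper itself offers no proof of this lemma: it is quoted from Shah and used as a black box, so the only comparison available is with Shah's original argument. Your framework (reduce to the Zariski closure, project to the simple factors, invoke the Bien--Borel observable/parabolic criterion) is the right one, and your closing admission that the whole thing is "a pointer to Shah" matches what the paper does. But two of your intermediate claims are false, and one of them sits exactly where the real content of the lemma lives.

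First, "no proper subgroup of a simple group is epimorphic" is wrong: parabolic subgroups --- indeed much smaller ones; Bien--Borel exhibit $3$-dimensional epimorphic subgroups of $\SL_3$ --- are proper and epimorphic, and if your claim were true the lemma itself would be vacuously false, since $\langle\{a_t\},U\rangle$ is typically proper in $H$. The "only if" direction should instead run through your first observation: when $a_t^{(i_0)}$ is trivial, the projection of $\langle\{a_t\},U\rangle$ to $H_{i_0}$ is a unipotent group, which fixes a highest weight vector in some nontrivial irreducible representation of $H_{i_0}$; pulling back to $H$ kills epimorphicity. Second, and more seriously, the claim carrying your "if" direction --- that in a simple group a nontrivial cocharacter together with \emph{any} nontrivial unipotent subgroup it normalizes generates an epimorphic subgroup --- is false. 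Take $H=\SL_3(\R)$, $a_t=\diag(e^t,1,e^{-t})$, and $U$ the root group of $e_1-e_2$: then $e_1\wedge e_3\in\bigwedge^2\R^3$ is fixed by both $\{a_t\}$ and $U$ but not by $\SL_3(\R)$, so $\langle\{a_t\},U\rangle$ is not epimorphic even though $\{a_t\}$ is nontrivial (even regular). What makes Shah's Lemma~5.2 true is that there $U$ is the \emph{full expanding horospherical subgroup} of $\{a_t\}$; any correct proof of the "if" direction must use this largeness of $U$ (e.g., to show $\langle\{a_t\},U\rangle$ lies in no proper parabolic, or that $U$ together with its opposite generates the group), and your sketch never invokes any property of $U$ beyond nontriviality. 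As stated, with $U$ an arbitrary unipotent subgroup normalized by $\{a_t\}$, the "if" direction is simply not provable because it is not true.
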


As a consequence of the above discussion, we have the following generalization of \cite[Theorem 5]{SS24}.

\begin{lemma}
\label{lem: reductive orbit counting estimate}
Let $H < G$ be a reductive real algebraic subgroup such that $[\LieH, \LieH]$ is nontrivial and $Z_\LieG([\LieH, \LieH]) \subset \LieH$. Then, there exists $\constkappa\label{kappa:PeriodicOrbits} > 0$ depending only on $\dim(\LieG)$ such that
\begin{align*}
\#\{Hx: \text{$x \in X$ and $Hx$ is periodic with $\vol(Hx) \leq R$}\} \ll_X R^{\ref{kappa:PeriodicOrbits}}.
\end{align*}
\end{lemma}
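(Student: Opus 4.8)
The statement to prove is \cref{lem: reductive orbit counting estimate}: for a reductive real algebraic subgroup $H < G$ with $[\LieH, \LieH]$ nontrivial and $Z_\LieG([\LieH, \LieH]) \subset \LieH$, the number of periodic $H$-orbits of volume at most $R$ is $\ll_X R^{\ref{kappa:PeriodicOrbits}}$. The preceding discussion in the excerpt has already assembled all the real inputs: the work of Sanchez--Seong \cite{SS24} gives a polynomial-in-volume bound on periodic orbits of a semisimple group provided two structural properties hold, namely $Z_\LieG([\LieH, \LieH]) \subset \LieH$ (so the complement of $[\LieH, \LieH]$ in $\LieG$ carries only nontrivial representations) and epimorphicity of a suitable $\langle\{a_t\}, U\rangle$ in $[H^\circ, H^\circ]$, the latter being supplied by \cite[Lemma 5.2]{Sha96} upon choosing $\{a_t\}$ with nontrivial projection to every simple factor. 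So the proof is essentially a reduction from $H$ to $[H^\circ, H^\circ]$ together with an invocation of the generalized \cite[Theorem 5]{SS24}.

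First I would reduce to the semisimple case. Write $H^\circ = Z \cdot [H^\circ, H^\circ]$ (almost direct product) where $Z$ is the connected center; since $\bfH$ is reductive and $\Q$-anisotropic-type considerations are not needed here, the key point is that a periodic $H$-orbit $Hx$ contains a periodic $[H^\circ, H^\circ]$-orbit, and $\vol([H^\circ, H^\circ]x') \ll \vol(Hx)$ up to constants depending only on $\dim(\LieG)$ (the central and component-group directions contribute boundedly, or one passes through the finite index $[H^\circ : H]$ and the compact/bounded center contribution). Thus it suffices to bound the number of periodic $[H^\circ, H^\circ]$-orbits of volume $\ll R$. Here I would invoke that $[\LieH,\LieH]$ is a semisimple Lie subalgebra with $Z_\LieG([\LieH,\LieH]) \subset \LieH$; decomposing $\LieG = [\LieH,\LieH] \oplus \LieH^\dagger$ as a representation of $[\LieH,\LieH]$, the hypothesis forces $\LieH^\dagger$ to have no nonzero $[H^\circ, H^\circ]$-fixed vectors. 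This is precisely the first bullet point property in the excerpt's discussion, so the Sanchez--Seong machinery applies.

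Next I would choose inside $[H^\circ, H^\circ]$ a one-parameter subgroup $\{a_t\}_{t\in\R}$ of semisimple (diagonalizable) elements whose projection to every simple factor of $[H^\circ, H^\circ]$ is nontrivial — this is possible because each simple factor has positive real rank (it contains a regular nilpotent, hence a nontrivial split torus direction) or, if a factor is compact, one notes $[H^\circ,H^\circ]$ is then still handled since a compact factor contributes only boundedly to volume; in the genuinely noncompact factors pick a regular semisimple direction. Let $U < [H^\circ, H^\circ]$ be a unipotent subgroup normalized by $\{a_t\}$ and expanded by it (e.g. the full expanding horospherical of $a_1$ inside $[H^\circ,H^\circ]$). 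By \cite[Lemma 5.2]{Sha96} (the epimorphicity lemma quoted just above the statement), $\langle\{a_t\}_{t\in\R}, U\rangle$ is epimorphic in $[H^\circ, H^\circ]$, which is the second bullet property. With both structural properties verified, the generalization of \cite[Theorem 5]{SS24} described in the excerpt yields a constant $\ref{kappa:PeriodicOrbits} > 0$, depending only on $\dim(\LieG)$, such that the number of periodic $[H^\circ,H^\circ]$-orbits of volume at most $CR$ is $\ll_X R^{\ref{kappa:PeriodicOrbits}}$. Combining with the reduction step of the previous paragraph finishes the proof.

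**Main obstacle.** The only nonroutine point is the reduction from $H$ to $[H^\circ, H^\circ]$ at the level of periodic orbits and volumes: one must check that a periodic orbit of $H$ really does descend to a periodic orbit of $[H^\circ,H^\circ]$ with comparable volume, uniformly, and that the finite index $[H : H^\circ]$ and the central torus factor only inflate the count by a bounded factor (absorbed into $\ll_X$) rather than by something growing with $R$. Everything else is bookkeeping: verifying the two bullet-point hypotheses, which the excerpt has already isolated, and then quoting the generalized Sanchez--Seong theorem and Shah's epimorphicity lemma verbatim. I expect the volume-comparison to be the step requiring the most care, though it should follow from standard structure theory of reductive groups (almost direct product decomposition $H^\circ = Z^\circ \cdot [H^\circ, H^\circ]$, with the $Z^\circ$-orbit periodic of bounded covolume or the relevant normalization argument as in \cite[Appendix A]{EMV09}).
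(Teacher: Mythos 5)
Your identification of the two essential inputs---the no-fixed-vectors property of $\LieH^\dagger$ forced by $Z_\LieG([\LieH,\LieH])\subset\LieH$, and epimorphicity of $\langle\{a_t\},U\rangle$ via \cite[Lemma 5.2]{Sha96}---is exactly what the paper uses; in fact the paper offers no proof beyond the observation that, with these two properties in hand, all the proofs of Sanchez--Seong \cite{SS24} go through verbatim for the reductive group $H$ itself, with the dynamical data $\{a_t\}$ and $U$ chosen inside $[H^\circ,H^\circ]$. Where you diverge is in inserting a preliminary reduction of the counting problem from periodic $H$-orbits to periodic $[H^\circ,H^\circ]$-orbits. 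The paper does not make this reduction, and it is the weak link in your argument: it is not automatic that a periodic $H$-orbit contains a \emph{closed} $[H^\circ,H^\circ]$-orbit, and even granting closedness, $\vol([H^\circ,H^\circ]x')$ and $\vol(Hx)$ are volumes of manifolds of different dimensions, so the asserted comparison $\vol([H^\circ,H^\circ]x')\ll\vol(Hx)$ has no a priori meaning without a normalization argument (and the lattice $\Gamma\cap H$ need not split compatibly with the almost direct product $H^\circ=Z^\circ\cdot[H^\circ,H^\circ]$ unless one invokes arithmeticity of the stabilizer). You correctly flag this as the main obstacle but do not resolve it. The fix is simply to drop the reduction: the point of the two bullet properties is precisely that the \cite{SS24} volume-versus-discriminant machinery applies to the reductive $H$ directly, so no passage to the derived group at the level of orbits is needed. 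A second, smaller caveat: your parenthetical claim that every simple factor of $[H^\circ,H^\circ]$ has positive real rank is unjustified for a general reductive subgroup satisfying the hypotheses (compact simple factors are not excluded), though one-parameter subgroups of semisimple elements with nontrivial projection to compact factors do exist, so Shah's criterion can still be met.
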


The following lemma and its corollary is similar to \cite[Lemma 27.12]{NV21} and can be proven similarly as well. We provide an alternative proof.

\begin{lemma}
\label{lem: proper reductive H has complementary centralizing unipotent direction}
Let $\favn \in \genregLieW{1}$ be of the form \labelcref{eqn: favn form}. Denote $\LieU := \LieU_{\favn}$. Let $\SL_2(\favn) \leq H < G$ be an intermediate reductive real algebraic subgroup. Then, there exists a $1$-dimensional Lie subalgebra of $\LieU$ orthogonal to $\LieH$.
\end{lemma}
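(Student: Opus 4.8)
The plan is to produce a single $\ad(\favh)$-weight vector $u \in \LieU$ which is orthogonal to $\LieH$; then $\R u$ is automatically a $1$-dimensional (abelian) Lie subalgebra of $\LieU$, as required. Write $Z := Z_\LieG(\favn) = \bigoplus_{j \in \calJ}\favtripleirrep_j(\varkappa_j) = \LieMdiam \oplus \LieU$, so that $\LieU = \bigoplus_{k \geq 1}\bigl(\favLieG(k) \cap Z\bigr)$ with $\favLieG(k) \cap Z = \bigoplus_{j : \varkappa_j = k}\favtripleirrep_j(k)$. I would first record two preliminary facts coming from the $\ad(\favh)$-grading $\LieG = \bigoplus_k \favLieG(k)$: it is orthogonal for $\langle \bigcdot, \bigcdot\rangle = B_\theta$ because $\ad(\favh)$ is self-adjoint ($\theta\favh = -\favh$); consequently, since $\favh \in \LieSL_2(\favn) \subseteq \LieH$, the subspace $\LieH$ is $\ad(\favh)$-invariant, hence graded, $\LieH = \bigoplus_k(\LieH \cap \favLieG(k))$. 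Moreover, for each $k$ the orthogonality of the $\favtripleirrep_j$ gives an orthogonal splitting $\favLieG(k) = \bigl(\favLieG(k)\cap Z\bigr) \oplus \bigl(\favLieG(k)\cap Z^\perp\bigr)$ with $\favLieG(k)\cap Z^\perp = \bigoplus_{j:\varkappa_j > k}\favtripleirrep_j(k)$.

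The core of the argument is a dichotomy. Suppose first that $\favLieG(k)\cap Z \subseteq \LieH$ for every $k \geq 1$, i.e.\ $\LieU \subseteq \LieH$. Then $\LieH$ contains the top-weight space $\favtripleirrep_j(\varkappa_j)$ of every non-trivial representation $\favtripleirrep_j$ ($j \in \calJstar$), and since $\checkfavn \in \LieSL_2(\favn) \subseteq \LieH$ and $\LieH$ — containing $\LieSL_2(\favn)$ — is an $\LieSL_2(\favn)$-submodule of $\LieG$, applying $\ad(\checkfavn)$ repeatedly shows $\favtripleirrep_j \subseteq \LieH$ for all $j \in \calJstar$, whence $\LieW^+ \oplus \LieW^- \subseteq \bigoplus_{j\in\calJstar}\favtripleirrep_j \subseteq \LieH$. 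But the Lie subalgebra of $\LieG$ generated by $\LieW^+ \cup \LieW^-$ is an ideal of $\LieG$ — because $\LieG_0$ normalizes $\LieW^{\pm}$, so all of $\ad(\LieG)$ preserves it — which contains $\LieW^+$; as $\LieG$ has no compact factors, this ideal is all of $\LieG$, forcing $\LieH = \LieG$ and contradicting $H < G$. Hence this case cannot occur.

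In the remaining case there is a largest integer $k_0 \geq 1$ with $\favLieG(k_0)\cap Z \not\subseteq \LieH$, and I would finish by constructing $u$ at weight $k_0$. By maximality $\favLieG(k)\cap Z \subseteq \LieH$ for all $k > k_0$; just as above this puts $\favtripleirrep_j \subseteq \LieH$ for every $j$ with $\varkappa_j > k_0$, and in particular $\favLieG(k_0)\cap Z^\perp = \bigoplus_{j:\varkappa_j > k_0}\favtripleirrep_j(k_0) \subseteq \LieH$. Therefore $\LieH \cap \favLieG(k_0) = \bigl(\LieH\cap\favLieG(k_0)\cap Z\bigr) \oplus \bigl(\favLieG(k_0)\cap Z^\perp\bigr)$, and $\LieH\cap\favLieG(k_0)\cap Z = \LieH \cap \bigl(\favLieG(k_0)\cap Z\bigr)$ is, by the choice of $k_0$, a \emph{proper} subspace of $\favLieG(k_0)\cap Z$. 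Now choose any nonzero $u \in \favLieG(k_0)\cap Z$ orthogonal, inside the inner product space $\favLieG(k_0)\cap Z$, to $\LieH\cap\favLieG(k_0)\cap Z$; then $u \in \LieU$. It is orthogonal to $\LieH$: for $v \in \LieH$ only its weight-$k_0$ component $v' \in \LieH\cap\favLieG(k_0)$ can pair with $u$, and writing $v' = v'_Z + v'_{Z^\perp}$ along the orthogonal splitting of $\favLieG(k_0)$, one has $v'_{Z^\perp} \in \LieH$, hence $v'_Z = v' - v'_{Z^\perp} \in \LieH\cap\favLieG(k_0)\cap Z$, so $\langle u, v\rangle = \langle u, v'\rangle = \langle u, v'_Z\rangle = 0$. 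This yields the desired line $\R u$. The step most in need of care is the weight-graded bookkeeping — chiefly the direct-sum identity $\LieH\cap\favLieG(k_0) = (\LieH\cap\favLieG(k_0)\cap Z) \oplus (\favLieG(k_0)\cap Z^\perp)$ once the representations of top weight exceeding $k_0$ have been placed in $\LieH$; everything else reduces either to orthogonality of the grading and of the $\favtripleirrep_j$, or to the elementary fact that a subalgebra containing both opposite maximal horospherical subalgebras of $\LieG$ must be all of $\LieG$.
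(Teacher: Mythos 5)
Your proof is correct, and it takes a genuinely different route from the paper's. The paper argues by contradiction: if no such line exists, then the orthogonal projection $\pi_{\LieU}|_{\LieH}$ is surjective, and a downward induction on $\dim\bigl(\favtripleirrep_j\bigr)$ — applying $\ad(\checkfavn)^{2\varkappa_j}$ and then $\ad(\hatfavn)$ to vectors whose ``leakage'' into higher-dimensional representations has already been absorbed into $\LieH$ — forces $\favtripleirrep_j \subset \LieH$ for all $j \in \calJstar$, after which Helgason's identity $\sum_{\alpha \in \Phi^+}[\LieG_\alpha, \LieG_{-\alpha}] = \LieA \oplus \LieM$ yields $\LieH = \LieG$. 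You instead exploit that $\LieH$, containing $\favh$, is graded by $\ad(\favh)$-weights, and run a dichotomy on whether $\LieU \subset \LieH$. In the degenerate case you replace Helgason's identity by the ideal generated by $\LieW^+ \cup \LieW^-$; note that this step and Helgason's identity need exactly the same input, namely that $\LieG$ has no compact ideals (such an ideal would sit inside $\LieMdiam$ and escape both arguments), so you are only making explicit a hypothesis the paper's citation carries implicitly. The non-degenerate case is where your argument genuinely diverges and, in my view, improves on the paper: localizing the failure of containment to the largest weight $k_0$ with $\favLieG(k_0) \cap Z_\LieG(\favn) \not\subset \LieH$, importing the whole representations of top weight exceeding $k_0$ into $\LieH$ so that $\favLieG(k_0) \cap Z_\LieG(\favn)^\perp \subset \LieH$, and then reading off the splitting $\LieH \cap \favLieG(k_0) = \bigl(\LieH \cap \favLieG(k_0) \cap Z_\LieG(\favn)\bigr) \oplus \bigl(\favLieG(k_0) \cap Z_\LieG(\favn)^\perp\bigr)$ produces the orthogonal line explicitly and entirely avoids the paper's surjectivity reformulation and its dimension induction. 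The trade-off is that your construction leans on the orthogonality of the full weight-space decomposition and of the grading (both of which the paper has arranged), whereas the paper's contradiction argument gets by with less graded bookkeeping at the cost of a more delicate induction.
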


\begin{proof}
Let $\favn$, $\LieU$, and $H$ be as in the lemma. For the sake of contradiction, suppose there exists no $1$-dimensional Lie subalgebra of $\LieU$ orthogonal to $\LieH$. Then, $\pi_\LieU|_\LieH$ must be surjective due to the observation that $\pi_\LieU$ is an orthogonal projection map since the weight space decomposition $\LieG = \bigoplus_{j \in \calJ} \bigoplus_{k = -\varkappa_j}^{\varkappa_j} \favtripleirrep_j(k)$ is orthogonal.

We induct on $\dim\bigl(\favtripleirrep_j\bigr)$ in decreasing order to prove our claim that $\favtripleirrep_j \subset \LieH$ for all $j \in \calJstar$. The claim holds vacuously for all dimensions strictly greater than $2\height(\Phi) + 1$. Now, suppose the claim holds for all dimensions strictly greater than some odd integer $3 \leq d \leq 2\height(\Phi) + 1$. Let $j \in \calJstar$ such that $\dim\bigl(\favtripleirrep_j\bigr) = d$. By surjectivity of $\pi_\LieU|_\LieH$, we may take any $v \in (\pi_\LieU|_\LieH)^{-1}\bigl(\favtripleirrep_j(\varkappa_j) \smallsetminus \{0\}\bigr) \subset \LieH$ so that its $\favtripleirrep_j(\varkappa_j)$-component is nonzero and its $\favtripleirrep_{j'}(\varkappa_{j'})$-component vanishes for all $j' \in \calJstar \smallsetminus \{j\}$. We apply the adjoint action of $\checkfavn \in \LieSL_2(\favn) \subset \LieH$ repeatedly to get
\begin{align*}
\ad(\checkfavn)^{2\varkappa_j}v \in \favtripleirrep_j(-\varkappa_j) \oplus \bigoplus_{j' \in \calJstar, \dim(\favtripleirrep_{j'}) > d} \favtripleirrep_{j'}
\end{align*}
with a nonzero $\favtripleirrep_j(-\varkappa_j)$-component. Thus, using the induction hypothesis, and the fact that $\dim\bigl(\favtripleirrep_j(-\varkappa_j)\bigr) = 1$, we get $\favtripleirrep_j(-\varkappa_j) \subset \LieH$. Similarly, we apply the adjoint action of $\hatfavn \in \LieSL_2(\favn) \subset \LieH$ repeatedly to get $\ad(\hatfavn)^k\favtripleirrep_j(-\varkappa_j) = \favtripleirrep_j(k - \varkappa_j) \subset \LieH$ for all $0 \leq k \leq 2\varkappa_j$. Therefore, $\favtripleirrep_j \subset \LieH$, establishing the claim.

The above claim generates nearly all of $\LieG$; namely, we obtain $\LieA \oplus \LieMstar \oplus \LieW^+ \oplus \LieW^- \subset \LieH$. Finally, we invoke Helgason's identity $\sum_{\alpha \in \Phi^+} [\LieG_\alpha, \LieG_{-\alpha}] = \LieA \oplus \LieM$ \cite[Chapter III, \S 1, Lemma 1.2]{Hel70} (see \cite[Proposition 4.6]{CS22} for a stronger identity) to obtain $\LieA \oplus \LieM \subset \LieH$. So indeed $\LieH = \LieG$ which implies $H = G$, contradicting the hypothesis that $H < G$, i.e., a \emph{proper} Lie subgroup.
\end{proof}

\begin{corollary}
\label{cor: proper reductive H has complementary centralizing unipotent direction}
Let $\favn \in \genregLieW{1}$ be of the form \labelcref{eqn: favn form}. Denote $\LieU := \LieU_{\favn}$. Let $H < G$ be a proper reductive real algebraic subgroup. Then, $\LieU \not\subset \LieH$.
\end{corollary}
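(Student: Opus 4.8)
The plan is to deduce this corollary from \cref{lem: proper reductive H has complementary centralizing unipotent direction} by showing that the hypothesis $\LieU \subset \LieH$ already forces $\SL_2(\favn) \leq H$, which that lemma rules out for a proper $H$. So I would argue by contradiction: assume $\LieU \subset \LieH$.

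First I would record two elementary facts. On the one hand, $\favn = \hatfavn \in \favtripleirrep_{j_0}(1) = \favtripleirrep_{j_0}(\varkappa_{j_0}) \subset \LieU_{\favn} = \LieU$ (here $\varkappa_{j_0} = 1$), and more generally every element of $\LieU = \bigoplus_{j \in \calJstar} \favtripleirrep_j(\varkappa_j)$ is nilpotent in $\LieG$, since $\LieU \subset \bigoplus_{k \geq 1} \favLieG(k) = \LieW$ by \cref{rem:CorrespondingSL2TripleProperties}. On the other hand, $\LieH$ is the Lie algebra of a reductive real algebraic group, hence decomposes as $\LieH = \mathfrak{z}(\LieH) \oplus [\LieH, \LieH]$, where $[\LieH, \LieH]$ is semisimple and $\mathfrak{z}(\LieH)$ is the Lie algebra of a torus and therefore consists of semisimple elements of $\LieG$. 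By uniqueness of Jordan decomposition in $\LieG$, a nilpotent element of $\LieG$ contained in $\LieH$ must have vanishing $\mathfrak{z}(\LieH)$-component; applying this to each element of $\LieU$ gives $\LieU \subset [\LieH, \LieH]$, and in particular $\favn \in [\LieH, \LieH]$.

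Next I would complete $\favn$ to an $\LieSL_2(\R)$-triple $(\favn, \h', \check{\n}')$ inside the semisimple (and algebraic) Lie algebra $[\LieH, \LieH]$ via the Jacobson--Morozov theorem. By the uniqueness statement recalled in \cref{subsec:NilpotentElements} — any $\LieSL_2(\R)$-triple in $\LieG$ with first entry $\favn$ is carried, by the adjoint action of some element of $\exp(\nil Z_\LieG(\favn)) = \exp(\LieU)$, to the standard triple $(\hatfavn, \favh, \checkfavn)$ — there is $u \in \exp(\LieU)$ with $\Ad(u)(\favn, \h', \check{\n}') = (\hatfavn, \favh, \checkfavn)$ (note $\Ad(u)\favn = \favn$ since $\LieU \subset Z_\LieG(\favn)$). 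Since $\LieU \subset [\LieH, \LieH]$ we have $u \in [H^\circ, H^\circ]$, so $\favh, \checkfavn \in \Ad(u)[\LieH, \LieH] = [\LieH, \LieH]$, whence $\LieSL_2(\favn) = \Span_{\R}\{\favn, \favh, \checkfavn\} \subset [\LieH, \LieH] \subset \LieH$, i.e. $\SL_2(\favn) \leq H$. Now \cref{lem: proper reductive H has complementary centralizing unipotent direction} applies to the intermediate reductive real algebraic subgroup $\SL_2(\favn) \leq H < G$ and produces a $1$-dimensional Lie subalgebra $\mathfrak{l} \subset \LieU$ orthogonal to $\LieH$; but $\mathfrak{l} \subset \LieU \subset \LieH$, so $\mathfrak{l}$ is orthogonal to itself, forcing $\mathfrak{l} = \{0\}$, a contradiction.

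The only step that requires genuine care is the implication $\LieU \subset \LieH \Rightarrow \LieSL_2(\favn) \subset \LieH$: it rests both on the algebraicity of $\LieH$, so that its reductive decomposition is compatible with Jordan decomposition in $\LieG$, and on the \emph{sharp} form of uniqueness for Jacobson--Morozov completions — conjugacy by $\exp(\nil Z_\LieG(\favn))$ rather than merely by $\exp Z_\LieG(\favn)$ — which is precisely what keeps the conjugating element $u$ inside $\exp(\LieU) \subset [H^\circ, H^\circ]$ so that $\favh$ and $\checkfavn$ are pulled into $[\LieH, \LieH]$.
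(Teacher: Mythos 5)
Your proof is correct and follows essentially the same route as the paper's: complete $\favn$ to an $\LieSL_2(\R)$-triple via Jacobson--Morozov, use the uniqueness of the completion up to $\exp(\nil Z_\LieG(\favn))$-conjugacy to place $\LieSL_2(\favn)$ inside $\LieH$, and then invoke \cref{lem: proper reductive H has complementary centralizing unipotent direction}. You merely make explicit the details (the Jordan-decomposition step giving $\LieU \subset [\LieH, \LieH]$, and the observation that the conjugating element already lies in $\exp(\LieU) \subset H$ so that $H$ itself need not be replaced) that the paper compresses into ``applying an appropriate conjugation.''
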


\begin{proof}
Let $\favn$, $\LieU$, and $H$ be as in the lemma. Using the Jacobson--Morozov theorem and applying an appropriate conjugation (see \cref{subsec:NilpotentElements}), we may assume that $\LieSL_2(\favn) \subset \LieH$ and $\SL_2(\favn) \leq H$. The corollary now follows from \cref{lem: proper reductive H has complementary centralizing unipotent direction}.
\end{proof}

\begin{lemma}
\label{lem: max intermediate subgroup Lie algebra properties}
Let $\favn \in \genregLieW{1}$ be of the form \labelcref{eqn: favn form}. Let $\SL_2(\favn) \leq H < G$ be a maximal intermediate closed subgroup. Then, $[\LieH, \LieH]$ is nontrivial and $Z_\LieG([\LieH, \LieH]) \subset \LieH$.
\end{lemma}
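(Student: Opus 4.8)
The plan is to get the nontriviality of $[\LieH,\LieH]$ directly, and then to obtain $Z_\LieG([\LieH,\LieH])\subseteq\LieH$ by realizing $\LieH+Z_\LieG([\LieH,\LieH])$ as the Lie algebra of a closed intermediate subgroup and invoking the maximality of $H$. First, since $\SL_2(\favn)\leq H$ we have $\LieSL_2(\favn)\subseteq\LieH$, and $\LieSL_2(\favn)$ is isomorphic to $\LieSL_2(\R)$, hence perfect, so $[\LieH,\LieH]\supseteq[\LieSL_2(\favn),\LieSL_2(\favn)]=\LieSL_2(\favn)\neq 0$; this is the first assertion.

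Write $\LieL:=[\LieH,\LieH]$. By \cref{lem: intermediate subalgebra is reductive}, applied to the regular semisimple subalgebra $\LieSL_2(\favn)\subseteq\LieH$, the subalgebra $\LieH$ is reductive, so $\LieH=Z(\LieH)\oplus\LieL$ with $\LieL$ semisimple; moreover $\favn\in\LieSL_2(\favn)\subseteq\LieL$, so $\LieL$ is itself a regular semisimple Lie subalgebra. Next I claim $\LieH+Z_\LieG(\LieL)$ is a Lie subalgebra of $\LieG$: the only nonobvious bracket is $[\LieH,Z_\LieG(\LieL)]$, and for $h\in\LieH$, $z\in Z_\LieG(\LieL)$, $l\in\LieL$, the Jacobi identity together with $[l,z]=0$ and $[l,h]\in\LieL$ (as $\LieL\trianglelefteq\LieH$) gives $[l,[h,z]]=[[l,h],z]=0$, so $[h,z]\in Z_\LieG(\LieL)$. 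Applying \cref{lem: intermediate subalgebra is reductive} once more, now to $\LieSL_2(\favn)\subseteq\LieH+Z_\LieG(\LieL)$, the connected Lie subgroup $H_1\leq G$ with Lie algebra $\LieH+Z_\LieG(\LieL)$ is a connected reductive real algebraic subgroup, hence closed in $G$, and $H^\circ\subseteq H_1$.

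Since $H$ normalizes $\LieH$ under $\Ad$, it normalizes the characteristic ideal $\LieL$, hence $Z_\LieG(\LieL)$, hence $\LieH+Z_\LieG(\LieL)=\Lie(H_1)$, hence $H_1$; so $\widetilde{H}:=HH_1$ is a subgroup of $G$, and it is closed because $H^\circ\subseteq H_1$ forces $H$ to meet only finitely many cosets of the closed set $H_1$. As $H\subseteq\widetilde{H}\subseteq G$ and $H$ is a maximal intermediate closed subgroup, either $\widetilde{H}=H$ or $\widetilde{H}=G$. If $\widetilde{H}=H$, then $H_1\subseteq H$, so $\LieH+Z_\LieG(\LieL)\subseteq\LieH$, i.e., $Z_\LieG([\LieH,\LieH])\subseteq\LieH$, as desired. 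If $\widetilde{H}=G$, then $\LieG=\LieH+Z_\LieG(\LieL)$; since $Z(\LieH)\subseteq Z_\LieG(\LieL)$ this reads $\LieG=\LieL+Z_\LieG(\LieL)$, and the sum is direct because $\LieL\cap Z_\LieG(\LieL)=Z(\LieL)=0$ by semisimplicity of $\LieL$. Then $[\LieL,Z_\LieG(\LieL)]=0$ exhibits $\LieL$ as a proper ideal — a proper product of simple factors — of the semisimple algebra $\LieG$; but $\favn\in\LieL$ is a regular nilpotent element of $\LieG$, which has a nonzero component in every simple factor of $\LieG$ and hence lies in no proper product of simple factors, a contradiction. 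Thus $\widetilde{H}=H$, completing the proof.

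The step I expect to be the main obstacle is the appeal to maximality: one has to package $\LieH+Z_\LieG([\LieH,\LieH])$ as the Lie algebra of a genuinely \emph{closed} intermediate subgroup, since the hypothesis on $H$ is a statement about closed subgroups. This is precisely why the second application of \cref{lem: intermediate subalgebra is reductive} is needed (it produces a real algebraic, hence closed, subgroup $H_1$) and why one must separately check that $HH_1$ is closed. The alternative $\widetilde{H}=G$ is then cheaply ruled out using only that a regular nilpotent element of $\LieG$ meets every simple factor.
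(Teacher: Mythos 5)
Your argument is correct in substance and follows the same overall strategy as the paper: both proofs get nontriviality of $[\LieH,\LieH]$ from $\LieSL_2(\favn)\subset[\LieH,\LieH]$, both form the subalgebra $\LieH':=\LieH+Z_\LieG([\LieH,\LieH])$, integrate it to a closed reductive subgroup via \cref{lem: intermediate subalgebra is reductive}, and contradict maximality. The genuine divergence is in how one rules out $\LieH'=\LieG$. The paper observes that $Z_\LieG([\LieH,\LieH])\subset Z_\LieG(\LieSL_2(\favn))=\LieMdiam\subset\LieM$, so $\LieH'$ misses the one-dimensional subalgebra of $\LieU_{\favn}\subset\LieW$ orthogonal to $\LieH$ supplied by \cref{lem: proper reductive H has complementary centralizing unipotent direction}. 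You instead note that $\LieH'=\LieG$ would force $\LieG=\LieL\oplus Z_\LieG(\LieL)$ as a sum of ideals and that the regular nilpotent $\favn\in\LieL$ cannot lie in a proper ideal. Your route is more elementary (it bypasses the Helgason-identity machinery behind \cref{lem: proper reductive H has complementary centralizing unipotent direction}), but the claim that a regular nilpotent element meets every simple factor of $\LieG$ is exactly the point where one needs $\LieG$ to have no compact simple factors (a compact factor contributes no restricted roots, and $\favn$ projects to $0$ there); this is the same tacit hypothesis that underlies the paper's use of Helgason's identity, so you are not worse off, but you should state it.

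Two smaller points. First, your closedness argument for $\widetilde{H}=HH_1$ needs $H/(H\cap H_1)$ to be finite, which you get from $H^\circ\subseteq H_1$ only if $H/H^\circ$ is finite; this is not completely automatic for an abstract maximal closed subgroup and deserves a word (e.g., $H\subseteq N_G(\LieH)$, the real points of an algebraic group). The paper's own proof elides the analogous issue (it asserts $H<H'<G$ with $H'$ connected without addressing the components of $H$), so your treatment is if anything more careful, but the gap is still there. Second, in the alternative $\widetilde{H}=G$ you should dispose of the degenerate case $Z_\LieG(\LieL)=0$ separately (then $\LieL=\LieG$ gives $H=G$ directly, contradicting $H<G$) before calling $\LieL$ a \emph{proper} ideal.
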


\begin{proof}
Let $\favn$ and $H$ be as in the lemma. Since $\LieSL_2(\favn) \subset \LieH$, clearly $\LieSL_2(\favn) \subset [\LieH, \LieH]$ and hence the latter is nontrivial.

Let us prove the second property. For the sake of contradiction suppose that $Z_\LieG([\LieH, \LieH]) \not\subset \LieH$ and in particular, nontrivial. Define the Lie subalgebra
\begin{align*}
\LieH \subsetneq \LieH' := \LieH + Z_\LieG([\LieH, \LieH]) \subset \LieG.
\end{align*}
It is indeed a Lie subalgebra, i.e., closed under the Lie bracket, since $\LieH = Z(\LieH) \oplus [\LieH, \LieH]$ by regularity of $\LieSL_2(\favn)$ and \cref{lem: intermediate subalgebra is reductive}, $Z(\LieH) \subset Z_\LieG([\LieH, \LieH])$, and $[[\LieH, \LieH], Z_\LieG([\LieH, \LieH])] = 0$. Now, we argue that $\LieH' \subset \LieG$ is proper. Since $\LieSL_2(\favn) \subset [\LieH, \LieH]$ and it is regular, we deduce that $Z_\LieG([\LieH, \LieH]) \subset \bigoplus_{j \in \calJdiam} \favtripleirrep_j = Z_\LieM(\favn) = \LieMdiam \subset \LieM$ (recall that $\dim\bigl(\favtripleirrep_j\bigr) = 1$ for all $j \in \calJdiam$). We then conclude properness using \cref{lem: proper reductive H has complementary centralizing unipotent direction}. Again invoking \cref{lem: intermediate subalgebra is reductive}, corresponding to $\LieSL_2(\favn) \subset \LieH \subsetneq \LieH' \subsetneq \LieG$, we thus obtain a proper intermediate reductive (and hence closed) real algebraic subgroup $H < H' < G$ which contradicts maximality of $H < G$.
\end{proof}

We are now ready to prove the following.

\begin{proposition}
\label{pro:EShahImpliesCEShah}
\EShah implies \CEShah.
\end{proposition}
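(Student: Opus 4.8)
The goal is to deduce the averaged, "centralizer" equidistribution statement \CEShah from the finer \EShah, which controls the average of $\phi$ along a one-parameter unipotent orbit $r \mapsto a_t u_r x_0$ with $u_r = \exp(r\favn)$. The plan is to realize the average over $\mathsf{B}_1^U$ (where $U = \exp(\LieU_{\favn})$ is the unipotent radical of the centralizer of $\favn$) as an iterated average of one-parameter $\favn$-flow averages, one for each of the $\#\calJstar - 1 = \rankGstar - 1$ directions in $\LieU$ transverse to $\favn$, and then apply \EShah repeatedly. The key structural input is that $\LieU \subset Z_\LieG(\favn)$ commutes with $\favn$, so translating the base point by an element $u \in U$ does not disturb the $\favn$-orbit average except through the base point; concretely, $a_t u_r (ux_0)$ with $u$ ranging over $\mathsf{B}_1^U$ is exactly the family we want, and for each fixed $u$ the inner $r$-average is governed by \EShah applied at base point $ux_0$.

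First I would fix $\favn$ of the form \labelcref{eqn: favn form}, set $\LieU := \LieU_{\favn}$, $U := \exp(\LieU)$, and write $\LieU = \R\favn \oplus \LieU'$ with $\LieU'$ the orthogonal complement of $\R\favn$ inside $\LieU$; note $\R\favn \subset \LieU$ since $\favn \in \favtripleirrep_{j_0}(1) = \favtripleirrep_{j_0}(\varkappa_{j_0}) \subset \LieU$. Using that $\LieU$ is abelian nilpotent of step controlled by $\height(\Phi)$ (it is a sum of highest weight spaces $\favtripleirrep_j(\varkappa_j)$, $j \in \calJstar$, all of which lie in $Z_\LieG(\favn)$, so $[\LieU,\LieU] \subset Z_\LieG(\favn)$; in the $\R$-quasi-split case $\LieU$ is genuinely abelian, and in general one only needs that the polynomial map $\exp: \LieU' \times \R \to U$, $(\bfnu, r) \mapsto \exp(\bfnu)\exp(r\favn) = \exp(r\favn)\exp(\bfnu)$ is measure-preserving onto $U$, which holds by the standard fact $\mu_U = \exp_*\mu_{\LieU}$ quoted in the preliminaries together with a unipotent change of variables). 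This lets me write, after a change of variables and Fubini,
\begin{align*}
\frac{1}{\mu_U(\mathsf{B}_1^U)}\int_{\mathsf{B}_1^U}\phi(a_tux_0)\diff\mu_U(u) = \frac{1}{c}\int_{\Omega'}\left(\int_{I(\bfnu)}\phi(a_t u_r \exp(\bfnu)x_0)\diff r\right)\diff\mu_{\LieU'}(\bfnu),
\end{align*}
where $\Omega' \subset \LieU'$ and the intervals $I(\bfnu) \subset \R$ are bounded pieces of a box decomposition of $\mathsf{B}_1^U$ and $c = \mu_U(\mathsf{B}_1^U)$; by rescaling $r$ and chopping $I(\bfnu)$ into $O(1)$ subintervals of length $1$, each inner integral is a sum of $O(1)$ honest $[0,1]$-averages of the form appearing in \EShah, at base points $\exp(\bfnu)x_0$ whose heights satisfy $\height(\exp(\bfnu)x_0) \ll \height(x_0)$ uniformly for $\bfnu \in \Omega'$ (since $\Omega'$ is a fixed bounded set and the height function has controlled distortion under a fixed compact set of translations).

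Next I apply \EShah at each base point $\exp(\bfnu)x_0$ with the parameter $R$ to be chosen. Alternative~(1) of \EShah gives exactly the desired bound $\ll \Sob(\phi)\height(x_0)^{\ref{Lambda:EShah}}R^{-\ref{kappa:EShah}}$ for that slice. Alternative~(2) produces a periodic orbit $Hx$ with $\SL_2(\favn) \leq H < G$ intermediate, $\vol(Hx) \leq R$, and $d(\exp(\bfnu)x_0, x) \leq R^{\ref{Lambda:EShah}}t^{\ref{Lambda:EShah}}e^{-t}$; enlarging $H$ to a maximal intermediate closed subgroup $\SL_2(\favn) \leq H_{\max} < G$ (still periodic with volume $\ll R$ by a standard volume comparison for intermediate periodic orbits) and invoking \cref{lem: max intermediate subgroup Lie algebra properties} we get $[\LieH_{\max},\LieH_{\max}]$ nontrivial and $Z_\LieG([\LieH_{\max},\LieH_{\max}]) \subset \LieH_{\max}$, so \cref{lem: reductive orbit counting estimate} bounds the number of such orbits by $\ll_X R^{\ref{kappa:PeriodicOrbits}}$. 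The strategy is then: integrate the trivial bound $|\phi| \le \Sob(\phi)$ over the "bad" set of $\bfnu$ — those for which $\exp(\bfnu)x_0$ lies within $R^{\ref{Lambda:EShah}}t^{\ref{Lambda:EShah}}e^{-t}$ of one of these $\ll R^{\ref{kappa:PeriodicOrbits}}$ periodic orbits — and use that each such periodic orbit $H_{\max}x$, being a proper algebraic subvariety, meets a fixed-size tube of radius $\rho$ around it in $\mu_X$-measure (hence in $\mu_{\LieU'}$-measure along the transverse slice, up to bounded distortion) of order $\ll \rho^{\delta}$ for some $\delta > 0$ depending only on $\dim X$; by \cref{cor: proper reductive H has complementary centralizing unipotent direction}, $\LieU \not\subset \LieH_{\max}$, so the $\LieU'$-slice is genuinely transverse to $H_{\max}x$ in at least one direction, giving a nonzero such $\delta$. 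Thus the bad set has measure $\ll R^{\ref{kappa:PeriodicOrbits}}\cdot(R^{\ref{Lambda:EShah}}t^{\ref{Lambda:EShah}}e^{-t})^{\delta}$, and the total contribution of bad slices to the average is $\ll \Sob(\phi)\,R^{\ref{kappa:PeriodicOrbits}}\,R^{\delta\ref{Lambda:EShah}}\,t^{\delta\ref{Lambda:EShah}}\,e^{-\delta t}$. Now choose $R := e^{\beta t}$ for a small enough $\beta > 0$ (depending only on $X$) so that $R^{\ref{kappa:PeriodicOrbits} + \delta\ref{Lambda:EShah}}e^{-\delta t} \le e^{-\kappa' t}$ for some $\kappa' > 0$, while simultaneously the good contribution $R^{-\ref{kappa:EShah}} = e^{-\beta\ref{kappa:EShah}t}$ is also exponentially small; the constraint $t \geq \ref{Lambda:EShah}\log R = \beta\ref{Lambda:EShah}t$ needed to invoke \EShah is satisfied for $\beta$ small. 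Combining the good and bad contributions and absorbing polynomial-in-$t$ factors into a slightly smaller exponent yields
\begin{align*}
\left|\frac{1}{\mu_U(\mathsf{B}_1^U)}\int_{\mathsf{B}_1^U}\phi(a_tux_0)\diff\mu_U(u) - \int_X\phi\diff\mu_X\right| \leq \Sob(\phi)\,\height(x_0)^{\ref{Lambda:CEShah}}\,e^{-\ref{kappa:CEShah}t}
\end{align*}
for suitable $\constkappa\label{kappa:ceshahplan} > 0$, $\constLambda\label{lambdaplan} > 0$ depending only on $X$, which is \CEShah.

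The main obstacle I anticipate is the geometric measure estimate on the "bad" transverse set: quantifying that the $\LieU'$-slice through $x_0$ meets an $\rho$-neighborhood of a periodic orbit of a proper reductive subgroup $H_{\max}$ in measure $\ll \rho^{\delta}$ with $\delta$ uniform over the (finitely many up to conjugacy, by \cref{lem: finite Lie subalgebras betweeen semisimple and LieG}) possible $H_{\max}$. This requires a uniform transversality/dimension bound; the finiteness from \cref{lem: finite Lie subalgebras betweeen semisimple and LieG} reduces it to finitely many cases, and \cref{cor: proper reductive H has complementary centralizing unipotent direction} supplies the qualitative transversality in each, but turning this into a clean quantitative tube estimate — and handling the mild non-uniformity coming from the dependence of $H_{\max}$ on the slice point — is the delicate part. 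A secondary technical point is the bookkeeping in the non-(quasi-split) case, where $\LieU$ need not be abelian, so one must be careful that the fibration $U \cong U' \times \{a_t$-orbit direction$\}$ used in the Fubini step is still measure-preserving; this follows from the Lebesgue-measure description of Haar measure on unipotent groups, but should be stated explicitly.
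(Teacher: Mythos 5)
Your overall strategy coincides with the paper's: slice the $U$-ball average into one-parameter $\favn$-averages, apply \EShah slice by slice, and control the exceptional slices (Case~(2)) by combining the periodic-orbit count of \cref{lem: reductive orbit counting estimate} with the transversality of $\LieU$ to proper reductive subgroups from \cref{cor: proper reductive H has complementary centralizing unipotent direction}, finally taking $R = e^{\beta t}$ with $\beta$ small. Two of your intermediate steps, however, do not hold as written. First, the chords $I(\bfnu)$ of the Euclidean ball $\mathsf{B}_1^U$ have length at most $2$, so they cannot be ``chopped into $O(1)$ subintervals of length $1$'': after removing at most one unit interval you are left with a remainder of length $\ell < 1$ that is a bounded (not small) fraction of the slice, and \EShah says nothing about sub-unit averages. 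This is repairable --- either by the rescaling $r \mapsto \ell r$, which converts a length-$\ell$ average into a $[0,1]$-average at time $t + \log\ell$ and a shifted base point $a_{-\log\ell}u_c\exp(\bfnu)x_0$ (costing a height factor $\ell^{-O(1)}$, so one must also discard the thin annulus of $\bfnu$ where $\ell$ is very small), or, as the paper does, by first using the F{\o}lner property to replace the ball average by an average over the box $\mathsf{Q} = \{u_r\}_{0 < r < 1}\cdot\mathsf{Q}^\dagger$ conjugated by $a_{-Z}$ with $Z = \kappa\log R$, which factors exactly as $[0,1]\times\mathsf{Q}^\dagger$ --- but it must be addressed.

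Second, and more seriously, your bad-set bound $\ll R^{\ref{kappa:PeriodicOrbits}}\cdot\bigl(R^{\ref{Lambda:EShah}}t^{\ref{Lambda:EShah}}e^{-t}\bigr)^{\delta}$ treats each periodic orbit as if its $\rho$-tube had transverse measure $\ll \rho^{\delta}$. A periodic orbit $Hx$ with $\vol(Hx) \leq R$ can return to a fixed small box roughly $R\cdot\inj_X^{-\dim(\LieH)}$ many times, so the correct estimate is (number of orbits) $\times$ (number of sheets per orbit in the box) $\times$ (transverse measure of a single sheet's $\rho$-neighborhood); moreover, transversality of the slice to $\LieH$ in one direction of $\LieU$ gives codimension one in the slice, i.e.\ a factor of $\rho$ per sheet, not a mysterious H\"older exponent $\delta$. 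The paper's proof of Claim~2 carries out exactly this three-factor count: it covers $\mathsf{Q}^\dagger$ by boxes of size comparable to the injectivity radius, bounds the number of sheets by $\vol(Hz)$ divided by the volume of one sheet, and bounds each sheet's neighborhood by $\delta^{\dim(\LieU^\dagger) - 1}\cdot R^{\ref{Lambda:EShah}}t^{\ref{Lambda:EShah}}e^{-t}$. Since you take $R = e^{\beta t}$ with $\beta$ small, the extra polynomial-in-$R$ multiplicity you omit is still absorbable and your parameter choice survives, but the measure estimate you rely on is not established as stated --- it is precisely the step you yourself flagged as the main obstacle.
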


\begin{proof}
Suppose \EShah holds. Let $\favn$, $u_{\bigcdot}$, $\LieU$, and $U$ be as in the hypotheses. Define the Lie subalgebra and Lie subgroup
\begin{align*}
\LieU^\dagger &:= \LieU \cap (\favn)^\perp = \bigoplus_{j \in \calJstar \smallsetminus \{j_0\}} \favtripleirrep_j(\varkappa_j) \subset \LieU, & U^\dagger &:= \exp(\LieU^\dagger) < U.
\end{align*}
Define the open box-like subsets
\begin{align*}
\mathsf{Q}^\dagger &:= \exp\left(\prod_{j \in \calJstar \smallsetminus \{j_0\}} B_1^{\favtripleirrep_j(\varkappa_j)}\right) \subset U^\dagger, & \mathsf{Q} &:= \{u_r\}_{0 < r < 1} \cdot \mathsf{Q}^\dagger \subset U,
\end{align*}
centered at $e \in U$. Let us first reduce the proposition to the following claim.

\medskip
\noindent
\textit{Claim~1. Let $y_0 \in X$. For all $R \gg 1$, $t \geq \ref{Lambda:EShahImpliesCEShahClaim1}\log(R)$, and $\phi \in C_{\mathrm{c}}^\infty(X)$, we have
\begin{align*}
\left|\frac{1}{\mu_{U^\dagger}(\mathsf{Q}^\dagger)} \int_{\mathsf{Q}^\dagger} \int_0^1 \phi(a_tu_r u^\dagger y_0) \diff r \diff\mu_{U^\dagger}(u^\dagger) - \int_{X} \phi \diff\mu_{X} \right| \leq \Sob(\phi) \height(y_0)^{\ref{Lambda:EShahImpliesCEShahClaim1}} R^{-\ref{kappa:EShah}}.
\end{align*}
Here, $\ref{kappa:EShah} > 0$ and $\constLambda\label{Lambda:EShahImpliesCEShahClaim1} > 0$ are constants depending only on $X$.}

\medskip
\noindent
\textit{Proof that Claim~1 implies the proposition.}
To simplify the notation, we denote $\mu_U(B)$ by $|B|$ for any Borel subset $B \subset U$, $\diff\mu_U(u)$ by $\diff u$, and $\diff\mu_{U^\dagger}(u^\dagger)$ by $\diff u^\dagger$. Let $x_0$, $t$, and $\phi$ be as in \CEShah; in particular, $t = \ref{Lambda:CEShah}\log(R)$ for some $R \gg 1$ where we fix $\ref{Lambda:CEShah} := \ref{Lambda:EShahImpliesCEShahClaim1} + \kappa$ and $\kappa > 0$ will be fixed later.

Let $Z := \kappa\log(R)$ and note that $a_{-Z}\mathsf{Q}a_Z \subset \mathsf{B}_{\rankGstar R^{-\kappa}}^U$. Recalling $\mu_U = \exp_* \mu_\LieU$, we may use the F{\o}lner property to introduce an extra average over $a_{-Z}\mathsf{Q}a_Z$ and then change of variables to get
\begin{align}
\label{eqn:FolnerProperty}
\begin{aligned}
&\frac{1}{\mu_U\bigl(\mathsf{B}_1^U\bigr)}\int_{\mathsf{B}_1^U} \phi(a_tux_0) \diff\mu_U(u) \\
={}&\frac{1}{|a_{-Z}\mathsf{Q}a_Z|} \int_{a_{-Z}\mathsf{Q}a_Z} \frac{1}{\bigl|\mathsf{B}_1^U\bigr|} \int_{\mathsf{B}_1^U} \phi(a_tu' ux_0) \diff u \diff u' + O(\|\phi\|_\infty R^{-\kappa}) \\
={}&\frac{1}{\bigl|\mathsf{B}_1^U\bigr| \cdot |\mathsf{Q}|} \int_{\mathsf{B}_1^U} \int_{\mathsf{Q}} \phi(a_t a_{-Z}u'a_Z ux_0) \diff u' \diff u + O(\Sob(\phi)R^{-\kappa}) \\
={}&\frac{1}{\bigl|\mathsf{B}_1^U\bigr| \cdot |\mathsf{Q}^\dagger|} \int_{\mathsf{B}_1^U} \int_{\mathsf{Q}^\dagger} \int_0^1 \phi(a_{t - Z}u_r u^\dagger a_Zux_0) \diff r \diff u^\dagger \diff u + O(\Sob(\phi)R^{-\kappa}).
\end{aligned}
\end{align}
Since $t - Z = (\ref{Lambda:CEShah} - \kappa)\log(R) = \ref{Lambda:EShahImpliesCEShahClaim1}\log(R)$, we obtain the conclusion of \CEShah by applying Claim~1 for $y_0 = a_Zux_0$ and the following calculations. Taking $\kappa = \ref{kappa:EShah}/(\height(\Phi) + 1)$, for all $u \in \mathsf{B}_1^U$, we have
\begin{align*}
\height(a_Zux_0)^{\ref{Lambda:EShahImpliesCEShahClaim1}}R^{-\ref{kappa:EShah}} \ll \height(x_0)^{\ref{Lambda:EShahImpliesCEShahClaim1}}R^{\height(\Phi)\kappa}R^{-\ref{kappa:EShah}} \leq \height(x_0)^{\ref{Lambda:CEShah}}R^{-\kappa}.
\end{align*}
Since $R \gg 1$, we may use $R^{-\kappa/2}$ to remove the resulting implicit constant after applying Claim~1. Finally, fix $\ref{kappa:CEShah} = \kappa/2\ref{Lambda:CEShah}$.

\medskip
Now, we reduce Claim~1 to the following claim. The derivation of Claim~1 from Claim~2 is simple and we omit it. Note that Claim~1 holds trivially for $y_0 \in X$ and $R \gg 1$ with $\height(y_0)^{\ref{Lambda:EShahImpliesCEShahClaim1}} \geq 2R^{\ref{kappa:EShah}}$.

\medskip
\noindent
\textit{Claim~2. Let $y_0 \in X$ and $R \gg 1$ with $\height(y_0)^{\ref{Lambda:EShahImpliesCEShahClaim1}} < 2R^{\ref{kappa:EShah}}$, and $t \geq \ref{Lambda:EShahImpliesCEShahClaim1}\log(R)$. There exists $\mathcal{E} \subset \mathsf{Q}^\dagger$ with $\mu_{U^\dagger}(\mathcal{E}) \leq R^{-\ref{kappa:EShah}}\mu_{U^\dagger}(\mathsf{Q}^\dagger)$ such that for all $u^\dagger \in \mathsf{Q}^\dagger \smallsetminus \mathcal{E}$ and $\phi \in C_{\mathrm{c}}^\infty(X)$, we have
\begin{align*}
\left|\int_0^1 \phi(a_tu_r u^\dagger y_0) \diff r - \int_{X} \phi \diff\mu_{X} \right| \leq \Sob(\phi) \height(y_0)^{\ref{Lambda:EShahImpliesCEShahClaim1}} R^{-\ref{kappa:EShah}}.
\end{align*}
}

\medskip
\noindent
\textit{Proof of Claim~2.}
The constant $\ref{Lambda:EShahImpliesCEShahClaim1}$ will be explicated throughout the proof. Let $y_0$, $R$, $t$, and $\phi$ be as in the claim. Let $c > 0$ be a constant (depending only on $G$) which will be specified later. Define $\mathsf{B} := \mathsf{B}_{\rankGstar(1 + 5c)}^G$. Then,
\begin{align*}
\inj_X(\mathsf{B} y_0) = \inf_{g \in \mathsf{B}}\inj_X(g y_0) \asymp \inj_X(y_0) \gg R^{-\ref{kappa: injectivity radius and height 1}\ref{kappa:EShah}/\ref{Lambda:EShahImpliesCEShahClaim1}}.
\end{align*}
Define the constant
\begin{align*}
\delta := \sup\bigl\{2^{-k}: 2^{-k} \leq \delta_0\inj_X(\mathsf{B} y_0)/10c, k \in \N\bigr\} \in (0, 1)
\end{align*}
where $\delta_0 \in (0, 1)$ is a constant (depending only on $G$) which will be specified later. Then, $\delta \asymp \inj_X(\mathsf{B} y_0)$ (recall that $\inj_X(X) < +\infty$ since $X$ is of finite volume) and consequently $\delta \gg R^{-\ref{kappa: injectivity radius and height 1}\ref{kappa:EShah}/\ref{Lambda:EShahImpliesCEShahClaim1}}$.
We may take $\ref{Lambda:EShahImpliesCEShahClaim1} > 2\ref{Lambda:EShah}$ sufficiently large so that $R^{\ref{Lambda:EShah}}t^{\ref{Lambda:EShah}}e^{-t} < \delta$; in fact, we can ensure that
\begin{align}
\label{eqn: distance to periodic orbit estimate}
R^{\ref{Lambda:EShah}}t^{\ref{Lambda:EShah}}e^{-t} \leq R^{\ref{Lambda:EShah}}e^{-t/2} \leq R^{\ref{Lambda:EShah}- \ref{Lambda:EShahImpliesCEShahClaim1}/2} \leq R^{-\ref{Lambda:EShahImpliesCEShahClaim1}/4} \leq R^{-\ref{Lambda:EShahImpliesCEShahClaim1}/8} \cdot R^{-\ref{kappa: injectivity radius and height 1}\ref{kappa:EShah}/\ref{Lambda:EShahImpliesCEShahClaim1}} \leq \delta.
\end{align}
Observe that the claim follows from \EShah as soon as we rule out the possibility of Case~2 in \EShah for the point $u^\dagger y_0$ for all $u^\dagger \in \mathsf{Q}^\dagger \smallsetminus \mathcal{E}$---indeed, we define
\begin{align}
\label{eqn: exceptional set of points near periodic orbits}
\mathcal{E} := \{u^\dagger \in \mathsf{Q}^\dagger: z_0 = u^\dagger y_0 \text{ satisfies Case~2 in \EShah}\}.
\end{align}
Let us use the explicit covering $\bigl\{\overline{\mathsf{Q}^\dagger_k}\bigr\}_{k = 1}^{k_0}$ of $\overline{\mathsf{Q}^\dagger}$ for some $k_0 \in \N$ where we define the mutually disjoint open box-like subsets
\begin{align*}
\mathsf{Q}^\dagger_k := \exp\left(\prod_{j \in \calJstar \smallsetminus \{j_0\}} B_\delta^{\favtripleirrep_j(\varkappa_j)}(p_k)\right) \qquad \text{for all $1 \leq k \leq k_0$}
\end{align*}
where $\{p_k\}_{k = 1}^{k_0} \subset \LieU^\dagger$ is an appropriate set of dyadic points. We write $\widehat{\mathsf{Q}}^\dagger_k$ for similar open box-like subsets centered at $p_k$ as above with $\delta$ replaced with $2c\delta$, for all $1 \leq k \leq k_0$. It suffices to prove for each $1 \leq k \leq k_0$ that $\mu_{U^\dagger}\bigl(\mathcal{E} \cap \mathsf{Q}^\dagger_k\bigr) \leq R^{-\ref{kappa:EShah}}\mu_{U^\dagger}\bigl(\mathsf{Q}^\dagger_k\bigr)$.

Fix $1 \leq k \leq k_0$ henceforth. We call each connected component of the (nonempty) intersection of any periodic orbit $Hz$ which appears in the defining property of $\mathcal{E}$ in \cref{eqn: exceptional set of points near periodic orbits} with $\widehat{\mathsf{Q}}^\dagger_k y_0$, a \emph{sheet}.
Let $N_k \in \N$ be the number of sheets in $\widehat{\mathsf{Q}}^\dagger_k y_0$. We argue that
\begin{align}
\label{eqn: number of sheets estimate}
N_k \ll R^{1 + (\ref{kappa: injectivity radius and height 1}\ref{kappa:EShah}/\ref{Lambda:EShahImpliesCEShahClaim1})(\dim(\LieG) - 1) + \ref{kappa:PeriodicOrbits}}.
\end{align}
Let $\SL_2(\favn) \leq H < G$ be a maximal intermediate closed subgroup. Then, $H$ is a reductive real algebraic subgroup by \cref{lem: intermediate subalgebra is reductive} such that $[\LieH, \LieH]$ is nontrivial and $Z_\LieG([\LieH, \LieH]) \subset \LieH$ by \cref{lem: max intermediate subgroup Lie algebra properties}. From these properties we draw two conclusions. Firstly, there are finitely many $H$ as above since, being a real algebraic subgroup, $H^\circ$ has finitely many covers in $G$ and, since $Z_\LieG(\LieH) \subset \LieH$, there are finitely many corresponding Lie subalgebras $\LieH$ by \cref{lem: finite Lie subalgebras betweeen semisimple and LieG}. Secondly, we can directly apply \cref{lem: reductive orbit counting estimate} to obtain
\begin{align*}
\#\{Hz: \text{$z \in X$ and $Hz$ is periodic with $\vol(Hz) \leq R$}\} \ll R^{\ref{kappa:PeriodicOrbits}}.
\end{align*}
Now, fix a periodic orbit $Hz$ with $\vol(Hz) \leq R$ for some $z \in \widehat{\mathsf{Q}}^\dagger_k y_0$ and write $N_{Hz, k} \in \N$ for the corresponding number of sheets. By Euclidean geometry,
\begin{align*}
\widehat{\mathsf{Q}}^\dagger_k \subset \mathsf{C}_{H, k} := \exp\Bigl(B_{\rankGstar \cdot 2c\delta}^{\LieH}(p_k) \times B_{\rankGstar \cdot 2c\delta}^{\LieH^\perp}(p_k)\Bigr).
\end{align*}
We may now require that $\delta_0 \leq {\rankGstar}^{-1}$ so that $\rankGstar \cdot 2c\delta \leq \inj_X(\mathsf{B} y_0)$. Then, for a connected component $\mathcal{C} \subset Hz \cap \mathsf{C}_{H, k}y_0$, we have
\begin{align*}
\vol(\mathcal{C}) \asymp \delta^{\dim(\LieH)} \geq \delta^{\dim(\LieG) - 1} \gg R^{-(\ref{kappa: injectivity radius and height 1}\ref{kappa:EShah}/\ref{Lambda:EShahImpliesCEShahClaim1})(\dim(\LieG) - 1)}.
\end{align*}
Together with $\vol(Hz) \leq R$, we deduce $N_{Hz, k} \ll R^{1 + (\ref{kappa: injectivity radius and height 1}\ref{kappa:EShah}/\ref{Lambda:EShahImpliesCEShahClaim1})(\dim(\LieG) - 1)}$. Compiling the above gives \cref{eqn: number of sheets estimate}.

Let $H$ be as in the preceding paragraph (of which there are finitely many). Using the key property that $\LieU \not\subset \LieH$ according to \cref{cor: proper reductive H has complementary centralizing unipotent direction}, we conclude that there exists $j^\dagger \in \calJstar \smallsetminus \{j_0\}$ such that $\favtripleirrep_{j^\dagger}(\varkappa_{j^\dagger}) \subset \LieU^\dagger \smallsetminus \LieH$.
Due to the position of this $1$-dimensional Lie subalgebra with respect to $\LieH$ (of which there are finitely many), we conclude that there exists a constant $c > 0$ (introduced in the beginning of the proof) such that for any $u \in \mathcal{E} \cap \mathsf{Q}^\dagger_k$, there exist $H$ as above and an open $(cR^{\ref{Lambda:EShah}}t^{\ref{Lambda:EShah}}e^{-t})$-neighborhood $\mathcal{S} \subset \bigl(U^\dagger \cap B_{cR^{\ref{Lambda:EShah}}t^{\ref{Lambda:EShah}}e^{-t}}^G\bigl(\widehat{\mathsf{Q}}^\dagger_k\bigr)\bigr)y_0$ containing $u$ of a corresponding strictly lower dimensional sheet in $\widehat{\mathsf{Q}}^\dagger_k y_0$ with respect to the metric $d$ on $G$.
Note that $\|\log(u)\| \leq (\rankGstar - 1) \cdot 2c\delta + c\delta + cR^{\ref{Lambda:EShah}}t^{\ref{Lambda:EShah}}e^{-t} \leq 2\rankGstar c\delta$ for all $u \in B_{cR^{\ref{Lambda:EShah}}t^{\ref{Lambda:EShah}}e^{-t}}^G\bigl(\widehat{\mathsf{Q}}^\dagger_k\bigr) \cdot \exp(p_k)^{-1}$ so long as $\delta_0$ is sufficiently small. It is now clear that
\begin{align*}
\vol(\mathcal{S}) &\ll \delta^{\dim(\LieU^\dagger) - 1} \cdot R^{\ref{Lambda:EShah}}t^{\ref{Lambda:EShah}}e^{-t} \ll R^{\ref{kappa: injectivity radius and height 1}\ref{kappa:EShah}/\ref{Lambda:EShahImpliesCEShahClaim1} + \ref{Lambda:EShah}}e^{-t/2} \mu_{U^\dagger}\bigl(\mathsf{Q}^\dagger_k\bigr).
\end{align*}
Combining this with \cref{eqn: number of sheets estimate}, we have
\begin{align*}
\mu_{U^\dagger}\bigl(\mathcal{E} \cap \mathsf{Q}^\dagger_k\bigr) \ll R^{1 + (\ref{kappa: injectivity radius and height 1}\ref{kappa:EShah}/\ref{Lambda:EShahImpliesCEShahClaim1})\dim(\LieG) + \ref{kappa:PeriodicOrbits} + \ref{Lambda:EShah}}e^{-t/2} \mu_{U^\dagger}\bigl(\mathsf{Q}^\dagger_k\bigr).
\end{align*}
With estimates similar to \cref{eqn: distance to periodic orbit estimate}, we may take $\ref{Lambda:EShahImpliesCEShahClaim1}$ sufficiently large so that the coefficient of $\mu_{U^\dagger}\bigl(\mathsf{Q}^\dagger_k\bigr)$ (including the implicit constant) is at most $R^{-\ref{kappa:EShah}}$ as desired.
\end{proof}

\subsection{Theorem for growing balls in the centralizer of a natural nilpotent element}
We introduce the notion of \emph{minimum height}: for all compact subsets $S \subset X$ and $t > 0$, we define
\begin{align*}
\minheight(S, t) := \inf_{x \in S} \height(a_{-t}x).
\end{align*}
Using \CEShah and \cite{KM98}, we prove the following theorem.

\begin{theorem}
\label{thm:CEquidistributionOfGrowingBalls}
Suppose \CEShah holds. Let $\favn \in \genregLieW{1}$ be of the form \labelcref{eqn: favn form}. Denote $\LieU := \LieU_{\favn}$ and $U := \exp(\LieU)$. Then, for all $x_0 \in X$, $t \gg_X 1$, $R \geq e^{\ref{Lambda:CEquidistributionOfGrowingBalls}t}$, and $\phi \in C_{\mathrm{c}}^\infty(X)$, we have
\begin{align*}
\left|\frac{1}{\mu_U\bigl(\mathsf{B}_R^U\bigr)}\int_{\mathsf{B}_R^U} \phi(u x_0) \diff\mu_U(u) - \int_{X} \phi \diff\mu_{X} \right| \leq \Sob(\phi) \minheight\bigl(\overline{\mathsf{B}_R^U}x_0, t\bigr)^{\ref{Lambda:CEquidistributionOfGrowingBalls}} e^{-\ref{kappa:CEquidistributionOfGrowingBalls}t}.
\end{align*}
Here, $\constkappa\label{kappa:CEquidistributionOfGrowingBalls} > 0$ and $\constLambda\label{Lambda:CEquidistributionOfGrowingBalls} > 0$ are constants depending only on $X$. 
\end{theorem}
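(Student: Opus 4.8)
The plan is to deduce effective equidistribution of the large ball $\mathsf{B}_R^U x_0$ from the $1$-ball statement \CEShah by a telescoping/averaging argument in the $\{a_t\}$-direction, exactly the kind of argument that converts mixing of a diagonal flow into equidistribution of expanding pieces of a horospherical orbit. The key geometric input is that under $\Ad(a_t)$, every root space $\favtripleirrep_j(\varkappa_j)$ inside $\LieU = \LieU_{\favn}$ is expanded by the factor $e^{\varkappa_j t}$ (since $[\favh,\cdot] = \varkappa_j$ on the highest weight space, by the grading-by-height in \cref{eqn: grading by height}), so $a_t \mathsf{B}_1^U a_{-t}$ contains $\mathsf{B}_1^U$ and, more importantly, $\mathsf{B}_R^U$ is covered up to negligible boundary by translates $a_{-s} \mathsf{B}_1^U a_s u$ for $s$ ranging over an interval of length $\asymp \log R$ and $u$ ranging over a fixed fundamental region. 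Concretely, first I would write $R = e^{\kappa' s_0}$ for an appropriate $s_0 > 0$ comparable to $t$ (here is where the hypothesis $R \geq e^{\ref{Lambda:CEquidistributionOfGrowingBalls}t}$ enters, forcing $s_0 \gtrsim t$), and I would tile $\mathsf{B}_R^U$ by a bounded-multiplicity family $\{a_{-s_i} \mathsf{B}_1^U a_{s_i} u_i\}$; the Følner property for the unipotent group $U$ (with $\mu_U = \exp_* \mu_{\LieU}$, as recorded in the preliminaries) controls the error from the imperfect tiling by a factor like $e^{-\kappa s_0}$ times $\Sob(\phi)$.

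For each tile I would write $\int_{a_{-s_i}\mathsf{B}_1^U a_{s_i} u_i} \phi(u x_0)\,\diff\mu_U(u)$, conjugate to pull $a_{-s_i}$ out front, and land on an integral of the form $\frac{1}{\mu_U(\mathsf{B}_1^U)}\int_{\mathsf{B}_1^U}\phi\bigl(a_{-s_i}\,u'\,a_{s_i}u_i x_0\bigr)\,\diff\mu_U(u')$. This is precisely the left-hand side of \CEShah applied with base point $y_i := a_{s_i} u_i x_0$ and time $s_i$ (after absorbing the diagonal translation $a_{-s_i}$ into a test function of comparable Sobolev norm, since $a_{-s_i}$ acts on $\LieG$ with operator norm $\ll e^{\height(\Phi) s_i}$, affecting only the polynomial loss). \CEShah then gives the main term $\int_X \phi\,\diff\mu_X$ with error $\Sob(\phi)\,\height(y_i)^{\ref{Lambda:CEShah}} e^{-\ref{kappa:CEShah}s_i}$. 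The height $\height(y_i) = \height(a_{s_i} u_i x_0)$ is exactly what the quantity $\minheight(\overline{\mathsf{B}_R^U}x_0, t)$ is designed to bound: since $u_i \in \overline{\mathsf{B}_R^U}$ and $s_i \leq s_0 \asymp t$, one has (using $\height(a_{s}x) \ll e^{O(s)}\height(x)$, or more cleanly by the definition of $\minheight$ after reindexing $s_i \mapsto t - s_i$) a bound $\height(y_i) \ll \minheight(\overline{\mathsf{B}_R^U}x_0, t)^{O(1)} e^{O(t)}$. Combining this with $e^{-\ref{kappa:CEShah}s_i}$ and choosing the tiling parameters so that $s_i \gtrsim t$ uniformly, the product is $\ll \minheight(\overline{\mathsf{B}_R^U}x_0,t)^{\ref{Lambda:CEquidistributionOfGrowingBalls}} e^{-\ref{kappa:CEquidistributionOfGrowingBalls}t}$ for suitable constants.

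The reference to \cite{KM98} in the statement signals where I expect the main obstacle: one cannot simply ignore the pieces of $\mathsf{B}_R^U x_0$ that have wandered deep into the cusp, because there \CEShah is vacuous or the height factor $\height(y_i)$ is large. The standard remedy is a quantitative non-divergence estimate for unipotent (here, $U$-)orbits à la Dani--Margulis/Kleinbock--Margulis: the $\mu_U$-measure of the set of $u \in \mathsf{B}_R^U$ for which $a_{-t}ux_0$ (equivalently the relevant base point $y_i$) has height exceeding $1/\eta$ is $\ll \eta^{\alpha}$ times $\mu_U(\mathsf{B}_R^U)$, with the implied data controlled polynomially by $\minheight(\overline{\mathsf{B}_R^U}x_0, t)$. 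One splits the tiles into a "good" family, where \CEShah applies with the stated height bound, and a "bad" family of total measure $\ll \eta^\alpha \mu_U(\mathsf{B}_R^U)$, on which one uses only $\|\phi\|_\infty \leq \Sob(\phi)$; optimizing $\eta$ against the main error $e^{-\kappa t}$ gives the final estimate. Making the non-divergence input uniform in $R$ and matching its polynomial-in-$\minheight$ loss with the loss coming from \CEShah is the delicate bookkeeping; everything else is the routine Følner-tiling and change-of-variables manipulation sketched above.
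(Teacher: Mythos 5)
Your overall strategy is the paper's: conjugate $\mathsf{B}_R^U$ by $a_{-t}$ to a contracted ellipsoid $\mathsf{E}=a_{-t}\mathsf{B}_R^U a_t$ based at $a_{-t}x_0$, thicken by $\mathsf{B}_1^U$ via the F{\o}lner property, apply \CEShah pointwise at the basepoints $ua_{-t}x_0$ with the single fixed time $t$, and control the exceptional set with Kleinbock--Margulis quantitative non-divergence anchored at the minimum-height point (this is exactly how $\minheight\bigl(\overline{\mathsf{B}_R^U}x_0,t\bigr)$ enters: one sets $\eta_0=\minheight(\cdot)^{-1}$, notes some point of $\mathsf{E}a_{-t}x_0$ lies in $X_{\eta_0}$, and gets $|\mathsf{E}\smallsetminus\mathsf{E}[\eta]|\ll\eta^\kappa|\mathsf{E}|$ for $\eta<\eta_0^C$, then optimizes $\eta$ against $e^{-\ref{kappa:CEShah}t}$). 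Your discrete tiling versus the paper's continuous inner average is cosmetic.

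Two steps as written would fail, though both are repairable. First, your tiles $a_{-s_i}\mathsf{B}_1^U a_{s_i}u_i$ are \emph{contracted} copies of the unit ball, and after conjugation you land on $\phi(a_{-s_i}u'\,a_{s_i}u_ix_0)$, with the diagonal element in the contracting direction; this is not the left-hand side of \CEShah. Your proposed fix of absorbing $a_{-s_i}$ into the test function is not a polynomial loss: $\Sob(\phi(a_{-s_i}\,\bigcdot))$ grows like $e^{\Omega(\ell s_i)}\Sob(\phi)$ with $s_i\asymp t$, which destroys the estimate. Done in the correct direction (pull the whole ball back by $u\mapsto a_{-t}ua_t$ and thicken, so the integrand reads $\phi(a_t u' u a_{-t}x_0)$ with $a_t$ expanding), no absorption into the test function is needed and \CEShah applies verbatim. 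Second, the hypothesis $R\geq e^{\ref{Lambda:CEquidistributionOfGrowingBalls}t}$ is not used to force a tiling time $s_0\gtrsim t$; its role is to guarantee that the shortest semi-axis of $\mathsf{E}$, which is $Re^{-\height(\Phi)t}$, is large enough that the F{\o}lner/boundary error from thickening by $\mathsf{B}_1^U$ is $O(\|\phi\|_\infty e^{-\ref{kappa:CEShah}t})$. With these two corrections your argument coincides with the paper's proof.
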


\begin{proof}
To simplify the notation, we denote $\mu_U(B)$ by $|B|$ for any Borel subset $B \subset U$, and $\diff\mu_U(u)$ by $\diff u$. Suppose \CEShah holds. The constants $\ref{kappa:CEquidistributionOfGrowingBalls}$ and $\ref{Lambda:CEquidistributionOfGrowingBalls}$ will be explicated throughout the proof. We start with requiring $\ref{kappa:CEquidistributionOfGrowingBalls} \leq \ref{kappa:CEShah}/2$. Let $\favn$, $\LieU$, $U$, $x_0$, $t$, $R$, and $\phi$ be as in the theorem.

Denote the subset $\mathsf{E} := a_{-t} \mathsf{B}_R^U a_t \subset U$ which is an open ellipsoid whose shortest semi-axis is of length $Re^{-\height(\Phi)t}$ in the \emph{intrinsically} Euclidean embedded submanifold $U < G$. Thus, $\mathsf{B}_{Re^{-\height(\Phi)t}}^U \subset \mathsf{E}$. As in \cref{eqn:FolnerProperty}, we use change of variables, $\mu_U = \exp_* \mu_\LieU$, and the F{\o}lner property to introduce an extra average over $\mathsf{B}_1^U$ and get
\begin{align*}
\frac{1}{\bigl|\mathsf{B}_R^U\bigr|}\int_{\mathsf{B}_R^U} \phi(ux_0)\diff u = \frac{1}{\bigl|\mathsf{B}_1^U\bigr|} \int_{\mathsf{B}_1^U} \frac{1}{|\mathsf{E}|} \int_{\mathsf{E}} \phi(a_t u'ua_{-t}x_0) \diff u \diff u' + O(\|\phi\|_\infty e^{-\ref{kappa:CEShah}t})
\end{align*}
where we have taken $\ref{Lambda:CEquidistributionOfGrowingBalls} \geq \ref{kappa:CEShah}/\rankGstar + \height(\Phi)$ (from definitions, $\dim(\LieU) = \rankGstar$). Since $t \gg 1$, we may use a factor of $e^{-(\ref{kappa:CEShah}/2)t}$ to reduce the resulting implicit constant to $1/2$. Thus it suffices to focus on the main term on the right hand side.

Let $\eta_0 = \minheight\bigl(\overline{\mathsf{B}_R^U}x_0, t\bigr)^{-1}$. By definition, there exists $u_0 \in \overline{\mathsf{B}_R^U}$ such that $a_{-t}u_0x_0 = u_0' a_{-t}x_0\in X_{\eta_0}$ where $u_0' := a_{-t}u_0a_t \in \overline{\mathsf{E}}$. Denote
\begin{align*}
\mathsf{E}[\eta] := \{u \in \mathsf{E}: ua_{-t}x_0 \in X_{\eta}\} \subset \mathsf{E} \qquad \text{for all $\eta \in (0, 1]$}.
\end{align*}
Then by \cite{KM98}, there exist $\kappa > 0$ (depending only on $\dim(G)$) and $C > 1$ such that for all $\eta \in \bigl(0, \eta_0^C\bigr)$, we have
\begin{align}
\label{eq: MK proof of uni thm}
|\mathsf{E} \smallsetminus \mathsf{E}[\eta]| \ll \eta^\kappa|\mathsf{E}|
\end{align}
where the implicit constant depends only on $X$. We also take $\ref{Lambda:CEquidistributionOfGrowingBalls} \geq C/\kappa$.

We may assume that $2\eta_0^{\ref{Lambda:CEquidistributionOfGrowingBalls}} > e^{-\ref{kappa:CEquidistributionOfGrowingBalls}t}$ because otherwise the theorem holds trivially. Since $t \gg 1$, we may take $\eta := e^{-(2\ref{kappa:CEquidistributionOfGrowingBalls}/\kappa)t} < \eta_0^{\ref{Lambda:CEquidistributionOfGrowingBalls}/\kappa} \leq \eta_0^C$. For this $\eta$, using \cref{eq: MK proof of uni thm}, \CEShah with $y_0 := ua_{-t}x_0$ as the basepoint, and the definition of $\mathsf{E}[\eta]$, we have
\begin{align*}
&\frac{1}{|\mathsf{E}| \cdot \bigl|\mathsf{B}_1^U\bigr|} \int_{\mathsf{E}} \int_{\mathsf{B}_1^U} \phi(a_t u' ua_{-t}x_0) \diff u' \diff u \\
={}&\frac{1}{|\mathsf{E}| \cdot \bigl|\mathsf{B}_1^U\bigr|} \int_{\mathsf{E}[\eta]} \int_{\mathsf{B}_1^U} \phi(a_t u'ua_{-t}x_0) \diff u' \diff u + O(\|\phi\|_\infty \eta^\kappa) \\
={}&\int_X \phi\diff\mu_X+O\bigl(\Sob(\phi)\bigl(\eta^{-\ref{Lambda:CEShah}}e^{-\ref{kappa:CEShah}t} + e^{-2\ref{kappa:CEquidistributionOfGrowingBalls}t}\bigr)\bigr).
\end{align*}
Finally, we take $\ref{kappa:CEquidistributionOfGrowingBalls} \leq \ref{kappa:CEShah}/(2\ref{Lambda:CEShah}/\kappa + 2)$ and calculate $\eta^{-\ref{Lambda:CEShah}}e^{-\ref{kappa:CEShah}t} \leq e^{(2\ref{Lambda:CEShah}\ref{kappa:CEquidistributionOfGrowingBalls}/\kappa - \ref{kappa:CEShah})t} \leq e^{-2\ref{kappa:CEquidistributionOfGrowingBalls}t}$, and use a factor of $e^{-\ref{kappa:CEquidistributionOfGrowingBalls}t}$ to reduce the resulting implicit constant to $1/2$.
\end{proof}

\subsection{\texorpdfstring{Theorem for growing balls in the $\star$-limiting Lie algebra of a regular nilpotent element}{Theorem for growing balls in the ★-limiting Lie algebra of a regular nilpotent element}}
The following is the main theorem in this section regarding equidistribution of certain growing balls. Since the quasi-centralizing property  is \emph{weaker}, in particular the $\star$-limiting vector space is not necessarily abelian (or even a Lie algebra), which may occur when $\bfG$ is not $\R$-quasi-split (cf. \cref{lem: quasi-split and quasi-centralizing implies centralizing}), we impose a \emph{stronger} hypothesis in that case. For all $\n \in \LieW$ with $\|\n\| = 1$, define the submanifold
\begin{align*}
\Lstar_\n(\infty) := \exp\LimLiestar_\n(\infty) \subset G
\end{align*}
and denote $\mathsf{B}_r^{\Lstar_\n(\infty)} := \exp\bigl(B_r^{\LimLiestar_\n(\infty)}\bigr)$ for all $r > 0$ as in \cref{eqn: exp of ball notation}. By \cref{lem: starQCP implies Lie algebra}, if $\bfG$ has the \starQCP, then $\LimLiestar_\n(\infty) \subset \LieG$ is a Lie subalgebra and $\Lstar_\n(\infty) < G$ is a Lie subgroup.

\begin{theorem}
\label{thm:EquidistributionOfGrowingBalls}
Suppose either
\begin{enumerate}
\item $\bfG$ has the \starCP and \CEShah holds;
\item $\bfG$ has the \starQCP and \EShah holds.
\end{enumerate}
Let $\n \in \epregLieW$ for some $\epsilon > 0$ with $\|\n\| = 1$. Let $g_{\n'} \in AW$ be the conjugating element provided by \cref{lem:LieAlgebraJordanNormalForm} for $\n' \in \LimLiestar_\n(\infty) \cap \regLieW$ with $\|\n'\| = 1$ such that $\pi_{\LieU_\n}(\n') \in \R\n$. Then, for all $x_0 \in X$, $t \gg_X 1$, $R \geq \epsilon^{-\ref{Lambda:EquidistributionOfGrowingBalls}}e^{\ref{Lambda:EquidistributionOfGrowingBalls}t}$, and $\phi \in C_{\mathrm{c}}^\infty(X)$, we have
\begin{multline*}
\left|\frac{1}{\mu_{\Lstar_\n(\infty)}\bigl(\mathsf{B}_R^{\Lstar_\n(\infty)}\bigr)}\int_{\mathsf{B}_R^{\Lstar_\n(\infty)}} \phi(l x_0) \diff\mu_{\Lstar_\n(\infty)}(l) - \int_{X} \phi \diff\mu_{X} \right| \\
\leq \Sob(\phi) \minheight\Bigl(g_{\n'}^{-1}\overline{\mathsf{B}_R^{\Lstar_\n(\infty)}}x_0, t\Bigr)^{\ref{Lambda:EquidistributionOfGrowingBalls}} \epsilon^{-\ref{Lambda:EquidistributionOfGrowingBalls}} e^{-\ref{kappa:EquidistributionOfGrowingBalls}t}.
\end{multline*}
Here, $\constkappa\label{kappa:EquidistributionOfGrowingBalls} > 0$ and $\constLambda\label{Lambda:EquidistributionOfGrowingBalls} > 0$ are constants depending only on $X$.
\end{theorem}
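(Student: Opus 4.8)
The plan is to reduce the statement to \cref{thm:CEquidistributionOfGrowingBalls} by conjugating the $\star$-limiting Lie algebra onto a standard nilpotent centralizer $\LieU_{\favn'}$, and then to rerun, essentially verbatim, the argument proving \cref{thm:CEquidistributionOfGrowingBalls}. First note that in both cases of the hypothesis \CEShah holds---outright in the first, and by \cref{pro:EShahImpliesCEShah} in the second. Assume for the moment that $\bfG$ has \starCP. Then $\LimLiestar_\n(\infty)$ is centralizing, so by \cref{rem:CentralizingDefinition n'} we may write $\LimLiestar_\n(\infty) = \LieU_{\n'}$ with $\n'$ the element of the statement (normalized so $\|\n'\| = 1$), which by \cref{rem: regularity constant of n'} is $\Omega_\LieG(\epsilon^{\Lambda'})$-regular with $\Lambda' = 6\rankGstar\height(\Phi)^2 + 1$. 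Applying \cref{lem:LieAlgebraJordanNormalForm} to $\n'$ produces $\favn'$ of the form \labelcref{eqn: favn form}, $\sigma' \in \LieA$, and $\omega' \in \LieW$ with $g_{\n'} := \exp(\sigma')\exp(\omega') \in AW$ and $\n' = \Ad(g_{\n'})\favn'$; hence $\LimLiestar_\n(\infty) = \Ad(g_{\n'})\LieU_{\favn'}$ and $\Lstar_\n(\infty) = g_{\n'}U'g_{\n'}^{-1}$, where $U' := \exp\LieU_{\favn'}$. Since $\|\n'\| = 1$, \cref{lem:sigma_omega_OperatorNorms} (with $r = 1$) gives $\|\Ad(g_{\n'})^{\pm 1}\|_{\mathrm{op}} \ll_\LieG \epsilon^{-\Lambda''}$ on $\LieG$ for some $\Lambda'' > 0$ depending only on $\LieG$, hence $\ll_\LieG \epsilon^{-\ell\Lambda''}$ on every tensor power of $\LieG$ of order at most $\ell$.

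Writing $l = g_{\n'}u'g_{\n'}^{-1}$, $y_0 := g_{\n'}^{-1}x_0$, and $\psi := \phi \circ L_{g_{\n'}} \in C_{\mathrm{c}}^\infty(X)$ for the left translate of $\phi$ by $g_{\n'}$ on $X$, the left side of the theorem equals
\[
\left|\frac{1}{\mu_{U'}(\exp(E))}\int_{\exp(E)} \psi(u'y_0) \diff\mu_{U'}(u') - \int_X \psi \diff\mu_X\right|, \qquad E := \Ad(g_{\n'})^{-1}B_R^{\LimLiestar_\n(\infty)} \subset \LieU_{\favn'},
\]
where $E$ is an ellipsoid with $B_{R\epsilon^{\Lambda''}}^{\LieU_{\favn'}} \subseteq E \subseteq B_{R\epsilon^{-\Lambda''}}^{\LieU_{\favn'}}$ by the operator norm bounds, where $\Sob(\psi) \ll_X \epsilon^{-\ell\Lambda''}\Sob(\phi)$ (the derivatives of $\psi$ up to order $\ell$ are those of $\phi$ precomposed with $\Ad(g_{\n'})$ on tensor powers, while $\|\psi\|_{L^2(X)} = \|\phi\|_{L^2(X)}$ by $G$-invariance of $\mu_X$), and $\minheight(g_{\n'}^{-1}\overline{\mathsf{B}_R^{\Lstar_\n(\infty)}}x_0, t) = \minheight(\overline{\exp(E)}\, y_0, t)$.

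It remains to prove equidistribution of the ellipsoid $\exp(E)$, for which the proof of \cref{thm:CEquidistributionOfGrowingBalls} applies with only cosmetic changes. Set $\eta_0^{-1} := \minheight(\overline{\exp(E)}\, y_0, t)$ and pick $u_0 \in \overline{\exp(E)}$ with $a_{-t}u_0 y_0 \in X_{\eta_0}$. Since $\LieU_{\favn'} \subseteq \bigoplus_{k = 1}^{\height(\Phi)}\favLieG(k)$ is $\ad(\favh)$-invariant, the substitution $u' = a_t u a_{-t}$ has constant Jacobian and carries $\exp(E)$ to the ellipsoid $\mathsf{F} := a_{-t}(\exp(E))a_t$ with $\mathsf{B}_{R\epsilon^{\Lambda''}e^{-\height(\Phi)t}}^{U'} \subseteq \mathsf{F} \subseteq \mathsf{B}_{R\epsilon^{-\Lambda''}e^{-t}}^{U'}$; taking $\ref{Lambda:EquidistributionOfGrowingBalls}$ large enough (relative to $\Lambda''$, $\height(\Phi)$, $\rankGstar$, and the forthcoming $\ref{kappa:EquidistributionOfGrowingBalls}$), the hypothesis $R \geq \epsilon^{-\ref{Lambda:EquidistributionOfGrowingBalls}}e^{\ref{Lambda:EquidistributionOfGrowingBalls}t}$ makes the smallest semi-axis of $\mathsf{F}$ at least $e^{\ref{kappa:EquidistributionOfGrowingBalls}t}$, so $\mathsf{B}_1^{U'} \subseteq \mathsf{F}$ and the F{\o}lner property lets us insert an extra average over $\mathsf{B}_1^{U'}$ at a cost of $O(\|\psi\|_\infty e^{-\ref{kappa:EquidistributionOfGrowingBalls}t})$. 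By \cite{KM98} applied to the polynomial trajectory $u \mapsto u a_{-t}y_0$ (of degree at most $\height(\Phi)$) on $\mathsf{F}$, anchored by the single good point $a_{-t}u_0 a_t \in \overline{\mathsf{F}}$, there are $\kappa > 0$ and $C > 1$ with $\mu_{U'}(\mathsf{F} \smallsetminus \mathsf{F}[\eta]) \ll \eta^\kappa \mu_{U'}(\mathsf{F})$ for $\eta \in (0, \eta_0^C)$, where $\mathsf{F}[\eta] := \{u \in \mathsf{F} : u a_{-t}y_0 \in X_\eta\}$; on $\mathsf{F}[\eta]$ the basepoints have height at most $1/\eta$, so \CEShah applied with $\favn'$ to the inner $\mathsf{B}_1^{U'}$-average produces the main term $\int_X \psi \diff\mu_X$ with error $O(\Sob(\psi)\eta^{-\ref{Lambda:CEShah}}e^{-\ref{kappa:CEShah}t})$. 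Combining these errors and optimizing $\eta = \min(\eta_0^C, e^{-\kappa't})$ for a suitable $\kappa' > 0$ yields total error $\ll_X \Sob(\psi)\minheight(\overline{\exp(E)}\, y_0, t)^{C\ref{Lambda:CEShah}}e^{-\kappa''t}$; substituting $\Sob(\psi) \ll_X \epsilon^{-\ell\Lambda''}\Sob(\phi)$ and the identity for $\minheight$ gives the claim, with $\ref{kappa:EquidistributionOfGrowingBalls}$ and $\ref{Lambda:EquidistributionOfGrowingBalls}$ absorbing all these exponents.

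When instead $\bfG$ has \starQCP and \EShah holds, $\LimLiestar_\n(\infty)$ is a Lie subalgebra by \cref{lem: starQCP implies Lie algebra} and is $\epsilon^{-O_\LieG(1)}$-centralizing by \cref{lem: quasi-centralizing estimates}; one runs the same scheme with $\Ad(g_{\n'})^{-1}\LimLiestar_\n(\infty)$ in place of $\LieU_{\favn'}$, using the $\epsilon^{-*}$-centralizing estimates to compare its growing balls to those of $U'$ up to an $\epsilon^{-O_\LieG(1)}$-bounded distortion and one further F{\o}lner error. The main obstacle, in both cases, is exactly this bookkeeping: one must check that the conjugation by $g_{\n'}$, the comparison between the (only approximately centralizing) tilted subalgebra and $\LieU_{\favn'}$, and the F{\o}lner and nondivergence steps each introduce a loss that is merely polynomial in $\epsilon^{-1}$ and in $\minheight(\bigcdot)$, so that everything can be absorbed into the stated constants $\ref{Lambda:EquidistributionOfGrowingBalls}$ and $\ref{kappa:EquidistributionOfGrowingBalls}$.
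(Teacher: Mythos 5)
Your first case ($\star$-CP $+$ CE-Shah) follows the paper's proof essentially verbatim: conjugate by $g_{\n'}$ so that $\Lstar_\n(\infty)$ becomes $U' = \exp\LieU_{\favn}$, control the distortion of the ellipsoid and of the Sobolev norm by \cref{lem:sigma_omega_OperatorNorms}, and then run the contraction/F{\o}lner/\cite{KM98}/CE-Shah scheme of \cref{thm:CEquidistributionOfGrowingBalls}. That part is fine.

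The second case is where there is a genuine gap. You propose to "compare the growing balls of $\Ad(g_{\n'})^{-1}\LimLiestar_\n(\infty)$ to those of $U'$ up to an $\epsilon^{-O_\LieG(1)}$-bounded distortion and one further F{\o}lner error," i.e.\ to still reduce to CE-Shah. This cannot work: in the merely quasi-centralizing case the tilted subalgebra $\LieL := \Ad(g_{\n'})^{-1}\LimLiestar_\n(\infty) = \Span(\{u_j+v_j\}_{j\in\calJstar_1}\cup\{u_j\}_{j\in\calJstar_2\cup\calJstar_3})$ with $v_j\in\favLieG(2)$, $\|v_j\|\ll\epsilon^{-\Lambda}$, is a \emph{different} Lie subalgebra from $\LieU_{\favn}$, and the two subgroups' orbits through a point are different submanifolds of $X$. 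A F{\o}lner error only compares averages over nearby subsets of the \emph{same} orbit. Moreover the tilt does not become negligible at the relevant scale: after contracting by $a_{-t}$ the angle between $\Ad(a_{-t})\LieL$ and $\LieU_{\favn}$ is indeed $O(\epsilon^{-\Lambda}e^{-t})$, but the inner average in the scheme is $\int_{\mathsf{B}_1}\phi(a_t u\,y_0)\,du$, and conjugating a unit vector of $\Ad(a_{-t})\LieL$ back by $a_t$ restores the full component $v_j$ of size $\asymp\epsilon^{-\Lambda}$ in $\favLieG(2)$. So corresponding points of the two unit balls, after the expansion by $a_t$, remain at distance of order $\epsilon^{-\Lambda}$ --- bounded but not small --- and no pointwise or F{\o}lner comparison identifies the two averages. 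This is exactly why the paper does \emph{not} use CE-Shah in this case: it reruns the E-Shah-based argument of \cref{pro:EShahImpliesCEShah} from scratch for the tilted box (Claims~2 and 3 of the paper's proof), using E-Shah pointwise in the transverse parameter, the finiteness of the intermediate subgroups $H$, \cref{lem: proper reductive H has complementary centralizing unipotent direction}, and the closeness of $\LieU_t^\dagger$ to $\LieU_{\favn}$ only to guarantee that the transversality of the box directions to the finitely many $\LieH$ survives the tilt. Your proposal is missing this entire covering/sheet-counting argument, and the reduction you substitute for it would fail.
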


\begin{proof}
Let $\n$ and $\n'$ be as in the theorem. In particular, $\n' \in \LimLiestar_\n(\infty) \cap \epregLieW$ with $\|\n'\| = 1$ such that $\pi_{\LieU_\n}(\n') \in \R\n$, and we apply \cref{lem:LieAlgebraJordanNormalForm} to obtain
\begin{align*}
\n' = \Ad(g_{\n'})\favn
\end{align*}
for some $\favn \in \genregLieW{1}$ of the form \labelcref{eqn: favn form} and $g_{\n'} \in AW < G$. In fact, it is the same $\favn \in \genregLieW{1}$ that we obtain for $\n$, i.e., $\n = \Ad(g_\n)\favn$ for some $g_\n \in AW$. Recall from \cref{rem: regularity constant of n'} that $\n' \in \genregLieW{\Omega(\epsilon^{\Lambda'})}$ for $\Lambda' = 6\rankGstar\height(\Phi)^2 + 1$.
By \cref{lem:LimitingLieAlgebraNilpotent}, we have $\LimLiestar_\n(\infty) \subset \LieW$.
We now proceed case by case where we use invariance of $g_{\n'}^{-1}\Lstar_\n(\infty)g_{\n'} \subset W$ under an appropriate unipotent subgroup.

\medskip
\noindent
\textit{Case~1: $\bfG$ has the \starCP and \CEShah holds.}
To simplify notation, we denote $\mu_{\Lstar_\n(\infty)}(B)$ by $|B|$ for any Borel subset $B \subset \Lstar_\n(\infty)$, and $\diff\mu_{\Lstar_\n(\infty)}(l)$ by $\diff l$.

Denote $\LieU := \LieU_{\favn}$ and $U := \exp(\LieU)$. We have the $\star$-limiting Lie algebra $\LimLiestar_\n(\infty) = \LieU_{\n'}$ since it is centralizing (see \cref{rem: centralizing implies star limiting Lie algebra}). Using $\nil Z_\LieG(\n') = \Ad(g_{\n'})\nil Z_\LieG(\favn)$, we have $\LimLiestar_\n(\infty) = \Ad(g_{\n'})\LieU$ and $\Lstar_\n(\infty) = g_{\n'}Ug_{\n'}^{-1}$.\footnote{Though we do not use it, recall that \cref{pro: LimLie always centralizing for favn} gives $\LimLiestar_{\favn}(\infty) = \LieU_{\favn}$.}
Using the operator norm estimates from \cref{lem:sigma_omega_OperatorNorms,rem:Optimal_OperatorNorms}, we have
\begin{align*}
\mathsf{B}_{\Omega(R\epsilon^\Lambda)}^U \subset \mathsf{E}_R' := g_{\n'}^{-1}\mathsf{B}_R^{\Lstar_\n(\infty)}g_{\n'} \subset U \qquad \text{for all $R > 0$},
\end{align*}
for $\Lambda = 2\height(\Phi)(\height(\Phi) - 1)\Lambda'$.

We derive the claim below as follows. By a similar argument as in \cref{eqn:FolnerProperty}, we use $\mu_U = \exp_* \mu_\LieU$ and the F{\o}lner property to introduce an extra average over $\mathsf{B}_{(R\epsilon^\Lambda)^{1 - 1/\rankGstar}}^U$ (from definitions, $\dim(\LieU) = \rankGstar$), invoke \cref{thm:CEquidistributionOfGrowingBalls} since \CEShah holds, and choose $\ref{kappa:EquidistributionOfGrowingBallsClaim1} = \ref{kappa:CEquidistributionOfGrowingBalls}$ and any $\ref{Lambda:EquidistributionOfGrowingBallsClaim1} \geq \max\{\Lambda, \ref{Lambda:CEquidistributionOfGrowingBalls}(1 - 1/\rankGstar)^{-1}, 2\ref{kappa:CEquidistributionOfGrowingBalls}\}$. We then finish the derivation by using \cite{KM98} as in the proof of \cref{thm:CEquidistributionOfGrowingBalls}, and adjusting $\ref{kappa:EquidistributionOfGrowingBallsClaim1}$ and $\ref{Lambda:EquidistributionOfGrowingBallsClaim1}$.

\medskip
\noindent
\textit{Claim~1. Let $y_0 \in X$. For all $t \gg 1$, $R \geq \epsilon^{-\ref{Lambda:EquidistributionOfGrowingBallsClaim1}}e^{\ref{Lambda:EquidistributionOfGrowingBallsClaim1}t}$, and $\phi \in C_{\mathrm{c}}^\infty(X)$, we have
\begin{align*}
\left|\frac{1}{\bigl|\mathsf{E}_R'\bigr|}\int_{\mathsf{E}_R'} \phi\bigl(uy_0\bigr) \diff u - \int_X \phi \diff \mu_X\right| \leq \Sob(\phi) \minheight\bigl(\overline{\mathsf{E}_R'}y_0, t\bigr)^{\ref{Lambda:EquidistributionOfGrowingBallsClaim1}} e^{-\ref{kappa:EquidistributionOfGrowingBallsClaim1}t}.
\end{align*}
Here, $\constkappa\label{kappa:EquidistributionOfGrowingBallsClaim1} > 0$ and $\constLambda\label{Lambda:EquidistributionOfGrowingBallsClaim1} > 0$ are constants depending only on $X$.}

\medskip
\noindent
\textit{Proof that Claim~1 implies the theorem.}
Let $x_0$, $t$, $R$, and $\phi$ be as in the theorem. Define the function $\phi_{\n'} \in C_{\mathrm{c}}^\infty(X)$ by
\begin{align*}
\phi_{\n'}(x) = \phi(g_{\n'}x) \qquad \text{for all $x \in X$}.
\end{align*}
Then, $\int_X \phi_{\n'} \diff \mu_X = \int_X \phi \diff \mu_X$ by left $G$-invariance of $\mu_X$ and $\Sob(\phi_{\n'}) \ll \Sob(\phi) \epsilon^{-\ell\Lambda}$ using \cref{lem:sigma_omega_OperatorNorms} and recalling that $\ell \in \N$ is the order of the $L^2$ Sobolev norm $\Sob$. By change of variables and applying Claim~1 for $y_0 = g_{\n'}^{-1}x_0$ and $\phi_{\n'}$, we get
\begin{align*}
&\frac{1}{\bigl|\mathsf{B}_R^{\Lstar_\n(\infty)}\bigr|}\int_{\mathsf{B}_R^{\Lstar_\n(\infty)}} \phi(lx_0) \diff l
= \frac{1}{\bigl|\mathsf{E}_R'\bigr|}\int_{\mathsf{E}_R'} \phi_{\n'}\bigl(ug_{\n'}^{-1} x_0\bigr) \diff u \\
={}&\int_X \phi_{\n'} \diff \mu_X + O\bigl(\Sob(\phi_{\n'}) \minheight\bigl(\overline{\mathsf{E}_R'}g_{\n'}^{-1}x_0, t\bigr)^{\ref{Lambda:EquidistributionOfGrowingBallsClaim1}} e^{-\ref{kappa:EquidistributionOfGrowingBallsClaim1}t}\bigr) \\
={}&\int_X \phi \diff \mu_X + O\Bigl(\Sob(\phi) \minheight\Bigl(g_{\n'}^{-1}\overline{\mathsf{B}_R^{L_\n(\infty)}}x_0, t\Bigr)^{\ref{Lambda:EquidistributionOfGrowingBalls}} \epsilon^{-\ref{Lambda:EquidistributionOfGrowingBalls}}e^{-\ref{kappa:EquidistributionOfGrowingBalls}t}\Bigr)
\end{align*}
where we take $\ref{Lambda:EquidistributionOfGrowingBalls} \geq \max\{\ref{Lambda:EquidistributionOfGrowingBallsClaim1}, \ell\Lambda\}$ and $\ref{kappa:EquidistributionOfGrowingBalls} \leq \ref{kappa:EquidistributionOfGrowingBallsClaim1}/2$, and use a factor of $e^{-\ref{kappa:EquidistributionOfGrowingBallsClaim1}/2}$ to ensure that the final implicit constant is $1$.

\medskip
\noindent
\textit{Case~2: $\bfG$ has the \starQCP and \EShah holds.}
To simplify notation, we denote $\mu_{U_0^\dagger}(B)$ by $|B|$ for any Borel subset $B \subset U_0^\dagger$, and $\diff\mu_{U_0^\dagger}(u^\dagger)$ by $\diff u^\dagger$. Although we have to prove many parts from scratch, they are very similar to the techniques from the proofs of \cref{pro:EShahImpliesCEShah,thm:CEquidistributionOfGrowingBalls}. Thus we provide the main structure and omit some of the (by now) routine details.

Due to \cref{lem: starQCP implies Lie algebra}, we know that $\LimLiestar_\n(\infty) \subset \LieW$ is a Lie subalgebra and hence $\Lstar_\n(\infty) < W$ is a Lie subgroup. We may define $\LimLiestar_\n(\infty)^\dagger \subset \LimLiestar_\n(\infty)$ to be the linear subspace such that $\pi_{\LieU_\n}(\LimLiestar_\n(\infty)^\dagger) = \bigoplus_{j \in \calJstar \smallsetminus \{j_0\}} \tripleirrep_j(\varkappa_j)$ due to the quasi-centralizing property. Consequently, we have the direct sum decomposition
\begin{align}
\label{eqn:LimitingLieAlgebraDaggerDecomposition}
\LimLiestar_\n(\infty) = \R\n' \oplus \LimLiestar_\n(\infty)^\dagger
\end{align}
which is not necessarily orthogonal. Exponentiating, we obtain the connected embedded submanifold $\Lstar_\n(\infty)^\dagger := \exp(\LimLiestar_\n(\infty)^\dagger) \subset \Lstar_\n(\infty)$. For all $t \in \R$, define
\begin{align*}
\LieU_t &:= \Ad(a_{-t})\Ad\bigl(g_{\n'}^{-1}\bigr)\LimLiestar_\n(\infty), & U_t &:= \exp(\LieU_t) = a_{-t}g_{\n'}^{-1}\Lstar_\n(\infty)g_{\n'}a_t, \\
\LieU_t^\dagger &:= \Ad(a_{-t})\Ad\bigl(g_{\n'}^{-1}\bigr)\LimLiestar_\n(\infty)^\dagger, & U_t^\dagger &:= \exp(\LieU_t^\dagger) = a_{-t}g_{\n'}^{-1}\Lstar_\n(\infty)^\dagger g_{\n'}a_t.
\end{align*}
For all $t \in \R$, applying $\Ad(a_{-t})\Ad\bigl(g_{\n'}^{-1}\bigr)$ to \cref{eqn:LimitingLieAlgebraDaggerDecomposition} gives $\LieU_t = \R\favn \oplus \LieU_t^\dagger$ which is indeed an orthogonal decomposition.
We may further fix a choice of an $\{\Ad(a_{-t})\}_{t \in \R}$-invariant family of decompositions
\begin{align}
\label{eqn: LieU_t orthogonal decomposition}
\LieU_t = \R\favn \oplus \bigoplus_{j \in \calJstar \smallsetminus \{j_0\}} (\LieU_t^\dagger)_j \qquad \text{for all $t \in \R$}
\end{align}
such that it is orthogonal at $t = 0$. Define the open box-like subsets
\begin{align*}
\mathsf{Q}_R^\dagger &:= \exp\left(\prod_{j \in \calJstar \smallsetminus \{j_0\}} B_R^{(\LieU_0^\dagger)_j}\right) \subset U_0^\dagger, & \mathsf{Q}_R &:= \{u_r\}_{0 < r < R} \cdot \mathsf{Q}_R^\dagger \subset U_0,
\end{align*}
centered at $e \in U_t$. Let us first reduce the theorem to the following claim.

\medskip
\noindent
\textit{Claim~2. Let $y_0 \in X$. For all $t \gg 1$, $R \geq \epsilon^{-\ref{Lambda:EquidistributionOfGrowingBallsClaim3}}e^{\ref{Lambda:EquidistributionOfGrowingBallsClaim3}t}$, and $\phi \in C_{\mathrm{c}}^\infty(X)$, we have
\begin{multline*}
\left|\frac{1}{R \cdot \bigl|\mathsf{Q}_R^\dagger\bigr|}\int_{\mathsf{Q}_R^\dagger} \int_0^R \phi(u_r u^\dagger y_0) \diff r \diff u^\dagger - \int_{X} \phi \diff\mu_{X} \right| \\
\leq \Sob(\phi) \minheight\bigl(\overline{\{u_r\}_{0 < r < R} \cdot \mathsf{Q}_R^\dagger}y_0, t\bigr)^{\ref{Lambda:EquidistributionOfGrowingBallsClaim3}} \epsilon^{-\ref{Lambda:EquidistributionOfGrowingBallsClaim3}}e^{-\ref{kappa:EquidistributionOfGrowingBallsClaim3}t}.
\end{multline*}
Here, $\constkappa\label{kappa:EquidistributionOfGrowingBallsClaim3} > 0$ and $\constLambda\label{Lambda:EquidistributionOfGrowingBallsClaim3} > 0$ are constants depending only on $X$.}

\medskip
We deduce the theorem from Claim~2 in two steps using similar techniques as in Case~1. In the following first step, we derive an exact analogue of Claim~1 with
\begin{align*}
\mathsf{E}_R' := g_{\n'}^{-1}\mathsf{B}_R^{\Lstar_\n(\infty)}g_{\n'} \subset g_{\n'}^{-1}\Lstar_\n(\infty)g_{\n'} \qquad \text{for all $R > 0$}
\end{align*}
and constants $\constkappa\label{kappa:EquidistributionOfGrowingBallsClaim3ToTheorem} > 0$ and $\constLambda\label{Lambda:EquidistributionOfGrowingBallsClaim3ToTheorem} > 0$ depending only on $X$. To this end, observe using the operator norm estimates from \cref{lem:sigma_omega_OperatorNorms,rem:Optimal_OperatorNorms} that
\begin{align*}
\{u_r\}_{0 < r < \Omega(R\epsilon^\Lambda)} \cdot \mathsf{Q}_{\Omega(R\epsilon^\Lambda)}^\dagger \subset \mathsf{E}_R' \qquad \text{for all $R > 0$}
\end{align*}
for $\Lambda = 2\height(\Phi)(\height(\Phi) - 1)\Lambda'$. By a similar argument as in \cref{eqn:FolnerProperty}, we use $\mu_{U_0} = \exp_* \mu_{\LieU_0}$ and the F{\o}lner property to introduce an extra average over the box-like subset $\{u_r\}_{0 < r < (R\epsilon^\Lambda)^{1 - 1/\rankGstar}} \cdot \mathsf{Q}_{(R\epsilon^\Lambda)^{1 - 1/\rankGstar}}^\dagger$ (from definitions, $\dim(\LieU_0) = \rankGstar$), apply Claim~2, and choose $\ref{kappa:EquidistributionOfGrowingBallsClaim3ToTheorem} = \ref{kappa:EquidistributionOfGrowingBallsClaim3}$ and any $\ref{Lambda:EquidistributionOfGrowingBallsClaim3ToTheorem} \geq \max\{\Lambda, \ref{Lambda:EquidistributionOfGrowingBallsClaim3}(1 - 1/\rankGstar)^{-1}, 2\ref{kappa:EquidistributionOfGrowingBallsClaim3}\}$. We then finish the derivation by using \cite{KM98} as in the proof of \cref{thm:CEquidistributionOfGrowingBalls}, and adjusting $\ref{kappa:EquidistributionOfGrowingBallsClaim3ToTheorem}$ and $\ref{Lambda:EquidistributionOfGrowingBallsClaim3ToTheorem}$. In the second step, we deal with the conjugation by $g_{\n'}$ to obtain the theorem exactly analogous to the above proof that Claim~1 implies the theorem, and taking $\ref{Lambda:EquidistributionOfGrowingBalls} \geq \ref{Lambda:EquidistributionOfGrowingBallsClaim3ToTheorem} + \ell\Lambda$ and $\ref{kappa:EquidistributionOfGrowingBalls} \leq \ref{kappa:EquidistributionOfGrowingBallsClaim3ToTheorem}/2$.

Now, we reduce Claim~2 to the following claim.

\medskip
\noindent
\textit{Claim~3. Let $y_0 \in X$, $t \gg \ref{Lambda:EquidistributionOfGrowingBallsClaim4}|\log(\epsilon)|$, and $R \geq \epsilon^{-\ref{Lambda:EquidistributionOfGrowingBallsClaim4}}e^{\ref{Lambda:EquidistributionOfGrowingBallsClaim4}t}$ with $\minheight\bigl(\overline{\mathsf{Q}_R^\dagger}y_0, t\bigr)^{\ref{Lambda:EquidistributionOfGrowingBallsClaim4}} < 2e^{\ref{kappa:EquidistributionOfGrowingBallsClaim4}t}$. There exists $\widehat{\mathcal{E}} \subset \mathsf{Q}_R^\dagger$ with $\mu_{U_0^\dagger}(\widehat{\mathcal{E}}) \leq e^{-\ref{kappa:EquidistributionOfGrowingBallsClaim4}t}\mu_{U_0^\dagger}\bigl(\mathsf{Q}_R^\dagger\bigr)$ such that for all $u^\dagger \in \mathsf{Q}_R^\dagger \smallsetminus \widehat{\mathcal{E}}$ and $\phi \in C_{\mathrm{c}}^\infty(X)$, we have
\begin{align*}
\left|\frac{1}{e^t}\int_0^{e^t} \phi(u_r u^\dagger y_0) \diff r - \int_{X} \phi \diff\mu_{X} \right| \leq \Sob(\phi) \minheight\bigl(\overline{\mathsf{Q}_R^\dagger}y_0, t\bigr)^{\ref{Lambda:EquidistributionOfGrowingBallsClaim4}} e^{-\ref{kappa:EquidistributionOfGrowingBallsClaim4}t}.
\end{align*}
Here, $\constkappa\label{kappa:EquidistributionOfGrowingBallsClaim4} > 0$ and $\constLambda\label{Lambda:EquidistributionOfGrowingBallsClaim4} > 0$ are constants depending only on $X$.}

\medskip
We deduce Claim~2 from Claim~3 in the following fashion. By a similar argument as in \cref{eqn:FolnerProperty}, we use the F{\o}lner property to introduce an extra average over $\{u_r\}_{0 < r < e^t}$, apply Claim~3, and choose $\ref{kappa:EquidistributionOfGrowingBallsClaim3} = \ref{kappa:EquidistributionOfGrowingBallsClaim4}$ and any $\ref{Lambda:EquidistributionOfGrowingBallsClaim3} \geq \max\{\ref{Lambda:EquidistributionOfGrowingBallsClaim4}, 2\ref{kappa:EquidistributionOfGrowingBallsClaim4} + 1\}$. We then finish the derivation by using \cite{KM98} as in the proof of \cref{thm:CEquidistributionOfGrowingBalls}, and adjusting $\ref{kappa:EquidistributionOfGrowingBallsClaim3}$ and $\ref{Lambda:EquidistributionOfGrowingBallsClaim3}$.

\medskip
\noindent
\textit{Proof of Claim~3.}
This proof uses similar ideas as in that of Claim~2 in the proof of \cref{pro:EShahImpliesCEShah} and so we provide the key details and refer to loc. cit. for the rest. Let $y_0$, $t$, $R$, and $\phi$ be as in the claim. Let $\tilde{R} = e^{\kappa t}$ for some sufficiently small $\kappa \in (0, 1/\ref{Lambda:EShah})$ which will be explicated later. In this proof, whenever we refer to \EShah, $\tilde{R}$ plays the role of $R$ from loc. cit. We also start with choosing $\ref{kappa:EquidistributionOfGrowingBallsClaim4} = \kappa \ref{kappa:EShah}$.

By change of variables, we have
\begin{align*}
\frac{1}{e^t}\int_0^{e^t} \phi(u_r u^\dagger y_0) \diff r = \int_0^1 \phi(a_t u_r \cdot a_{-t}u^\dagger y_0) \diff r.
\end{align*}
Accordingly, define
\begin{align}
\label{eqn: exceptional set of points near periodic orbits general}
\widehat{\mathcal{E}} &:= \bigl\{u^\dagger \in \mathsf{Q}_R^\dagger: z_0 = a_{-t}u^\dagger y_0 \text{ satisfies Case~2 in \EShah}\bigr\}, \\
\mathcal{E} &:= \bigl\{u^\dagger \in \mathsf{Q}^\dagger: z_0 = u^\dagger a_{-t}y_0 \text{ satisfies Case~2 in \EShah}\bigr\},
\end{align}
where we also define another open box-like subset
\begin{align*}
\mathsf{Q}^\dagger := a_{-t}\mathsf{Q}_R^\dagger a_t &= \exp\left(\Ad(a_{-t})\prod_{j \in \calJstar \smallsetminus \{j_0\}} B_R^{(\LieU_0^\dagger)_j}\right) \\
&= \exp\left(\prod_{j \in \calJstar \smallsetminus \{j_0\}} B_{r_j}^{(\LieU_t^\dagger)_j}\right) \subset U_t^\dagger
\end{align*}
for some $\{r_j\}_{j \in \calJstar \smallsetminus \{j_0\}} \subset \R_{>0}$.

Let $\eta_0 = \minheight\bigl(\overline{\mathsf{Q}_R^\dagger}y_0, t\bigr)^{-1}$ which satisfies $2\eta_0^{\ref{Lambda:EquidistributionOfGrowingBallsClaim4}} > e^{-\ref{kappa:EquidistributionOfGrowingBallsClaim4}t}$. We will use \cite{KM98} as in the proof of \cref{thm:CEquidistributionOfGrowingBalls}, again denoting the constants by $\kappa > 0$ (we may assume it is the same as the one introduced in the beginning) and $C > 1$. Take $\ref{Lambda:EquidistributionOfGrowingBallsClaim4} \geq C\kappa$. Since $t \gg 1$, we may take $\eta := e^{-(2\ref{kappa:EquidistributionOfGrowingBallsClaim4}/\kappa) t} < \eta_0^{\ref{Lambda:EquidistributionOfGrowingBallsClaim4}/\kappa} \leq \eta_0^C$. Then, we have
\begin{align*}
\bigl|\mathsf{Q}^\dagger \smallsetminus \mathsf{Q}^\dagger[\eta]\bigr| \ll \eta^\kappa|\mathsf{Q}^\dagger|
\end{align*}
where we define
\begin{align*}
\mathsf{Q}^\dagger[\eta] := \{u^\dagger \in \mathsf{Q}^\dagger: u^\dagger a_{-t}y_0 \in X_\eta\} \subset \mathsf{Q}^\dagger.
\end{align*}

Let $\SL_2(\favn) \leq H < G$ be a maximal intermediate closed subgroup. As in the proof of Claim~2 in the proof of \cref{pro:EShahImpliesCEShah}, there are finitely many such subgroups $H$ and hence finitely many corresponding Lie subalgebras $\LieH$.
Now, using the form of $\LimLiestar_\n(\infty)$ from the proof of \cref{lem: starQCP implies Lie algebra} (keeping the same notation), we have
\begin{align*}
\Plucker[\Ad\bigl(g_{\n'}^{-1}\bigr)\LimLiestar_\n(\infty)] = \Bigl[\bigwedge_{j \in \calJstar_1} (u_j + v_j) \wedge \bigwedge_{j \in \calJstar_2 \cup \calJstar_3} u_j\Bigr]
\end{align*}
where $u_j \in \favtripleirrep_j(\varkappa_j)$ for all $j \in \calJstar$ and $v_j \in \favLieG(2)$ for all $j \in \calJstar_1$. Using \cref{lem:sigma_omega_OperatorNorms,rem:Optimal_OperatorNorms} and estimates as in the proof of \cref{lem: quasi-centralizing estimates}, we have $\|u_j\| \asymp 1$ and $\|v_j\| \ll \epsilon^{-\Lambda}$ where $\Lambda = 8\rankGstar\height(\Phi)(\height(\Phi) - 1) \Lambda'$, for all $j \in \calJstar_1$. We have $\Ad(a_{-t})(u_j + v_j) = u_je^{-t} + v_je^{-2t}$ and we can ensure that
\begin{align*}
\|v_je^{-2t}\| \ll \epsilon^{-\Lambda}e^{-t}\|u_je^{-t}\| \leq \|u_je^{-t}\| \qquad \text{for all $\calJstar_1$}.
\end{align*}
by taking $\ref{Lambda:EquidistributionOfGrowingBallsClaim4} > 2\Lambda$ since $t \gg \ref{Lambda:EquidistributionOfGrowingBallsClaim4}|\log(\epsilon)|$. Therefore, when we subsequently apply $\Ad(a_{-t})$ to the Lie algebra $\Ad\bigl(g_{\n'}^{-1}\bigr)\LimLiestar_\n(\infty)$, we find that the principal angles between $\LieU^\dagger_t = \Ad(a_{-t})\Ad\bigl(g_{\n'}^{-1}\bigr)\LimLiestar_\n(\infty)$ and $\LieU_{\favn}$ are bounded above, say by $\pi/4$. Again, we use the key property that there exists a $1$-dimensional Lie subalgebra of $\LieU_{\favn}$ orthogonal to $\LieH$ by \cref{lem: proper reductive H has complementary centralizing unipotent direction}. Denote by $\pi_{\LieH^\perp}: \LieG \to \LieH^\perp$ the orthogonal projection map with respect to the orthogonal decomposition $\LieG = \LieH \oplus \LieH^\perp$. Combining the above facts with property~(1) of \cref{lem: quasi-centralizing estimates}, \cref{lem:sigma_omega_OperatorNorms}, and orthogonality of the decomposition in \cref{eqn: LieU_t orthogonal decomposition}, we conclude that there exists $j^\dagger \in \calJstar \smallsetminus \{j_0\}$ such that
\begin{align*}
\|\pi_{\LieH^\perp}(v)\| \geq c^{-1} \|v\| \qquad \text{for all $v \in (\LieU_t^\dagger)_{j^\dagger}$}
\end{align*}
for some $c > 0$ (depending only on $G$).
For all $j \in \calJstar \smallsetminus \{j_0\}$, define the constant
\begin{align*}
\delta_j := \sup\bigl\{2^{-k}: 2^{-k} \leq \delta_0\inj_X\bigl(\mathsf{B}_{\rankGstar(1 + 5c)}^G X_\eta\bigr)/10c r_j, k \in \N\bigr\}
\end{align*}
where the constant $\delta_0 \in (0, 1)$ is to be specified as in the proof of Claim~2 in the proof of \cref{pro:EShahImpliesCEShah}; then, $c\delta_j r_j \asymp \inj_X\bigl(\mathsf{B}_{\rankGstar(1 + 5c)}^G X_\eta\bigr) \gg e^{-(2\ref{kappa: injectivity radius and height 1}\ref{kappa:EquidistributionOfGrowingBallsClaim4}/\kappa)t}$.

Let us use the explicit covering $\bigl\{\overline{\mathsf{Q}^\dagger_k}\bigr\}_{k = 1}^{k_0}$ of $\overline{\mathsf{Q}^\dagger}$ for some $k_0 \in \N$ where we define the mutually disjoint open box-like subsets
\begin{align*}
\mathsf{Q}^\dagger_k := \exp\left(\prod_{j \in \calJstar \smallsetminus \{j_0\}} B_{\delta_j r_j}^{(\LieU_t^\dagger)_j}(p_k)\right) \qquad \text{for all $1 \leq k \leq k_0$}
\end{align*}
where $\{p_k\}_{k = 1}^{k_0} \subset \LieU_t^\dagger$ is an appropriate set of dyadic points scaled by $\{r_j\}_{j \in \calJstar \smallsetminus \{j_0\}}$.
Let $\mathscr{K} \subset \{1, 2, \dotsc, k_0\}$ be the subset of indices such that $\mathsf{Q}^\dagger[\eta] \cap \mathsf{Q}^\dagger_k \neq \varnothing$. Then, we have $\inj_X(\mathsf{Q}_k^\dagger a_{-t}y_0) \asymp \eta$ for all $k \in \mathscr{K}$, and $\bigl|\bigcup_{k \in \mathscr{K}} \mathsf{Q}_k^\dagger\bigr| \ll \eta^\kappa|\mathsf{Q}^\dagger|$.

We make the following two observations. Firstly, we recall that $\mu_{U_{t'}^\dagger} = \exp_*\mu_{\LieU_{t'}^\dagger}$ for all $t' \in \R$. Secondly, since the map $\Ad(a_t)|_{\LieU_t^\dagger}: \LieU_t^\dagger \to \LieU_0^\dagger$ is a Lie algebra (a fortiori, linear) isomorphism, it preserves ratio of volumes. The claim now follows from \EShah as soon as we prove for each $k \in \mathscr{K}$ that $\mu_{U_t^\dagger}\bigl(\mathcal{E} \cap \mathsf{Q}_k^\dagger\bigr) \leq \tilde{R}^{-\ref{kappa:EShah}}\mu_{U_t^\dagger}\bigl(\mathsf{Q}_k^\dagger\bigr)$. The proof of this proceeds as in that of Claim~2 in the proof of \cref{pro:EShahImpliesCEShah} (with $\LieU_t^\dagger$, $U_t^\dagger$, $\tilde{R} = e^{\kappa t}$, and $c$ as appropriate playing the role of $\LieU^\dagger$, $U^\dagger$, $R$, and $c$ from that proof).
\end{proof}

\section{Quantitative non-divergence for translates of tori}
\label{sec:Non-divergenceForTranslatesOfPeriodic_A_Orbits}
In this section we will show that the minimum height factor which appears in \cref{thm:EquidistributionOfGrowingBalls} can be controlled for a large measure of points on a translate of a periodic $A$-orbit by any element in $G$. More precisely, we will deduce the following proposition regarding quantitative non-divergence of translates of periodic $A$-orbits.

\begin{proposition}
\label{pro: Nondivergence of A orbit}
There exists $\constkappa\label{kappa:Non-divergence} > 0$ (depending only on $\dim(G)$) such that the following holds. Let $x_0 \in X$ such that $Ax_0$ is periodic. Then, for all $g \in G$, we have
\begin{align*}
\mu_{Ax_0}(\{x \in Ax_0: gx \notin X_\eta\}) \ll_{\height(Ax_0)} \eta^{\ref{kappa:Non-divergence}} \qquad \text{for all $\eta > 0$}.
\end{align*}
\end{proposition}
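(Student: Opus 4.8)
The plan is to deduce this from the quantitative non-divergence theorem of Kleinbock--Margulis \cite{KM98} applied to the one-parameter unipotent flow inside the periodic torus orbit, after reducing the torus orbit to a family of unipotent orbits via a covering argument. First I would fix $x_0$ with $Ax_0$ periodic and $g \in G$, and parametrize $Ax_0 \cong \LieA/\Lambda_0$ where $\Lambda_0 < \LieA$ is the period lattice; note that $\height(Ax_0) < \infty$ controls both the covolume of $\Lambda_0$ (it is bounded above in terms of $\height(Ax_0)$) and its systole (bounded below). The key point is that $\LieA$ is abelian, so $A \cong \R^{\rankG}$ as a group, and a periodic $A$-orbit, when we translate by $g$, can be covered by a bounded number (depending on $\vol(Ax_0)$, hence on $\height(Ax_0)$) of translates of, say, unit cubes in $\LieA$; along each of the $\rankG$ coordinate directions we have a one-parameter unipotent (in fact, since $A$ is a split torus, \emph{diagonalizable} but acting on $X$ the relevant flows are the $\{a_t\}$-type flows) subgroup.

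The heart is then to invoke \cite{KM98}: for a one-parameter subgroup $\{a_s\}_{s \in \R} < A$ generated by some $H \in \LieA$ with $\|H\| = 1$, and any bounded interval $I \subset \R$, the function $s \mapsto \Ad(g a_s h) \LieG(\Z)$ (for $h$ ranging over a fixed bounded piece of $A$) is a polynomial-like map in the sense required by the $(C, \alpha)$-good theory, with constants depending only on $\dim(\LieG)$ and the degree, which is bounded in terms of $\height(\Phi)$. The non-divergence estimate of \cite{KM98} then yields, for each such one-parameter piece, a bound of the form $\Leb\{s \in I : g a_s h x_0 \notin X_\eta\} \ll \eta^{\ref{kappa:Non-divergence}} \Leb(I)$ with $\ref{kappa:Non-divergence} > 0$ depending only on $\dim(G)$, provided the orbit is not trapped in a proper subspace --- but here $Ax_0$ is periodic, so the relevant non-escape hypothesis of \cite{KM98} is automatic (a periodic orbit cannot be contained in any proper horospherical-type degeneration; more precisely the torus is $\Q$-anisotropic on the relevant $\Gamma$-orbit so the measure of the full orbit is finite and the Dani--Margulis type hypothesis holds). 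I would apply this inductively over the $\rankG$ coordinate directions, or more cleanly apply the multi-dimensional version of \cite{KM98} directly to the abelian group $A$ acting by a polynomial cocycle, integrating the one-dimensional estimates with Fubini over the remaining $\rankG - 1$ directions; the implied constants accumulate only through the covering number, which is $O_{\height(Ax_0)}(1)$.

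The main obstacle I anticipate is bookkeeping the dependence on $\height(Ax_0)$ correctly and verifying that the $(C,\alpha)$-good constants in \cite{KM98} genuinely do \emph{not} depend on $g$ or on the particular periodic orbit --- only on $\dim(G)$ and the polynomial degree. The degree independence is the reason the exponent $\ref{kappa:Non-divergence}$ can be taken uniform; one must check that conjugating the polynomial map $s \mapsto \Ad(g a_s) v$ by a fixed $g \in G$ does not inflate the $(C,\alpha)$-good constants beyond a $g$-dependent (but harmless, since it gets absorbed once we pass to the sup over the orbit) factor --- and in fact the cleanest route is to note that the \emph{ratio} $\mu_{Ax_0}(\{x : gx \notin X_\eta\})$ is what we bound, and the relevant quantity in \cite{KM98} is normalized so that the $g$-dependence cancels. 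A secondary technical point is ensuring the covering of the (possibly geometrically complicated) fundamental domain of $\Lambda_0$ in $\LieA$ by unit cubes is done with $O_{\height(Ax_0)}(1)$ pieces, which follows from the volume and systole bounds on $\Lambda_0$ recalled above. Once these are in place, summing the $O_{\height(Ax_0)}(1)$ local estimates and rescaling $\eta$ gives the stated bound.
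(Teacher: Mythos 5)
Your overall strategy (reduce to a fundamental domain of the period lattice in $\LieA$ and feed the resulting family of functions into the Kleinbock--Margulis non-divergence machinery) is the right skeleton, and it is the one the paper follows, but the proposal has a genuine gap at the central step: the map $s \mapsto \Ad(g a_s h)v$ is \emph{not} polynomial, or "polynomial-like of degree bounded by $\height(\Phi)$", in the torus parameter. The adjoint action of $a_{\bftau}$ scales each restricted root space by $e^{\alpha(\bftau)}$, so the functions $\bftau \mapsto \|\Ad(g a_{\bftau} g_0)\Delta\|$ are sums of \emph{exponentials} $\sum_j c_j e^{\langle \bflambda_j, \bftau\rangle}$, and the standard $(C,\alpha)$-good theory for polynomials in \cite{KM98} does not apply to them. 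This is precisely where the paper has to do new work: it introduces the class $\mathcal{E}(r,\Lambda,\delta)$ of exponential sums with bounded and separated exponents and proves (via a Vandermonde determinant argument producing a uniform lower bound on some derivative of bounded order, followed by \cite[Lemma 3.3]{KM98} and the coordinate-wise criterion of \cite[Lemma 3.3]{KT07}) that such functions are $(C,\alpha)$-good on compact sets with constants independent of the particular function. Without this input your plan stalls at the point where you invoke the $(C,\alpha)$-good hypothesis.

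The second, smaller gap is the lower-bound hypothesis of \cite[Theorem 5.2]{KM98}, namely $\sup_{\bftau}\|\Ad(ga_{\bftau}g_0)\Delta\| \geq \delta$ uniformly over all primitive $\Delta$ \emph{and over all} $g$. Your remarks that periodicity makes this "automatic" and that "the $g$-dependence cancels" are not arguments. The paper handles this by first reducing to $g = w \in W$ via the decomposition $g = wka$ (using left $A$-invariance of $\mu_{Ax_0}$ and compactness of $K$), and then invoking Eskin--Mozes--Shah's \cite[Proposition 4.4]{EMS97}, which gives $G = Z_G(A)\cdot Y$ with $Y = WK$ together with the uniform bound $\sup_{\bftau \in \mathcal{K}}\|\varrho(ya_{\bftau})v\| \geq \delta\|v\|$ for all $y \in Y$; this is what makes the final estimate uniform in $g$ with implicit constant depending only on $\height(Ax_0)$ (through $\|g_0\Delta\|$ and the size of the fundamental domain). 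Incidentally, once these two ingredients are in place the covering-by-cubes and coordinate-by-coordinate induction you describe is unnecessary: one applies the multi-dimensional statement of \cite[Theorem 5.2]{KM98} directly to a single ball containing the fundamental parallelotope.
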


Before we begin the proof, we need to collect some key definitions and tools. The first definition is a notion introduced by Kleinbock--Margulis \cite{KM98} based on the earlier work of Dani--Margulis \cite{DM91}.

\begin{definition}[{\cite[\S 3]{KM98}}]
For any $r \in \N$ and $\mathcal{U} \subset \R^r$, we say that a measurable function $f: \mathcal{U} \to \R$ is \emph{$(C, \alpha)$-good} for some $C > 0$ and $\alpha > 0$ if for any open ball $B \subset \mathcal{U}$ and $\epsilon > 0$, we have
\begin{align*}
\Leb(\{x \in B: |f(x)| < \epsilon\}) \leq C \left(\frac{\epsilon}{\sup \bigl|f|_B\bigr|}\right)^\alpha \Leb(B).
\end{align*}
\end{definition}

We introduce the following relevant class of functions and prove that it consists of $(C, \alpha)$-good functions. For any $r \in \N$, $\Lambda \geq 1$, and $\delta > 0$, let $\mathcal{E}(r, \Lambda, \delta)$ denote the set of functions $f: \R^r \to \R$ of the form
\begin{align*}
f(\bftau) = \sum_{j = 0}^n c_j e^{\langle \bflambda_j, \bftau\rangle} \qquad \text{for all $\bftau \in \R^r$}
\end{align*}
for some $n \in \N$, $\{c_j\}_{j = 0}^n \subset \R$, and $\{\bflambda_j = (\lambda_{j, 1}, \lambda_{j, 2}, \dotsc, \lambda_{j, r})\}_{j = 0}^n \subset \R^r$ with
\begin{align*}
|\lambda_{j, k}| &\leq \Lambda, & |\lambda_{j, k} - \lambda_{j', k}| &\geq \delta \qquad \text{for all $0 \leq j < j' \leq n$ and $1 \leq k \leq r$}.
\end{align*}
Note that we necessarily have $n \leq 2\Lambda/\delta$. We also denote $\mathcal{E}(\Lambda, \delta) := \mathcal{E}(1, \Lambda, \delta)$ in which case we simply write $\lambda_j = \bflambda_j = \lambda_{j, 1}$ for all $0 \leq j \leq n$.

We need a lemma from \cite{KM98} for the proof of \cref{lem: exp C alpha good}. We state it below only in the $1$-dimensional setting which is all we need.

\begin{lemma}[{\cite[Lemma 3.3]{KM98}}]
\label{lem: C alpha good for differentiable functions}
Let $\mathcal{U} \subset \R$ be an open subset and $f \in C^\ell(\mathcal{U})$ for some $\ell \in \N$. Suppose $\sup\bigl|f^{(k)}\bigr| \leq B$ for all $0 \leq k \leq \ell$ and $\inf\bigl|f^{(\ell)}\bigr| \geq b$ for some $B > 0$ and $b > 0$. Then, $f$ is $(C, \alpha)$-good for
\begin{align*}
C &= \ell(\ell + 1)\sqrt[\ell]{Bb^{-1}(\ell + 1)(2\ell^\ell + 1)}, & \alpha &= 1/\ell.
\end{align*}
\end{lemma}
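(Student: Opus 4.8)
The plan is to prove the lemma by induction on $\ell$, following the classical iterated-Rolle strategy of Dani--Margulis and Kleinbock--Margulis. Fix an open ball (interval) $I \subseteq \mathcal{U}$ and $\epsilon > 0$; set $M := \sup_I |f|$ and $E_\epsilon := \{x \in I : |f(x)| < \epsilon\}$, so the goal is $\Leb(E_\epsilon) \leq C (\epsilon/M)^{1/\ell} \Leb(I)$. The cases $\epsilon \geq M/2$ are immediate: for $\epsilon \geq M$ the inequality is trivial, and for $M/2 \leq \epsilon < M$ one has $\Leb(E_\epsilon) \leq \Leb(I) \leq 2(\epsilon/M)\Leb(I)$; so we may assume $\epsilon < M/2$, and also $\inf_I|f| < \epsilon$ (else $E_\epsilon = \varnothing$). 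The one structural fact used throughout: since $|f^{(\ell)}| \geq b > 0$ on $\mathcal{U}$, the continuous function $f^{(\ell)}$ has constant sign on $I$, so iterated Rolle gives that each $f^{(j)}$ has at most $\ell - j$ zeros in $I$; in particular $f$ has at most $\ell - 1$ critical points in $I$, hence $I$ splits into at most $\ell$ intervals on each of which $f$ is monotone.

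For the base case $\ell = 1$, $f$ is monotone on $I$ with $b \leq |f'| \leq B$, so $E_\epsilon$ is a subinterval over which $f$ varies by less than $2\epsilon$, giving $\Leb(E_\epsilon) \leq 2\epsilon/b$; choosing a near-maximizer of $|f|$ and a point where $|f| < \epsilon$ and using $|f'| \leq B$ gives $\Leb(I \setminus E_\epsilon) \geq (M - \epsilon)/B \geq M/(2B)$, whence $\Leb(E_\epsilon)/\Leb(I) \leq \tfrac{2\epsilon/b}{2\epsilon/b + M/(2B)} \leq (4B/b)(\epsilon/M)$, which is the desired bound with $C_1 = 4B/b$ and $\alpha = 1$.

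For the inductive step, apply the result for $\ell - 1$ to $f'$ — whose derivatives of orders $0, \dots, \ell - 1$ satisfy the same bounds $B$ and $b$ — to conclude that $f'$ is $(C', \tfrac{1}{\ell-1})$-good with an explicit $C'$. Fix a threshold $\mu > 0$ and write $E_\epsilon = E^{<\mu} \sqcup E^{\geq\mu}$ according to whether $|f'| < \mu$ or $|f'| \geq \mu$ on $E_\epsilon$. On each of the at most $\ell$ monotone pieces of $f$, the set where $|f| < \epsilon$ is an interval of total variation $< 2\epsilon$ for $f$, and on it $\Leb(\{|f'| \geq \mu\}) \leq \mu^{-1}\int |f'| < 2\epsilon/\mu$; summing, $\Leb(E^{\geq\mu}) \leq 2\ell\epsilon/\mu$. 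For $E^{<\mu} \subseteq \{x \in I : |f'(x)| < \mu\}$, the $(C', \tfrac1{\ell-1})$-goodness of $f'$ together with $\sup_I|f'| \geq (M-\epsilon)/\Leb(I) \geq M/(2\Leb(I))$ (from $\inf_I|f| < \epsilon$ and the mean value theorem) gives $\Leb(E^{<\mu}) \leq C'\bigl(2\mu\Leb(I)/M\bigr)^{1/(\ell-1)}\Leb(I)$. Choosing $\mu$ to balance the two bounds produces $\Leb(E_\epsilon) \ll_{\ell, C'} (\epsilon/M)^{1/\ell}\Leb(I)$, completing the induction with exponent $\alpha = 1/\ell$.

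The conceptual content is routine; the real labor is purely bookkeeping: propagating $C'$ through the optimization and checking that the resulting recursion $C_\ell = C_\ell(C_{\ell-1}, \ell)$ telescopes to exactly $\ell(\ell+1)\sqrt[\ell]{Bb^{-1}(\ell+1)(2\ell^\ell+1)}$, which dictates the (slightly wasteful) choices of constants at each stage. The one place demanding care is the scale-invariant lower bound $\sup_I|f'| \gg (M-\epsilon)/\Leb(I)$ — it is what forces the dependence of $C$ on $B, b$ to enter only through $B/b$, and it requires the preliminary reduction to $\inf_I|f| < \epsilon$. An alternative, as in Kleinbock--Margulis, is to bypass the induction by Taylor-expanding $f$ against degree-$(\ell-1)$ polynomials, for which the $(C,\tfrac1\ell)$-good property is elementary; I would nonetheless prefer the inductive route as it makes the role of each hypothesis transparent.
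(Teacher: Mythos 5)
This lemma is quoted verbatim from \cite[Lemma 3.3]{KM98} and the paper supplies no proof of its own, so the only meaningful comparison is with the argument in [KM98]. Your route is genuinely different: [KM98] obtains the stated constant by comparing $f$ with a degree-$\ell$ interpolating polynomial (choose $\ell+1$ points spread through the sublevel set, use the divided-difference identity $f[y_0,\dotsc,y_\ell]=f^{(\ell)}(\xi)/\ell!$, and invoke the elementary $(C,1/\ell)$-goodness of polynomials of degree $\le\ell$), whereas you run the Dani--Margulis-style induction on $\ell$. Your induction is sound where it matters: the constant sign of $f^{(\ell)}$ and iterated Rolle do give at most $\ell$ monotone pieces; the Chebyshev bound $\Leb(E^{\geq\mu})\le 2\ell\epsilon/\mu$ is correct; the mean-value lower bound $\sup_I|f'|\ge(M-\epsilon)/\Leb(I)$ is legitimate once you have reduced to $\epsilon<M/2$ and $E_\epsilon\neq\varnothing$; and balancing $\mu$ does produce the exponent $1/\ell$ with a constant depending only on $\ell$ and $B/b$. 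That is all the paper ever uses (in \cref{lem: exp C alpha good} the constants are immediately discarded), so in context your proof would serve.

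The one genuine defect is your final claim that the recursion ``telescopes to exactly'' $\ell(\ell+1)\sqrt[\ell]{Bb^{-1}(\ell+1)(2\ell^\ell+1)}$. It does not. Your optimization yields $C_\ell = 2(4\ell)^{1/\ell}\,C_{\ell-1}^{(\ell-1)/\ell}$ with $C_1=4B/b$ (or something of that shape, depending on the exact bookkeeping), and the factor $2$ acquired at every stage compounds: writing $C_\ell = c_\ell (B/b)^{1/\ell}$, one finds $\log c_\ell \sim \tfrac{\ell}{2}\log 2$, i.e.\ $c_\ell$ grows like $2^{\ell/2}$, while the stated constant grows only polynomially in $\ell$ (roughly $2\ell^3$). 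For small $\ell$ your constant is in fact better, but for $\ell$ in the range of a few dozen it is strictly worse, so the inductive argument cannot establish the lemma with the constant as written; that constant is an artifact of the polynomial-interpolation proof. Either carry out [KM98]'s comparison with polynomials, or restate the conclusion as ``$(C,1/\ell)$-good for some explicit $C=C(\ell,B/b)$,'' which is all that is needed downstream.
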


\begin{lemma}
\label{lem: exp C alpha good}
Let $\Lambda \geq 1$ and $\delta > 0$. The set of functions $\mathcal{E}(\Lambda, \delta)$, when restricted to a compact domain $\mathcal{K} \subset \R$, consists of $(C, \alpha)$-good functions for some uniform constants $C > 0$ and $\alpha > 0$ (independent of $\mathcal{K}$).
\end{lemma}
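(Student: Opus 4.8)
The plan is the following. Since the frequencies $\lambda_j$ lie in $[-\Lambda,\Lambda]$ and are pairwise $\delta$-separated, one has $n\le 2\Lambda/\delta$, so every $f\in\mathcal{E}(\Lambda,\delta)$ is an exponential sum with at most $n_0:=\lfloor 2\Lambda/\delta\rfloor+1$ terms. The class $\mathcal{E}(\Lambda,\delta)$ is invariant under translating the variable (absorbing $e^{\lambda_j a}$ into $c_j$) and under multiplying all the $c_j$ by a common scalar, and the $(C,\alpha)$-good property is insensitive to the latter; hence, after enclosing the compact set $\mathcal{K}$ in a bounded interval $I$, it suffices to bound $\Leb(\{\tau\in B:|f(\tau)|<\epsilon\})$ uniformly over $f\in\mathcal{E}(\Lambda,\delta)$ for balls $B\subseteq I$.

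First I would record the classical structural fact that $\{e^{\lambda_0\tau},\dots,e^{\lambda_n\tau}\}$ forms a Chebyshev system on $\R$: a nonzero element of $\mathcal{E}(\Lambda,\delta)$ has at most $n_0-1$ real zeros counted with multiplicity (this follows by iterating Rolle's theorem, using nonvanishing of the relevant generalized Vandermonde determinants $\prod_{i<j}(\lambda_j-\lambda_i)$). Applying this to the functions $f\pm\epsilon$, which again lie in an exponential-sum class with at most $n_0+1$ frequencies, shows that $\{\tau\in B:|f(\tau)|<\epsilon\}$ has at most $n_0+1$ connected components, each an interval; when $\sup_B|f|\le\epsilon$ the asserted inequality is trivial, and otherwise every such component $J$ is bounded by points at which $|f|=\epsilon$ or by $\partial B$.

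It then remains to bound the length of a single component $J\subseteq B$ on which $|f|<\epsilon$, in terms of $\epsilon/\sup_B|f|$. Here I would invoke a Remez/Tur\'an-type inequality for exponential sums: for $J\subseteq B\subseteq I$ one has
\[
\sup_B|f|\ \le\ \Bigl(\tfrac{C|B|}{|J|}\Bigr)^{n_0-1}e^{\,C\Lambda|B|}\,\sup_J|f|
\]
with $C$ depending only on $n_0$; since $\sup_J|f|\le\epsilon$ and $|B|\le\diam(I)$, this rearranges to $|J|\le C'\bigl(\epsilon/\sup_B|f|\bigr)^{1/(n_0-1)}|B|$ with $C'$ depending only on $\Lambda$, $\delta$, and $\diam(I)$. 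Summing over the boundedly many components, and undoing the reductions of the first paragraph, gives the lemma with $\alpha=1/(n_0-1)$ (the trivial monotone case $n_0=1$ being handled directly). The Remez-type inequality is itself a consequence of the zero-counting fact, either directly via a Markov--Bernstein argument or, after normalizing $\sup_I|f|=1$, via a compactness argument in $C^\infty(I)$ over the compact set of admissible frequency vectors combined with \cref{lem: C alpha good for differentiable functions}. Establishing this comparison of suprema — equivalently, ruling out that $f$ can be uniformly tiny on a large subinterval of a ball $B$ while being comparatively large elsewhere on $B$ — is the main obstacle; once it is in hand the remaining steps are routine bookkeeping.
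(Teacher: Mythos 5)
Your route is genuinely different from the paper's, and it is essentially correct \emph{provided} the Remez-type inequality you invoke is taken as a citable input rather than something you derive on the spot. The inequality you write down is exactly the Tur\'an--Nazarov lemma for exponential sums: for $f=\sum_{j=0}^{n}c_je^{\lambda_jt}$ and a \emph{measurable} set $E\subset B$ one has $\sup_B|f|\le e^{\max_j|\lambda_j|\,|B|}\bigl(A|B|/|E|\bigr)^{n}\sup_E|f|$ with $A$ absolute. Since this holds for measurable $E$ and not just intervals, you can apply it directly to $E=\{\tau\in B:|f(\tau)|<\epsilon\}$ and read off the $(C,\alpha)$-good property in one line with $\alpha=1/n\geq\delta/2\Lambda$; your Chebyshev-system component count is correct but then unnecessary. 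The paper proceeds quite differently and more self-containedly: after normalizing $\sum_jc_j^2=1$, it uses the Vandermonde determinant of the matrix $(\lambda_j^k)$ to produce an explicit lower bound, depending only on $\Lambda$ and $\delta$, on some derivative $|f^{(\ell)}(0)|$ with $\ell\le n$; together with the trivial upper bounds on all derivatives this places $f$ in the hypotheses of \cref{lem: C alpha good for differentiable functions} on a uniform neighborhood of each point, and translation invariance of the class plus a finite cover of $\mathcal{K}$ finishes. The paper's method buys explicit constants and avoids any appeal to exponential-sum Remez theory; yours buys a shorter argument and, via Nazarov, a cleaner mechanism, at the price of importing a nontrivial theorem.

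The one step of your writeup that would not survive as written is the fallback derivation of the Remez inequality ``by compactness in $C^\infty(I)$ over the admissible frequency vectors.'' Compactness gives, for each \emph{fixed} ratio $r=|J|/|B|$, a positive lower bound $c(r)$ for $\sup_J|f|/\sup_B|f|$, but it says nothing about how $c(r)$ degenerates as $r\to0$, and the $(C,\alpha)$-good property is precisely the assertion of a polynomial rate $c(r)\gtrsim r^{n_0-1}$. To extract that rate you must either prove a uniform lower bound on some derivative of $f$ --- which is the paper's Vandermonde argument, so this branch collapses into the paper's proof --- or cite Tur\'an--Nazarov (or carry out the Markov--Bernstein route in full) after all. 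So: cite the Tur\'an--Nazarov lemma and your proof closes; as a self-contained derivation, the compactness step is the gap you yourself flagged.
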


\begin{proof}
Let $\Lambda$, $\delta$, and $\mathcal{K}$ be as in the lemma. For convenience, take $T \geq 1$ such that $\mathcal{K} \subset [-(T - 1), T - 1]$.

Let $f \in \mathcal{E}(\Lambda, \delta)$ and write $f(t) = \sum_{j = 0}^n c_j e^{\lambda_j t}$ for all $t \in \R$, for some $n \in \N$, $\{c_j\}_{j = 0}^n, \{\lambda_j\}_{j = 0}^n \subset \R$ with $|\lambda_j| \leq \Lambda$ and $|\lambda_j - \lambda_{j'}| \geq \delta$ for all $0 \leq j < j' \leq n$. Let us write $c = (c_0, c_1, \dotsc, c_n) \in \R^{n + 1}$. We assume $f \neq 0$ since the lemma is trivial otherwise. Observe that the set of functions $\mathcal{E}(\Lambda, \delta)$ and the $(C, \alpha)$-good property are both invariant under scalar multiplication (cf. \cite[Lemma~3.1]{KM98}). Therefore, we may endow $\R^{n + 1}$ with the Euclidean metric and assume $\|c\|^2 = \sum_{j = 0}^n c_j^2 = 1$.

In order to use \cref{lem: C alpha good for differentiable functions}, we will first obtain uniform bounds on derivatives of order at most $n + 1$ on a uniform open neighborhood $\mathcal{O}_0 \subset (-1, 1)$ of $0 \in \R$. Differentiating at $0 \in \R$ repeatedly, we obtain
\begin{align}
\label{eqn: f derivatives at 0}
f^{(k)}(0) &= \sum_{j = 0}^n c_j\lambda_j^k \qquad \text{for all $k \in \Z_{\geq 0}$}.
\end{align}
Define the $(n + 1) \times (n + 1)$ square matrix
\begin{align*}
L :=
\begin{pmatrix}
1 & 1 &\cdots & 1 \\
\lambda_0 & \lambda_1 & \cdots & \lambda_n \\
\vdots & \vdots & \ddots & \vdots \\
\lambda_0^n & \lambda_1^n & \cdots & \lambda_n^n
\end{pmatrix}
\end{align*}
so that the right hand side of \cref{eqn: f derivatives at 0} for $0 \leq k \leq n$ in vector form is $Lc$. Using the Vandermonde determinant formula, we have
\begin{align}
\label{eqn: determinant lower bound}
|\det(L)|
=
\prod_{0 \leq j < k \leq n} |\lambda_k - \lambda_j| \geq \delta^{\frac{n(n + 1)}{2}}.
\end{align}
Denote by $\widehat{L}$ the adjugate matrix of $L$ so that $L\widehat{L} = \det(L)I_{n + 1}$. We recall that the absolute value of the determinant is the volume of a corresponding parallelotope to estimate each entry of $\widehat{L}$, and then use the fact that the Frobenius norm dominates the operator norm to obtain
\begin{align}
\label{eqn: adjugate operator norm upper bound}
\|\widehat{L}\|_{\mathrm{op}} \leq (n + 1)n^{n/2}\Lambda^{\frac{n(n + 1)}{2}}.
\end{align}
Since $L$ is invertible, we have
\begin{align*}
1 = \|c\| = \|L^{-1}Lc\| \leq \|L^{-1}\|_{\mathrm{op}} \|Lc\| = |\det(L)|^{-1} \|\widehat{L}\|_{\mathrm{op}} \|Lc\|.
\end{align*}
Combining the above with \cref{eqn: determinant lower bound,eqn: adjugate operator norm upper bound} gives the uniform bound
\begin{align*}
\|Lc\| \geq |\det(L)| \cdot \|\widehat{L}\|_{\mathrm{op}}^{-1} \geq (n + 1)^{-1}n^{-n/2}(\delta/\Lambda)^{\frac{n(n + 1)}{2}}.
\end{align*}
Recalling \cref{eqn: f derivatives at 0} and $n \leq 2\Lambda/\delta$, we conclude that there exists $0 \leq \ell \leq n$ such that
\begin{align*}
\bigl|f^{(\ell)}(0)\bigr| = \left|\sum_{j = 0}^n c_j\lambda_j^\ell\right| &\geq (n + 1)^{-3/2}n^{-n/2}(\delta/\Lambda)^{\frac{n(n + 1)}{2}} \\
&\geq (2\Lambda/\delta + 1)^{-3/2}(2\Lambda/\delta)^{-\Lambda/\delta}(\delta/\Lambda)^{(\Lambda/\delta)(2\Lambda/\delta + 1)} =: 2b
\end{align*}
where $b \in (0, 1)$ depends only on $\Lambda$ and $\delta$.
Using the Cauchy--Schwarz inequality, $\|c\| = 1$, and the upper bound for $\{\lambda_j\}_{j = 0}^n$, we obtain for all $0 \leq k \leq n + 1$, in particular $k = \ell + 1$, the uniform bound
\begin{align*}
\begin{aligned}
f^{(k)}(t) &\leq (n + 1)^{1/2}\Lambda^k e^{\Lambda} \\
&\leq (2\Lambda/\delta + 1)^{1/2}\Lambda^{2\Lambda/\delta + 1}e^{\Lambda} =: \Lambda B
\end{aligned}
\qquad \text{for all $t \in [-1, 1]$}
\end{align*}
where $B > 0$ depends only on $\Lambda$ and $\delta$. Therefore, by the mean value theorem there exists a uniform open neighborhood $\mathcal{O}_0 = (-\eta, \eta) \subset (-1, 1)$ of $0 \in \R$ where
\begin{align*}
\eta := b(2\Lambda/\delta + 1)^{-1/2}\Lambda^{-(2\Lambda/\delta + 1)}e^{-\Lambda} \in (0, 1)
\end{align*}
which depends only on $\Lambda$ and $\delta$, so that $f^{(\ell)}(t) \geq b$ for all $t \in \mathcal{O}_0$. Now by \cref{lem: C alpha good for differentiable functions}, $f$ is $(C, \alpha)$-good on $\mathcal{O}_0$ for $C = \ell(\ell + 1)\sqrt[\ell]{Bb^{-1}(\ell + 1)(2\ell^\ell + 1)}$ and $\alpha = 1/\ell$. By replacing $\ell$ with $2\Lambda/\delta$, we may worsen the constants to
\begin{align*}
C &= (2\Lambda/\delta)(2\Lambda/\delta + 1)\bigl(Bb^{-1}(2\Lambda/\delta + 1)\bigl(2(2\Lambda/\delta)^{2\Lambda/\delta} + 1\bigr)\bigr)^{\delta/2\Lambda}, & \alpha &= \delta/2\Lambda
\end{align*}
where $C$ and $\alpha$ both depend only on $\Lambda$ and $\delta$.

Now, we can also conclude that $f$ is $(C, \alpha)$-good, with the same constants as above, on the open neighborhood $\mathcal{O}_{t_0} := t_0 + \mathcal{O}_0$ centered at any other point $t_0 \in \mathcal{K}$ because the set of functions $\mathcal{E}(\Lambda, \delta)$ and the $(C, \alpha)$-good property are both invariant under coordinate translations. The latter is clear, so we show the former. Applying a translation by $t_0 \in \mathcal{K}$ on the domain of $f \in \mathcal{E}(\Lambda, \delta)$, we get a function $g: \R \to \R$ defined by
\begin{align*}
g(t) = f(t + t_0) = \sum_{j = 0}^n c_j e^{\lambda_j (t + t_0)} = \sum_{j = 0}^n \tilde{c}_j e^{\lambda_j t} \qquad \text{for all $t \in \R$}
\end{align*}
where we define the new coefficients $\tilde{c}_j = c_j e^{\lambda_j t_0}$. Thus $g \in \mathcal{E}(\Lambda, \delta)$ as desired.

Finally, since $\mathcal{K} \subset \R$ is compact, we can use a finite open cover $\{\mathcal{O}_{t_j}\}_{j = 1}^N$ for some $N \in \N$ with $N \leq T/2\eta$ and $\{t_j\}_{j = 1}^N \subset \mathcal{K}$. Then, we can extend the $(C, \alpha)$-good property for $f$ on each $\mathcal{O}_{t_j}$ to $\mathcal{K}$ at the cost of worsening the constant $C$ by the factor $\left(\frac{\sup\bigl|f|_{\mathcal{K}}\bigr|}{\min_{1 \leq j \leq N}\sup\bigl|f|_{\mathcal{O}_{t_j}}\bigr|}\right)^{\alpha}$ which only depends on $\Lambda$, $\delta$, and $\mathcal{K}$.
\end{proof}

We now recall \cite[Lemma 3.3]{KT07} which says that a function is $(C, \alpha)$-good if it is coordinate-wise $(C', \alpha')$-good. Thus, with the above lemma in hand, a direct application of loc. cit. immediately yields the following lemma.

\begin{lemma}
\label{lem: exp C alpha good higher dim}
Let $r \in \N$, $\Lambda \geq 1$, and $\delta > 0$. The set of functions $\mathcal{E}(r, \Lambda, \delta)$, when restricted to a compact domain $\mathcal{K} \subset \R^r$, consists of $(C, \alpha)$-good functions for some uniform constants $C > 0$ and $\alpha > 0$ (independent of $\mathcal{K}$).
\end{lemma}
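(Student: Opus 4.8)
The plan is to deduce this several-variable statement from the one-dimensional case, \cref{lem: exp C alpha good}, by invoking the standard principle that coordinate-wise $(C,\alpha)$-goodness upgrades to genuine $(C',\alpha')$-goodness, which is \cite[Lemma 3.3]{KT07}. Thus the only real content is to verify that the restriction of a function in $\mathcal{E}(r,\Lambda,\delta)$ to a line parallel to a coordinate axis again lies in $\mathcal{E}(\Lambda,\delta)$, with the \emph{same} parameters $\Lambda$ and $\delta$, and then to match hypotheses.

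First I would fix $f\in\mathcal{E}(r,\Lambda,\delta)$ written as $f(\bftau)=\sum_{j=0}^n c_j e^{\langle\bflambda_j,\bftau\rangle}$ with $\bflambda_j=(\lambda_{j,1},\dots,\lambda_{j,r})$, fix a coordinate index $1\le k\le r$, and fix values $\tau_l$ for all $l\ne k$. Substituting these into $f$, the restriction to the $k$-th coordinate line is the one-variable function $t\mapsto\sum_{j=0}^n\tilde c_j e^{\lambda_{j,k}t}$ with $\tilde c_j=c_j\exp\!\bigl(\sum_{l\ne k}\lambda_{j,l}\tau_l\bigr)$. Its exponents are exactly $\{\lambda_{j,k}\}_{j=0}^n$, and the defining inequalities of $\mathcal{E}(r,\Lambda,\delta)$ — namely $|\lambda_{j,k}|\le\Lambda$ and $|\lambda_{j,k}-\lambda_{j',k}|\ge\delta$ for all $0\le j<j'\le n$ and \emph{every} $k$ — are precisely what is needed to conclude that this function belongs to $\mathcal{E}(\Lambda,\delta)$; no collapsing of terms is required since the separation is built in. Moreover, when $\bftau$ ranges over a ball $B\subset\mathcal{K}$, the domain of this restriction is a bounded interval contained in the compact set $\pi_k(\mathcal{K})\subset\R$, the projection of $\mathcal{K}$ onto the $k$-th coordinate.

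Next, applying \cref{lem: exp C alpha good} with the fixed compact domain $\pi_k(\mathcal{K})$, there are constants $C'>0$ and $\alpha'>0$ depending only on $\Lambda$, $\delta$ (uniformly over all of $\mathcal{E}(r,\Lambda,\delta)$, by the scaling- and translation-invariance arguments used in that proof) such that every such one-variable restriction is $(C',\alpha')$-good on $\pi_k(\mathcal{K})$, hence on any subinterval. In other words, $f$ is coordinate-wise $(C',\alpha')$-good on every open ball $B\subset\mathcal{K}$. Applying \cite[Lemma 3.3]{KT07} then produces constants $C>0$ and $\alpha>0$ depending only on $C'$, $\alpha'$, and $r$ — hence only on $r$, $\Lambda$, $\delta$ — such that $f$ is $(C,\alpha)$-good on $\mathcal{K}$, which is the assertion.

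I expect the only delicate point to be purely organizational: matching the exact formulation of \cite[Lemma 3.3]{KT07} (phrased in terms of restrictions to coordinate segments inside a given ball) against the conclusion of \cref{lem: exp C alpha good}, and confirming that the constants in the latter can be taken uniform across the entire family $\mathcal{E}(r,\Lambda,\delta)$ restricted to $\mathcal{K}$. There is no genuine analytic obstacle beyond the one-dimensional lemma that has already been established.
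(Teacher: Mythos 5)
Your proposal is correct and follows essentially the same route as the paper: the paper likewise deduces the multi-variable case by combining \cref{lem: exp C alpha good} with \cite[Lemma 3.3]{KT07}, which upgrades coordinate-wise $(C',\alpha')$-goodness to genuine $(C,\alpha)$-goodness. Your explicit verification that each coordinate restriction of a function in $\mathcal{E}(r,\Lambda,\delta)$ lies in $\mathcal{E}(\Lambda,\delta)$ with the same parameters is the step the paper leaves implicit, and it is carried out correctly.
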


We now import key tools from the work of Eskin--Mozes--Shah \cite{EMS97}, stated in our setting, and the work of Kleinbock--Margulis \cite{KM98}.

\begin{proposition}[{\cite[Proposition 4.4]{EMS97}}]
\label{pro: EMS Bound}
There exists a closed subset $Y \subset G$ such that the following holds.
\begin{enumerate}
\item We have the product $G = Z_G(A) \cdot Y$.
\item For any representation $\varrho: G \to \GL(V)$ on a finite-dimensional inner product space $V$ over $\R$ and a compact subset $\mathcal{K} \subset \LieA$, there exists $\delta > 0$ such that
\begin{align*}
\sup_{\bftau \in \mathcal{K}} \|\varrho(ya_{\bftau})v\| \geq \delta \|v\| \qquad \text{for all $v \in V$ and $y \in Y$}.
\end{align*}
\end{enumerate}
\end{proposition}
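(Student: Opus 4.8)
Since this is \cite[Proposition 4.4]{EMS97}, one option is simply to invoke it; but here is the shape of the argument I would give. The plan is to build $Y$ as a ``root-adapted transversal'' for the \emph{left} action of $Z_G(A) = MA$ on $G$, using the relative Bruhat decomposition with respect to the minimal parabolic $P = Z_G(A)N$. First I would write $G = \bigsqcup_{w \in \mathcal{W}} C_w$ with $C_w = V_w\, w\, P$ (restricted Weyl group $\mathcal{W}$, $V_w$ the corresponding Bruhat unipotent subgroup), and parametrize each cell modulo left $Z_G(A)$: since $Z_G(A)$ normalizes $N$ and $N^-$ and a Weyl representative normalizes $Z_G(A)$, the central $Z_G(A)$-factor in $V_w\, w\, Z_G(A)\, N$ gets absorbed into the left $Z_G(A)$, leaving representatives of the form $y = v_w\, w\, n$ with $v_w \in V_w$ and $n \in N$. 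Taking closures of these pieces and unioning over $w$ produces a closed $Y$ with $G = Z_G(A)\,Y$, giving property~(1). The structural point, to be exploited in (2), is that an element of $Y$ can tend to infinity only through its \emph{unipotent} factors $v_w, n$ (plus a fixed compact Weyl representative), never through a diagonal $A$-direction, since the $A$-part of each Bruhat cell has been absorbed into $Z_G(A)$.

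For property~(2) I would fix $\varrho$ and $\mathcal{K}$ and decompose $V = \bigoplus_\mu V_\mu$ into $A$-weight spaces, so that $\varrho(a_\bftau)v = \sum_\mu e^{\mu(\bftau)}v_\mu$; then every coordinate of $\bftau \mapsto \varrho(y a_\bftau)v$ in a fixed orthonormal basis is an exponential sum lying in the class $\mathcal{E}(r,\Lambda,\delta_0)$ of \cref{lem: exp C alpha good higher dim}, with $\Lambda,\delta_0$ depending only on $\varrho$. Since finitely many distinct exponentials are linearly independent, such a function cannot be uniformly small on a set with nonempty interior unless its coefficients vanish; the substance of (2) is that this smallness cannot occur \emph{uniformly over $y \in Y$ and $v$}. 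Here the adapted form $y = v_w\, w\, n$ enters: conjugation by $w$ permutes the weight spaces by the Weyl action, so the weight direction along which $\varrho(y)$ is maximally contracting becomes, after the $w$-twist, a direction expanded by $\varrho(a_\bftau)$ as $\bftau$ ranges over $\mathcal{K}$ (one uses that $\mathcal{K}$ is large enough to ``see'' all root directions, which holds in the intended application, where $\mathcal{K}$ is a fundamental domain for the period lattice of $Ax_0$). I would then finish with a compactness argument on the ``angular'' part of $(y,v)$, made quantitative by the Vandermonde-determinant estimate underlying \cref{lem: exp C alpha good}, to extract the uniform $\delta > 0$.

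The hard part is not (2) itself but getting $Y$ right: a naive transversal --- minimal-norm coset representatives, or the image of $N^-K$ --- does \emph{not} satisfy (2), because it contains representatives whose contracted direction is ``trapped'' relative to $A$ (a large diagonal contribution that bounded translates $a_\bftau$ cannot undo). Ensuring instead that the only route to infinity inside $Y$ is through unipotent and compact factors is the crux; in \cite{EMS97} this ``non-focusing'' property is established by an induction over the standard parabolic subgroups of $G$, and I would follow that route.
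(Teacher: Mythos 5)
The paper gives no proof of this proposition: it is quoted from Eskin--Mozes--Shah, and the ``proof'' is the citation itself together with the remark that in this setting one may take $Y = WK$. So your opening option --- simply invoking \cite[Proposition 4.4]{EMS97} --- is exactly what the paper does.

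Your independent sketch, however, rests on a false premise. You motivate the Bruhat-cell construction of $Y$ by claiming that the Iwasawa-type transversal (``the image of $N^-K$'') fails property~(2); but the remark following the proposition states that $Y = WK$ works, and since nothing in the statement of the proposition depends on the choice of positive system, $N^-K$ satisfies (2) if and only if $WK$ does. The cell-by-cell construction is therefore unnecessary, and it is also harder to control: after ``taking closures'' of the sets $V_w w N$ you would still have to verify (2) at boundary points, which no longer have the product form your heuristic relies on.

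As for (2) itself, the decomposition $\varrho(ya_{\bftau})v = \sum_\mu e^{\mu(\bftau)}\varrho(y)v_\mu$ and linear independence of distinct exponentials on a set with nonempty interior reduce the claim to the bound $\max_\mu\|\varrho(y)v_\mu\| \geq \delta'\|v\|$ \emph{uniformly} in $y = wk \in WK$ --- and this is the whole difficulty, since $\varrho(w)$ acquires arbitrarily small singular values as $w \to \infty$ in $W$. The natural device (project onto the lowest restricted weight space of $V$ in which some $\varrho(k)v_\mu$ has a nonzero component; on that component the unipotent, weight-raising operator $\varrho(w)$ acts as the identity) gives a positive lower bound for each fixed $(k,v)$ that is independent of $w$, but it does not survive a compactness argument in $(k,v)$: the relevant lowest weight index is not semicontinuous, and the corresponding component can degenerate as $(k,v)$ varies. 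Your Weyl-twist heuristic does not address this either, because in $\varrho(ya_{\bftau})$ the bounded element $a_{\bftau}$ acts first and can only move $v$ a bounded amount, so it cannot by itself escape a direction that $\varrho(y)$ contracts by an unbounded factor. Closing exactly this gap is what the argument in \cite{EMS97} accomplishes, and your proposal ultimately defers to it, so as a standalone proof it is incomplete. (A minor point on the statement: (2) is false for $\mathcal{K}$ with empty interior --- take $\mathcal{K} = \{0\}$ and $y$ a large element of $W$ --- so the implicit hypothesis is that $\mathcal{K}$ has nonempty interior, as it does in the application; ``nonempty interior'' rather than ``large enough to see all root directions'' is the right condition, since distinct exponentials are already linearly independent on any open set.)
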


\begin{remark}
From the proof of \cite[Proposition 4.4]{EMS97}, it is clear that we can take $Y = WK$ in our setting.
\end{remark}

Let $V$ be a finite-dimensional inner product space over $\R$ with a $\Z$-structure. Let $\Delta \subset V(\Z)$ be a $\Z$-submodule. We define its norm by taking any $\Z$-basis $\{v_j\}_{j = 1}^{\dim(\Delta)} \subset \Delta$ and setting $\|\Delta\| := \bigl\|v_1 \wedge \dotsb \wedge v_{\dim(\Delta)}\bigr\|$ (this is simply the volume of the corresponding parallelotope; see \cref{sec:LieTheoreticEstimates}). We also say that $\Delta$ is \emph{primitive} if $\Delta = \Span_\R(\Delta) \cap V(\Z)$.

\begin{proposition}[{\cite[Theorem 5.2]{KM98}}]
\label{pro: KM measure estimate}
Let $r, n \in \N$, $C > 0$, $\alpha > 0$, $\delta \in (0, 1/n]$, $B := B_{r_0}^{\R^r}(x_0) \subset \R^r$, and $\widetilde{B} := B_{3^n r_0}^{\R^r}(x_0) \subset \R^r$ for some $x_0 \in \R^r$ and $r_0 > 0$. Let $\varphi: \widetilde{B} \to \GL_n(\R)$. For all primitive $\Z$-submodules $\Delta \subset \Z^n$, let $\varphi_\Delta: \widetilde{B} \to \R$ be defined by $\varphi_\Delta(x) = \|\varphi(x)\Delta\|$ for all $x \in \widetilde{B}$ and suppose
\begin{enumerate}
\item $\varphi_\Delta$ is $(C, \alpha)$-good on $\tilde{B}$,
\item $\sup\bigl|\varphi_\Delta|_B\bigr| \geq \delta$.
\end{enumerate}
Then, we have
\begin{align*}
\bigl|\bigl\{x \in B: \height(\varphi(x)\Z^n) > \epsilon^{-1}\bigr\}\bigr| \leq nC(3^r\beta_r)^n \left(\frac{\epsilon}{\delta}\right)^\alpha |B| \qquad \text{for all $\epsilon \in (0, \delta)$}
\end{align*}
where $\beta_r \in \N$ is the multiplicity constant from the Besicovitch covering theorem.
\end{proposition}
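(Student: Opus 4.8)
The plan is to reprove this as a quantitative non-divergence estimate of Dani--Margulis \cite{DM91} / Kleinbock--Margulis \cite{KM98} type; the two engines are the $(C,\alpha)$-good hypothesis and a Besicovitch covering argument, organized as an induction on the rank $n$. For $n = 1$ the condition $\height(\varphi(x)\Z^n) > \epsilon^{-1}$ is exactly $\varphi_{\Z^n}(x) = \|\varphi(x)e_1\| < \epsilon$, and since $\varphi_{\Z^n}$ is $(C,\alpha)$-good on $\widetilde B \supseteq B$ with $\sup_B \varphi_{\Z^n} \ge \delta$, the $(C,\alpha)$-good property hands back the bound $C(\epsilon/\delta)^\alpha|B|$ directly, which is stronger than the claim.

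For $n \ge 2$ I would work with the lattice of nonzero primitive $\Z$-submodules $\Delta \subset \Z^n$ ordered by inclusion, with join $\Delta_1 \vee \Delta_2 = \Span_\R(\Delta_1 + \Delta_2) \cap \Z^n$. The covolume comparison lemma of \cite{DM91, KM98}, combined with the hypothesis $\delta \le 1/n$, ensures that for each fixed $x$ the finitely many primitive submodules with $\varphi_\Delta(x) < \delta$ have a unique maximal element $\Delta(x)$ under inclusion; and if $\varphi(x)\Z^n$ has a nonzero vector of norm $< \epsilon < \delta$ then $\Delta(x) \ne 0$, since it contains the line through a shortest vector. Fixing $\epsilon \in (0,\delta)$ and writing $E = \{x \in B : \height(\varphi(x)\Z^n) > \epsilon^{-1}\}$, for each $x \in E$ the function $\varphi_{\Delta(x)}$ is $< \delta$ at $x$ but, by hypothesis~(2) applied to $\Delta' = \Delta(x)$ together with continuity of $\varphi$, attains $\delta$ on a controlled dilate of a ball $B_x \ni x$ on which it stays below $\delta$; the threefold enlargements required to run this at each of the at most $n$ recursion levels are precisely why $\widetilde B$ is taken to have radius $3^n r_0$. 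The Besicovitch covering theorem applied to $\{B_x\}_{x \in E}$ then yields a subfamily covering $E$ with overlap multiplicity $\le \beta_r$, the source of the $\beta_r$ in the conclusion.

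On each ball $B'$ of this subfamily, with associated maximal submodule $\Delta$ of rank $k$, I would split the bad set according to whether a shortest vector of $\varphi(x)\Z^n$ lies in $\varphi(x)\Span_\R(\Delta)$ or is transverse to it. The first case is a rank-one problem for the line $\bigwedge^k \Delta \subset \bigwedge^k \R^n$, handled by the $(C,\alpha)$-good property, which is stable under passage to exterior powers and under restriction. The transverse case projects to the quotient $\Z^n/\Delta$ of rank $n - k < n$; maximality of $\Delta$ is exactly what guarantees that the two hypotheses survive for the quotient data with the same $C$, $\alpha$, $\delta$, so the induction hypothesis in rank $n - k$ applies on $B'$. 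Summing the resulting bounds over the $\le \beta_r$-fold cover and tracking how $C$ and the factor $3^r\beta_r$ accumulate through the at most $n$ levels of recursion produces the asserted estimate $nC(3^r\beta_r)^n(\epsilon/\delta)^\alpha|B|$.

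The main obstacle is closing the induction in the last step: one must check that after isolating the maximal small submodule $\Delta$ on a ball $B'$, the quotient operator $\varphi(x)$ acting on $\Z^n/\Delta$ genuinely inherits $(C,\alpha)$-goodness for \emph{every} primitive submodule of the quotient together with the lower bound $\sup \ge \delta$ on $B'$ (the latter leaning on maximality of $\Delta$), and that the constants multiply out to exactly the stated form rather than merely polynomially. Stability of $(C,\alpha)$-goodness under products, powers, restriction, and the Pl\"ucker identification of submodules with pure wedges is what legitimizes the exterior-power recursion; the careful bookkeeping of the $3^n$-dilation and of the covolume comparison lemma is the remainder of the work.
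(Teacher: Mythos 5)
The paper does not prove this proposition at all: it is imported verbatim as \cite[Theorem 5.2]{KM98} and used as a black box, so the only question is whether your sketch stands on its own. It does not, for two concrete reasons. First, the pivot of your argument --- that for fixed $x$ the primitive submodules $\Delta$ with $\varphi_\Delta(x)<\delta$ admit a \emph{unique maximal element} $\Delta(x)$ --- is false in general. The submodularity inequality $\|\varphi(x)(\Delta_1\vee\Delta_2)\|\cdot\|\varphi(x)(\Delta_1\cap\Delta_2)\|\leq\|\varphi(x)\Delta_1\|\cdot\|\varphi(x)\Delta_2\|$ gives closure under join only when the intersection is trivial or itself has norm $\geq\delta$. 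Take $n=3$ and $\varphi(x)$ diagonal with entries $a,b,c$ where $a$ is tiny and $b=c$ are large, with $ab=ac<\delta$ but $abc\gg\delta$: the planes $\Span_\Z(e_1,e_2)$ and $\Span_\Z(e_1,e_3)$ are two incomparable maximal ``small'' submodules, and their join is not small. The hypothesis $\delta\leq 1/n$ does not repair this; in the genuine argument that hypothesis plays a different role (comparing the shortest vector to covolumes of intermediate subgroups), not the one you assign to it.

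Second, your inductive step does not close, and you have flagged but not resolved exactly the point where it breaks. After isolating $\Delta$ of rank $k$, the norm of a submodule of the quotient $\Z^n/\Delta$ is a \emph{ratio} $\|\varphi(x)\Lambda\|/\|\varphi(x)\Delta\|$ for $\Delta\subset\Lambda$, and $(C,\alpha)$-goodness is not preserved under ratios of $(C,\alpha)$-good functions; nor does maximality of $\Delta$ at one point (or on one ball) yield the lower bound $\sup\geq\delta$ for the quotient data on $B'$ with the same $\delta$. This is precisely why the proof in \cite{KM98} (going back to \cite{DM91}) is organized differently: it never passes to a quotient lattice with ``inherited'' hypotheses, but runs a stopping-radius/Besicovitch argument directly on the original functions $\|\varphi(\cdot)\Lambda\|$ for primitive $\Lambda\subset\Z^n$, choosing for each bad point the largest ball on which the relevant subgroup norms stay below $\delta$ while some subgroup attains $\delta$ on the closure, applying the $(C,\alpha)$-good hypothesis on the threefold dilate (which is where the radius $3^n r_0$ of $\widetilde{B}$ and the factor $(3^r\beta_r)^n$ actually come from), and iterating at most $n$ times over ranks. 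As written, your two key reductions (unique maximal small submodule; quotient inherits $(C,\alpha)$-goodness and the $\sup\geq\delta$ bound) are respectively false and unproved, so the sketch is not a proof; either follow the stopping-radius scheme of \cite{KM98} or, as the paper does, cite the result.
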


\begin{proof}[Proof of \cref{pro: Nondivergence of A orbit}]
Let $x_0 = g_0\Gamma \in X$ such that $Ax_0$ is periodic. We will explicate $\ref{kappa:Non-divergence}$ throughout the proof. Let $g \in G$. First, we make a reduction using the decomposition $g = wka \in WKA$. Since $\mu_{Ax_0}$ is left $A$-invariant and $K < G$ is compact, it suffices to prove the proposition only for $g = w$.

We endow $\LieG$ with a $\Z$-structure given by a lattice $\Delta_0 \subset \LieG$ generated by an orthonormal basis so that there exists a isometry $\LieG \to \R^{\dim(\LieG)}$ such that the image of $\Delta_0$ is $\Z^{\dim(\LieG)}$. Now, for all primitive $\Z$-submodules $\Delta \subset \LieG(\Z)$, consider the function $\varphi_\Delta: \LieA \to \R$ defined by
\begin{align*}
\varphi_\Delta(\bftau) = \|\Ad(wa_{\bftau}  g_0) \Delta\| \qquad \text{for all $\bftau \in \LieA$}.
\end{align*}
Recalling that the periodic orbit $Ax_0$ is isomorphic as an $A$-space to a quotient of $A \cong \LieA \cong \R^\rankG$ by a lattice, there exists a closed parallelotope $\mathcal{A}_{x_0} \subset \LieA$ (which is compact) as its fundamental domain. Let $B := B_{r_0}^{\LieA}(x_0)$ for some $x_0 \in \LieA$ and $r_0 > 0$ be an open ball containing $\mathcal{A}_{x_0}$ and $\widetilde{B} := B_{3^{\dim(\LieG)}r_0}^{\LieA}(x_0)$. The proposition follows if we can apply \cref{pro: KM measure estimate} of Kleinbock--Margulis. Thus, it suffices to verify the following conditions: for all primitive $\Z$-submodules $\Delta \subset \LieG(\Z)$,
\begin{enumerate}
\item $\varphi_\Delta$ is $(C, \alpha)$-good on $\widetilde{B}$ for some $C > 0$ and $\alpha > 0$;
\item $\sup\bigl|\varphi_\Delta|_{B_1^\LieA(\mathcal{A}_{x_0})}\bigr| \geq \delta$ for some $\delta > 0$.
\end{enumerate}
The first condition holds by \cref{lem: exp C alpha good higher dim} since $\varphi_\Delta \in \mathcal{E}(\rankG, \Lambda, \delta_1)$ for some $\Lambda \geq 1$ and $\delta_1 > 0$ using the restricted root space decomposition of $\LieG$. To verify the second condition, we use \cref{pro: EMS Bound} of Eskin--Mozes--Shah. Property~(1) of the proposition states that we can write $G = AY$ with $Y = WK$. Thus, writing $\delta_2 > 0$ for the provided constant, property~(2) of the proposition gives $\sup\bigl|\varphi_\Delta|_B\bigr| \geq \delta_2\|g_0\Delta\| =: \delta > 0$, concluding the proof.
\end{proof}

\section{\texorpdfstring{Effective equidistribution of ($M^\circ$-orbits of) tori from that of unipotent orbits}{Effective equidistribution of (M°-orbits of) tori from that of unipotent orbits}}
\label{sec:EffectiveEquidistributionOfAMstar-Orbits}
In this section, we will establish the general \cref{thm:EquidistributionOfStarPartialCentralizerOfTori} regarding the passage from
\CEShah/\EShah, to effective equidistribution in $X$ of translates of periodic $AM^\circ$-orbits.

For the convenience of the reader, we first state the following theorem which is essentially the first case of \cref{thm:EquidistributionOfStarPartialCentralizerOfTori} and whose hypotheses are in terms of more standard properties rather than the \starCP.

\begin{theorem}
\label{thm:EquidistributionOfStarPartialCentralizerOfTori starCP Case}
Suppose one of the following holds:
\begin{enumerate}
\item $\height(\Phi) \leq 2$;
\item $\height(\Phi) \leq 3$ and $\bfG$ is $\R$-quasi-split;
\end{enumerate}
and \CEShah holds. Let $x_0 \in X$ such that $Ax_0$ is periodic. Let $g = kw_{\mathsf{N}}a \in K\epregW A$ for some $\epsilon > 0$ and $\|\mathsf{N}\| \gg_{X, \height(Ax_0)} \epsilon^{-\ref{Lambda:EquidistributionOfStarPartialCentralizerOfTori}}$. Then, for all $\phi \in C_{\mathrm{c}}^\infty(X)$, we have
\begin{align*}
\left|\int_{AM^\circ x_0} \phi(gx) \diff\mu_{AM^\circ x_0}(x) - \int_X \phi \diff\mu_X\right| \leq \Sob(\phi)\epsilon^{-\ref{Lambda:EquidistributionOfStarPartialCentralizerOfTori}}\|\mathsf{N}\|^{-\ref{kappa:EquidistributionOfStarPartialCentralizerOfTori}}.
\end{align*}
Here, $\ref{kappa:EquidistributionOfStarPartialCentralizerOfTori} > 0$ and $\ref{Lambda:EquidistributionOfStarPartialCentralizerOfTori} > 0$ are constants depending only on $X$.
\end{theorem}

\begin{proof}
The theorem follows from \cref{pro: height at most 3 implies star-QCP,thm:EquidistributionOfStarPartialCentralizerOfTori}.
\end{proof}

Recall that the above theorem is used to derive the unconditional \cref{thm:EquidistributionOfToriSL_3} when $G$ is locally isomorphic to one of the following: $\SO(n, 1)^\circ$ for $n \geq 2$, $\SL_2(\R) \times \SL_2(\R)$, $\SL_3(\R)$, $\SU(2, 1)$, $\Sp_4(\R)$.

\begin{theorem}
\label{thm:EquidistributionOfStarPartialCentralizerOfTori}
Suppose either
\begin{enumerate}
\item $\bfG$ has the \starCP and \CEShah holds;
\item $\bfG$ has the \starQCP and \EShah holds.
\end{enumerate}
Let $x_0 \in X$ such that $Ax_0$ is periodic. Let $g = kw_{\mathsf{N}}a \in K\epregW A$ for some $\epsilon > 0$ and $\|\mathsf{N}\| \gg_{X, \height(Ax_0)} \epsilon^{-\ref{Lambda:EquidistributionOfStarPartialCentralizerOfTori}}$. Then, for all $\phi \in C_{\mathrm{c}}^\infty(X)$, we have
\begin{align*}
	\left|\int_{AM^\circ x_0} \phi(gx) \diff\mu_{AM^\circ x_0}(x) - \int_X \phi \diff\mu_X\right| \leq \Sob(\phi)\epsilon^{-\ref{Lambda:EquidistributionOfStarPartialCentralizerOfTori}}\|\mathsf{N}\|^{-\ref{kappa:EquidistributionOfStarPartialCentralizerOfTori}}.
\end{align*}
Here, $\ref{kappa:EquidistributionOfStarPartialCentralizerOfTori} > 0$ and $\ref{Lambda:EquidistributionOfStarPartialCentralizerOfTori} > 0$ are constants depending only on $X$.
\end{theorem}

We reduce \cref{thm:EquidistributionOfStarPartialCentralizerOfTori} to \cref{thm:EquidistributionOfStarPartialCentralizerOfToriW} and then focus on proving the latter which requires the tools developed in the prior sections.

\begin{theorem}
\label{thm:EquidistributionOfStarPartialCentralizerOfToriW}
Suppose either
\begin{enumerate}
\item $\bfG$ has the \starCP and \CEShah holds;
\item $\bfG$ has the \starQCP and \EShah holds.
\end{enumerate}
Let $x_0 \in X$ such that $Ax_0$ is periodic. Let $w_{\mathsf{N}} \in \epregW$ for some $\epsilon > 0$ and $\|\mathsf{N}\| \gg_{X, \height(Ax_0)} \epsilon^{-\ref{Lambda:EquidistributionOfStarPartialCentralizerOfTori}}$. Then, for all $\phi \in C_{\mathrm{c}}^\infty(X)$, we have
\begin{align*}
\left|\int_{AM^\circ x_0} \phi(w_{\mathsf{N}}x) \diff\mu_{AM^\circ x_0}(x) - \int_X \phi \diff\mu_X\right| \leq \Sob(\phi)\epsilon^{-\ref{Lambda:EquidistributionOfStarPartialCentralizerOfTori}}\|\mathsf{N}\|^{-\ref{kappa:EquidistributionOfStarPartialCentralizerOfTori}}.
\end{align*}
Here, $\constkappa\label{kappa:EquidistributionOfStarPartialCentralizerOfTori} > 0$ and $\constLambda\label{Lambda:EquidistributionOfStarPartialCentralizerOfTori} > 0$ are constants depending only on $X$.
\end{theorem}

\begin{remark}
\label{rem:EquidistributionOfStarPartialCentralizerOfToriAnyG}
By \cref{pro: LimLie always centralizing for favn}, it is clear from the proof of \cref{thm:EquidistributionOfStarPartialCentralizerOfToriW} that if $\mathsf{N} \in \genregLieW{1}$ such that $\favn := \hatfavn := \|\mathsf{N}\|^{-1}\mathsf{N} \in \genregLieW{1}$ is of the form \labelcref{eqn: favn form} (and hence part of a natural $\LieSL_2(\R)$-triple $(\hatfavn, \favh, \checkfavn)$; see \cref{sec:LieTheoreticEstimates}), then all the three theorems above hold for \emph{any} $\bfG$ assuming only that \CEShah holds.
\end{remark}

\begin{remark}
For all the three theorems above, $M^\circ$ can be replaced with an embedded submanifold of the form $\calM = \Mstar\calM' \subset M^\circ$ where $\Mstar := \exp(\Ostar) \subset M^\circ$ for some open subset $\Ostar \subset \LieMstar$ containing $0 \in \LieMstar$ and $\calM' \subset M^\circ$ are both also embedded submanifolds. Note that $\Mstar$ and $M^\circ$ are particular instances. The proof requires a little more work using the F{\o}lner property but we do not write it to avoid unnecessary complications.
\end{remark}

\begin{proof}[Proof that \cref{thm:EquidistributionOfStarPartialCentralizerOfToriW} implies \cref{thm:EquidistributionOfStarPartialCentralizerOfTori}]
Suppose that the hypothesis of \cref{thm:EquidistributionOfStarPartialCentralizerOfTori} holds. Let $x_0$, $g = kw_{\mathsf{N}}a$, $\mathsf{N}$, and $\phi$ be as in the theorem. We apply \cref{thm:EquidistributionOfStarPartialCentralizerOfToriW} to the function $\phi_k := \phi(k \bigcdot) \in C_{\mathrm{c}}^\infty(X)$ to get
\begin{align*}
\left|\int_{AM^\circ x_0} \phi_k(w_{\mathsf{N}}z) \diff\mu_{AM^\circ x_0}(z) - \int_{X} \phi_k \diff\mu_X\right| \leq \Sob(\phi_k) \epsilon^{-\ref{Lambda:EquidistributionOfStarPartialCentralizerOfTori}} \|\mathsf{N}\|^{-\ref{kappa:EquidistributionOfStarPartialCentralizerOfTori}}.
\end{align*}
The theorem follows using left $A$-invariance of $\mu_{AM^\circ x_0}$, left $K$-invariance of $\mu_X$, and $\Sob(\phi_k) = \Sob(\phi)$ again by left $K$-invariance of the Riemannian metric on $G$.
\end{proof}

Before we begin the proof of \cref{thm:EquidistributionOfStarPartialCentralizerOfToriW}, we derive a quick estimate below for the size of the expanded open ellipsoid $\Ad(w_{T\n})B_r^{\LieA \oplus \LieMstar}$ for any $\n \in \epregLieW$ with $\|\n\| = 1$. When $\LieMstar$ is trivial, $\LieA \oplus \LieMstar = \LieA$ and the estimate can be proven with only standard tools for restricted root spaces.

\begin{lemma}
\label{lem:Ad(w)ExpandsLieA}
There exists $\constLambda\label{Lambda:Ad(w)ExpandsLieA} > 1$ (depending only on $\dim(\LieG)$) such that the following holds. Let $\n \in \epregLieW$ for some $\epsilon > 0$ with $\|\n\| = 1$. Then, we have
\begin{align*}
B_{(\epsilon T/\ref{Lambda:Ad(w)ExpandsLieA})r}^{\Ad(w_{T\n})(\LieA \oplus \LieMstar)} \subset \Ad(w_{T\n})B_r^{\LieA \oplus \LieMstar} \qquad \text{for all $r > 0$ and $T > 0$}.
\end{align*}
\end{lemma}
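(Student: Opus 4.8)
The plan is to convert the claimed ball inclusion into a uniform lower bound for the smallest singular value of $\Ad(w_{T\n})$ restricted to $\LieA \oplus \LieMstar$, and then to produce that bound by reading off a single graded component. Since $\Ad(w_{T\n})$ carries $\LieA \oplus \LieMstar$ bijectively onto $\Ad(w_{T\n})(\LieA \oplus \LieMstar)$ and both balls are taken in the ambient metric, the asserted inclusion is equivalent to the inequality
\begin{align*}
\|\Ad(w_{T\n})v\| \geq \frac{\epsilon T}{\ref{Lambda:Ad(w)ExpandsLieA}}\,\|v\| \qquad \text{for all } v \in \LieA \oplus \LieMstar \text{ and } T > 0;
\end{align*}
this is the statement I would actually establish, the case $\LieMstar = 0$ being the one alluded to before the lemma (where $\LieA \oplus \LieMstar = \LieA$).

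The next step is to isolate the height-one part. Writing $\Ad(w_{T\n}) = \exp(T\ad(\n))$ and recalling that for $\n = \sum_{\beta \in \Phi^+} \n_\beta \in \LieW$ the operator $\ad(\n)$ strictly raises the height grading of \cref{eqn: grading by height}, while $\LieA \oplus \LieMstar \subset \favLieG(0)$ by \cref{eqn:LieA+LieMstarDecomposition}, I would observe that for $v \in \LieA \oplus \LieMstar$ the component of $\Ad(w_{T\n})v$ in $\favLieG(1) = \bigoplus_{\alpha \in \Pi} \LieG_\alpha$ is exactly $T\,\ad(\n^{(1)})v$, where $\n^{(1)} := \sum_{\alpha \in \Pi} \n_\alpha$ is the height-one part of $\n$: the $k = 0$ term contributes nothing to $\favLieG(1)$, the terms with $k \geq 2$ live in heights $\geq 2$, and from $k = 1$ only $\ad(\n^{(1)})$ survives. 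Hence $\|\Ad(w_{T\n})v\| \geq T\,\|\ad(\n^{(1)})v\|$, and everything reduces to the bound $\|\ad(\n^{(1)})v\| \gg \epsilon\|v\|$ on $\LieA \oplus \LieMstar$.

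For this last bound — the heart of the argument — I would apply \cref{lem:LieAlgebraJordanNormalForm} to write $\n = \Ad(\exp\sigma)\Ad(\exp\omega)\favn$ with $\favn$ of the form \labelcref{eqn: favn form}, $\sigma = \sum_{\alpha \in \Pi} \log(\sqrt{\rankG}\|\n_\alpha\|)\favh_\alpha \in \LieA$, and $\omega \in \LieW$. Taking height-one parts, and using that $\Ad(\exp\sigma)$ preserves the height grading while $\Ad(\exp\omega)$ only adds strictly-higher-height terms to $\favn$, gives $\n^{(1)} = \Ad(\exp\sigma)\favn$; since $\sigma \in \LieA$ acts as zero on $\favLieG(0) \supseteq \LieA \oplus \LieMstar$, this yields $\ad(\n^{(1)})v = \Ad(\exp\sigma)\,\ad(\favn)v$. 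Decomposing $\ad(\favn)v = \sum_{\alpha \in \Pi} y_\alpha$ with $y_\alpha \in \LieG_\alpha$ (orthogonal summands) and noting $\Ad(\exp\sigma)y_\alpha = e^{\alpha(\sigma)}y_\alpha$ with $e^{\alpha(\sigma)} = \sqrt{\rankG}\|\n_\alpha\| \geq \epsilon$ by $\epsilon$-regularity (using $\|\n\| = 1$), one gets $\|\ad(\n^{(1)})v\|^2 = \sum_\alpha e^{2\alpha(\sigma)}\|y_\alpha\|^2 \geq \epsilon^2\|\ad(\favn)v\|^2$. It then remains to bound $\|\ad(\favn)v\| \geq c_0\|v\|$ with $c_0 > 0$: by \cref{eqn:LieA+LieMstarDecomposition,eqn: centralizer of nilpotent}, $\LieA \oplus \LieMstar = \bigoplus_{j \in \calJstar} \favtripleirrep_j(0)$ and $Z_\LieG(\favn) = \bigoplus_{j \in \calJ} \favtripleirrep_j(\varkappa_j)$ share no weight space because $\varkappa_j \geq 1$ for $j \in \calJstar$, so $\ad(\favn)|_{\LieA \oplus \LieMstar}$ is injective; and since the set of elements of the form \labelcref{eqn: favn form} is compact (a product of spheres, one in each $\LieG_\alpha$) and $\favn \mapsto (\LieA \oplus \LieMstar,\, \ad(\favn))$ varies continuously, $c_0$ can be taken uniform in $\favn$. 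Combining the three steps and setting $\ref{Lambda:Ad(w)ExpandsLieA} := \max\{2, c_0^{-1}\}$ gives the lemma.

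The only genuinely non-formal point — and thus the main obstacle — is this last injectivity statement together with its uniformity: because $\LieMstar = \LieM \cap Z_\LieM(\favn)^\perp$ genuinely depends on $\favn$ when $\bfG$ is not $\R$-quasi-split, one cannot fix a single $\favn$ throughout and must keep track of the fact that $\LieMstar$ is the subspace attached to the particular $\favn$ supplied by \cref{lem:LieAlgebraJordanNormalForm}, extracting the uniform constant by a compactness argument over the family of form-\labelcref{eqn: favn form} elements. Everything else is routine manipulation of the height grading and the $\LieSL_2(\favn)$-isotypic decomposition.
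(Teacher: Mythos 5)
Your proposal is correct and follows essentially the same route as the paper: the paper likewise reduces the ball inclusion to the lower bound $\inf_{\|\bfchi\|=1}\|\Ad(w_{T\n})\bfchi\|\gg\epsilon T$, expands $\Ad(w_{T\n})\bfchi=\bfchi+T\ad(\n)\bfchi+Z$ with $Z$ of height $\geq 2$, uses \cref{eqn:ad_IterateCalculation} to replace $\ad(\n)\bfchi$ by $\Ad(\exp\sigma)\ad(\favn)\bfchi$ modulo higher-height terms, drops to the height-one component by orthogonality, and then bounds $\|\Ad(\exp\sigma)^{-1}|_{\favLieG(1)}\|_{\mathrm{op}}^{-1}\geq\epsilon$ via \cref{eqn: Ad exp - sigma operator norm} together with $\|\ad(\favn)\bfchi\|\gg\|\bfchi\|$ for $\bfchi\in\LieA\oplus\LieMstar$. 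Your extra discussion of the injectivity of $\ad(\favn)|_{\LieA\oplus\LieMstar}$ and the uniformity over form-\labelcref{eqn: favn form} elements simply spells out what the paper asserts in one line (cf. \cref{eqn: ad favn lower bound}).
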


\begin{proof}
Let $\n$, $T$, and $r$ be as in the lemma. It suffices to show the lower bound
\begin{align*}
\inf_{\bfchi \in \LieA \oplus \LieMstar, \|\bfchi\| = 1} \|\Ad(w_{T\n})\bfchi\| \gg \epsilon T.
\end{align*}
Recall from \cref{eqn:ad_IterateCalculation} (keeping the same notation) that for all $j \in \calJstar$ and $0 \leq k \leq \height(\Phi)$, we have
\begin{align*}
\ad(\n)^k\bftau_j \in \Ad(\exp \sigma)\ad(\favn)^k\bftau_j + \bigoplus_{l > k} \favLieG(l).
\end{align*}
Let $\bfchi \in \LieA \oplus \LieMstar$ with $\|\bfchi\| = 1$. Using the basis $\{\bftau_j\}_{j \in \calJ} \subset \LieA \oplus \LieMstar$, the above equation, and orthogonality of the weight space decomposition $\LieG = \bigoplus_{j \in \calJ} \bigoplus_{k = -\varkappa_j}^{\varkappa_j} \favtripleirrep_j(k)$, we have
\begin{align*}
\|\Ad(w_{T\n})\bfchi\| &= \|\exp(\ad(T\n))\bfchi\| = \|\bfchi + T\ad(\n)\bfchi + Z\| \\
&= \|\bfchi + T\Ad(\exp \sigma)\ad(\favn)\bfchi + Z'\| \\
&\geq T\|\Ad(\exp \sigma)\ad(\favn)\bfchi\| \\
&\gg T\Bigl\|\Ad(\exp \sigma)^{-1}\bigr|_{\favLieG(1)}\Bigr\|_{\mathrm{op}}^{-1}
\end{align*}
where $Z, Z' \in \bigoplus_{k > 1} \favLieG(k)$ and we have used $\|\ad(\favn)\bfchi\| \gg \|\bfchi\| = 1$ since $\bfchi \in \LieA \oplus \LieMstar$. Recalling that $\favLieG(1) = \bigoplus_{\alpha \in \Phi^+, \height(\alpha) = 1} \LieG_\alpha$, the calculation from \cref{eqn: Ad exp - sigma operator norm} gives $\Bigl\|\Ad(\exp \sigma)^{-1}\bigr|_{\favLieG(1)}\Bigr\|_{\mathrm{op}} \leq \epsilon\|\n\| = \epsilon$, concluding the proof.
\end{proof}

\begin{proof}[Proof of \cref{thm:EquidistributionOfStarPartialCentralizerOfToriW}]
To simplify notation, we denote $\mu_{\Lstar_\n(\infty)}(B)$ by $|B|$ for any Borel subset $B \subset \Lstar_\n(\infty)$, and $\diff\mu_{\Lstar_\n(\infty)}(l)$ by $\diff l$.

Suppose either $\bfG$ has the \starCP and \CEShah holds; or $\bfG$ has the \starQCP and \EShah holds.
Let $x_0$, $w_{\mathsf{N}}$, $\mathsf{N}$, and $\phi$ be as in the theorem. Write $\mathsf{N} = T\n$ where $T = \|\mathsf{N}\|$ and $\n \in \epregLieW$ with $\|\n\| = 1$ so that $w_{\mathsf{N}} = w_{T\n}$.
Recall the polynomial curve $\LimLiestar_\n$. Since $Ax_0$ is periodic, it is isomorphic as an $A$-space to a quotient of $A \cong \LieA \cong \R^\rankG$ by a lattice and there exists a closed parallelotope $\mathcal{A}_{x_0} \subset \LieA$ (which is compact) as its fundamental domain. Take $R = \epsilon^{-\ref{Lambda:EquidistributionOfGrowingBalls}}T^{1/8}$.

We first prepare with some definitions and estimates. Define the orthogonal projection map $\pi_{\LimLiestar_\n(\infty)}: \LieG \to \LimLiestar_\n(\infty)$. Define the open neighborhood $E \subset \LieA \oplus \LieMstar$ of $0 \in \LieA \oplus \LieMstar$, which is an open ellipsoid, such that $B_R^{\LimLiestar_\n(\infty)} = \pi_{\LimLiestar_\n(\infty)}(\Ad(w_{T\n})E)$, i.e.,
\begin{align*}
E = \Ad(w_{T\n})^{-1}\bigl(\pi_{\LimLiestar_\n(\infty)}|_{\LimLiestar_\n(T)}\bigr)^{-1}\bigl(B_R^{\LimLiestar_\n(\infty)}\bigr).
\end{align*}
By \cref{lem: limiting Lie algebra,lem: quasi-centralizing estimates}, we have
\begin{align*}
d(\LimLiestar_\n(T), \LimLiestar_\n(\infty)) \ll \epsilon^{-4\rankGstar\height(\Phi)^2}T^{-1}
\end{align*}
where we may assume that the right hand side together with the implicit constant factor is at most $1$ by requiring that $\ref{Lambda:EquidistributionOfStarPartialCentralizerOfTori} \geq 4\rankGstar\height(\Phi)^2$ and using $T \gg \epsilon^{-\ref{Lambda:EquidistributionOfStarPartialCentralizerOfTori}}$. We now use techniques as in the proof of property~(3) in the proof of \cref{lem: quasi-centralizing estimates}. Using the Pl\"ucker embedding and the definition of the Fubini--Study metric, we can take corresponding pure wedges $\Upsilon_T, \Upsilon_\infty \in \bigwedge^\rankGstar \LieG$ with $\|\Upsilon_T\| = \|\Upsilon_\infty\| = 1$ such that $\|\Upsilon_T - \Upsilon_\infty\| \ll \epsilon^{-4\rankGstar\height(\Phi)^2}T^{-1}$. This implies
\begin{align*}
\prod_{j \in \calJstar} \cos(\theta_j) = \langle \Upsilon_T, \Upsilon_\infty \rangle \geq 1 - O\bigl(\epsilon^{-4\rankGstar\height(\Phi)^2}T^{-1}\bigr)
\end{align*}
where $\{\theta_j\}_{j \in \calJstar}$ are the principal angles between the linear subspaces $\LimLiestar_\n(T)$ and $\LimLiestar_\n(\infty)$. We conclude that $\tan(\theta_j) \ll \epsilon^{-2\rankGstar\height(\Phi)^2}T^{-1/2}$ for all $j \in \calJstar$. Therefore,
\begin{align*}
\bigl\|\Ad(w_{T\n})\bfchi - \pi_{\LimLiestar_\n(\infty)}(\Ad(w_{T\n})\bfchi)\bigr\| \ll \epsilon^{-2\rankGstar\height(\Phi)^2}T^{-1/2}R \qquad \text{for all $\bfchi \in E$}.
\end{align*}
The Baker--Campbell--Hausdorff formula with the fact that $[u, v] = [u, \delta] = O(\|u\|\cdot \|\delta\|)$ if $v = u + \delta$ for all $u, v, \delta \in \LieG$, and the above estimate gives
\begin{align}
\label{eqn: distance to corresponding point on exp of limiting Lie algebra}
\begin{aligned}
&d\bigl(\exp(\Ad(w_{T\n})\bfchi) \cdot x, \exp\bigl(\pi_{\LimLiestar_\n(\infty)}\Ad(w_{T\n})\bfchi\bigr) \cdot x\bigr) \\
\leq{}&d\bigl(e, \exp(-\Ad(w_{T\n})\bfchi) \cdot \exp\bigl(\pi_{\LimLiestar_\n(\infty)}\Ad(w_{T\n})\bfchi\bigr)\bigr) \\
\ll{}&\bigl\|\Ad(w_{T\n})\bfchi - \pi_{\LimLiestar_\n(\infty)}\Ad(w_{T\n})\bfchi\bigr\| \\
\ll{}&\epsilon^{-2\rankGstar\height(\Phi)^2}T^{-1/2}R
\end{aligned}
\end{align}
for all $x \in X$ and $\bfchi \in E$. Now, we may assume that $\bigl(\pi_{\LimLiestar_\n(\infty)}|_{\LimLiestar_\n(T)}\bigr)^{-1}\bigl(B_R^{\LimLiestar_\n(\infty)}\bigr) \subset B_{2R}^{\LimLiestar_\n(T)}$. Fix an embedded submanifold $\Mstar := \exp(\Ostar) \subset M^\circ$ for some open subset $\Ostar \subset \LieMstar$ containing $0 \in \LieMstar$. We then use \cref{lem:Ad(w)ExpandsLieA} to estimate that
\begin{align}
\label{eqn: round ball pullback inside round ball}
E \subset \Ad(w_{T\n})^{-1}B_{2R}^{\LimLiestar_\n(T)} \subset B_{2\ref{Lambda:Ad(w)ExpandsLieA}\epsilon^{-1}T^{-1}R}^{\LieA \oplus \LieMstar} \subset B_1^{\LieA} + \Ostar
\end{align}
where we have the last containment by requiring that $\ref{Lambda:EquidistributionOfStarPartialCentralizerOfTori} \geq 2(1 + \ref{Lambda:EquidistributionOfGrowingBalls})$ and using $T \gg \epsilon^{-\ref{Lambda:EquidistributionOfStarPartialCentralizerOfTori}}$.

Recall that the measure $\mu_\Mstar$ is induced by the Riemannian metric on $\Mstar$ which is obtained by restricting the one on $G$. Therefore, there exists a positive smooth function $\varsigma \in C^\infty(\Ostar)$ such that the pushforward of the measure $\varsigma \diff \bfxi$ on $\Ostar$ under $\exp$ gives the measure $\mu_\Mstar$ on $\Mstar$. For convenience, we extend $\varsigma$ to a smooth function on $\LieA + \Ostar$ trivially by $\varsigma(\bfchi) = \varsigma(\bfxi)$ for all $\bfchi = \bftau + \bfxi \in \LieA + \Ostar$. Consequently, the pushforward of the measure $\varsigma \diff \bfchi = \varsigma \diff \bftau \diff \bfxi$ on $\LieA + \Ostar$ under $\exp$ gives the measure $\mu_{A\Mstar}$ on $A\Mstar$. We will use the fact that
\begin{align}
\label{eqn: AMstar measure density estimate 1}
\varsigma(0) = \varsigma(\bfchi) \bigl(1 + O\bigl(\epsilon^{-1}T^{-1}R\bigr)\bigr) \qquad \text{for all $\bfchi \in E$}
\end{align}
due to \cref{eqn: round ball pullback inside round ball}. We also calculate that
\begin{align*}
&\varsigma(0) \mu_{\LieA \oplus \LieMstar}(E) = \int_E \varsigma(0) \diff\bfchi = \int_{E} \varsigma(\bfchi) \bigl(1 + O\bigl(\epsilon^{-1}T^{-1}R\bigr)\bigr) \diff\bfchi \\
={}&\int_E \varsigma(\bfchi) \diff\bfchi \cdot \bigl(1 + O\bigl(\epsilon^{-1}T^{-1}R\bigr)\bigr) = \mu_{A\Mstar}(\exp E) \cdot \bigl(1 + O\bigl(\epsilon^{-1}T^{-1}R\bigr)\bigr).
\end{align*}
Hence
\begin{align}
\label{eqn: AMstar measure density estimate 2}
\varsigma(0) = \frac{\mu_{A\Mstar}(\exp E)}{\mu_{\LieA \oplus \LieMstar}(E)} \bigl(1 + O\bigl(\epsilon^{-1}T^{-1}R\bigr)\bigr).
\end{align}

Therefore, introducing an extra average over $E$, using the estimates \cref{eqn: AMstar measure density estimate 1,eqn: AMstar measure density estimate 2,eqn: distance to corresponding point on exp of limiting Lie algebra}, and using change of variables, we calculate that (recall the notation from \cref{eqn: exponential notation})
\begin{align*}
&\int_{AM^\circ x_0} \phi(w_{T\n}x) \diff\mu_{AM^\circ x_0}(x) \\
={}&\frac{1}{\mu_{A\Mstar}(\exp E)} \int_{E} \int_{AM^\circ x_0} \phi\bigl(w_{T\n} b_{\bfchi}w_{-T\n} \cdot w_{T\n}x\bigr) \varsigma(\bfchi) \diff\mu_{AM^\circ x_0}(x) \diff\bfchi \\
={}&\frac{1}{\mu_{A\Mstar}(\exp E)} \int_{AM^\circ x_0} \int_E \phi\bigl(\exp(\Ad(w_{T\n})\bfchi) \cdot w_{T\n}x\bigr) \varsigma(\bfchi) \diff\bfchi \diff\mu_{AM^\circ x_0}(x) \\
={}&\frac{\varsigma(0)}{\mu_{A\Mstar}(\exp E)} \int_{AM^\circ x_0} \int_E \phi\bigl(\exp(\Ad(w_{T\n})\bfchi) \cdot w_{T\n}x\bigr) \diff\bfchi \diff\mu_{AM^\circ x_0}(x) \\
{}&+ O\bigl(\Sob(\phi)\epsilon^{-1}T^{-1}R\bigr) \\
={}&\frac{1}{\mu_{\LieA \oplus \LieMstar}(E)} \int_{AM^\circ x_0} \int_E \phi\bigl(\exp(\Ad(w_{T\n})\bfchi) \cdot w_{T\n}x\bigr) \diff\bfchi \diff\mu_{AM^\circ x_0}(x) \\
{}&+ O\bigl(\Sob(\phi)\epsilon^{-1}T^{-1}R\bigr) \\
={}&\int_{AM^\circ x_0} \frac{1}{\mu_{\LieA \oplus \LieMstar}(E)} \int_E \phi\bigl(w_{\pi_{\LimLie_\n(\infty)}\Ad(w_{T\n})\bfchi} \cdot w_{T\n}x\bigr) \diff\bfchi \diff\mu_{AM^\circ x_0}(x) \\
{}&+ O\bigl(\Sob(\phi)\epsilon^{-2\rankGstar\height(\Phi)^2}T^{-1/2}R\bigr) \\
={}&\int_{AM^\circ x_0} \frac{1}{\bigl|B_R^{\LimLiestar_\n(\infty)}\bigr|} \int_{B_R^{\LimLiestar_\n(\infty)}} \phi(w_{\bfnu} \cdot w_{T\n}x) \diff\bfnu \diff\mu_{AM^\circ x_0}(x) \\
{}&+ O\bigl(\Sob(\phi)\epsilon^{-2\rankGstar\height(\Phi)^2}T^{-1/2}R\bigr) \\
={}&\int_{AM^\circ x_0} \frac{1}{\bigl|\mathsf{B}_R^{\Lstar_\n(\infty)}\bigr|} \int_{\mathsf{B}_R^{\Lstar_\n(\infty)}} \phi(l \cdot w_{T\n}x) \diff l \diff\mu_{AM^\circ x_0}(x) \\
{}&+ O\bigl(\Sob(\phi)\epsilon^{-2\rankGstar\height(\Phi)^2}T^{-1/2}R\bigr).
\end{align*}

Now, we would like to use the effective equidistribution of growing balls from \cref{thm:EquidistributionOfGrowingBalls} to obtain the desired error term. However, according to the theorem, it costs a factor of a certain height. Thus, we also wish to control the measure of the set of points for which this factor is too large, i.e., for which certain translations are high in the cusp. For exactly this reason, we have proven the quantitative non-divergence property in \cref{pro: Nondivergence of A orbit}. Let $\eta = T^{-\ref{kappa:EquidistributionOfGrowingBalls}/8\ref{Lambda:EquidistributionOfGrowingBalls}^2}$. Define the measurable subsets
\begin{align*}
\mathcal{B}(x_0) &:= \{x \in AM^\circ x_0: a_{-t}g_{\n'}^{-1}w_{T\n}x \in X_\eta\} \subset AM^\circ x_0, \\
\mathcal{A}(y_0) &:= \{y \in Ay_0: a_{-t}g_{\n'}^{-1}w_{T\n}y \in X_\eta\} \subset Ay_0 \qquad \text{for all $y_0 \in M^\circ x_0$}.
\end{align*}
We then have the decomposition into a disjoint union $\mathcal{B}(x_0) = \bigsqcup_{y_0 \in M^\circ x_0} \mathcal{A}(y_0)$. Note that $\height(AM^\circ x_0) = \height(M^\circ Ax_0) \asymp \height(Ax_0)$. Thus, using $y_0 \in M^\circ x_0$ for the basepoint, \cref{pro: Nondivergence of A orbit} gives
\begin{align}
\label{eqn:Non-divergence output}
\mu_{Ay_0}(Ay_0 \smallsetminus \mathcal{A}(y_0)) \ll \eta^{\ref{kappa:Non-divergence}} \qquad \text{for all $y_0 \in M^\circ x_0$}.
\end{align}
We continue the above calculations in the following fashion: we use Fubini's theorem to integrate over $M^\circ x_0$ separately, then we decompose the integral over $Ay_0$ into integrals over $\mathcal{A}(y_0)$ and $Ay_0 \smallsetminus \mathcal{A}(y_0)$, then we estimate the latter using \cref{eqn:Non-divergence output}, and finally we invoke \cref{thm:EquidistributionOfGrowingBalls} with $t = \log(T)/4\ref{Lambda:EquidistributionOfGrowingBalls}$. We obtain
\begin{align*}
&\int_{AM^\circ x_0} \phi(w_{T\n}x) \diff\mu_{AM^\circ x_0}(x) \\
={}&\int_{M^\circ x_0} \int_{\mathcal{A}(y_0)} \frac{1}{\bigl|\mathsf{B}_R^{\Lstar_\n(\infty)}\bigr|} \int_{\mathsf{B}_R^{\Lstar_\n(\infty)}} \phi(l \cdot w_{T\n}y) \diff l \diff\mu_{A y_0}(y) \diff\mu_{M^\circ x_0}(y_0) \\
{}&+ \int_{M^\circ x_0} \int_{Ay_0 \smallsetminus \mathcal{A}(y_0)} \frac{1}{\bigl|\mathsf{B}_R^{\Lstar_\n(\infty)}\bigr|} \int_{\mathsf{B}_R^{\Lstar_\n(\infty)}} \phi(l \cdot w_{T\n}y) \diff l \diff\mu_{Ay_0}(y) \diff\mu_{M^\circ x_0}(y_0) \\
{}&+ O\bigl(\Sob(\phi)\epsilon^{-2\rankGstar\height(\Phi)^2}T^{-1/2}R\bigr) \\
={}&\int_{M^\circ x_0} \int_{\mathcal{A}(y_0)} \int_X \phi \diff\mu_X \diff\mu_{Ay_0} \diff\mu_{M^\circ x_0}(y_0) \\
{}&+ O\bigl(\Sob(\phi)\bigl(\eta^{-\ref{Lambda:EquidistributionOfGrowingBalls}}\epsilon^{-\ref{Lambda:EquidistributionOfGrowingBalls}}T^{-\ref{kappa:EquidistributionOfGrowingBalls}/4\ref{Lambda:EquidistributionOfGrowingBalls}} + \eta^{\ref{kappa:Non-divergence}} + \epsilon^{-2\rankGstar\height(\Phi)^2}T^{-1/2}R\bigr)\bigr) \\
={}& \int_X \phi \diff\mu_X + O\bigl((\Sob(\phi)\epsilon^{-\ref{Lambda:EquidistributionOfStarPartialCentralizerOfTori}}T^{-2\ref{kappa:EquidistributionOfStarPartialCentralizerOfTori}}\bigr)
\end{align*}
where we take $\ref{Lambda:EquidistributionOfStarPartialCentralizerOfTori} = 2 + 2\ref{Lambda:EquidistributionOfGrowingBalls} + 4\rankGstar\height(\Phi)^2$ and $\ref{kappa:EquidistributionOfStarPartialCentralizerOfTori} = \min\{1/4, \ref{kappa:EquidistributionOfGrowingBalls}/8\ref{Lambda:EquidistributionOfGrowingBalls}, \ref{kappa:EquidistributionOfGrowingBalls}\ref{kappa:Non-divergence}/8\ref{Lambda:EquidistributionOfGrowingBalls}^2\}$. We finish the proof by using a factor of $T^{-\ref{kappa:EquidistributionOfStarPartialCentralizerOfTori}}$ to eliminate the implicit constant.
\end{proof}

\section{Effective count of integral points}
\label{sec:ProofOfTheCountingTheorem}
In this section, we will prove our effective counting theorems. \Cref{thm:GeneralCounting} is a general but conditional theorem. Together with the tools developed in the prior sections, we obtain the unconditional \cref{thm:GeneralCountingSL3}.

Our setting in addition to \cref{sec:NotationAndPreliminaries} is as follows. Let $\widetilde{\bfG}$ be a connected reductive $\Q$-group of $\R$-rank $\rankG \in \N$.
Let
\begin{align*}
\bfG := \widetilde{\bfG}/Z(\widetilde{\bfG})
\end{align*}
so that $\bfG$ is a center-free connected semisimple $\Q$-group of $\R$-rank $\rankG \in \N$; we keep the same notation for all associated objects from \cref{sec:NotationAndPreliminaries}. Recall that the Lie algebra of $\widetilde{\bfG}(\R)$ is then $\tilde{\LieG} = Z(\tilde{\LieG}) \oplus \LieG$. Since the above quotient is by the center, observe that the faithful $\Q$-rational representations
\begin{align*}
\ad: \LieG &\to \LieSL(\LieG), & \Ad: \bfG(\R) &\to \SL(\LieG),
\end{align*}
both factor through the faithful $\Q$-rational representations
\begin{align*}
\ad: \LieG &\to \LieGL(\tilde{\LieG}) , & \Ad: \bfG(\R) &\to \GL(\tilde{\LieG}),
\end{align*}
abusing notation, respectively. Then, $\bfG(\R)$ acts on $\tilde{\LieG}$ from the right via the map $\Ad \circ \Inv$ where $\Inv$ denotes the inverse map on $\bfG(\R)$.

Recall that $\Gamma < \bfG(\Q) \cap G$ is an arithmetic lattice. Fix $x_0 := \Gamma \in X = G/\Gamma$. Suppose that $\mathbf{A} < \bfG$ is a maximal $\R$-split $\Q$-anisotropic $\Q$-torus. Let $\tilde{A} = \mathbf{A}(\R) \cap G < G$ and $A = \mathbf{A}(\R)^\circ < \tilde{A}$ which is of finite index, say $d_A := [\tilde{A} : A] \in \N$. Then, $\Gamma_{\tilde{A}} := \Gamma \cap \tilde{A} < \tilde{A}$ and $\Gamma_A := \Gamma \cap A < A$ are lattices and hence $Ax_0 \subset X$ is a periodic $A$-orbit. Note that $\bigl[\Gamma_{\tilde{A}} : \Gamma_A\bigr] = d_A$. Suppose also that $\mathbf{A}(\R) < \bfG(\R)$ is the stabilizer, i.e., the centralizer in $\bfG(\R)$, of some vector $v_0 \in \tilde{\LieG}$; in particular, the $\LieG$-component of $v_0$ must be \emph{regular} and contained in $\LieA(\Q)$.
Then, $\Ad(G)v_0 \cong \tilde{A}\backslash G$ and $A\backslash G$ is its $G$-equivariant cover of degree $d_A$, as analytic $G$-spaces. Let $\|\bigcdot\|_*$ be \emph{any} norm on $\tilde{\LieG}$. For any linear subspace $V \subset \tilde{\LieG}$, let $B_{*, T}^V \subset \tilde{\LieG}$ be the corresponding open ball of radius $T > 0$ centered at $0 \in V$. For all $T > 0$, define
\begin{align*}
\mathcal{N}(T) &:= d_A \cdot \#\bigl(\Ad(\Gamma)v_0 \cap B_{*, T}^{\tilde{\LieG}}\bigr), \\
\mathcal{B}_T &:= \bigl\{Ag \in A\backslash G: \Ad(g^{-1})v_0 \in B_{*, T}^{\tilde{\LieG}}\bigr\} \subset A\backslash G.
\end{align*}
Note that for convenience, we have compensated for the degree $d_A$ above so that we may work with $A\backslash G$ instead of $\tilde{A}\backslash G$ while $\mathcal{N}(T) = \#(\mathcal{B}_T \cap A\backslash A\Gamma)$ also holds.

\begin{theorem}
\label{thm:GeneralCountingSL3}
Let $G$ be one of the following: $\PGL_2(\R)$, $\PGL_2(\R) \times \PGL_2(\R)$, $\SL_3(\R)$, $\PSp_4(\R)$. Then, there exist $\constc\label{c:GeneralCountingSL3} > 0$ (depending only on $\|\bigcdot\|_*$) and $\constkappa\label{kappa:GeneralCountingSL3} > 0$ (depending only on $X$) such that for all $T > 0$, we have
\begin{align*}
\mathcal{N}(T) &= \mu_{A\backslash G}(\mathcal{B}_T) + O_{Ax_0, v_0, \|\bigcdot\|_*} \bigl(T^{\dim(W) - \ref{kappa:GeneralCountingSL3}}\bigr) \\
&= \ref{c:GeneralCountingSL3}T^{\dim(W)} + O_{Ax_0, \|\bigcdot\|_*} \bigl(T^{\dim(W) - \ref{kappa:GeneralCountingSL3}}\bigr).
\end{align*}
\end{theorem}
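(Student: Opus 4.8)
The plan is to deduce \cref{thm:GeneralCountingSL3} from the more general conditional statement \cref{thm:GeneralCounting} by checking that its hypotheses hold for each of the listed groups, and then to carry out the standard DRS--EM--EMS passage from the equidistribution theorem (here, the \emph{effective} \cref{thm:EquidistributionOfStarPartialCentralizerOfTori}, via \cref{thm:EquidistributionOfToriSL_3}) to an effective count. First I would observe that each of $\PGL_2(\R)$, $\PGL_2(\R) \times \PGL_2(\R)$, $\SL_3(\R)$, $\PSp_4(\R)$ is center-free, semisimple, and equals $\bfG(\R)^\circ$ (up to the relevant identifications) for a $\Q$-group $\bfG$ with $G = \bfG(\R)^\circ$ of Lie type $A_1$, $A_1 \times A_1$, $A_2$, or $B_2 = C_2$ respectively; by \cref{cor: starQCP for Lie types} these all have \starQCP, and the types $A_1$, $A_1\times A_1$, $A_2$ even have \starCP. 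Moreover \CEShah holds for $\SO(n,1)^\circ$-type groups by \cref{thm: KM}, and \EShah holds for the list in \cref{thm: LMW and LMWY}, which covers $\SL_2(\R)\times\SL_2(\R)$, $\SL_3(\R)$, $\Sp_4(\R)$ (and $\SL_2(\R)$ is $\SO(2,1)^\circ$-type, or directly handled). Thus the hypotheses of \cref{thm:EquidistributionOfStarPartialCentralizerOfTori} are satisfied in every case, giving the effective equidistribution input with a power-saving rate $\epsilon^{-\Lambda}T^{-\kappa}$.

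Next I would set up the counting argument following the outline in \cref{subsec:OutlineOfTheProofs}. Fixing $x_0 = \Gamma$ with $Ax_0$ periodic, one writes, for a bump function $\phi_\delta$ supported on a $\delta$-ball around $x_0$ with $\int_X \phi_\delta \diff\mu_X = 1$,
\begin{align*}
\mathcal{N}(T) = \sum_{\gamma \in \Gamma/\Gamma_A} \mathbf{1}_{B_{*,T}^\LieG}(\Ad(\gamma)v_0) \approx \int_{\mathcal{B}_T} \int_{Ax_0} \phi_\delta(g^{-1}x) \diff\mu_{Ax_0}(x)\diff\mu_{A\backslash G}(Ag),
\end{align*}
where the inner integral is close to $1$ for $g$ in the relevant range by effective equidistribution, and one must control the boundary/smoothing error from the $\delta$-thickening against the main term $\mu_{A\backslash G}(\mathcal{B}_T) \asymp T^{\dim(W)}$. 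The key structural point is the decomposition $G = K \epregW A$ of (almost all of) $G$ and the fact that in the coordinates $g = kwa$, the set $\mathcal{B}_T$ is captured, up to a lower-order piece, by $\{w \in W : \|\log w\| \lesssim T\}$ with $\epsilon$-regular $w$; the non-$\epsilon$-regular part of $W$ lies near a finite union of proper linear subspaces and so contributes a volume of order $T^{\dim(W)}$ times a power of $\epsilon$, which is then optimized against $T$. One also needs the precise volume asymptotics of $\mathcal{B}_T$ (that $\mu_{A\backslash G}(\mathcal{B}_T) = \ref{c:GeneralCountingSL3}T^{\dim(W)} + O(T^{\dim(W)-\kappa'})$ for the appropriate norm-dependent constant $\ref{c:GeneralCountingSL3}$), which comes from a change of variables into $KWA$-coordinates together with the homogeneity of the orbit map $g \mapsto \Ad(g^{-1})v_0$ under the $A$-action, and an effective non-divergence/tail estimate to handle integration near the cusp (this is where \cref{pro: Nondivergence of A orbit} re-enters).

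The main obstacle I expect is the \emph{effective} bookkeeping of the several competing error terms: the smoothing parameter $\delta$ (which forces $\Sob(\phi_\delta) \lesssim \delta^{-\ell}$ into the equidistribution error), the regularity parameter $\epsilon$ governing how much of $W$ is excised and thus a loss $\epsilon^{-\Lambda}$ in \cref{thm:EquidistributionOfStarPartialCentralizerOfTori} versus a gain $\epsilon^{?}$ in the excised volume, and the cusp-truncation height all have to be chosen as small powers of $T$ so that every contribution is $O(T^{\dim(W)-\kappa})$ for some $\kappa > 0$; the non-homogeneity of $\mathcal{B}_T$ (it is not a metric ball) means the usual wavefront-type smoothing must be done with a Lipschitz control of $\partial\mathcal{B}_T$ in the $KWA$-coordinates, and one must verify that the boundary has the right dimension and does not concentrate near the cusp. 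Everything else — the verification of hypotheses, the reduction to \cref{thm:GeneralCounting}, the identification of $\ref{c:GeneralCountingSL3}$, and the passage to \cref{thm:MainCounting} via $\mathscr{V}_{3,p} \cong A\backslash\PGL_3(\R)$ with $v_0$ the companion matrix of $p(\lambda)$ — is then routine given the tools already assembled in \cref{sec:Non-divergenceForTranslatesOfPeriodic_A_Orbits,sec:EffectiveEquidistributionOfAMstar-Orbits}.
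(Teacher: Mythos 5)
Your proposal is correct and follows essentially the same route as the paper: the paper's proof of this statement is precisely the reduction to \cref{thm:GeneralCounting} after verifying its hypotheses ($\R$-split, \starQCP via \cref{pro: height at most 3 implies star-QCP,cor: starQCP for Lie types}, and \CEShah via \cref{thm: KM} or \cref{thm: LMW and LMWY} combined with \cref{pro:EShahImpliesCEShah}) for each listed group. Your additional two paragraphs sketching the DRS--EM--EMS passage are not needed for this particular statement — they reproduce the separately proven content of \cref{thm:GeneralCounting} (namely \cref{lem: approximation of orbit count,pro: volume ratio estimate,pro: weak-* convergence with polynomial rate,lem: change of variables on LieW}) — but they accurately reflect how that theorem is established.
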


\begin{proof}
The theorem follows from \cref{thm: LMW and LMWY,pro:EShahImpliesCEShah,pro: height at most 3 implies star-QCP,ex: starQCP,nonex: starQCP,thm:GeneralCounting}.
\end{proof}

\begin{remark}
\Cref{thm:MainCounting} is a special case of the above theorem in light of \cite[Theorem 6.9]{BHC62} (see \cref{subsec:HistoricalBackground}).
\end{remark}

\begin{theorem}
\label{thm:GeneralCounting}
Suppose $\bfG$ is $\R$-split and has the \starQCP, and \CEShah holds. Then, there exist $\constc\label{c:GeneralCounting} > 0$ (depending only on $\|\bigcdot\|_*$) and $\constkappa\label{kappa:GeneralCounting} > 0$ (depending only on $X$) such that for all $T > 0$, we have
\begin{align*}
\mathcal{N}(T) &= \mu_{A\backslash G}(\mathcal{B}_T) + O_{Ax_0, v_0, \|\bigcdot\|_*} \bigl(T^{\dim(W) - \ref{kappa:GeneralCounting}}\bigr) \\
&= \ref{c:GeneralCounting}T^{\dim(W)} + O_{Ax_0, \|\bigcdot\|_*} \bigl(T^{\dim(W) - \ref{kappa:GeneralCounting}}\bigr).
\end{align*}
\end{theorem}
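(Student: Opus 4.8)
The plan is to follow the classical ``mixing/equidistribution implies counting'' mechanism of Duke--Rudnick--Sarnak and Eskin--McMullen, upgraded to the quantitative setting, as sketched in \cref{subsec:OutlineOfTheProofs}. Fix a smooth bump function $\phi_\delta \in C_{\mathrm{c}}^\infty(X)$ supported on a ball of radius $\delta$ around $x_0 = \Gamma$ with $\int_X \phi_\delta \diff\mu_X = 1$ and $\Sob(\phi_\delta) \ll \delta^{-\ell'}$ for a suitable $\ell'$. The first step is the standard unfolding identity: for any $g \in G$ with $\Ad(g^{-1})v_0$ ranging over lattice points, one writes
\begin{align*}
\sum_{\gamma \in \Gamma_A \backslash \Gamma} \phi_\delta(g\gamma x_0) = \int_{A\backslash G} \#(\Ad(\Gamma)v_0 \cap (\text{small region})) \diff(\cdots),
\end{align*}
so that integrating $\phi_\delta$ against the translated periodic orbit measure $\mu_{Ax_0}$ and then over $\mathcal{B}_T$ computes $\mathcal{N}(T)$ up to a boundary error controlled by $\delta$ and the volume of a $\delta$-neighbourhood of $\partial\mathcal{B}_T$. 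Concretely, I would show
\begin{align*}
\mathcal{N}(T) = \int_{\mathcal{B}_T} \int_{Ax_0} \phi_\delta(g^{-1}x) \diff\mu_{Ax_0}(x) \diff\mu_{A\backslash G}(Ag) + O(\text{boundary term}),
\end{align*}
and then estimate the inner integral using \cref{thm:EquidistributionOfStarPartialCentralizerOfToriW} (valid since $\bfG$ is $\R$-split, so $M^\circ$ is trivial and $AM^\circ x_0 = Ax_0$, and the hypotheses \starQCP and \CEShah are assumed).

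The key technical steps, in order, are: (i) decompose $G = K\epregW A$ via a quantitative Iwasawa/Cartan-type decomposition adapted to the regularity cone, and split $\mathcal{B}_T$ into the part where the $W$-component is $\epsilon$-regular and the complementary part near the Zariski-closed subvariety of non-regular unipotents; (ii) on the $\epsilon$-regular part, apply \cref{thm:EquidistributionOfStarPartialCentralizerOfToriW} with $T' = \|\log(w)\|$ to replace the inner integral by $\int_X \phi_\delta \diff\mu_X = 1$ with error $\ll \Sob(\phi_\delta)\epsilon^{-\Lambda}(T')^{-\kappa}$, which forces a careful choice of $\epsilon$ as a small power of $T$ so that the regular region captures almost all of $\mathcal{B}_T$ while the equidistribution error stays a power saving; (iii) bound the contribution of the non-regular/small-$T'$ part by volume estimates --- this is where one shows $\mu_{A\backslash G}$ of the relevant sub-region is $\ll T^{\dim(W) - \kappa_0}$, using that non-regular unipotents lie in finitely many proper linear subspaces of $\LieW$ and that the Jacobian of $\Ad$ on $\mathcal{B}_T$ is polynomially controlled; (iv) compute $\mu_{A\backslash G}(\mathcal{B}_T)$ asymptotically via the explicit change of variables $A\backslash G \cong \Ad(G)v_0$ and homogeneity of the norm ball, extracting $\ref{c:GeneralCounting}T^{\dim(W)}$ with a power-saving remainder from the smoothness of $\partial B_{*,1}^\LieG$ (a Lipschitz boundary suffices). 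Throughout, one optimizes $\delta$ and $\epsilon$ as powers of $T$ to balance the boundary term $\delta \cdot T^{\dim(W)}$, the equidistribution error $\delta^{-\ell'}\epsilon^{-\Lambda}T^{-\kappa} \cdot T^{\dim(W)}$, and the non-regular volume $\epsilon^{\kappa_1}T^{\dim(W)}$.

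I expect the main obstacle to be step (iii): obtaining an \emph{effective} bound on the volume $\mu_{A\backslash G}$ (equivalently the measure on $\mathcal{B}_T$) of the region near the non-regular locus, uniformly in $T$ and with a quantitative gain in $\epsilon$. The non-focusing/non-divergence phenomena that Eskin--Mozes--Shah handled qualitatively must here be made quantitative; one needs that the pushforward of the $G$-invariant measure, written in $K\epregW A$ coordinates, assigns mass $\ll \epsilon^{\kappa_1} T^{\dim(W)}$ to the $\epsilon$-neighbourhood of the union of proper subspaces of $\LieW$ where regularity fails, intersected with $\mathcal{B}_T$. This requires understanding how the radial function $\|\Ad(g^{-1})v_0\|_*$ behaves as $g$ approaches the non-regular directions --- essentially a Lojasiewicz-type lower bound for the relevant polynomial map --- combined with the coarea formula. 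A secondary difficulty is that $\mathcal{B}_T$ is not a Riemannian ball, so the boundary estimate in step (i) and the asymptotic in step (iv) both demand a direct analysis of the norm ball geometry rather than an appeal to standard wavefront lemmas; but since $\|\bigcdot\|_*$ is an arbitrary norm, its unit ball is convex with Lipschitz boundary, and the $\delta$-neighbourhood of $\partial\mathcal{B}_T$ has measure $\ll \delta T^{\dim(W)}$ by a Minkowski-content argument, so this part, while technical, should be routine.
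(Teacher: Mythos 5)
Your proposal is correct and follows essentially the same route as the paper: unfolding to $\langle F_T,\phi_\delta\rangle$, sandwiching via a well-roundedness/volume-ratio estimate, passing to Iwasawa (and spherical) coordinates on $\LieW$, applying \cref{thm:EquidistributionOfStarPartialCentralizerOfToriW} on the $\epsilon$-regular cone with $\epsilon$ and $\delta$ chosen as small powers of $T$, and discarding the neighbourhood of the non-regular locus by a volume bound. The obstacle you flag in step (iii) turns out to be elementary in the paper: after the change of variables by the unipotent polynomial bijection $\Psi_k$ (Jacobian $1$), the simple-root components of $\mathcal{B}_{k,T}^{\LieW}$ are confined to balls of radius $\asymp T$, so removing the $T^{1-\kappa}$-neighbourhood of each $\ker(\pi_{\LieG_\alpha})$, $\alpha\in\Pi$, costs only a factor $T^{-\kappa}$ — no Łojasiewicz inequality or coarea argument is needed.
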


\begin{remark}
Recall that if $\bfG$ is $\R$-split and has the \starQCP, then by \cref{cor: quasi-split and starQCP implies starCP} it has the stronger \starCP; and by \cref{pro: height at most 3 implies star-QCP}, we must have $\height(\Phi) \leq 3$.
\end{remark}

In the rest of this section, we assume that $\bfG$ is $\R$-split. Let $T > 0$. Define the function $\widetilde{F}_T: G \to \R$ by
\begin{align*}
\widetilde{F}_T(g) = \sum_{\gamma \in \Gamma/\Gamma_A} \chi_{\mathcal{B}_T}\bigl(A (g\gamma)^{-1}\bigr) \qquad \text{for all $g \in G$}.
\end{align*}
Then, $\widetilde{F}_T$ is right $\Gamma$-invariant and hence descends to a function $F_T: X \to \R$. We calculate that for any $\phi \in L^\infty(X, \R)$, we have (cf. \cite[Eq. (2.4)]{DRS93})
\begin{align}
\label{eqn: Inner Product Formula for Counting Function}
\begin{aligned}
\langle F_T, \phi\rangle := &\int_X F_T \phi \diff\mu_X = \int_X \left(\sum_{\gamma \in \Gamma/\Gamma_A} \chi_{\mathcal{B}_T}\bigl(A (g\gamma)^{-1}\bigr)\right) \phi(g\Gamma) \diff\mu_X(g\Gamma) \\
&= \int_{G/\Gamma_A} \chi_{\mathcal{B}_T}\bigl(A g^{-1}\bigr) \phi(g\Gamma) \diff\mu_{G/\Gamma_A}(g\Gamma_A) \\
&= \int_{A\backslash G} \chi_{\mathcal{B}_T}(A g) \int_{A/\Gamma_A}\phi(g^{-1}ax_0) \diff\mu_{A/\Gamma_A}(a\Gamma_A) \diff\mu_{A\backslash G}(Ag).
\end{aligned}
\end{align}
For $\phi = \chi_X$, we find that $\|F_T\|_1 = \mu_{A\backslash G}(\mathcal{B}_T)$. In light of this, we normalize
\begin{align*}
\widehat{F}_T := \mu_{A\backslash G}(\mathcal{B}_T)^{-1}F_T.
\end{align*}
Observe that the first asymptotic formula in \cref{thm:GeneralCounting} is proved if we show $\widehat{F}_T(x_0) \to 1$ as $T \to +\infty$ with the correct error term. The second asymptotic formula will be a simple consequence of \cref{lem: change of variables on LieW}.

We will approximate $\widehat{F}_T(x_0) = \int_X \widehat{F}_T \diff\delta_{x_0}$ by approximating the Dirac measure $\delta_{x_0}$ whose atom is at $x_0$. Recall that the Sobolev norm $\Sob$ is of order $\ell$. There exists a family of smooth bump functions $\{\phi_\delta\}_{\delta \in (0, \inj_X(x_0))} \subset C_{\mathrm{c}}^\infty(X)$ such that:
\begin{itemize}
\item $\phi_\delta$ is supported on $B_\delta^X(x_0)$;
\item $\int_X \phi_\delta \diff\mu_X = 1$;
\item $\Sob(\phi_\delta) \ll \delta^{-(\ell + \dim(G)/2)}$;
\end{itemize}
for all $\delta \in (0, \inj_X(x_0))$.
Now, we need the following quick estimate. With respect to the norm $\|\bigcdot\|_*$, we define the operator norm of $\Ad(g)$ by
\begin{align*}
\|\Ad(g)\|_{*,\mathrm{op}} := \sup_{v \in \tilde{\LieG}, \|v\|_* = 1} \|\Ad(g)v\|_*.
\end{align*}
Let $\epsilon_G > 0$ such that $\overline{B_{\epsilon_G}^G} \subset \exp(\LieG)$. Simply by compactness of $\overline{B_{\epsilon_G}^G}$, smoothness of $\Ad$, and $\|\Ad(e)\|_{*,\mathrm{op}} = 1$, we obtain
\begin{align}
\label{eqn: representation operator norm estimate}
|\|\Ad(g)\|_{*,\mathrm{op}} - 1| \leq \ref{Lambda: operator norm growth wrt Remannian distance}d(e, g) \qquad \text{for all $g \in B_{\epsilon_G}^G$}.
\end{align}
for some constant $\constLambda\label{Lambda: operator norm growth wrt Remannian distance} > 0$ depending only on $G$. Define the function $\eta: \R_{>0} \to \R$ by $\eta(\delta) = 1 + \ref{Lambda: operator norm growth wrt Remannian distance}\delta$ for all $\delta > 0$.

\begin{lemma}
\label{lem: approximation of orbit count}
For all $\delta \in (0, \min\{\epsilon_G, \inj_X(x_0)\})$ and $T > 0$, we have
\begin{align*}
\frac{\mu_{A\backslash G}(\mathcal{B}_{\eta(\delta)^{-1} T})}{\mu_{A\backslash G}(\mathcal{B}_T)} \bigl\langle \widehat{F}_{\eta(\delta)^{-1} T}, \phi_\delta\bigr\rangle \leq \widehat{F}_T(x_0) \leq \frac{\mu_{A\backslash G}(\mathcal{B}_{\eta(\delta) T})}{\mu_{A\backslash G}(\mathcal{B}_T)} \bigl\langle\widehat{F}_{\eta(\delta) T}, \phi_\delta\bigr\rangle.
\end{align*}
\end{lemma}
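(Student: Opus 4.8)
\textbf{Proof plan for \cref{lem: approximation of orbit count}.}
The strategy is the standard ``thickening'' argument going back to Duke--Rudnick--Sarnak, made quantitative via the operator norm estimate \cref{eqn: representation operator norm estimate}. First I would unwind the definitions: recall $\widehat{F}_T(x_0) = \int_X \widehat{F}_T \, \delta_{x_0}$ with the atom at $x_0 = \Gamma$, and that $\phi_\delta$ is supported on $B_\delta^X(x_0)$ with $\int_X \phi_\delta \diff\mu_X = 1$. The key point is that the counting function $F_T$ (before normalization), being a sum over $\Gamma/\Gamma_A$ of indicator functions of $\mathcal{B}_T$ pulled back, is \emph{monotone in $T$}: $\mathcal{B}_{T_1} \subset \mathcal{B}_{T_2}$ whenever $T_1 \leq T_2$, so $F_{T_1} \leq F_{T_2}$ pointwise on $X$. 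Thus, to compare $F_T(x_0)$ with the smeared-out quantity $\langle F_{T'}, \phi_\delta\rangle = \int_X F_{T'} \phi_\delta \diff\mu_X$, I would show that for $g$ with $d(e, g) \leq \delta$ (so that $g\Gamma \in B_\delta^X(x_0)$), we have the sandwich $F_{\eta(\delta)^{-1}T}(g\Gamma) \leq F_T(x_0) \leq F_{\eta(\delta) T}(g\Gamma)$.

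The heart of the matter is this pointwise sandwich, and it reduces to a statement about how the ball $B_{*,T}^{\LieG}$ transforms under the adjoint action. Concretely, $F_T(g\Gamma) = \#\{\gamma \in \Gamma/\Gamma_A : \Ad(\gamma^{-1}g^{-1})v_0 \in B_{*,T}^{\LieG}\}$, and $F_T(x_0)$ is the same with $g = e$. Writing $\Ad(\gamma^{-1}g^{-1})v_0 = \Ad(g^{-1})\bigl(\Ad(\gamma^{-1})\ldots\bigr)$ is not quite the right grouping; rather, I would use $\Ad(\gamma^{-1}g^{-1}) = \Ad(\gamma^{-1})\Ad(g^{-1})$ and instead compare the two lattice-point sets by noting that $\|\Ad(g^{-1})v\|_* $ and $\|v\|_*$ differ by a factor in $[\eta(\delta)^{-1}, \eta(\delta)]$ whenever $d(e, g) \leq \delta < \epsilon_G$, by \cref{eqn: representation operator norm estimate} applied to both $g^{-1}$ and $g$ (since $d$ is right-invariant, $d(e, g^{-1}) = d(e, g)$, so the same estimate controls $\|\Ad(g^{-1})\|_{*,\mathrm{op}}$). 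Hence $\Ad(\delta)$-close perturbation of the group element only rescales the defining ball by the multiplicative factor $\eta(\delta)^{\pm 1}$, giving the containments of lattice-point sets and therefore the pointwise inequalities $F_{\eta(\delta)^{-1}T}(g\Gamma) \leq F_T(x_0) \leq F_{\eta(\delta)T}(g\Gamma)$ for all $g\Gamma \in B_\delta^X(x_0)$.

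Given the pointwise sandwich, I integrate against $\phi_\delta$, which is a probability density supported on $B_\delta^X(x_0)$: this yields $\langle F_{\eta(\delta)^{-1}T}, \phi_\delta\rangle \leq F_T(x_0) \leq \langle F_{\eta(\delta)T}, \phi_\delta\rangle$. Finally I divide through by $\mu_{A\backslash G}(\mathcal{B}_T)$ and rewrite $F_{T'} = \mu_{A\backslash G}(\mathcal{B}_{T'}) \widehat{F}_{T'}$ to obtain the claimed chain of inequalities
\[
\frac{\mu_{A\backslash G}(\mathcal{B}_{\eta(\delta)^{-1}T})}{\mu_{A\backslash G}(\mathcal{B}_T)} \bigl\langle \widehat{F}_{\eta(\delta)^{-1}T}, \phi_\delta\bigr\rangle \leq \widehat{F}_T(x_0) \leq \frac{\mu_{A\backslash G}(\mathcal{B}_{\eta(\delta)T})}{\mu_{A\backslash G}(\mathcal{B}_T)} \bigl\langle \widehat{F}_{\eta(\delta)T}, \phi_\delta\bigr\rangle.
\]
The only subtlety I anticipate is a bookkeeping one: ensuring $\delta$ is small enough that $B_\delta^G \subset B_{\epsilon_G}^G \cap \{d(e,\cdot) < \inj_X(x_0)\}$ so that both \cref{eqn: representation operator norm estimate} applies and $\phi_\delta$ is genuinely supported near $x_0$; this is harmless since the lemma is only ever used for small $\delta$. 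There is no deep obstacle here --- the monotonicity of $F_T$ in $T$ together with the Lipschitz-type control on $\|\Ad(g)\|_{*,\mathrm{op}}$ near the identity does all the work.
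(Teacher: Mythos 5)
Your proposal is correct and follows essentially the same route as the paper: the operator norm estimate \cref{eqn: representation operator norm estimate} gives the containments $\mathcal{B}_{\eta(\delta)^{-1}T} \subset \Ad\bigl(\bigl(B_\delta^G\bigr)^{-1}\bigr)\cdot\mathcal{B}_T \subset \mathcal{B}_{\eta(\delta)T}$ (equivalently, your pointwise sandwich $F_{\eta(\delta)^{-1}T}(g\Gamma) \leq F_T(x_0) \leq F_{\eta(\delta)T}(g\Gamma)$ for $g \in B_\delta^G$), and integrating against the probability density $\phi_\delta$ and normalizing yields the claim. Your bookkeeping caveat about requiring $\delta < \epsilon_G$ and $\delta < \inj_X(x_0)$ is the correct reading of the implicit restriction on $\delta$ in the statement.
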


\begin{proof}
Using definitions and \cref{eqn: representation operator norm estimate}, we have the relation
\begin{align*}
\mathcal{B}_{\eta(\delta)^{-1}T} \subset \Ad\bigl(\bigl(B_\delta^G\bigr)^{-1}\bigr) \cdot \mathcal{B}_T \subset \mathcal{B}_{\eta(\delta)T} \qquad \text{for all $\delta \in (0, \epsilon_G)$ and $T > 0$}.
\end{align*}
Note that the first containment follows from the second containment for $\eta(\delta)^{-1}T$ in place of $T$ and $\bigl(B_\delta^G\bigr)^{-1} = B_\delta^G$.
Therefore, a straightforward calculation gives
\begin{align*}
\bigl\langle F_{\eta(\delta)^{-1} T}, \phi_\delta\bigr\rangle \leq F_T(x_0) \leq \bigl\langle F_{\eta(\delta) T}, \phi_\delta\bigr\rangle
\end{align*}
for all $\delta \in (0, \min\{\epsilon_G, \inj_X(x_0)\})$ and $T > 0$, which gives the lemma.
\end{proof}

We treat the factors of the form $\frac{\mu_{A\backslash G}(\mathcal{B}_{\eta(\delta) T})}{\mu_{A\backslash G}(\mathcal{B}_T)}$ for any $T > 0$, i.e., the volume ratio, and $\bigl\langle\widehat{F}_T, \phi_\delta\bigr\rangle$ for any $T > 0$ in \cref{lem: approximation of orbit count} separately.

For the first factor in \cref{lem: approximation of orbit count}, we record a stronger version of \cite[Proposition 5.4]{EMS96} which can be extracted directly from its proof in \cite[Appendix A]{EMS96}. Indeed, it is clear that in loc. cit. (keeping the same notation, in particular, the functions $g$, $\beta$, and $b$), we may take $g(\beta) = 1 + (C')^{-1}\beta$ for all $\beta \in (0, c_3)$, for some $C' > 0$. Solving $\kappa = g(\beta)$ gives $\beta(\kappa) = C'(\kappa - 1)$ for all $\kappa \in (1, 1 + sc_3)$. Finally, we get $b(\kappa) = e^{C\beta(\kappa)} = 1 + O(\kappa - 1)$ for all $\kappa \in (1, 1 + sc_3)$.

\begin{proposition}
\label{pro: volume ratio estimate}
There exists $\eta_0 > 0$ such that
\begin{align*}
\frac{\mu_{A\backslash G}(\mathcal{B}_{\eta T})}{\mu_{A\backslash G}(\mathcal{B}_T)} = 1 + O_{Ax_0, v_0, \|\bigcdot\|_*}(\eta - 1) \qquad \text{for all $\eta \in (1, \eta_0)$ and $T > 0$}.
\end{align*}
\end{proposition}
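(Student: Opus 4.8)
The statement is a volume-comparison estimate for the ``shapes'' $\mathcal{B}_T \subset A\backslash G$, and the claim is simply a quotation and slight sharpening of Proposition~5.4 of Eskin--Mozes--Shah. The plan is therefore to reduce it to the work done in the appendix of \cite{EMS96}. First I would recall the setup there: the volume $\mu_{A\backslash G}(\mathcal{B}_T)$ is computed via a Cartan-type decomposition adapted to the pair $(A, G)$, writing $\mu_{A\backslash G}(\mathcal{B}_T) = \int \cdots$ as an integral over $W$ (or a Weyl-chamber-type region) of a density that is (up to lower order terms) polynomial of degree $\dim(W)$ in the relevant radial parameter, with the defining inequality $\Ad(g^{-1})v_0 \in B_{*,T}^{\LieG}$ translating into a homogeneous (degree-one in $T$) constraint on that parameter since $v_0$ is regular and the top-degree behavior of $\Ad$ on $W$ is governed by the highest root. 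The ratio $\mu_{A\backslash G}(\mathcal{B}_{\eta T})/\mu_{A\backslash G}(\mathcal{B}_T)$ is then, by the substitution scaling the radial variable, equal to $\eta^{\dim(W)}$ times the ratio of two integrals of the same density over nested regions differing by the dilation factor $\eta$, plus error terms coming from the non-homogeneous (lower-order) part of the density.

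The key steps, in order, are: (i) import the exact formula from \cite[Appendix A]{EMS96} expressing $\mu_{A\backslash G}(\mathcal{B}_T)$ in the form $T^{\dim(W)}$ times a bounded, continuous, positive function of a scaling-invariant set of parameters, plus an error of order $T^{\dim(W) - 1}$ (or a power-saving term) uniformly for large $T$; (ii) observe that in loc.\ cit.\ the auxiliary comparison function --- called $g(\beta)$ there --- can be taken to be $g(\beta) = 1 + (C')^{-1}\beta$ on a small interval $(0, c_3)$, since the relevant regions depend smoothly and with bounded derivative on the dilation parameter near $1$; (iii) invert this: setting $\kappa = g(\beta)$ gives $\beta(\kappa) = C'(\kappa - 1)$, valid for $\kappa$ in a small interval $(1, 1 + sc_3) =: (1, \eta_0)$; (iv) conclude that the output bound $b(\kappa) = e^{C\beta(\kappa)}$ of loc.\ cit.\ becomes $b(\kappa) = 1 + O(\kappa - 1)$ on that interval, which is exactly the assertion with $\eta_0 := 1 + sc_3$ and implicit constant depending on $Ax_0$, $v_0$, and $\|\bigcdot\|_*$ through $C$, $C'$, $s$, $c_3$. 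I would present this as: ``This is a restatement of \cite[Proposition~5.4]{EMS96} extracted directly from its proof in \cite[Appendix~A]{EMS96}; in the notation there one may take $g(\beta) = 1 + (C')^{-1}\beta$, hence $\beta(\kappa) = C'(\kappa - 1)$ and $b(\kappa) = e^{C\beta(\kappa)} = 1 + O(\kappa - 1)$.''

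The only genuine content beyond bookkeeping is checking that the comparison function $g$ in \cite{EMS96} really can be taken affine near $\beta = 0$ with a controlled slope; this amounts to the statement that the density appearing in the volume formula is Lipschitz in the dilation parameter near $1$, uniformly, which in turn follows because that density is built from the Jacobian of a fixed smooth change of variables and from the indicator of a region whose boundary moves Lipschitz-continuously under dilation of $T$. I expect \textbf{this regularity-of-the-density step to be the main obstacle} --- not because it is deep, but because it requires carefully re-reading the appendix of \cite{EMS96} to confirm that their estimates are uniform in the base point data $Ax_0, v_0$ in the way needed here, and that no hidden dependence on $T$ creeps into the Lipschitz constant. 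Everything else --- the algebraic manipulation $\kappa = g(\beta) \Rightarrow \beta(\kappa) = C'(\kappa-1) \Rightarrow b(\kappa) = 1 + O(\kappa - 1)$ --- is a one-line computation once the affine form of $g$ is granted. Since the paper explicitly frames this as a known result being quoted in sharpened form, I would keep the proof to a short paragraph of this kind rather than reproducing the appendix.
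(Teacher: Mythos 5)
Your proposal is correct and is essentially identical to the paper's argument: the paper likewise extracts the statement from the proof of \cite[Proposition~5.4]{EMS96} in \cite[Appendix~A]{EMS96} by taking $g(\beta) = 1 + (C')^{-1}\beta$ on $(0, c_3)$, inverting to $\beta(\kappa) = C'(\kappa-1)$, and concluding $b(\kappa) = e^{C\beta(\kappa)} = 1 + O(\kappa-1)$ for $\kappa \in (1, 1+sc_3)$. Your additional remarks on verifying the affine form of $g$ are a reasonable elaboration of what the paper dismisses as clear, but the route is the same.
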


We now turn to the second factor in \cref{lem: approximation of orbit count}. We first need some notation and tools to deal with integrals over $\mathcal{B}_T$ for any $T > 0$. Define the diffeomorphism
\begin{align*}
\mathcal{I}: W \times K &\to A\backslash G \\
(w, k) &\mapsto Awk
\end{align*}
coming from the Iwasawa decomposition of $G$. For all $k \in K$ and $T > 0$, define the measurable subsets
\begin{align}
\label{eqn: pullback balls}
\begin{aligned}
\mathcal{B}_{k, T}^W &:= \{w \in W: \|\Ad(k^{-1})\Ad(w^{-1})v_0\|_* < T\} \subset W, \\
\mathcal{B}_{k, T}^{\LieW} &:= \{\bfnu \in \LieW: \|\Ad(k^{-1})\Ad(w_{-\bfnu})v_0\|_* < T\} \subset \LieW, \\
\mathcal{C}_{k, T}^{\LieW} &:= \{\bfnu \in \LieW: \|\bfnu + \Ad(k^{-1})v_0\|_* < T\} \subset \LieW,
\end{aligned}
\end{align}
which satisfy $\mathcal{B}_{k, T}^W = \exp\bigl(\mathcal{B}_{k, T}^{\LieW}\bigr)$. Then, we may write
\begin{align*}
\mathcal{B}_T &= \mathcal{I}(\{(w, k) \in W \times K: \|\Ad(k^{-1})\Ad(w^{-1})v_0\|_* < T\}) \\
&= \mathcal{I}\bigl(\bigl\{(w, k) \in W \times K: w \in \mathcal{B}_{k, T}^W\bigr\}\bigr)
\end{align*}
for all $T > 0$.

Let us discuss the relationship between the subsets from \cref{eqn: pullback balls}. Since $W$ acts simply transitively on $\Ad(W)v_0$ from the right via the inverse map and $\Ad$, the map $W \to \Ad(W)v_0$ defined by $w \mapsto \Ad(w^{-1})v_0$ is a diffeomorphism. Consequently, taking the differential at $e \in W$, the induced linear map $\LieW \to \ad(\LieW)v_0$ defined by $\bfnu \mapsto \ad(-\bfnu)v_0$ is a linear isomorphism. Thus, we have the following commutative diagram where the vertical arrows are linear isomorphisms/diffeomorphisms onto their images:
\begin{equation}
\label{eqn: commutative diagram images in V}
\begin{tikzcd}
\LieW & W & {} \\
{\ad(\LieW)} & {\Ad(W)} \\
{\LieW = \ad(\LieW)v_0} & {\Ad(W)v_0} = v_0 + \LieW
\arrow["\exp", from=1-1, to=1-2]
\arrow["{\ad \circ \inv}", from=1-1, to=2-1]
\arrow["\Ad \circ \Inv", from=1-2, to=2-2]
\arrow["\exp", from=2-1, to=2-2]
\arrow[from=2-1, to=3-1]
\arrow[from=2-2, to=3-2]
\arrow[from=3-1, to=3-2]
\end{tikzcd}
\end{equation}
where $\inv$ denotes the negation map on $\LieW$ and $\Inv$ denotes the inverse map on $W$.
Define
\begin{align*}
\Phi^+_{j, l} &:= \bigcup\biggl\{\Xi \subset \Phi^+: \#\Xi = j \text{ and }  \sum_{\alpha \in \Xi} \height(\alpha) = l\biggr\}
\end{align*}
for all $0 \leq j \leq \height(\Phi)$ and $j \leq l \leq \height(\Phi)$. In particular,
\begin{itemize}
\item $\Phi^+_{0, l} = \varnothing$ for all $0 \leq l \leq \height(\Phi)$;
\item $\Phi^+_{1, l} = \{\alpha \in \Phi^+: \height(\alpha) = l\}$ for all $1 \leq l \leq \height(\Phi)$;
\item $\Phi^+_{j, l} \subset \{\alpha \in \Phi^+: \height(\alpha) < l\}$ for all $2 \leq j \leq \height(\Phi)$ and $j \leq l \leq \height(\Phi)$.
\end{itemize}
For any subset $\Xi \subset \Phi^+$, denote by $\bfnu_\Xi$ the vector indeterminate corresponding to $\bigoplus_{\alpha \in \Xi}\LieG_\alpha$, and denote $\bfnu := \bfnu_{\Phi^+}$. We calculate that
\begin{align*}
\Ad(w_{-\bfnu})v_0 = \exp(\ad(-\bfnu))v_0 &= \sum_{j = 0}^{\height(\Phi)} \frac{1}{j!} \ad(-\bfnu)^j v_0 \\
&= \sum_{j = 0}^{\height(\Phi)} \sum_{l = j}^{\height(\Phi)}  P_{j, l} \\
&\in v_0 + \LieG[\bfnu]
\end{align*}
decomposed into polynomials $P_{j, l} \in \tilde{\LieG}^\natural(l)\Bigl[\bfnu_{\Phi^+_{j, l}}\Bigr]$ that are homogeneous of degree $j$, where $\tilde{\LieG}^\natural(l) := \favLieG(l)$ if $l \neq 0$, and $\tilde{\LieG}^\natural(l) = \tilde{\LieG}^\natural(0) := Z(\tilde{\LieG}) \oplus \LieA$ if $l = 0$. In particular,
\begin{itemize}
\item $P_{0, 0} = v_0$, and $P_{0, l} = 0$ for all $1 \leq l \leq \height(\Phi)$;
\item $\sum_{l = 1}^{\height(\Phi)} P_{1, l} = \ad(-\bfnu)v_0$.
\end{itemize}
Define the injective polynomial map $\Psi: \LieW \to \LieW$ to be the composition of the maps in the commutative diagram in \cref{eqn: commutative diagram images in V} from the top left to the bottom right and then a translation by $-v_0$, i.e.,
\begin{align*}
\Psi(\bfnu) := \Ad(w_{-\bfnu})v_0 - v_0 = \ad(-\bfnu)v_0 + \sum_{j = 2}^{\height(\Phi)} \sum_{l = j}^{\height(\Phi)} P_{j, l}.
\end{align*}
In fact, $\Psi$ is surjective by the following inductive argument and hence a polynomial bijection. Recall that the $\LieG$-component of $v_0$ must be \emph{regular} (and contained in $\LieA(\Q)$); in particular, $\alpha(v_0) \neq 0$ for all $\alpha \in \Pi$. Let $\bfnu' = \sum_{\alpha \in \Phi^+} \bfnu_\alpha' \in \LieW$. Now, we recursively define
\begin{itemize}
\item $\bfnu_\alpha = \alpha(v_0)^{-1}\bfnu_\alpha' \in \LieG_\alpha$ so that $\ad(-\bfnu_\alpha)v_0 = \bfnu_\alpha'$, for all $\alpha \in \Phi^+$ with $\height(\alpha) = 1$, i.e., $\alpha \in \Pi$;
\item having defined $\bfnu_{\alpha'}$ for all $\alpha' \in \Phi^+$ with $1 \leq \height(\alpha') \leq l$ for some $1 \leq l \leq \height(\Phi) - 1$, we define $\bfnu_\alpha = \alpha(v_0)^{-1}\Bigl(\bfnu_\alpha' - \sum_{j = 2}^{\height(\Phi)} P_{j, l}\Bigr)$ so that $\ad(-\bfnu_\alpha)v_0 = \bfnu_\alpha'$, for all $\alpha \in \Phi^+$ with $\height(\alpha) = l + 1$.
\end{itemize}
Then, $\bfnu = \sum_{\alpha \in \Phi^+} \bfnu_\alpha \in \LieW$ satisfies $\Psi(\bfnu) = \bfnu'$ as desired.

For all $k \in K$, since $\Ad(k^{-1})$ is an isomorphism, we obtain a polynomial bijection $\Psi_k = \Ad(k^{-1}) \circ \Psi$. From definitions, we see that
\begin{align*}
\mathcal{C}_{k, T}^{\LieW} = \Psi_k\bigl(\mathcal{B}_{k, T}^{\LieW}\bigr) \qquad \text{for all $k \in K$ and $T > 0$}.
\end{align*}
We have the following useful facts regarding the subsets from \cref{eqn: pullback balls} including a change of variables formula.

\begin{lemma}
\label{lem: change of variables on LieW}
Let $k \in K$ and $T > 0$. The following hold.
\begin{enumerate}
\item We have the containments
\begin{align*}
B_{*, T - \|\Ad(k^{-1})v_0\|_*}^{\LieW} \subset \mathcal{C}_{k, T}^{\LieW} \subset B_{*, T + \|\Ad(k^{-1})v_0\|_*}^{\LieW}.
\end{align*}
\item There exists $\constc\label{c: change of variables on LieW} > 0$ (depending only on $\|\bigcdot\|_*$) such that
\begin{align*}
\mu_{\LieW}\bigl(\mathcal{C}_{k, T}^{\LieW}\bigr) = \ref{c: change of variables on LieW}T^{\dim(\LieW)} + O_{v_0, \|\bigcdot\|_*}\bigl(T^{\dim(\LieW) - 1}\bigr).
\end{align*}
\item For all measurable functions $\phi: \LieW \to \R$, we have
\begin{align*}
\int_{\mathcal{B}_{k, T}^{\LieW}} \phi \diff\mu_\LieW = \int_{\mathcal{C}_{k, T}^{\LieW}} \phi \circ \Psi_k^{-1} \diff\mu_{\LieW}.
\end{align*}
\end{enumerate}
\end{lemma}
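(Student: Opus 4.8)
The plan is to establish the three parts in the stated order, each of which reduces to something routine. First I would prove the containments in~(1) by a bare triangle inequality; then deduce the volume asymptotic~(2) by sandwiching $\mathcal{C}_{k, T}^{\LieW}$ between two $\|\bigcdot\|_*$-balls in $\LieW$ and using homogeneity of the norm; and finally prove the change-of-variables identity~(3) by invoking the ordinary change-of-variables theorem for the polynomial bijection $\Psi_k$, the only substantive point being that $\det D\Psi_k$ is a nonzero \emph{constant}. I expect step~(3) to be the only place where a moment's thought is needed, and even there the work is just in checking this constancy and matching it to the normalization of $\mu_\LieW$; parts~(1) and~(2) are essentially immediate.

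For~(1): by definition $\mathcal{C}_{k, T}^{\LieW}$ consists of those $\bfnu \in \LieW$ lying in the open $\|\bigcdot\|_*$-ball of radius $T$ about $-\Ad(k^{-1})v_0 \in \LieG$. If $\|\bfnu\|_* < T - \|\Ad(k^{-1})v_0\|_*$ then $\|\bfnu + \Ad(k^{-1})v_0\|_* \leq \|\bfnu\|_* + \|\Ad(k^{-1})v_0\|_* < T$, giving the left containment, and if $\|\bfnu + \Ad(k^{-1})v_0\|_* < T$ then $\|\bfnu\|_* \leq \|\bfnu + \Ad(k^{-1})v_0\|_* + \|\Ad(k^{-1})v_0\|_* < T + \|\Ad(k^{-1})v_0\|_*$, giving the right containment. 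For~(2): since $K$ is compact and $\Ad$ is continuous, $C_0 := \sup_{k \in K}\|\Ad(k^{-1})v_0\|_* < \infty$, so~(1) sandwiches $\mathcal{C}_{k, T}^{\LieW}$ between $B_{*, T - C_0}^{\LieW}$ and $B_{*, T + C_0}^{\LieW}$ for $T > C_0$. By homogeneity of $\|\bigcdot\|_*$ and translation-invariance of $\mu_\LieW$, one has $\mu_\LieW\bigl(B_{*, r}^{\LieW}\bigr) = \ref{c: change of variables on LieW}\, r^{\dim(\LieW)}$ with $\ref{c: change of variables on LieW} := \mu_\LieW\bigl(B_{*, 1}^{\LieW}\bigr)$ depending only on $\|\bigcdot\|_*$ (the subspace $\LieW$ being fixed throughout); expanding $(T \pm C_0)^{\dim(\LieW)} = T^{\dim(\LieW)} + O_{v_0, \|\bigcdot\|_*}\bigl(T^{\dim(\LieW) - 1}\bigr)$ yields the estimate for $T > C_0$, and it holds trivially for $T \leq C_0$ after enlarging the implicit constant.

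For~(3): recall from the discussion preceding the lemma that $\Psi_k$ is a polynomial bijection of $\LieW$ with $\Psi_k\bigl(\mathcal{B}_{k, T}^{\LieW}\bigr) = \mathcal{C}_{k, T}^{\LieW}$, so substituting $\bfnu = \Psi_k^{-1}(\bfnu')$ rewrites $\int_{\mathcal{B}_{k, T}^{\LieW}} \phi \diff\mu_\LieW$ as $\int_{\mathcal{C}_{k, T}^{\LieW}} \bigl(\phi \circ \Psi_k^{-1}\bigr)\,\bigl|\det D\Psi_k^{-1}\bigr|\diff\mu_\LieW$, and it remains to see the Jacobian factor is $1$. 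The key observation is that $\det D\Psi_k$ is a nowhere-vanishing polynomial on $\LieW$ whose reciprocal $\bigl(\det D\Psi_k^{-1}\bigr)\circ\Psi_k$ is again a polynomial (as $\Psi_k^{-1}$ is polynomial), which forces $\det D\Psi_k$ to be a nonzero constant; the higher-degree terms of $\Psi$ are strictly height-raising, so only the linear part $\bfnu \mapsto \ad(-\bfnu)v_0$ (which on each $\LieG_\alpha$ is multiplication by $\alpha(v_0)$) contributes, and the resulting constant is absorbed into the normalization of $\mu_\LieW$ in force. This is the only place where the \emph{regularity} of $v_0$ enters, namely $\alpha(v_0) \neq 0$ for all $\alpha \in \Pi$ — which is exactly what makes $\Psi_k$ bijective in the first place — so the one genuinely delicate bookkeeping point of the whole lemma is this constancy of the Jacobian. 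Finally, the second asymptotic in \cref{thm:GeneralCounting} drops out afterwards by feeding~(2) into the Iwasawa-fibered description of $\mathcal{B}_T$ together with the change of variables~(3), requiring no new idea.
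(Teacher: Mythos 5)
Parts~(1) and~(2) are correct and are essentially the paper's own argument: the triangle inequality for~(1), and for~(2) the sandwich between $B_{*, T - C_0}^{\LieW}$ and $B_{*, T + C_0}^{\LieW}$ with $C_0 = \sup_{k \in K}\|\Ad(k^{-1})v_0\|_*$ together with homogeneity of $\|\bigcdot\|_*$.

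Part~(3) has a genuine gap at the final step. Your computation up to that point is sound: $\det \diff\Psi_k$ is a nonzero constant, and evaluating at $\bfnu = 0$, where the derivative of $\Psi$ is the linear map $\bfnu \mapsto \ad(-\bfnu)v_0 = \sum_{\alpha \in \Phi^+}\alpha(v_0)\bfnu_\alpha$, identifies that constant (in absolute value) as $\prod_{\alpha \in \Phi^+}|\alpha(v_0)|^{\dim(\LieG_\alpha)}$, the factor $\Ad(k^{-1})$ being a $B_\theta$-isometry. But the identity asserted in~(3) is between two integrals against the \emph{same} fixed measure $\mu_{\LieW}$, so there is no ``normalization in force'' into which a Jacobian constant can be absorbed; the identity as stated requires the Jacobian to literally equal $1$. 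What your argument actually yields is
\begin{align*}
\int_{\mathcal{B}_{k, T}^{\LieW}} \phi \diff\mu_\LieW = \Bigl(\prod_{\alpha \in \Phi^+} |\alpha(v_0)|^{\dim(\LieG_\alpha)}\Bigr)^{-1}\int_{\mathcal{C}_{k, T}^{\LieW}} \phi \circ \Psi_k^{-1} \diff\mu_{\LieW},
\end{align*}
and nothing in the setup forces the prefactor to be $1$: $v_0$ is only assumed regular, i.e., $\alpha(v_0) \neq 0$, not $\alpha(v_0) = \pm 1$.

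For comparison, the paper's proof of~(3) asserts that $[\diff\Psi_k(\bfnu)]_\beta$ is \emph{unipotent} upper triangular in a height-ordered basis, i.e., has $1$'s on the diagonal, whence Jacobian $1$. Your (correct) identification of the diagonal blocks as $\alpha(v_0)\,\mathrm{Id}_{\LieG_\alpha}$ is in direct conflict with that assertion, and the conflict should be confronted rather than waved away: either one proves $\prod_{\alpha}|\alpha(v_0)|^{\dim(\LieG_\alpha)} = 1$ (there is no reason it should hold), or one carries the constant explicitly, which merely rescales the constant in part~(2) and the leading constants in the counting theorem and is harmless downstream. As written, however, ``absorbed into the normalization of $\mu_\LieW$'' does not prove the equality claimed in~(3).
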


\begin{proof}
Let $k \in K$ and $T > 0$. For brevity, denote $v_k := \Ad(k^{-1})v_0$. The containments of property~(1) follow from definitions and the triangle inequalities $\|\bfnu + v_k\|_* \leq \|\bfnu\|_* + \|v_k\|_*$ and $\|\bfnu\|_* -  \|v_k\|_* \leq \|\bfnu + v_k\|_*$ respectively, for all $\bfnu \in \mathcal{C}_{k, T}^{\LieW}$. Now we prove property~(2). We simply take $\ref{c: change of variables on LieW} > 0$ such that $\mu_{\LieW}\bigl(B_{*, T}^{\LieW}\bigr) = \ref{c: change of variables on LieW}T^{\dim(\LieW)}$. We have
\begin{gather*}
\ref{c: change of variables on LieW}(T - \|v_k\|_*)^{\dim(\LieW)} \leq \mu_{\LieW}\bigl(\mathcal{C}_{k, T}^{\LieW}\bigr) \leq \ref{c: change of variables on LieW}(T + \|v_k\|_*)^{\dim(\LieW)}, \\
\ref{c: change of variables on LieW}(T - \|v_k\|_*)^{\dim(\LieW)} \leq \ref{c: change of variables on LieW}T^{\dim(\LieW)} \leq \ref{c: change of variables on LieW}(T + \|v_k\|_*)^{\dim(\LieW)},
\end{gather*}
where the first inequality follows from property~(1). Therefore, we estimate the difference between the upper and lower bounds. We have
\begin{align*}
&\ref{c: change of variables on LieW}(T + \|v_k\|_*)^{\dim(\LieW)} - \ref{c: change of variables on LieW}(T - \|v_k\|_*)^{\dim(\LieW)} \\
\leq{}&\ref{c: change of variables on LieW} \cdot 2\|v_k\|_* \sum_{j = 0}^{\dim(\LieW) - 1} (T + \|v_k\|_*)^{\dim(\LieW) - 1 - j}(T - \|v_k\|_*)^j \\
\ll{}&T^{\dim(\LieW) - 1}
\end{align*}
using continuity of $\Ad$ and $\|\bigcdot\|_*$, and compactness of $K$.

Now we prove property~(3). It is the change of variables formula with the following Jacobian. Fix any basis $\beta = \bigsqcup_{\alpha \in \Phi^+} \beta_\alpha$ in increasing order according to $\height(\alpha)$ where $\beta_\alpha \subset \LieG_\alpha$ are bases for $\LieG_\alpha$ for all $\alpha \in \Phi^+$. Due to the above characterization of $\Psi_k$, the matrix $[\!\diff\Psi_k(\bfnu)]_\beta$ corresponding to the derivative $\!\diff\Psi_k(\bfnu)$ with respect to $\beta$ is a unipotent upper triangular matrix, for all $\bfnu \in \LieW$. It follows that the Jacobian of $\Psi_k$ and hence of $\Psi_k^{-1}$ is $1$ on $\LieW$.
\end{proof}

We have the following proposition where the key \cref{thm:EquidistributionOfStarPartialCentralizerOfToriW} is used. In some suitable sense, it shows weak-* convergence $\widehat{F}_T \to 1$ with a polynomial rate.

\begin{proposition}
\label{pro: weak-* convergence with polynomial rate}
Suppose $\bfG$ has the \starQCP and \CEShah holds. Then, there exists $\constkappa\label{kappa: HatF_T inner product} > 0$ (depending only on $X$) such that for all $\phi \in C_{\mathrm{c}}^\infty(X)$, we have
\begin{align*}
\langle\widehat{F}_T, \phi\rangle = \langle 1, \phi\rangle + O(\Sob(\phi)T^{-\ref{kappa: HatF_T inner product}}) \qquad \text{for all $T > 0$}.
\end{align*}
\end{proposition}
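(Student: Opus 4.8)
The plan is to use the inner product formula \eqref{eqn: Inner Product Formula for Counting Function} together with the change of variables machinery developed in \cref{lem: change of variables on LieW} to reduce the statement to an integral of $\phi$ over translates of the periodic $A$-orbit $Ax_0$, and then feed that into \cref{thm:EquidistributionOfStarPartialCentralizerOfToriW}. Concretely, starting from
\begin{align*}
\langle F_T, \phi\rangle = \int_{A\backslash G} \chi_{\mathcal{B}_T}(Ag) \int_{A/\Gamma_A} \phi(g^{-1}ax_0) \diff\mu_{A/\Gamma_A}(a\Gamma_A) \diff\mu_{A\backslash G}(Ag),
\end{align*}
I would use the diffeomorphism $\mathcal{I}: W \times K \to A\backslash G$ to rewrite the outer integral as an integral over $K$ (against a fixed finite measure, with $\phi$ being $K$-invariant on the inside up to the $K$-factor which can be absorbed as in the proof that \cref{thm:EquidistributionOfStarPartialCentralizerOfToriW} implies \cref{thm:EquidistributionOfStarPartialCentralizerOfTori}) and an integral over $w \in \mathcal{B}_{k,T}^W$. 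The inner term is exactly $\int_{A/\Gamma_A}\phi(w^{-1}k^{-1}ax_0)\diff\mu_{A/\Gamma_A}$, i.e. the average of $\phi$ over the translate $w^{-1}$ (composed with $k^{-1}$) of the periodic orbit $Ax_0$, which is precisely the quantity controlled by \cref{thm:EquidistributionOfStarPartialCentralizerOfToriW} \emph{provided} $w^{-1}$, or rather $w$, is $\epsilon$-regular with $\|\log w\|$ large.

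The main obstacle, and the bulk of the work, is handling the set of non-regular (or insufficiently regular) $w \in \mathcal{B}_{k,T}^W$ and the set of $w$ with $\|\log w\|$ too small. For these, \cref{thm:EquidistributionOfStarPartialCentralizerOfToriW} does not apply, so I would bound the contribution trivially by $\|\phi\|_\infty$ times the measure of the offending region, and the point is to show this measure is a power-saving fraction of $\mu_{\LieW}(\mathcal{B}_{k,T}^{\LieW}) \asymp T^{\dim(W)}$. For the small-$\|\log w\|$ part this is immediate: the ball of radius $\ll \epsilon^{-\Lambda}$ has volume $\ll_\epsilon 1 = o(T^{\dim W})$ once we choose $\epsilon$ a suitable small power of $T$, say $\epsilon = T^{-\kappa_0}$ for $\kappa_0$ small. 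For the non-$\epsilon$-regular part, $\epregW$ misses $W$ in a neighborhood of $\rank_\R(\bfG)$ proper linear subspaces of $\LieW$ (by \cref{def: epsilon regular}); I would combine this with \cref{lem: change of variables on LieW}(3), which says $\Psi_k$ is measure-preserving and polynomial, to transport the question to the region $\mathcal{C}_{k,T}^{\LieW}$, a perturbation of the Euclidean ball $B_{*,T}^{\LieW}$ by \cref{lem: change of variables on LieW}(1), where the preimage under $\Psi_k$ of an $\epsilon$-neighborhood of a proper subvariety has volume $\ll \epsilon^{\kappa_1} T^{\dim W}$ for some $\kappa_1 > 0$ — this is a standard $(C,\alpha)$-good-type estimate for the polynomial map $\Psi_k$, uniform in $k \in K$ by compactness, and can be justified by the same kind of reasoning underlying \cref{lem: exp C alpha good,lem: exp C alpha good higher dim}. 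One subtlety: the ``loss'' factor in \cref{thm:EquidistributionOfStarPartialCentralizerOfToriW} is $\epsilon^{-\Lambda} T^{-\kappa}$ with $T = \|\log w\|$ varying over $\mathcal{B}_{k,T}^W$, so I need $\|\log w\| \gg T^{c}$ on the bulk of $\mathcal{B}_{k,T}^W$ for some $c > 0$; this follows because $\Ad(w^{-1})v_0$ has norm $\asymp T$ forces $\|\log w\| \gg T^{1/\height(\Phi)}$ (from the polynomial relation $\Psi$, whose top-degree part in each root-height layer is controlled), so on the good set we get an error $\ll \Sob(\phi)\, T^{c\kappa_0\Lambda} T^{-c\kappa/\height(\Phi)}$, which is power-saving for $\kappa_0$ chosen small enough relative to $\kappa/\Lambda$.

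Finally I would assemble: divide by $\mu_{A\backslash G}(\mathcal{B}_T) = \|F_T\|_1 \asymp T^{\dim W}$ (using \cref{lem: change of variables on LieW}(2) and integrating over $K$ to see this volume is $\asymp T^{\dim W}$ with the same shape), so that the bulk term contributes $\langle 1,\phi\rangle$ up to $O(\Sob(\phi) T^{-\kappa_3})$ from the equidistribution error, the non-regular and small-$w$ terms contribute $O(\|\phi\|_\infty \epsilon^{\kappa_1}) = O(\|\phi\|_\infty T^{-\kappa_0\kappa_1})$, and every $\epsilon$ is fixed as an appropriate negative power of $T$ to balance these. Choosing $\ref{kappa: HatF_T inner product}$ to be the minimum of the resulting exponents gives the claim; note the dependence on $Ax_0$, $v_0$, $\|\bigcdot\|_*$ enters only through the implicit constants in \cref{thm:EquidistributionOfStarPartialCentralizerOfToriW} (height of $Ax_0$) and in \cref{lem: change of variables on LieW} (norm of $\Ad(k^{-1})v_0$), consistent with the statement. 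I expect the measure estimate for the bad set of $w$ — establishing the $\epsilon^{\kappa_1}T^{\dim W}$ bound uniformly in $k$ and $T$ via the polynomial structure of $\Psi_k$ — to be the most delicate step, though it is of the same flavor as the non-divergence arguments already carried out in \cref{sec:Non-divergenceForTranslatesOfPeriodic_A_Orbits}.
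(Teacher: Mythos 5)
Your proposal is correct and follows essentially the same route as the paper: the inner product formula \cref{eqn: Inner Product Formula for Counting Function}, the Iwasawa change of variables $\mathcal{I}$ with unit Jacobian reducing to a fixed $k \in K$ and an integral over $\mathcal{B}_{k,T}^W$, an application of \cref{thm:EquidistributionOfStarPartialCentralizerOfToriW} on the sufficiently regular $w$ with $\|\log w\|$ large, a trivial $\|\phi\|_\infty$-bound on the exceptional set, and a final balancing of exponents against $\mu_W\bigl(\mathcal{B}_{k,T}^W\bigr) \asymp T^{\dim(W)}$ from \cref{lem: change of variables on LieW}. The only genuine divergence is in how the exceptional set is measured. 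The paper works in spherical coordinates on $\LieW$: it deletes the $T^{1-\kappa}$-neighborhood of $\bigcup_{\alpha \in \Pi} \ker(\pi_{\LieG_\alpha})$, whose measure inside $\mathcal{B}_{k,T}^{\LieW}$ is bounded by $T^{\dim(\LieW)-\kappa}$ directly because these are linear subspaces and the simple-root coordinates of $\mathcal{B}_{k,T}^{\LieW}$ are confined to balls of radius $\asymp T$; then the radial split of the error integral at $t = T^{1-\kappa'}$ simultaneously disposes of the small-$\|\log w\|$ region and of the $t^{-\kappa}$ decay, so no $(C,\alpha)$-good input for $\Psi_k$ is needed anywhere in this step. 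Your transport of the bad set to $\mathcal{C}_{k,T}^{\LieW}$ via $\Psi_k$ and appeal to $(C,\alpha)$-goodness would work but is heavier than necessary, and you should note one pitfall: the $\epsilon$-regularity condition of \cref{def: epsilon regular} is \emph{relative} to $\|\bfnu\|$, which on $\mathcal{B}_{k,T}^{\LieW}$ can reach size $T^{\height(\alpha)}$ in the higher-root coordinates, so the non-$\epsilon$-regular locus is a cone rather than a tube; the paper sidesteps this by thresholding $\|\pi_{\LieG_\alpha}(\bfnu)\|$ at the absolute scale $T^{1-\kappa}$ and converting to a lower bound on the regularity of the unit vector $\n = \bfnu/\|\bfnu\|$ only afterwards. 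Your observation that, outside a subset of power-saving measure, $\|\Ad(w^{-1})v_0\|_* \geq T^{1-\delta}$ forces $\|\log w\| \gg T^{(1-\delta)/\height(\Phi)}$ is correct and plays exactly the role of the paper's radial cutoff.
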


\begin{proof}
Recall that we have assumed $\bfG$ is $\R$-split, and suppose $\bfG$ has the \starQCP and \CEShah holds. Using change of variables for integrals on $A\backslash G$ (as below) and right $G$-invariance of $\mu_{A\backslash G}$, we deduce that the Jacobian of $\mathcal{I}$ is independent of the $K$-coordinate. It turns out that a similar but different argument for integrals on $G$ shows that the Jacobian of $\mathcal{I}$ is $1$ \cite[Chapter I, \S 5, Corollary 5.3]{Hel00}.\footnote{Alternatively, one can argue directly but with more work: translating $Awk$ by $w'$ and using the Iwasawa decomposition $kw' = \tilde{a}_{k, w'}\tilde{w}_{k, w'}\tilde{k}_{k, w'}$ gives $Awkw' = A\bigl(\tilde{a}_{k, w'}^{-1}w\tilde{a}_{k, w'}\bigr)\tilde{w}_{k, w'}\tilde{k}_{k, w'}$; and then one shows that $\tilde{a}_{k, w'} \to e$ as $w' \to e$ and that the differential of $w' \mapsto \tilde{w}_{k, w'}$ is $\Id_{\LieW}$.}
Thus, the change of variables formula is:
\begin{align*}
\int_{A\backslash G} \psi(Ag) \diff\mu_{A\backslash G}(Ag) = \int_K \int_W \psi(Awk) \diff\mu_W(w) \diff\mu_K(k)
\end{align*}
for all $\psi \in C_{\mathrm{c}}^\infty(A\backslash G)$. Let $\phi \in C_{\mathrm{c}}^\infty(X)$ and $T \gg 1$. We recall the calculation for $\langle F_T, \phi\rangle$ from \cref{eqn: Inner Product Formula for Counting Function} and apply the above change of variables formula for $\psi \in C_{\mathrm{c}}^\infty(A\backslash G)$ defined by $\psi(Ag) = \chi_{\mathcal{B}_T}(Ag)\int_{A/\Gamma_A} \phi(g^{-1}ax_0) \diff\mu_{A/\Gamma_A}(a\Gamma_A)$ for all $Ag \in A\backslash G$. Define $\phi_k := \phi(k \bigcdot) \in C_{\mathrm{c}}^\infty(X)$ for all $k \in K$. By Fubini's theorem,
\begin{align*}
\langle F_T, \phi\rangle = \int_K \int_{\mathcal{B}_{k, T}^W} \int_{Ax_0} \phi_k(wx) \diff\mu_{Ax_0}(x) \diff\mu_W(w) \diff\mu_K(k)
\end{align*}
where for convenience, we have removed inverses by unimodularity of $K$ and $W$.

Now, for all $k \in K$, using $\int_X \phi_k \diff\mu_X = \int_X \phi \diff\mu_X$ and $\Sob(\phi_k) = \Sob(\phi)$ by left $K$-invariance of the Riemannian metric on $G$, it suffices to prove that there exists $\ref{kappa: HatF_T inner product} > 0$ such that
\begin{align*}
\int_{\mathcal{B}_{k, T}^W} \int_{Ax_0} \phi_k(wx) \diff\mu_{Ax_0}(x) \diff\mu_W(w) ={}& \mu_W\bigl(\mathcal{B}_{k, T}^W\bigr) \int_X \phi_k \diff\mu_X \\
{}&+ O\bigl(\mu_W\bigl(\mathcal{B}_{k, T}^W\bigr) \Sob(\phi_k) T^{-\ref{kappa: HatF_T inner product}}\bigr).
\end{align*}
Note that using $\mu_W = \exp_*\mu_\LieW$ and properties~(2) and (3) of \cref{lem: change of variables on LieW}, we have
\begin{align}
\label{eqn: volume of W-section of * ball estimate}
\begin{aligned}
&\mu_W\bigl(\mathcal{B}_{k, T}^W\bigr) = \mu_\LieW\bigl(\mathcal{B}_{k, T}^{\LieW}\bigr) = \mu_\LieW\bigl(\mathcal{C}_{k, T}^{\LieW}\bigr) = \ref{c: change of variables on LieW}T^{\dim(\LieW)} + O\bigl(T^{\dim(\LieW) - 1}\bigr) \\
\implies{}&\ref{c: change of variables on LieW}T^{\dim(\LieW)} = \mu_W\bigl(\mathcal{B}_{k, T}^W\bigr) + O\bigl(\mu_W\bigl(\mathcal{B}_{k, T}^W\bigr)T^{-1}\bigr).
\end{aligned}
\end{align}

Let $k \in K$. Abusing notation, we abbreviate $\phi := \phi_k$, and $\mathcal{B}_T^W := \mathcal{B}_{k, T}^W$, and $\mathcal{B}_T^{\LieW} := \mathcal{B}_{k, T}^{\LieW}$ for the rest of the proof. Denote $\bbS(\LieW) := \{\bfnu \in \LieW: \|\bfnu\| = 1\}$ and by $\omega$ the spherical measure on $\bbS(\LieW)$. Recall that $\favLieG(1) = \bigoplus_{\alpha \in \Pi} \LieG_\alpha$ and define the orthogonal projection map $\pi_{\LieG_\alpha}: \LieW \to \LieG_\alpha$. Define $\epsilon: \bbS(\LieW) \to \R$ by
\begin{align*}
\epsilon(\n) := \min_{\alpha \in \Pi} \|\pi_{\LieG_\alpha}(\n)\| \qquad \text{for all $\n \in \bbS(\LieW)$}.
\end{align*}
Fix the constants $\kappa := \min\{1, \ref{kappa:EquidistributionOfStarPartialCentralizerOfTori}\}/4\ref{Lambda:EquidistributionOfStarPartialCentralizerOfTori}$, and $\kappa' = \min\{1, \ref{kappa:EquidistributionOfStarPartialCentralizerOfTori}\}/2\dim(\LieW) \in (0, 1/2)$, and $\ref{kappa: HatF_T inner product} = \min\{\kappa, \ref{kappa:EquidistributionOfStarPartialCentralizerOfTori}/4\}$. Define the subset
\begin{align*}
\mathcal{M} := \mathcal{B}_T^{\LieW} \smallsetminus \bigcup_{\alpha \in \Pi} B_{*, T^{1 - \kappa}}^{\LieW}\bigl(\ker(\pi_{\LieG_\alpha})\bigr) \subset \LieW.
\end{align*}
Define $\bbS(\mathcal{M}) := \mathcal{M} \cap \bbS(\LieW)$. Note that by the characterization of $\Psi_k$, we have
\begin{align*}
\mathcal{B}_T^{\LieW} \subset \prod_{\alpha \in \Pi} B_{*, T + \|\Ad(k^{-1})v_0\|_*}^{\LieG_\alpha} \times \prod_{\alpha \in \Phi^+, \height(\alpha) > 1} \LieG_\alpha.
\end{align*}
Therefore, we have
\begin{align}
\label{eqn: estimates for main part of LieW-section of * ball}
\epsilon(\n) &\geq T^{-\kappa} \qquad \text{for all $\n \in \bbS(\mathcal{M})$}, & \mu_{\LieW}\bigl(\mathcal{B}_T^{\LieW} \smallsetminus \mathcal{M}\bigr) &\ll T^{\dim(\LieW) - \kappa}.
\end{align}
Using spherical coordinates, we may write
\begin{align*}
\mathcal{B}_T^{\LieW} &= \{t\n \in \LieW: \n \in \bbS(\LieW), t \in \mathsf{T}(\n)\}, \\
\mathcal{M} &= \{t\n \in \LieW: \n \in \bbS(\mathcal{M}), t \in \mathsf{T}_{\mathcal{M}}(\n)\},
\end{align*}
where $\{\mathsf{T}(\n) \subset \R_{\geq 0}\}_{\n \in \bbS(\LieW)}$ and $\{\mathsf{T}_{\mathcal{M}}(\n) \subset \R_{\geq 0}\}_{\n \in \bbS(\mathcal{M})}$ are families of measurable subsets.

Now, recalling $\mu_W = \exp_*\mu_\LieW$, decomposing $\mathcal{B}_T^{\LieW} = \mathcal{M} \sqcup \bigl(\mathcal{B}_T^{\LieW} \smallsetminus \mathcal{M}\bigr)$, and then using spherical coordinates, \cref{thm:EquidistributionOfStarPartialCentralizerOfToriW} for the case that $\bfG$ is $\R$-split, and \cref{eqn: estimates for main part of LieW-section of * ball}, we have
\begin{align*}
&\int_{\mathcal{B}_T^W} \int_{Ax_0} \phi(wx) \diff\mu_{Ax_0}(x) \diff\mu_W(w) \\
={}&\int_{\bbS(\mathcal{M})} \int_{\mathsf{T}_{\mathcal{M}}(\n)} \left(\int_{Ax_0} \phi(w_{t\n}x) \diff\mu_{Ax_0}(x)\right) t^{\dim(\LieW) - 1} \diff t \diff\omega(\n) \\
{}&+ \int_{\mathcal{B}_T^{\LieW} \smallsetminus \mathcal{M}} \left(\int_{Ax_0} \phi(w_{\bfnu}x) \diff\mu_{Ax_0}(x)\right) \diff\mu_\LieW(\bfnu) \\
={}&\int_{\bbS(\mathcal{M})} \int_{\mathsf{T}_{\mathcal{M}}(\n)} \left(\int_X \phi \diff\mu_X + O\bigl(\Sob(\phi)\epsilon(\n)^{-\ref{Lambda:EquidistributionOfStarPartialCentralizerOfTori}}t^{-\ref{kappa:EquidistributionOfStarPartialCentralizerOfTori}}\bigr)\right) t^{\dim(\LieW) - 1} \diff t \diff\omega(\n) \\
{}&+ O\bigl(\|\phi\|_\infty T^{\dim(\LieW) - \kappa}\bigr) \\
={}&\mu_\LieW\bigl(\mathcal{B}_T^{\LieW}\bigr) \int_X \phi \diff\mu_X + \int_{\bbS(\mathcal{M})} \int_{\mathsf{T}_{\mathcal{M}}(\n)} O\bigl(\Sob(\phi)\epsilon(\n)^{-\ref{Lambda:EquidistributionOfStarPartialCentralizerOfTori}}t^{\dim(\LieW) - 1 - \ref{kappa:EquidistributionOfStarPartialCentralizerOfTori}}\bigr) \diff t \diff\omega(\n) \\
{}&+ O\bigl(\|\phi\|_\infty T^{\dim(\LieW) - \kappa}\bigr).
\end{align*}
In light of \cref{eqn: volume of W-section of * ball estimate}, it suffices to treat the second term.
Again using \cref{eqn: volume of W-section of * ball estimate,eqn: estimates for main part of LieW-section of * ball}, we calculate that
\begin{align*}
{}&\int_{\bbS(\mathcal{M})} \int_{\mathsf{T}_{\mathcal{M}}(\n)} O\bigl(\Sob(\phi)\epsilon(\n)^{-\ref{Lambda:EquidistributionOfStarPartialCentralizerOfTori}}t^{\dim(\LieW) - 1 - \ref{kappa:EquidistributionOfStarPartialCentralizerOfTori}}\bigr) \diff t \diff\omega(\n) \\
={}&\int_{\bbS(\mathcal{M})} \int_{\mathsf{T}_{\mathcal{M}}(\n)} O\bigl(\Sob(\phi)T^{\kappa\ref{Lambda:EquidistributionOfStarPartialCentralizerOfTori}}t^{\dim(\LieW) - 1 - \ref{kappa:EquidistributionOfStarPartialCentralizerOfTori}}\bigr) \diff t \diff\omega(\n) \\
={}&\int_{\bbS(\LieW)}\int_{\mathsf{T}(\n)} O\bigl(\Sob(\phi)T^{\kappa\ref{Lambda:EquidistributionOfStarPartialCentralizerOfTori}}t^{\dim(\LieW) - 1 - \ref{kappa:EquidistributionOfStarPartialCentralizerOfTori}}\bigr) \diff t \diff\omega(\n) \\
={}&\int_{\bbS(\LieW)} \int_{\mathsf{T}(\n) \smallsetminus [0, T^{1 - \kappa'}]} O\bigl(\Sob(\phi)T^{\kappa\ref{Lambda:EquidistributionOfStarPartialCentralizerOfTori}}t^{\dim(\LieW) - 1 - \ref{kappa:EquidistributionOfStarPartialCentralizerOfTori}}\bigr) \diff t \diff\omega(\n) \\
{}&+ \int_{\bbS(\LieW)} \int_0^{T^{1 - \kappa'}} O\bigl(\Sob(\phi)T^{\kappa\ref{Lambda:EquidistributionOfStarPartialCentralizerOfTori}}t^{\dim(\LieW) - 1}\bigr) \diff t \diff\omega(\n) \\
={}&\int_{\bbS(\LieW)} \int_{\mathsf{T}(\n) \smallsetminus [0, T^{1 - \kappa'}]} O\bigl(\Sob(\phi)T^{\kappa\ref{Lambda:EquidistributionOfStarPartialCentralizerOfTori} - (1 - \kappa')\ref{kappa:EquidistributionOfStarPartialCentralizerOfTori}}t^{\dim(\LieW) - 1}\bigr) \diff t \diff\omega(\n) \\
{}&+ O\bigl(\Sob(\phi)T^{\kappa\ref{Lambda:EquidistributionOfStarPartialCentralizerOfTori}} T^{(1 - \kappa')\dim(\LieW)}\bigr) \\
={}&\int_{\bbS(\LieW)} \int_{\mathsf{T}(\n)} O\bigl(\Sob(\phi)T^{\kappa\ref{Lambda:EquidistributionOfStarPartialCentralizerOfTori} - \ref{kappa:EquidistributionOfStarPartialCentralizerOfTori}/2}t^{\dim(\LieW) - 1}\bigr) \diff t \diff\omega(\n) + O\bigl(\Sob(\phi) T^{\dim(\LieW) - \kappa\ref{Lambda:EquidistributionOfStarPartialCentralizerOfTori}}\bigr) \\
={}&O\bigl(\mu_\LieW(\mathcal{B}_T^{\LieW})\Sob(\phi)T^{-\ref{kappa:EquidistributionOfStarPartialCentralizerOfTori}/4}\bigr) + O\bigl(\Sob(\phi) T^{\dim(\LieW) - \kappa\ref{Lambda:EquidistributionOfStarPartialCentralizerOfTori}}\bigr) \\
={}&O\bigl(\mu_W\bigl(\mathcal{B}_T^W\bigr) \Sob(\phi) T^{-\ref{kappa: HatF_T inner product}}\bigr).
\end{align*}
\end{proof}

\begin{proof}[Proof of \cref{thm:GeneralCounting}]
Recall that we have assumed $\bfG$ is $\R$-split, and suppose $\bfG$ has the \starQCP and \CEShah holds. Fix $\ref{kappa:GeneralCounting} := (2\ell + \dim(G))^{-1}\ref{kappa: HatF_T inner product} \in (0, \ref{kappa: HatF_T inner product})$. Let $T \gg 1$ since otherwise the implicit constant of the theorem gives the desired bound. We may take $\delta = T^{-\ref{kappa:GeneralCounting}}$. Then, recalling the properties of $\phi_\delta$ and using \cref{lem: approximation of orbit count,pro: volume ratio estimate,pro: weak-* convergence with polynomial rate}, we have
\begin{align*}
\widehat{F}_T(x_0) &= (1 + O(\eta(\delta) - 1)) \cdot (1 + O(\Sob(\phi_\delta)(\eta(\delta)^{-1}T)^{-\ref{kappa: HatF_T inner product}})) \\
&= 1 + O\bigl(\delta + \delta^{-(\ell + \dim(G)/2)}T^{-\ref{kappa: HatF_T inner product}}\bigr) \\
&= 1 + O(T^{-\ref{kappa:GeneralCounting}} + T^{\ref{kappa:GeneralCounting}(\ell + \dim(G)/2) - \ref{kappa: HatF_T inner product}}) \\
&= 1 + O(T^{-\ref{kappa:GeneralCounting}}).
\end{align*}
This proves the first asymptotic formula in the theorem and as mentioned before, the second asymptotic formula is a simple consequence of \cref{lem: change of variables on LieW} (cf. \cref{eqn: volume of W-section of * ball estimate}).
\end{proof}

\nocite{*}
\bibliographystyle{amsalpha}
\bibliography{References}

\providecommand{\bysame}{\leavevmode\hbox to3em{\hrulefill}\thinspace}
\providecommand{\MR}{\relax\ifhmode\unskip\space\fi MR }
\providecommand{\MRhref}[2]{%
  \href{http://www.ams.org/mathscinet-getitem?mr=#1}{#2}
}
\providecommand{\href}[2]{#2}
\begin{thebibliography}{LMWY25}

\bibitem[And75]{And75}
Peter~P. Andre, \emph{{$k$}-regular elements in semisimple algebraic groups},
  Trans. Amer. Math. Soc. \textbf{201} (1975), 105--124. \MR{357637}

\bibitem[BHC62]{BHC62}
Armand Borel and Harish-Chandra, \emph{Arithmetic subgroups of algebraic
  groups}, Ann. of Math. (2) \textbf{75} (1962), 485--535. \MR{147566}

\bibitem[Bir62]{Bir62}
B.~J. Birch, \emph{Forms in many variables}, Proc. Roy. Soc. London Ser. A
  \textbf{265} (1962), no.~1321, 245--263. \MR{150129}

\bibitem[BO12]{BO12}
Yves Benoist and Hee Oh, \emph{Effective equidistribution of {$S$}-integral
  points on symmetric varieties}, Ann. Inst. Fourier (Grenoble) \textbf{62}
  (2012), no.~5, 1889--1942. \MR{3025156}

\bibitem[CG90]{CG90}
Lawrence~J. Corwin and Frederick~P. Greenleaf, \emph{Representations of
  nilpotent {L}ie groups and their applications. {P}art {I}}, Cambridge Studies
  in Advanced Mathematics, vol.~18, Cambridge University Press, Cambridge,
  1990, Basic theory and examples. \MR{1070979}

\bibitem[CS22]{CS22}
Michael Chow and Pratyush Sarkar, \emph{Exponential mixing of frame flows for
  convex cocompact locally symmetric spaces}, Preprint, arXiv:2211.14737.

\bibitem[Dav59]{Dav59}
H.~Davenport, \emph{Cubic forms in thirty-two variables}, Philos. Trans. Roy.
  Soc. London Ser. A \textbf{251} (1959), no.~993, 193--232. \MR{105394}

\bibitem[DM91]{DM91}
S.~G. Dani and G.~A. Margulis, \emph{Asymptotic behaviour of trajectories of
  unipotent flows on homogeneous spaces}, Proc. Indian Acad. Sci. Math. Sci.
  \textbf{101} (1991), no.~1, 1--17. \MR{1101994}

\bibitem[DRS93]{DRS93}
W.~Duke, Z.~Rudnick, and P.~Sarnak, \emph{Density of integer points on affine
  homogeneous varieties}, Duke Math. J. \textbf{71} (1993), no.~1, 143--179.
  \MR{1230289}

\bibitem[Edw21]{Edw21}
Samuel~C. Edwards, \emph{On the rate of equidistribution of expanding
  translates of horospheres in {$\Gamma\backslash G$}}, Comment. Math. Helv.
  \textbf{96} (2021), no.~2, 275--337. \MR{4277274}

\bibitem[EM93]{EM93}
Alex Eskin and Curt McMullen, \emph{Mixing, counting, and equidistribution in
  {L}ie groups}, Duke Math. J. \textbf{71} (1993), no.~1, 181--209.
  \MR{1230290}

\bibitem[EMM98]{EMM98}
Alex Eskin, Gregory Margulis, and Shahar Mozes, \emph{Upper bounds and
  asymptotics in a quantitative version of the {O}ppenheim conjecture}, Ann. of
  Math. (2) \textbf{147} (1998), no.~1, 93--141. \MR{1609447}

\bibitem[EMS96]{EMS96}
Alex Eskin, Shahar Mozes, and Nimish Shah, \emph{Unipotent flows and counting
  lattice points on homogeneous varieties}, Ann. of Math. (2) \textbf{143}
  (1996), no.~2, 253--299. \MR{1381987}

\bibitem[EMS97]{EMS97}
A.~Eskin, S.~Mozes, and N.~Shah, \emph{Non-divergence of translates of certain
  algebraic measures}, Geom. Funct. Anal. \textbf{7} (1997), no.~1, 48--80.
  \MR{1437473}

\bibitem[EMV09]{EMV09}
M.~Einsiedler, G.~Margulis, and A.~Venkatesh, \emph{Effective equidistribution
  for closed orbits of semisimple groups on homogeneous spaces}, Invent. Math.
  \textbf{177} (2009), no.~1, 137--212. \MR{2507639}

\bibitem[FMT89]{FMT89}
Jens Franke, Yuri~I. Manin, and Yuri Tschinkel, \emph{Rational points of
  bounded height on {F}ano varieties}, Invent. Math. \textbf{95} (1989), no.~2,
  421--435. \MR{974910}

\bibitem[GN12]{GN12}
Alexander Gorodnik and Amos Nevo, \emph{Counting lattice points}, J. Reine
  Angew. Math. \textbf{663} (2012), 127--176. \MR{2889708}

\bibitem[Hel70]{Hel70}
Sigur\dj~ur Helgason, \emph{A duality for symmetric spaces with applications to
  group representations}, Advances in Math. \textbf{5} (1970), 1--154 (1970).
  \MR{263988}

\bibitem[Hel00]{Hel00}
Sigurdur Helgason, \emph{Groups and geometric analysis}, Mathematical Surveys
  and Monographs, vol.~83, American Mathematical Society, Providence, RI, 2000,
  Integral geometry, invariant differential operators, and spherical functions,
  Corrected reprint of the 1984 original. \MR{1790156}

\bibitem[Hel01]{Hel01}
\bysame, \emph{Differential geometry, {L}ie groups, and symmetric spaces},
  Graduate Studies in Mathematics, vol.~34, American Mathematical Society,
  Providence, RI, 2001, Corrected reprint of the 1978 original. \MR{1834454}

\bibitem[JL24]{JL24}
Seongsu Jeon and Yuchan Lee, \emph{An asymptotic formula for the number of
  integral matrices with a fixed characteristic polynomial via orbital
  integrals}, Preprint, arXiv:2501.00284.

\bibitem[KK18]{KK18}
Dubi Kelmer and Alex Kontorovich, \emph{Effective equidistribution of shears
  and applications}, Math. Ann. \textbf{370} (2018), no.~1-2, 381--421.
  \MR{3747491}

\bibitem[KM96]{KM96}
D.~Y. Kleinbock and G.~A. Margulis, \emph{Bounded orbits of nonquasiunipotent
  flows on homogeneous spaces}, Sina\u{\i}'s {M}oscow {S}eminar on {D}ynamical
  {S}ystems, Amer. Math. Soc. Transl. Ser. 2, vol. 171, Amer. Math. Soc.,
  Providence, RI, 1996, pp.~141--172. \MR{1359098}

\bibitem[KM98]{KM98}
\bysame, \emph{Flows on homogeneous spaces and {D}iophantine approximation on
  manifolds}, Ann. of Math. (2) \textbf{148} (1998), no.~1, 339--360.
  \MR{1652916}

\bibitem[Kna02]{Kna02}
Anthony~W. Knapp, \emph{Lie groups beyond an introduction}, second ed.,
  Progress in Mathematics, vol. 140, Birkh\"{a}user Boston, Inc., Boston, MA,
  2002. \MR{1920389}

\bibitem[KT07]{KT07}
Dmitry Kleinbock and George Tomanov, \emph{Flows on {$S$}-arithmetic
  homogeneous spaces and applications to metric {D}iophantine approximation},
  Comment. Math. Helv. \textbf{82} (2007), no.~3, 519--581. \MR{2314053}

\bibitem[LMMS19]{LMMS}
Elon Lindenstrauss, Amir Mohammadi, Gregory Margulis, and Nimish Shah,
  \emph{Quantitative behavior of unipotent flows and an effective avoidance
  principle}, 2019.

\bibitem[LMW22]{LMW22}
Elon Lindenstrauss, Amir Mohammadi, and Zhiren Wang, \emph{Effective
  equidistribution for some one parameter unipotent flows}, Preprint.

\bibitem[LMWY25]{LMWY25}
Elon Lindenstrauss, Amir Mohammadi, Zhiren Wang, and Lei Yang, \emph{Effective
  equidistribution in rank 2 homogeneous spaces and values of quadratic forms},
  Preprint.

\bibitem[LS24]{LS24}
Zuo Lin and Pratyush Sarkar, \emph{Finitary estimates for the distribution of
  lattice orbits in homogeneous spaces {I}: {R}iemannian metric}, Preprint.

\bibitem[NV21]{NV21}
Paul~D. Nelson and Akshay Venkatesh, \emph{The orbit method and analysis of
  automorphic forms}, Acta Math. \textbf{226} (2021), no.~1, 1--209.
  \MR{4241374}

\bibitem[Ono57]{Ono57}
Takashi Ono, \emph{Sur une propri\'{e}t\'{e} arithm\'{e}tique des groupes
  alg\'{e}briques commutatifs}, Bull. Soc. Math. France \textbf{85} (1957),
  307--323. \MR{94362}

\bibitem[OS14]{OS14}
Hee Oh and Nimish~A. Shah, \emph{Limits of translates of divergent geodesics
  and integral points on one-sheeted hyperboloids}, Israel J. Math.
  \textbf{199} (2014), no.~2, 915--931. \MR{3219562}

\bibitem[OV90]{OV90}
A.~L. Onishchik and \`E.\~B. Vinberg, \emph{Lie groups and algebraic groups},
  Springer Series in Soviet Mathematics, Springer-Verlag, Berlin, 1990,
  Translated from the Russian and with a preface by D. A. Leites. \MR{1064110}

\bibitem[Pog98]{Pog98}
Detlev Poguntke, \emph{Normalizers and centralizers of reductive subgroups of
  almost connected {L}ie groups}, J. Lie Theory \textbf{8} (1998), no.~2,
  211--217. \MR{1650341}

\bibitem[Ric67]{Ric67}
R.~W. Richardson, Jr., \emph{Conjugacy classes in {L}ie algebras and algebraic
  groups}, Ann. of Math. (2) \textbf{86} (1967), 1--15. \MR{217079}

\bibitem[Rin16]{Rin16}
Claus~Michael Ringel, \emph{The {C}atalan combinatorics of the hereditary
  {A}rtin algebras}, Recent developments in representation theory, Contemp.
  Math., vol. 673, Amer. Math. Soc., Providence, RI, 2016, pp.~51--177.
  \MR{3546710}

\bibitem[Sch85]{Sch85}
Wolfgang~M. Schmidt, \emph{The density of integer points on homogeneous
  varieties}, Acta Math. \textbf{154} (1985), no.~3-4, 243--296. \MR{781588}

\bibitem[Sha96]{Sha96}
Nimish~A. Shah, \emph{Limit distributions of expanding translates of certain
  orbits on homogeneous spaces}, Proc. Indian Acad. Sci. Math. Sci.
  \textbf{106} (1996), no.~2, 105--125. \MR{1403756}

\bibitem[SS70]{SS70}
T.~A. Springer and R.~Steinberg, \emph{Conjugacy classes}, Seminar on
  {A}lgebraic {G}roups and {R}elated {F}inite {G}roups ({T}he {I}nstitute for
  {A}dvanced {S}tudy, {P}rinceton, {N}.{J}., 1968/69), Lecture Notes in Math.,
  vol. Vol. 131, Springer, Berlin-New York, 1970, pp.~167--266. \MR{268192}

\bibitem[SS24]{SS24}
Anthony Sanchez and Juno Seong, \emph{An avoidance principle and {M}argulis
  functions for expanding translates of unipotent orbits}, J. Mod. Dyn.
  \textbf{20} (2024), 409--439. \MR{4799469}

\bibitem[Ste65]{Ste65}
Robert Steinberg, \emph{Regular elements of semisimple algebraic groups}, Inst.
  Hautes \'{E}tudes Sci. Publ. Math. (1965), no.~25, 49--80. \MR{180554}

\bibitem[Tim03]{Tim03}
Franz~Georg Timmesfeld, \emph{Groups with root-system of type {$BC_l$}},
  Beitr\"age Algebra Geom. \textbf{44} (2003), no.~1, 213--227. \MR{1990995}

\end{thebibliography}
\end{document}